\def\N {\mathbb{N}}
\def\Z {\mathbb{Z}}
\def\R {\mathbb{R}}
\def\im{\mathrm{i}}
\def\id{\mathrm{id}}
\def\hc#1{\mathrm{h}_{#1}}
\def\h {\mathrm{H}}
\def\subset{\subseteq}
\renewcommand{\varepsilon}{\epsilon}
\renewcommand{\to}{\nobr\!\xymatrix@R=0cm@C=1.4em{\ar[r] &}\nobr}
\renewcommand{\mapsto}{\!\xymatrix@R=0cm@C=1.4em{\ar@{|->}[r] &}\!}
\renewcommand{\Rightarrow}{\!\xymatrix@R=0cm@C=1.4em{\ar@{=>}[r] &}\!}
\renewcommand{\Leftarrow}{\!\xymatrix@R=0cm@C=1.4em{\ar@{<=}[r] &}\!}
\newcommand{\incl}{\!\xymatrix@R=0cm@C=1.4em{\ar@{^(->}[r] &}\!}
\renewcommand\Leftrightarrow{\!\xymatrix@R=0cm@C=1.4em{\ar@{<=>}[r] &}\!}
\def\erf#1{(\ref{#1})}
\def\stackerf#1#2{\stackrel{\text{\erf{#1}}}{#2}}
\def\brackets#1{\IfStrEq{#1}{-}{}{(#1)}}
\def\subindex#1{\IfStrEq{#1}{-}{}{_{#1}}}
\def\trivtech{\mathcal{T}\hspace{-0.34em}r\hspace{-0.06em}i\hspace{-0.07em}v}
\newcommand{\alxydim}[2]{\begin{aligned}\xymatrix#1{#2}\end{aligned}}
\def\bigset#1#2{\left\lbrace\;\begin{minipage}[c]{#1}\begin{center}#2\end{center}\end{minipage}\;\right\rbrace}
\newlength{\myl}
\def\ddt#1#2#3{\left.\frac{\mathrm{d}^{\IfStrEq{#1}{1}{}{#1}}}{\mathrm{d}#2}\IfStrEq{#2}{#3}{\right.}{\right|_{#3}}}
\def\nobr{\hspace{-0.2em}~\hspace{-0.2em}}
\def\maps{\nobr\colon\thinspace\nobr}
\def\df{\nobr := \nobr}
\def\eq{\nobr = \nobr}
\let\Oldin\in\renewcommand{\in}{\nobr\Oldin\nobr}
\let\Oldtimes\times\renewcommand{\times}{\nobr\Oldtimes\nobr}
\let\Oldotimes\otimes\renewcommand{\otimes}{\nobr\Oldotimes\nobr}
\newcommand{\ueins}{{\mathrm{U}}(1)}
\newcommand{\spin}[1]{{\mathrm{Spin}}\brackets{#1}}
\newcommand{\so}[1]{{\mathrm{SO}}\brackets{#1}}
\def\man{\mathcal{M}\!a\!n}
\def\diff{\mathcal{D}\!i\!f\!\!f}
\def\frech{\mathcal{F}\hspace{-0.12em}r\hspace{-0.1em}e\hspace{-0.05em}c\hspace{-0.05em}h}
\def\hom{\mathcal{H}\!om}
\def\homcon{\mathcal{H}\!om^{\!\nabla}\!}
\def\triv#1{\trivtech\brackets{#1}}
\def\trivcon#1{\trivtech^{\nabla\!}\brackets{#1}}
\def\hol#1#2{\mathrm{Hol}_{#1}(#2)}
\def\ev{\mathrm{ev}}
\def\pr{{\mathrm{pr}}}
\newlength{\widthtmp}
\def\length#1{\settowidth{\widthtmp}{#1}\the\widthtmp}
\def\buntech#1#2{\mathcal{B}\hspace{-0.01em}un_{\hspace{0.05em}#1}^{#2}}
\def\trivlin{\mathbf{I}}
\def\bun#1#2{\buntech{#1}{}\brackets{#2}}
\def\buncon#1#2{\buntech{#1}{\nabla}\hspace{-0.05em}\brackets{#2}}
\def\bunconflat#1#2{\buntech#1{\nabla_{\!0}}\hspace{-0.05em}\brackets{#2}}
\def\ubun#1{\bun\relax{#1}}
\def\ubunth#1{\bunth\relax{#1}}
\def\ubuncon#1{\buncon\relax{#1}}
\def\ubunconflat#1{\bunconflat\relax{#1}}
\def\ufusbun#1{\mathcal{F}\!us\buntech{}{}(#1)}
\def\ufusbunth#1{\mathcal{F}\!us\bunth{}{#1}}
\def\ufusbuncon#1{\mathcal{F}\!us\buncon{}{#1}}
\def\ufusbunconsf#1{\mathcal{F}\!us\bunconsf{}{#1}}
\def\bunconsf#1#2{\buntech{#1}{\!\nabla}{}^{_{\!\!s\!f}}\hspace{-0.15em}\brackets{#2}}
\def\bunth#1#2{\buntech{#1}{\!\!\!\;\;t\hspace{-0.03em}h}\hspace{-0.05em}\brackets{#2}}
\def\grbtech#1{\mathcal{G}\hspace{-0.06em}r\hspace{-0.06em}b_{\hspace{-0.07em}{#1}}}
\def\grb#1#2{\grbtech{#1}\brackets{#2}}
\def\grbcon#1#2{\grbtech{#1}^{\nabla\!}\brackets{#2}}
\def\ugrb#1{\grb{\,}{#1}}
\def\ugrbcon#1{\grbcon\relax{#1}}
\newcounter{denseversion}
\newcounter{authorcounter}
\newcounter{adresscounter}
\def\title#1{\gdef\@title{#1}}
\def\@title{}
\def\subtitle#1{\gdef\@subtitle{#1}}
\def\@subtitle{}
\def\authortagsused{0}
\def\adresstag#1{\if!#1!\else$^{\;#1\;}$\fi}
\renewcommand{\author}[2][]{
  \stepcounter{authorcounter}
  \if!#1!\else\gdef\authortagsused{1}\fi
  \ifnum\value{authorcounter}=1
    \def\@authorstringa{#2\adresstag{#1}}
    \def\@authorstringb{#2}
    \def\@authorstringc{#2\adresstag{#1}}
  \else
    \g@addto@macro\@authorstringa{\ and #2\adresstag{#1}}
    \g@addto@macro\@authorstringb{\ and #2}
    \g@addto@macro\@authorstringc{\\#2\adresstag{#1}}
  \fi}
\def\@author{\ifnum\value{denseversion}=0\@authorstringa\else\@authorstringb\fi}
\def\@adressstringa{}
\def\@adressstringb{}
\newcommand{\adress}[2][]{
  \stepcounter{adresscounter}
  \ifnum\value{adresscounter}=1
    \g@addto@macro\@adressstringa{\ifnum\authortagsused=0\def\br{\\}\else\def\br{, }\fi\adresstag{#1}#2}
    \g@addto@macro\@adressstringb{\def\br{\\}\adresstag{#1}\parbox[t]{14cm}{#2}}
  \else
    \g@addto@macro\@adressstringa{\\[\bigskipamount]\adresstag{#1}#2}
    \g@addto@macro\@adressstringb{\\[\medskipamount]\adresstag{#1}\parbox[t]{14cm}{#2}}
  \fi}
\def\preprint#1{\gdef\@preprint{#1}}
\def\@preprint{}
\def\keywords#1{\gdef\@keywords{#1}}
\def\@keywords{}
\def\msc#1{\gdef\@msc{#1}}
\def\@msc{}
\def\email#1{
   \gdef\@email{#1}
   \g@addto@macro\@authorstringc{ {\it (#1)}}}
\def\@email{}
\def\dedication#1{\gdef\@dedication{#1}}
\def\@dedication{}
\def\mybaselinestretch#1{\gdef\@mybaselinestretch{#1}}
\def\@mybaselinestretch{}
\renewcommand{\baselinestretch}{\@mybaselinestretch}
\def\denseversion{
  \setcounter{denseversion}{1}
  \newgeometry{left=3cm,right=3cm,top=3cm}
  \mybaselinestretch{1.1}
  \renewcommand{\baselinestretch}{\@mybaselinestretch}
  \normalfont
  \def\possiblelinebreak{}
  \fancyfoot[C]{\itshape{--$\,\,$\thepage$\,\,$--}}}
\newlength{\myparskip}
\newlength{\myproofparskip}
\def\possiblelinebreak{\\}
\renewcommand{\emph}[1]{\def\reserved@a{it}\ifx\f@shape\reserved@a\uline{#1}\else\textit{#1}\fi}
\newcommand{\mytableofcontents}{
   \ifnum\value{denseversion}=0
     \renewcommand{\baselinestretch}{1}
     \normalfont
     \tableofcontents
     \renewcommand{\baselinestretch}{\@mybaselinestretch}
     \normalfont
   \else
     \renewcommand{\baselinestretch}{0.5}
     \normalfont
     \tableofcontents
     \renewcommand{\baselinestretch}{\@mybaselinestretch}
     \normalfont
   \fi}
\newlength{\zeilenlaenge}
\def\putindent#1{
  \settowidth{\zeilenlaenge}{#1}
  \ifnum\zeilenlaenge>\textwidth
    #1
  \else
    \noindent #1
  \fi
}
\def\href#1#2{#2}
\def\kohyp{
  \usepackage{hyperref}
  \hypersetup{
    linktocpage = true,
    pdftitle = {\@title},
    pdfauthor = {\@author},
    pdfkeywords = {\@keywords},    
    bookmarksopen = true,
    bookmarksopenlevel = 1
  }}  
\def\showkeywords{\begin{flushleft}\footnotesize\textbf{Keywords}: \@keywords\end{flushleft}}
\def\showmsc{\begin{flushleft}\footnotesize\textbf{MSC 2010}: \@msc\end{flushleft}}
\newcounter{mythm}[subsection]
\newcounter{mainthm}
\def\setsecnumdepth#1{
  \setcounter{secnumdepth}{#1}
  \setcounter{mythm}{0}
  \ifnum \c@secnumdepth >0
    \ifnum \c@secnumdepth >1
      \def\themythm{\thesubsection.\arabic{mythm}}
      \numberwithin{equation}{subsection}
      \renewcommand\theequation{\thesubsection.\arabic{equation}}
    \else
      \def\themythm{\thesection.\arabic{mythm}}
      \numberwithin{equation}{section}
      \renewcommand\theequation{\thesection.\arabic{equation}}
    \fi
  \else
    \def\themythm{\arabic{mythm}}
  \fi}
\newenvironment{mythmenv}{\strut\ \setlength{\parskip}{\myproofparskip}}{\setlength{\parskip}{\myparskip}}
\newlength{\mythmskip}
\newlength{\mythmtopskip}
\newtheoremstyle{mythmstylea}{\mythmtopskip}{\mythmskip}{\it}{}{\bf}{.}{0em}{}
\newtheoremstyle{mythmstyleb}{\mythmtopskip}{\mythmskip}{}{}{\bf}{.}{0em}{}
\theoremstyle{mythmstylea}
\newtheorem{mytheorem}[mythm]{\nameTheorem}
\newtheorem{mydefinition}[mythm]{\nameDefinition}
\newtheorem{mycorollary}[mythm]{\nameCorollary}
\newtheorem{myproposition}[mythm]{\nameProposition}
\newtheorem{mylemma}[mythm]{\nameLemma}
\newtheorem{mymaintheorem}[mainthm]{\nameTheorem}
\newtheorem{mymaincorollary}[mainthm]{\nameCorollary}
\newtheorem{mymainproposition}[mainthm]{\nameProposition}
\newtheorem{mymaindefinition}[mainthm]{\nameDefinition}
\theoremstyle{mythmstyleb}
\newtheorem{myremark}[mythm]{\nameRemark}
\newtheorem{myproblem}[mythm]{\nameProblem}
\newtheorem{myexample}[mythm]{\nameExample}
\newtheorem{myexercise}[mythm]{\nameExercise}
\newenvironment{theorem}[1][]{\begin{mytheorem}[#1]\begin{mythmenv}}{\end{mythmenv}\end{mytheorem}}
\newenvironment{definition}[1][]{\begin{mydefinition}[#1]\begin{mythmenv}}{\end{mythmenv}\end{mydefinition}}
\newenvironment{corollary}[1][]{\begin{mycorollary}[#1]\begin{mythmenv}}{\end{mythmenv}\end{mycorollary}}
\newenvironment{proposition}[1][]{\begin{myproposition}[#1]\begin{mythmenv}}{\end{mythmenv}\end{myproposition}}
\newenvironment{lemma}[1][]{\begin{mylemma}[#1]\begin{mythmenv}}{\end{mythmenv}\end{mylemma}}
\newenvironment{remark}[1][]{\begin{myremark}[#1]\begin{mythmenv}}{\end{mythmenv}\end{myremark}}
\newenvironment{example}[1][]{\begin{myexample}[#1]\begin{mythmenv}}{\end{mythmenv}\end{myexample}}
\newenvironment{maintheorem}[1]{\begin{mymaintheorem}\begin{mythmenv}}{\end{mythmenv}\end{mymaintheorem}}
\newenvironment{maincorollary}[1]{\begin{mymaincorollary}\begin{mythmenv}}{\end{mythmenv}\end{mymaincorollary}}
\renewenvironment{proof}[1][\nameProof]{\noindent #1. \begin{mythmenv}}{\hphantom{$\square$}\hfill$\square$\end{mythmenv}\medskip}
\def\tocsection#1{\section*{#1}\addcontentsline{toc}{section}{#1}}
\def\mytitle{}
\def\zmptitle{
  \begin{tabular}{cc}
    \begin{minipage}[c]{0.4\textwidth}
      \begin{flushleft}
        \includegraphics[width=110pt]{../../tex/zmp}
      \end{flushleft}  
    \end{minipage}&
    \begin{minipage}[c]{0.55\textwidth}
      \begin{flushright}
      {\small\sf\@preprint}
      \end{flushright}
    \end{minipage}
  \end{tabular}
  \vskip 2cm}
\def\maketitle{
  \setlength{\parskip}{\myparskip}  
  \newpage
  \noindent
  \mytitle
  \begin{center}
    \LARGE\@title\\
    \if!\@subtitle!\else\smallskip\LARGE\@subtitle\\\fi
    \bigskip
    \if!\@author!\else\bigskip\large\@author\\\fi
    \ifnum\value{denseversion}=0
      \if!\@adressstringa!\else\bigskip\normalsize\@adressstringa\\\fi
      \if!\@email!\else\ifnum\value{authorcounter}=1\bigskip\normalsize\textit{\@email}\\\else\fi\fi
    \else
    \fi
    \if!\@dedication!\else\bigskip\normalsize{\@dedication}\\\fi
  \end{center}
  \ifnum\value{denseversion}=0\vskip 1.5cm\else\vskip0.5cm\fi
  \if!\@draft!\else\thispagestyle{empty}\fi}
\def\kobib#1{
  \begin{raggedright}
  \ifnum\value{denseversion}=0\else\small\fi
  \Oldbibliography{#1/kobib}
  \bibliographystyle{#1/kobib}
  \end{raggedright}
  \ifnum\value{denseversion}=0\else
      \noindent
      \if!\@authorstringc!\else
        \ifnum\authortagsused=0\ifnum\value{authorcounter}>1\normalsize\@authorstringc\\[\medskipamount]\else\fi\else\normalsize\@authorstringc\\[\medskipamount]\fi
      \fi
      \if!\@adressstringb!\else\normalsize\@adressstringb\\{}\fi
      \ifnum\authortagsused=0
        \ifnum\value{authorcounter}=1
          \if!\@email!\else\linebreak\normalsize\textit{\@email}\\{}\fi
        \else
        \fi
      \else
      \fi
  \fi
  }
\let\Oldbibliography\bibliography
\def\bibliography#1{
  \begin{raggedright}
  \ifnum\value{denseversion}=0\else\small\fi
  \Oldbibliography{#1}
  \end{raggedright}
  \ifnum\value{denseversion}=0\else
      \noindent
      \if!\@authorstringc!\else
        \ifnum\authortagsused=0\ifnum\value{authorcounter}>1\normalsize\@authorstringc\\[\medskipamount]\else\fi\else\normalsize\@authorstringc\\[\medskipamount]\fi
      \fi
      \if!\@adressstringb!\else\normalsize\@adressstringb\\{}\fi
      \ifnum\authortagsused=0
        \ifnum\value{authorcounter}=1
          \if!\@email!\else\linebreak\normalsize\textit{\@email}\\{}\fi
        \else
        \fi
      \else
      \fi
  \fi
}
\newenvironment{commentfigure}{\begin{comment}}{\end{comment}}
\newenvironment{sidewayscommentfigure}{\begin{minipage}}{\end{minipage}}
\def\draft#1#2#3#4{
  \ifnum#4=0
    \def\showcomments{ - Comments are not displayed}
  \else
    \renewenvironment{comment}{\begin{list}{}{\rightmargin=1cm\leftmargin=1cm}\item\sf\begin{small}}{\end{small}\end{list}}

    \def\showcomments{ - Comments are displayed}
  \fi
  \gdef\@draft{DRAFT - Version #1 - Last edited on #2 - Last edited by #3\showcomments}
  \fancyhead[C]{\footnotesize\tt\textcolor{red}{\@draft}}}
\def\@draft{}
\def\thinpairs#1{#1^2_{\text{\tiny \it thin}}}
\def\p{P}
\def\ev{\mathrm{ev}}
\def\hc#1{\mathrm{h}_{#1}}
\def\pcomp{\nobr\star\nobr}
\def\prev#1{\overline{#1}}
\def\un{\mathscr{R}}
\def\tr{\mathscr{T}}
\def\ufusbun#1{\mathcal{F}\!us\buntech{}{}(#1)}
\def\ufusbunth#1{\mathcal{F}\!us\bunth{}{#1}}
\def\ufusbuncon#1{\mathcal{F}\!us\buncon{}{#1}}
\def\ufusbunconsf#1{\mathcal{F}\!us\bunconsf{}{#1}}
\def\bunconsf#1#2{\buntech{#1}{\!\nabla}{}^{_{\!\!s\!f}}\hspace{-0.15em}\brackets{#2}}
\def\bunth#1#2{\buntech{#1}{\!\!\!\;\;t\hspace{-0.03em}h}\hspace{-0.05em}\brackets{#2}}
\def\quot#1{``#1''}
\def\quand{\quad\text{ and }\quad}
\def\quomma{\quad\text{, }\quad}
\def\nameTheorem{Theorem}
\def\nameDefinition{Definition}
\def\nameCorollary{Corollary}
\def\nameProposition{Proposition}
\def\nameLemma{Lemma}
\def\nameRemark{Remark}
\def\nameProblem{Problem}
\def\nameExample{Example}
\def\nameExercise{Exercise}
\def\nameProof{Proof}
\title{String geometry vs. spin geometry on loop spaces}
\author{Konrad Waldorf}
\email{konrad.waldorf@uni-greifswald.de}
\keywords{string geometry, string connection, transgression, gerbes, lifting gerbes, loop group, loop space}
\def\xypicst{3em}
\def\lspin#1{L\spin #1}
\def\lspinhat#1{\widetilde{L\spin #1}}
\def\inf#1{\EuScript{#1}}
\def\trunc{_1}
\def\ttrunc{_2}
\def\trivcontrunc#1{\mathrm{t}\trunc{\trivcon{#1}}}
\def\trivconttrunc#1{\mathrm{t}\ttrunc{\trivcon{#1}}}
\def\trivtrunc#1{\mathrm{t}\trunc{\triv{#1}}}
\def\trivttrunc#1{\mathrm{t}\ttrunc{\triv{#1}}}
\def\sc{\mathcal{S}\hspace{-0.08em}t\hspace{-0.065em}r\mathcal{C}\hspace{-0.04em}l}
\def\sccon{\sc^{\hspace{-0.04em}\nabla}\!}
\def\split{s}
\def\gbas{\mathcal{G}_{bas}}
\def\lghat{\widetilde{LG}}
\def\lop{\cup\,}
\def\ddt{\left . \frac{\mathrm{d}}{\mathrm{d}t} \right |_0}
\def\corr{spin}
\def\q{P}
\def\spintech{\mathcal{S}\hspace{-0.07em}pi\hspace{-0.06em}n}
\def\stringtech#1{\mathcal{S}\hspace{-0.09em}t\hspace{-0.07em}r\hspace{-0.06em}i\hspace{-0.07em}n\hspace{-0.09em}g_{#1}}
\def\fustech{f\!u\hspace{-0.09em}s}
\def\thtech{t\hspace{-0.05em}h}
\def\spst{\spintech(LM)}
\def\spstcon{\spintech^{\!\nabla\hspace{-0.2em}}(LM)}
\def\spstconsf{\spintech^{\!\nabla_{\!\!s\!f\!}}(LM)}
\def\spstconsffus{\spintech_{\hspace{-0.1em}\fustech\!}^{\!\nabla_{\!\!s\!f\!}}(LM)}
\def\spstth{\spintech^{th\hspace{-0.08em}}(LM)}
\def\spstfus{\spintech_{\hspace{-0.1em}\fustech\hspace{-0.08em}}(LM)}
\def\spstthfus{\spintech^{\hspace{0.1em}\thtech}_{\hspace{-0.1em}\fustech\hspace{-0.1em}}(LM)}
\def\stst#1#2{\stringtech#1(#2)}
\def\ststcon#1#2{\stringtech#1^{\nabla\!}(#2)}
\def\trivthfus#1{\trivtech_{\fustech}^{\,\thtech}(#1)}
\begin{document}

\denseversion

\maketitle

\begin{abstract}
We introduce various versions of spin structures on  free loop spaces of smooth manifolds, based on  a classical notion due to Killingback, and additionally coupled to two relations between loops: thin homotopies and loop fusion. 
The central result of this article is an equivalence between these enhanced versions of spin structures  on the loop space and string structures on the manifold itself. The equivalence exists in two  settings:   in a purely topological  one and a in geometrical one that includes spin connections and string connections. Our results   provide a consistent, functorial, one-to-one dictionary between string geometry and spin geometry on loop spaces.  
\end{abstract}

{
\small
\mytableofcontents
}

\setsecnumdepth{1}

\section{Introduction}

One perspective to  classical two-dimensional field theories on a Riemannian manifold $M$, also known as sigma models, is to regard them as  one-dimensional field theories on the free loop space $LM$: the points of $LM$ are the \quot{closed strings} in $M$. For example, if we want to understand the coupling of strings to gauge fields, this perspective  makes us study principal bundles with connections over $LM$. And if we want to understand fermions, it lets us ask for  spin structures on loop spaces.

In order to study fermions on an oriented, $n$-dimensional Riemannian manifold, one has to lift the structure group of the frame bundle of $M$ from $\so n$ to a covering group that admits appropriate unitary representations, $\spin n$. Analogous steps on the loop space require, in the first place, to choose an orientation: a reduction of the structure group of $LM$, namely  $L\so n$, to the connected component of the identity,   $L\spin n$, see \cite{atiyah2,mclaughlin1}. Such a reduction can, for instance,  be induced from a spin structure on $M$. 
In the next step, one observes that  $L\spin n$ has no appropriate unitary representations. It only has projective ones, i.e. representations of its universal central extension,
\begin{equation}
\label{eq:ext}
1 \to \ueins \to \lspinhat n \to \lspin n \to 1\text{.}
\end{equation}
Thus, we require a lift of the structure group of  $LM$ from $\lspin n$ to this central extension; such a lift  is called a \emph{spin structure} on $LM$ \cite{killingback1}. 
We remark that an important difference to ordinary spin structures is that the central subgroup of the extension \erf{eq:ext} is  the \emph{continuous} group $\ueins$ instead of the \emph{discrete} group $\Z/2\Z$. One effect of this difference is that it is non-trivial to lift a given connection on the frame bundle of $LM$ to a connection on the lifted bundle, a \emph{spin connection} \cite{Coquereaux1998}. Every spin structure on $LM$ admits a spin connection \cite{manoharan1}, but  there might be non-equivalent choices.

\subsubsection*{Deficits of the loop space theory}

Returning to the attempt to understand  the coupling of strings to gauge fields via, say, principal $\ueins$-bundles with connection over loop spaces, one soon encounters the problem that not all aspects of  the two-dimensional theory  can be described in terms of such bundles. For  example, if two strings join in form of a pair of pants, there is no sensible way to describe the gauge field coupling of this process solely in terms of a bundle over $LM$. This deficit of the loop space theory has lead to the development of   \emph{B-fields}, structure defined on the manifold itself that fulfills all requirements for a gauge field for strings. Nowadays it is well understood that a B-field is a \emph{$\ueins$-gerbe with connection} \cite{gawedzki3,brylinski1}. 
The relation between gerbes over $M$ and bundles over $LM$ can be understood on a cohomological level in terms of a transgression homomorphism
\begin{equation}
\label{eq:tau}
\tau: \h^n(M,\Z) \to \h^{n-1}(LM,\Z)\text{,}
\end{equation}
which for $n=3$ takes the Dixmier-Douady class of a gerbe over $M$ to the first Chern class of a principal $\ueins$-bundle over $LM$. Various differential-geometric versions of the transgression homomorphism have been developed that also include connections on both sides \cite{brylinski1,gawedzki1,waldorf5}.
A general fact is that  transgression is not injective; this loss of information explains precisely the above-mentioned deficit of the loop space theory for gauge fields.

A similar phenomenon has been observed  for \emph{geometric spin structures} on loop spaces, i.e. spin structures with spin connections.  For the consistency of the fermionic theory (a version of  the supersymmetric sigma model)  it is necessary to trivialize a certain \emph{Pfaffian line bundle} over the mapping spaces of closed spin surfaces into $M$ \cite{freed4,freed5}. A spin structure on the loop space only provides such  trivializations over the mapping space of \emph{genus one} surfaces. In order to remedy this deficit (among other issues) Stolz and Teichner have proposed a notion of a  \emph{geometric string structure} on $M$, consisting of a \emph{string structure} and a \emph{string connection} \cite{stolz1}. Spin manifolds that admit such structures are called \emph{string manifolds}; they are characterized by the vanishing of the first fractional Pontryagin class $\frac{1}{2}p_1(M)$.
The proof that a geometric string structure indeed provides trivializations of the Pfaffian line bundle over mapping spaces of \emph{arbitrary} surfaces  was provided later by Bunke \cite{bunke1} based on a gerbe-theoretical formulation of geometric string structures  introduced  in  \cite{waldorf8}. Additionally, that formulation allows to define a transgression procedure for geometric string structures on $M$, analogous to the homomorphism \erf{eq:tau}, that results in spin structures on $LM$ \cite{Waldorfa}.  This transgression procedure is again afflicted with a loss of information \cite{Pilch1988}, explaining the limitation of the loop space theory to genus one surfaces.

We remark that several other aspects are not yet understood, neither in terms of spin geometry on $LM$ nor in terms of string geometry on $M$. Examples are the Dirac operator on $LM$ postulated by Witten \cite{witten2}, or the Höhn-Stolz conjecture \cite{stolz4}.  The quest for methods to attack problems like these is the motivation for studying relations between geometry on $M$ and geometry on $LM$. The purpose of the present article is to contribute a new instance of such relations: an equivalence between (geometric) string structures on $M$ and a version of (geometric) spin structures on $LM$.

\subsubsection*{Thin homotopy and loop fusion}

We return to the above-mentioned transgression of gerbes (with connection) over $M$ to principal $\ueins$-bundles (with connection) over $LM$, suffering from a loss of information. It turns out that one can equip $\ueins$-bundles over loop spaces with additional structures, in such a way that an inverse of transgression can be defined, and an equivalence between  gerbes over $M$ and versions of $\ueins$-bundles over $LM$ is achieved; see  \cite{waldorf9,waldorf10,waldorf11} or \cite{Kottke,Kottkea} for an alternative approach. The relevant additional structures couple $\ueins$-bundles over $LM$ to two operations that only exist in loop spaces (rather than in general manifolds): thin homotopies and loop fusion.
\begin{figure}[h]
\begin{center}
\begin{tabular}{ccc}
\includegraphics[height=7em]{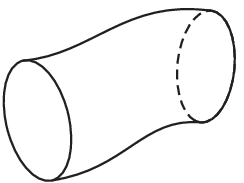}   && 
\includegraphics[height=7em]{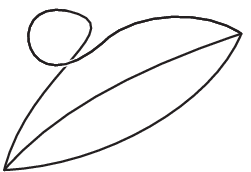} \\
(a)
\begin{minipage}[t]{0.3\textwidth}
\raggedright
A  homotopy between loops, regarded as a tube.
\end{minipage} && 
(b) 
\begin{minipage}[t]{0.3\textwidth}
\raggedright
Two loops with a common segment. 
\end{minipage}
\end{tabular}
\end{center}
\vspace{-1.5em}
\end{figure}
Roughly speaking, \emph{thin homotopies} are homotopies between loops that have \quot{zero area} when regarded as tubes in $M$ as shown in Figure (a). The relevance of thin homotopies has been noticed in axiomatic approaches to the parallel transport of connections on bundles and gerbes \cite{barret1,schreiber3}.  A \emph{thin structure} on a principal $\ueins$-bundle $P$ over $LM$ is a way to identify consistently the fibres of $P$ over thin homotopic loops. A connection on $P$ is called \emph{superficial}, if such a thin structure can be induced by parallel transport along a thin homotopy (independently of the choice of the thin homotopy). 

The second operation, \emph{loop fusion}, joins two loops along a common segment, see Figure (b). A \emph{fusion product} on a principal $\ueins$-bundle $P$ over $LM$ is a structure that lifts loop fusion to the fibres of  $P$. These additional structures   furnish a  category $\ufusbunth {LM}$ of principal $\ueins$-bundles over $LM$ equipped with fusion products and thin structures, and another category $\ufusbunconsf {LM}$ of principal $\ueins$-bundles over $LM$ equipped with fusion products and superficial connections. These two categories are \quot{loop space duals} of the bicategories $\ugrb M$ of gerbes over $M$ and $\ugrbcon M$ of gerbes with connection over $M$, respectively. These dualities  can be expressed in terms of a commutative diagram
\begin{equation}
\label{eq:duality}
\alxydim{@=\xypicst}{\ufusbunconsf{LM} \ar[r] \ar[d] & \hc 1 \ugrbcon M \ar[d] \\ h\ufusbunth{LM} \ar[r] & \hc 1 \ugrb M }
\end{equation}
of monoidal categories and  functors, in which the horizontal arrows are equivalences and the vertical arrows describe the passage from the setting \quot{with connections} to the one \quot{without connections}. The symbol $\hc 1$  stands for the truncation of a bicategory down to a category 
\begin{comment}
(with the same objects and 2-isomorphism classes of 1-morphisms)
\end{comment}
and the symbol $h$   stands for the homotopy category (where bundle morphisms become identified if they are homotopic). The horizontal functors in the diagram are called \emph{regression} as they are inverse to transgression; we refer to \cite{waldorf11} for a more detailed exposition.

The equivalence on top of the diagram  explains  how the deficit of the loop space theory of gauge fields for strings has to be compensated, namely by the addition of a fusion product and the requirement that the connection be superficial. Indeed, a fusion product provides exactly the structure needed in order to account for the joining of two strings in form of a pair of pants, see the discussion in \cite[Section 5.3]{waldorf10}.

\subsubsection*{Results of the present article}

In the present article we discuss an equivalence between the string geometry on $M$ and  versions of spin geometry on the loop space $LM$. The first part of this article is concerned with determining how exactly these versions have to be defined, and the second part is concerned with the proof that they serve their purpose and yield  the claimed equivalence.

We introduce two versions of spin structures on loop spaces: a category $\spstthfus$ of \emph{thin fusion spin structures} (Definition \ref{def:thinfusionspin}) and another category $\spstconsffus$ of \emph{superficial geometric fusion spin structures} (Definition \ref{def:geometricfusion}). As the terminology suggests, our strategy is to equip Killingback's original spin structures with structures that have already proved themselves: fusion products, thin structures, and superficial connections. The main issue is to connect these structures correctly  to action of the central extension $\lspinhat n$ on the spin structure.  Therefore, we start the first part of the present article by revisiting loop group geometry through transgression of multiplicative gerbes, bringing fusion products, thin structures, and superficial connections in context with central extensions of loop groups.

The categories $\spstthfus$ and $\spstconsffus$ are related, respectively, to the bicategories $\stst \relax M$ and $\ststcon \relax M$ of string structures and geometric string structures  introduced in \cite{waldorf8} as bicategory of trivializations of the Chern-Simons 2-gerbe. The relation is established  by regression functors that are inverse to the above-mentioned transgression procedure for geometric string structures. The main result of this article is
the following.

\needspace{10em}
\begin{maintheorem}{A}
\label{th:main}
Let $M$ be a connected spin manifold of dimension $n=3$ or $n>4$. There is a  commutative diagram of categories and functors,
\begin{equation*}
\alxydim{@=\xypicst}{\spstconsffus  \ar[r] \ar[d] & \hc 1 \ststcon \relax M \ar[d] \\ h\spstthfus \ar[r] &  \hc 1 \stst \relax M\text{.}}
\end{equation*}
If $M$ is  string, all  categories in the diagram are non-empty, and the following results hold:
\begin{enumerate}[(i)]

\item 
The horizontal functors are equivalences of categories, and the vertical functors are essentially surjective.

\item 
The diagram is a torsor over the diagram \erf{eq:duality} in the sense that each category is a torsor over the monoidal category in the corresponding corner of \erf{eq:duality}, and each functor is equivariant along the corresponding functor in \erf{eq:duality}.    

\end{enumerate}
If $M$ is not string, then all four categories in the diagram are empty.
\end{maintheorem}

Here, a category is a torsor over a monoidal category if it is a module for that monoidal category and the action is free and transitive in a sense  explained later.

We spell out explicitly what Theorem \ref{th:main} implies upon passing to  isomorphism classes of objects, an operation that we denote by the symbol $\hc 0$.  The set $\hc 0 \stst \relax M$ can be identified with a set $\sc (M)$ of \emph{string classes} \cite{redden1,waldorf8}; these can easily be described as cohomology classes $\xi\in \h^3(FM,\Z)$ on the total space of the spin-oriented frame bundle $FM$ of $M$ that restrict over each fibre to a generator of $\h^3(\spin n,\Z)\cong \Z$. We introduce  an analogous description  of \emph{geometric} string structures in terms of  \emph{differential} cohomology, which we call \emph{differential string classes} (Definition \ref{def:diffstringclass}). A differential string class is a differential cohomology class $\hat\xi\in\hat\h^3(FM)$, subject to a condition in the differential cohomology of $FM \times \spin n$ that involves a certain 2-form known from classical Chern-Simons theory.    We prove that the set $\sccon(M)$ of differential string classes can be identified with the set $\hc 0 \ststcon\relax M$ of isomorphism classes of geometric string structures (Theorem \ref{th:differentialstringclasses}). Under these identifications, Theorem \ref{th:main} implies the following statement.

\begin{maincorollary}{B}
\label{co:main}
Let $M$ be a connected string manifold of dimension $n=3$ or $n>4$. There is a commutative diagram
\begin{equation*}
\alxydim{@=\xypicst}{\hc 0 \spstconsffus \ar[r] \ar[d] & \sccon(M) \ar[d] \\ \hc 0 \spstthfus  \ar[r] & \sc(M)\text{.} }
\end{equation*}
The map in the first row is an equivariant bijection between torsors over the differential cohomology group $\hat\h^3(M)$, and the map in the second row is an equivariant bijection between torsors over the ordinary cohomology group $\h^3(M,\Z)$.  Moreover, the vertical maps are surjective and  equivariant along the projection $\hat\h^3(M) \to \h^3(M,\Z)$. In particular, the fibres of the vertical maps are torsors over the group $\Omega^2(M)/\Omega^2_{cl,\Z}(M)$ of 2-forms modulo closed 2-forms with integral periods.
\end{maincorollary}

The last statement follows because the group $\Omega^2(M)/\Omega^2_{cl,\Z}(M)$ is precisely  the kernel of the projection $\hat\h^3(M) \to \h^3(M)$ from differential to ordinary cohomology.

Summarizing, either in the categorical or in the set-theoretical setting, we provide a consistent dictionary between string geometry and spin geometry on loop spaces. We remark that in a first approximation of such an equivalence, Witten proposed to impose that spin structures be equivariant under the rotation action of $\ueins$ on $LM$ \cite{witten2}. In a previous article \cite{Waldorfa} I have considered a version of spin structures with fusion products (but without thin structures), and proved that such spin structures exist if and only if $M$ is a string manifold. Recently, Kottke and Melrose  introduced a version of spin structures that combines  fusion products and equivariance under a group of reparameterizations of $S^1$ (including rotations) \cite{Kottke}. This version achieved, on the level of equivalence classes, a bijection with the set of string classes. The results of the present article improve that bijection in two aspects: we upgrade it to an equivalence of categories and amend it by a second equivalence in the setting \quot{with connections}. 

\begin{comment}
The spin structures of Kottke and Melrose, though formally similar, are a bit different from the ones used in this article. One difference is the way how loop fusion is realized as an operation on \emph{smooth} loops. In the present article  we use   smooth paths with \quot{sitting instants} and the rather primitive theory of diffeological spaces. Kottke and Melrose develop a more elegant approach  using piecewise-smooth paths, energy loop spaces, and lithe smoothness. A second difference is that the thin structures we use here require equivariance under \emph{all} thin homotopies, whereas Kottke and Melrose manage to require only equivariance under reparameterizations. I have included a couple of comments about the relation to \cite{Kottke} at the end of Section \ref{sec:fusionspin}.
\end{comment}

\subsubsection*{Method of proof and organization of the paper}

For the proof of Theorem \ref{th:main} we will collect various partial results throughout this article; in the final Section \ref{sec:proof} we summarize these and show that the theorem is fully proved. The main tool in the proof is \emph{lifting gerbe theory} over the loop space, which  allows us to split the work into two parts. The first part (Sections  \ref{sec:fusionextensions},  \ref{sec:spinstructures},  \ref{sec:geometricspin}) is to reformulate spin structures and all additional structures in terms of trivializations of the  \emph{spin lifting gerbe} over $LM$ (Proposition \ref{prop:equivthinfusion} and Corollary \ref{co:equivsupfusion}).  This reformulation is based on work of Murray \cite{murray}, Gomi \cite{gomi3}, and previous work \cite{Waldorfa}. A crucial new aspect we encounter here is that the standard theory for connections on lifting gerbes  must be refined in a certain way in order to take thin homotopies into account (Proposition \ref{prop:chi}).  

The second part (Sections  \ref{sec:stringstructures}, \ref{sec:geomstringstructures}, \ref{sec:transgression})  is concerned with the problem to tailor the  bicategories $\ststcon \relax M$ and $\stst \relax M$ of (geometric) string structures into a form that allows a direct application of the duality between gerbes and bundles over loop spaces. The resulting loop space structure can then be identified with exactly those trivializations of the spin lifting gerbe that we identified in the first part as reformulations of the categories $\spstthfus$ and $\spstconsffus$, see Theorems \ref{th:trequivcon} and \ref{th:trequiv}.
The tailoring of the bicategories involves a general decategorification procedure for trivializations of bundle 2-gerbes. A key result that we prove is that in case of the Chern-Simons 2-gerbe,  this decategorification procedure is an equivalence of categories (Theorems \ref{th:truncation} and \ref{th:truncationcon}). 

\begin{comment}
The organization of this article is as follows. In Section \ref{sec:fusionextensions} we revisit central extensions of loop groups using the theory of multiplicative bundle gerbes. We collect additional structures on the loop group to be used in the following sections. In Section \ref{sec:spinstructures} we introduce the definition of thin fusion spin structures, and develop their reformulation in terms of the spin lifting gerbe. In Section \ref{sec:geometricspin} we do the same in the setting with connections: we introduce the definition of superficial geometric fusion spin structures, and develop their reformulation in terms of a certain connection on the spin lifting gerbe. In Sections \ref{sec:stringstructures} and \ref{sec:geomstringstructures} we review the bicategory of (geometric) string structures, and introduce various equivalent, but decategorified versions, of which the last is the set of (differential) string classes.  In Section \ref{sec:transgression} we turn on the transgression-regression machine and obtain equivalences between string structures and trivializations of the spin lifting gerbe. In Section \ref{sec:proof}  we summarize the various results and explain how they combine to a complete proof of Theorem \ref{th:main}. After Section \ref{sec:proof} we  include for the convenience of the reader a table of notations. 
\end{comment}

\paragraph{Acknowledgements.} This work is supported by the DFG network \quot{String Geometry} (project code 594335), and by the Erwin-Schrödinger Institute for Mathematical Physics in Vienna.

\setsecnumdepth 2

\section{Loop group geometry via multiplicative gerbes}

\label{sec:fusionextensions}

In this section we explore the geometry of central extensions of the loop group $LG$ of a Lie group $G$ via multiplicative bundle gerbes over $G$. The goal is to construct models for central extensions with specific additional structures: superficial connections, thin structures, and fusion products. The results of this section will be applied in the  sequel to $G=\spin n$. 

\subsection{Transgression and central extensions}

\label{sec:centralextension}

We  use the theory of bundle gerbes (with structure group $\ueins$) and connections on those. Introductions can be found in \cite{murray,carey2,murray3,waldorf1}. We  denote by $\ugrb X$ and $\ugrbcon X$ the bicategories of bundle gerbes and bundle gerbes with connection over a smooth manifold $X$, respectively. The 1-morphisms are called (connection-preserving) \emph{isomorphisms}, and the 2-morphisms are called (connection-preserving) \emph{transformations}.  The operation of \quot{forgetting the connection} is a surjective, but neither full nor faithful 2-functor
\begin{equation}
\label{eq:forgetconnections}
\ugrbcon X \to \ugrb X\text{.}
\end{equation}
\begin{comment}
This means that every bundle gerbe admits a connection, and it means that \quot{being connection-preserving} is \emph{additional structure} for isomorphisms, which may or may not exist, and if it exists, it might not be unique. 
\end{comment}

Let $G$ be a Lie group with Lie algebra $\mathfrak{g}$, and let $\left \langle -,-  \right \rangle$ be a symmetric  invariant bilinear form on $\mathfrak{g}$. There is a canonical, left-invariant closed 3-form $H\in \Omega^3(G)$ whose value at the identity is given by $H_1(X,Y,Z) = \left \langle X,[Y,Z]  \right \rangle$. In terms of the  left-invariant Maurer-Cartan form $\theta$ on $G$ it is  given by
\begin{equation}
\label{eq:H}
H=\frac{1}{6}\left \langle \theta\wedge [\theta\wedge \theta]  \right \rangle\text{.}
\end{equation}
\begin{comment}
So we use the standard convention for plugging into a wedge product:
\begin{equation*}
(\alpha\wedge\beta)(X_1,...,X_p,X_{p+1},...,X_{p+q})=\sum_{\sigma\in Sh_{p,q}} (-1)^{\sigma}\alpha(X_{\sigma(1)},...,X_{\sigma(p)})\beta(X_{\sigma(p+1)},...,X_{\sigma(p+q)})\text{,}
\end{equation*}
where $Sh_{p,q}\subset S_{p+q}$ contains only those permutations with
\begin{equation*}
\sigma(1)<\sigma(2)<...<\sigma(p)
\quand
\sigma(p+1)<\sigma(p+2)<...<\sigma(p+1)\text{.}
\end{equation*}
\end{comment}
We fix a bundle gerbe $\mathcal{G}$ over $G$ with connection of curvature $H$. Such a bundle gerbe exists if and only if $H$ has integral periods, in which case $H$ represents the Dixmier-Douady class $\mathrm{DD}(\mathcal{G}) \in \h^3(G,\Z)$ in real cohomology. Different choices of possible bundle gerbes with connection (up to connection-preserving isomorphisms) are parameterized by $\h^2(G,\ueins)$.

\begin{example}
Suppose $G$ is compact, simple  and simply-connected, for example,  $G\eq\spin n$ for $n=3$ or $n>4$. Then, $\left \langle -,-  \right \rangle$ is a multiple of the Killing form, and it can be normalized such that $H$ has integral periods and represents a generator $\gamma \in \h^3(G,\Z) \cong \Z$. We have $\h^2(G,\ueins)=0$. Thus, there exists a (up to connection-preserving isomorphisms) unique bundle gerbe $\mathcal{G}$ with connection of curvature $H$. Its Dixmier-Douady class is  the generator $\gamma$. This bundle gerbe $\mathcal{G}$ is called the \emph{basic gerbe} over $G$, and it will be denoted by $\gbas$. There exist  Lie-theoretical models for $\gbas$  \cite{gawedzki1,meinrenken1}.
\end{example}

The double group $G^2$ carries a canonical 2-form 
\begin{equation}
\label{eq:rho}
\rho := \left \langle  \pr_1^{*}\theta\wedge \pr_2^{*}\bar\theta  \right \rangle \in \Omega^2(G^2)\text{,}
\end{equation}
with $\theta$, $\bar\theta$ the left- and right-invariant Maurer-Cartan forms on $G$, respectively. 
\begin{comment}
The 2-form $\rho$ captures an \quot{error} in the multiplicativity of the 3-form $H$. 
\end{comment}
Let $m,\pr_1,\pr_2\maps G^2 \to G$ denote the multiplication and the two projections, respectively. Then 
\begin{equation}
\label{eq:multH}
\pr_1^{*}H + \pr_2^{*}H=m^{*}H + \mathrm{d}\rho
\end{equation}
as 3-forms on $G^2$.
\begin{comment}
This follows from the multiplication rule for the Maurer-Cartan-form,
\begin{equation*}
m^{*}\theta = \mathrm{Ad}_{\pr_2}^{-1}(\pr_1^{*}\theta) + \pr_2^{*}\theta\text{,}
\end{equation*}
and the formula $\mathrm{Ad}(\theta)=\bar\theta$.
From this we get
\begin{equation*}
m^{*}H=\pr_1^{*}H + \pr_2^{*}H+\frac{1}{2}\left \langle \pr_1^{*}\theta\wedge \pr_2^{*}[\bar\theta \wedge \bar\theta]  \right \rangle + \frac{1}{2}\left \langle \pr_1^{*}[\theta\wedge\theta]\wedge \pr_2^{*}\bar\theta  \right \rangle\text{.}
\end{equation*}
The Maurer-Cartan-equations are
\begin{equation*}
\mathrm{d}\theta +\frac{1}{2} [\theta\wedge\theta]=0
\quand
\mathrm{d}\bar\theta - \frac{1}{2}[\bar\theta \wedge\bar\theta]=0\text{.}
\end{equation*}
For if $X$ and $Y$ are left invariant vector field, then
\begin{equation*}
\mathrm{d}\theta(X,Y)=X\theta(Y)-Y\theta(X)-\theta([X,Y])=-\theta([X,Y])=-[\theta(X),\theta(Y)]=-\frac{1}{2}[\theta\wedge\theta](X,Y)\text{.}
\end{equation*}
Anyway, we have
\begin{equation*}
m^{*}H=\pr_1^{*}H + \pr_2^{*}H + \left \langle \pr_1^{*}\theta \wedge \mathrm{d}\pr_2^{*}\bar\theta  \right \rangle - \left \langle  \mathrm{d}\pr_1^{*}\theta\wedge\pr_2^{*}\bar\theta \right \rangle\text{.}
\end{equation*}
The last two terms are $-\mathrm{d}\rho$.
\end{comment}
\begin{comment}
On the triple group $G ^{3}$, the 2-form $\rho$ satisfies a cocycle condition.
\end{comment}
We have four maps $\pr_{12},\pr_{23},m_{23},m_{12}:G^3 \to G^2$, where $\pr_{12}$ and $\pr_{23}$ project to the indexed components, and $m_{23}$ and $m_{12}$ multiply the indexed components. Then, 
\begin{equation}
\label{eq:rhococycle}
\pr_{23}^{*}\rho + m_{23}^{*}\rho = \pr_{12}^{*}\rho + m_{12}^{*}\rho\text{.}
\end{equation}
We may re-interpret Equations \erf{eq:multH} and \erf{eq:rhococycle} by considering the simplicial manifold $BG$, see \cite{waldorf5}.
\begin{comment}
Its $n$-th manifold is $G^{n}$, and its face maps $\Delta_i^q:G^q\to G^{q-1}$ in degrees $q\eq2,3$ are precisely the maps that appear in \erf{eq:multH} and \erf{eq:rhococycle}.
\end{comment}
Denoting by $\Delta: \Omega^{*}(G^{q-1})\to\Omega^{*}(G^{q})$ the alternating sum over the pullbacks along the face maps, Equations \erf{eq:multH} and \erf{eq:rhococycle} become
\begin{equation}
\label{eq:simpderham}
\Delta H=\mathrm{d}\rho
\quand
\Delta\rho=0\text{.}
\end{equation}

A bundle gerbe $\mathcal{G}$ with connection of curvature $H$ can be seen as a lift from a differential form setting to a  cohomological setting. A corresponding lift of equations  \erf{eq:multH} and \erf{eq:rhococycle} is called a \emph{multiplicative structure} on $\mathcal{G}$. We recall that  2-forms are  connections on trivial gerbes; thus, we have a bundle gerbe $\mathcal{I}_{\rho}$ over $G^{2}$ -- it has  vanishing Dixmier-Douady class and curvature $\mathrm{d}\rho$. A multiplicative structure on $\mathcal{G}$ consists of a  connection-preserving isomorphism
\begin{equation*}
\mathcal{M}:\pr_1^{*}\mathcal{G} \otimes \pr_2^{*}\mathcal{G} \to m^{*}\mathcal{G} \otimes \mathcal{I}_{\rho}
\end{equation*}
of bundle gerbes over $G^{2}$, and of a connection-preserving transformation
\begin{equation*}
\alxydim{@R=\xypicst@C=4.5em}{\pr_1^{*}\mathcal{G} \otimes \pr_2^{*}\mathcal{G} \otimes \pr_3^{*}\mathcal{G} \ar[r]^-{\pr_{12}^{*}\mathcal{M} \otimes \id} \ar[d]_{\id \otimes \pr_{23}^{*}\mathcal{M}} & \pr_{12}^{*}\mathcal{G} \otimes \pr_3^{*}\mathcal{G} \otimes \mathcal{I}_{\pr_{12}^{*}\rho} \ar[d]^{m_{12}^{*}\mathcal{M} \otimes \id} \ar@{=>}[dl]|*+{\alpha} \\ \pr_1^{*}\mathcal{G} \otimes \pr_{23}^{*}\mathcal{G} \otimes \mathcal{I}_{\pr_{23}^{*}\rho} \ar[r]_-{m_{23}^{*}\mathcal{M} \otimes \id} & \mathcal{G}_{123} \otimes \mathcal{I}_{\rho_{\Delta}}}
\end{equation*}
between isomorphisms over $G^3$, where $\rho_{\Delta}$ is either side of \erf{eq:rhococycle}.
The transformation $\alpha$ has to satisfy a pentagon axiom over $G^4$. Multiplicative bundles gerbes (without connections) have been introduced in \cite{carey4}, the theory of connections is developed in \cite{waldorf5}.
\begin{comment}
Another perspective is to view the isomorphism $\mathcal{M}$  as a lift of the group structure from $G$ to the bundle gerbe $\mathcal{G}$, and to view the transformation $\alpha$ as an \quot{associator} for that lifted group structure.
\end{comment}

The quadruple $(H,\rho,0,0)$ is a degree 4 chain in the de Rham complex of the simplicial manifold $BG$.
\begin{comment}
That is, in the total complex of the double complex $\Omega^{*}(G^{*})$, with differentials $\mathrm{d}$ and $\Delta$. 
\end{comment}
Closedness of $H$ together with Equations \erf{eq:simpderham} show that it is a cocycle, and thus represents an element in $\h^4(BG,\R)$. Multiplicative bundle gerbes with connection relative to the differential forms $H$ and $\rho$ exist if and only if that  class is integral.
Different choices are parameterized by $\h^3(BG,\ueins)$, see \cite[Proposition 2.4]{waldorf5}.

\begin{example}
If $G$ is compact and simple, then $\h^3(BG,\ueins)=0$, so that multiplicative gerbes with connection are (up to connection-preserving isomorphisms compatible with the multiplicative structure) uniquely determined by $H$ and $\rho$, hence by $\left \langle -,-  \right \rangle$. If $G$ is in addition simply connected, every bundle gerbe with connection of curvature $H$ admits a multiplicative structure relative to the 2-form $\rho$ \cite[Example 1.5]{waldorf5}. In particular, the basic gerbe $\gbas$ over a compact, simple and simply-connected Lie group has a unique multiplicative structure.  
Based on  explicit models for the  basic gerbe over such groups, it is possible to construct this unique multiplicative structure  \cite{Waldorf}.
\end{example}

In the following we continue with a fixed multiplicative bundle gerbe $\mathcal{G}$ with connection over a general Lie group $G$, relative to the differential forms $H$ and $\rho$ of Equations \erf{eq:H} and \erf{eq:rho}. 
\begin{comment}
We describe how to construct central extensions of the loop group.
\end{comment}

For every smooth manifold $X$, there is a transgression functor
\begin{equation}
\label{eq:tr}
 \hc 1 \ugrbcon X \to \ubun{LX}:\mathcal{G}\mapsto \tr_{\mathcal{G}}
\end{equation} 
with target the category of Fréchet principal $\ueins$-bundles over the free loop space $LX
\eq C^{\infty}(S^1,X)$. The  category $\hc 1 \ugrbcon X$ is obtained from the  bicategory $\ugrbcon X$ by identifying 2-isomorphic isomorphisms.

Transgression for gerbes has first been defined by Gaw\c edzki  in terms  of cocycles for Deligne cohomology \cite{gawedzki3}, and by Gaw\c edzki-Reis  for bundle gerbes \cite{gawedzki1}.  Brylinski   has defined transgression in terms of Dixmier-Douady sheaves of categories \cite{brylinski1}.  The functor  \erf{eq:tr} that we use here is defined in \cite{waldorf5}. It is monoidal with respect to the tensor product of bundle gerbes and principal $\ueins$-bundles,  it is natural with respect to smooth maps $f\maps X \to X'$ between smooth manifolds and the induced maps $Lf\maps LX \to LX'$ between their loop spaces, and it sends  trivial bundle gerbes $\mathcal{I}_{\rho}$ to canonically trivializable bundles. Furthermore, it satisfies \begin{equation}
\label{eq:trprop}
\mathrm{c}_1(\tr_{\mathcal{G}}) = -\tau(\mathrm{DD}(\mathcal{G}))
\end{equation}
for all bundle gerbes $\mathcal{G}$ with connection over $X$, where $\mathrm{c}_1$ denotes the first Chern class of a principal $\ueins$-bundle, and  $\tau$ is the transgression homomorphism \erf{eq:tau}, see \cite{waldorf5}.

Applying the transgression functor to the bundle gerbe $\mathcal{G}$ over  $G$, we obtain a Fréchet principal $\ueins$-bundle $\tr_{\mathcal{G}}$ over the loop group $LG$. Because transgression is functorial and monoidal, the multiplicative structure $\mathcal{M}$ on  $\mathcal{G}$ transgresses  to a bundle isomorphism
\begin{equation*}
\alxydim{@C=1.6cm}{\pr_1^{*}\tr_{\mathcal{G}} \otimes \pr_2^{*}\tr_{\mathcal{G}} \ar[r]^-{\tr_{\mathcal{M}}} & m^{*}\tr_{\mathcal{G}} \otimes \tr_{\mathcal{I}_{\rho}} \cong m^{*}\tr_{\mathcal{G}}}
\end{equation*}
over $LG \times LG$, inducing a binary operation on the total space  $\tr_{\mathcal{G}}$ that covers the group structure of $LG$.  The mere existence of the associator $\alpha$ for the multiplicative structure $\mathcal{M}$ implies the commutativity of a diagram in the  category $\hc 1 \ugrbcon-(G^3)$, which implies under transgression the associativity of the binary operation $\tr_{\mathcal{M}}$. 

\begin{theorem}[{{\cite[Theorem 3.1.7]{waldorf5}}}]
The associative binary operation $\tr_{\mathcal{M}}$ equips $\tr_{\mathcal{G}}$  with the structure of a Fréchet Lie group,   making up a central extension
\begin{equation*}
1 \to \ueins \to \tr_{\mathcal{G}} \to LG \to 1\text{.}
\end{equation*}
\end{theorem}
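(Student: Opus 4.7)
The plan is to obtain the three ingredients of a central extension—a group structure on the total space $\tr_{\mathcal{G}}$, a Fr\'echet Lie group smoothness, and centrality of the $\ueins$-action—by pushing each piece of data of the multiplicative structure $(\mathcal{M},\alpha)$ through the transgression functor \erf{eq:tr} and using its key properties: it is monoidal, natural, sends $\mathcal{I}_{\rho}$ to a canonically trivializable bundle, and, crucially, is defined on the truncated bicategory $\hc 1 \ugrbcon-$ where $2$-isomorphic $1$-morphisms are identified.

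First, I would interpret $\tr_{\mathcal{M}}$ as a multiplication map on the total space. Monoidality of $\tr$ turns $\mathcal{M}$ into a bundle isomorphism $\pr_1^{*}\tr_{\mathcal{G}} \otimes \pr_2^{*}\tr_{\mathcal{G}} \to m^{*}\tr_{\mathcal{G}} \otimes \tr_{\mathcal{I}_{\rho}}$ over $LG \times LG$, and the canonical trivialization of $\tr_{\mathcal{I}_{\rho}}$ converts this into a $\ueins$-equivariant bundle isomorphism onto $m^{*}\tr_{\mathcal{G}}$. This is the same datum as a smooth fibre-preserving map $\tr_{\mathcal{G}} \times \tr_{\mathcal{G}} \to \tr_{\mathcal{G}}$ covering the multiplication of $LG$. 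Associativity is then a direct consequence of the associator $\alpha$: the pentagon diagram above witnesses that, over $G^{3}$, the two composites of $\mathcal{M}$ obtained by first multiplying the first two factors or the last two factors agree in $\hc 1 \ugrbcon-(G^{3})$. Since \erf{eq:tr} is a functor out of $\hc 1 \ugrbcon-$, this equality of morphism classes becomes a genuine equality of bundle isomorphisms over $LG^{3}$, which unpacks exactly to $(xy)z = x(yz)$ on the total space.

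The neutral element and inverses come from the standard unit and inverse coherences built into (or forced by) a multiplicative bundle gerbe. Restricting $\mathcal{M}$ along $\{e\} \times G \hookrightarrow G^{2}$ yields a canonical trivialization of $\pr_2^{*}\mathcal{G}$ relative to the fibre of $\mathcal{G}$ over $e$; transgressing and evaluating at the constant loop produces a distinguished element in the fibre of $\tr_{\mathcal{G}}$ over the trivial loop which, together with the associativity already established, satisfies both unit laws. Inverses are then obtained from the symmetric restriction to $G \times \{e\}$ combined with the group structure on $LG$. Centrality of the $\ueins$-action is automatic: $\tr_{\mathcal{M}}$ is by construction $\ueins$-equivariant as a morphism of principal bundles, so $u \cdot (xy) = (u \cdot x) y = x (u \cdot y)$ for $u \in \ueins$, which says precisely that $\ueins$ lies in the centre of $\tr_{\mathcal{G}}$.

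Finally, the Fr\'echet Lie group structure is read off mechanically: $\tr_{\mathcal{G}}$ is, by the construction of \erf{eq:tr}, a Fr\'echet principal $\ueins$-bundle over the Fr\'echet Lie group $LG$, and $\tr_{\mathcal{M}}$ is smooth as a bundle morphism between Fr\'echet bundles, so the induced multiplication and inversion on $\tr_{\mathcal{G}}$ are smooth. The main obstacle in carrying this plan out in detail is not associativity, which is essentially formal once one works in $\hc 1 \ugrbcon-$, but rather the careful bookkeeping needed to extract an actual identity element and honest inverses from the coherence data of $(\mathcal{M},\alpha)$, and to verify that the smooth structure of the transgressed bundle interacts well with the pointwise construction of the multiplication map.
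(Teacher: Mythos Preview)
Your proposal is correct and follows essentially the same approach as the paper. The paper does not give a self-contained proof here---it cites \cite[Theorem 3.1.7]{waldorf5}---but the preceding paragraph outlines exactly the mechanism you describe: monoidality and naturality of $\tr$ turn $\mathcal{M}$ into a binary operation on $\tr_{\mathcal{G}}$, and the mere existence of the associator $\alpha$ forces the relevant diagram in $\hc 1 \ugrbcon-(G^3)$ to commute, which transgresses to associativity; your additional remarks on units, inverses, centrality, and Fr\'echet smoothness are the natural elaborations one would expect when unpacking the cited reference.
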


\begin{example}
Consider again a compact, simple and simply-connected Lie group $G$, equipped with the basic gerbe $\gbas$ and  its unique multiplicative structure. We get from Equation \erf{eq:trprop} 
\begin{equation*}
\mathrm{c}_1(\tr_{\gbas})=-\tau(\mathrm{DD}(\gbas))=-\tau(\gamma)\text{.}
\end{equation*}
This means that $\tr_{\gbas}$ is the universal central extension of $LG$, see \cite{pressley1}.
\end{example}

In the following two subsections we  discuss additional structures on the central extension $\tr_{\gbas}$, which we  find by analyzing the image of the transgression functor $\tr$.

\subsection{Connections and splittings}

\label{sec:connsplits}

The principal $\ueins$-bundles in the image of the transgression functor $\tr$ of \erf{eq:tr} are canonically equipped with connections \cite{brylinski1}. In other words, transgression is actually a functor
\begin{equation}
\label{eq:trcon}
\tr: \hc 1 \ugrbcon X \to \ubuncon {LX}
\end{equation}
to the category of Fréchet principal $\ueins$-bundles \emph{with connection}. It satisfies  
\begin{equation}
\label{eq:trcurv}
\mathrm{curv}(\tr_{\mathcal{G}}) =- \tau_{\Omega}(\mathrm{curv}(\mathcal{G}))\text{,}
\end{equation}
where $\tau_{\Omega}$ is the differential form counterpart of the transgression homomorphism \erf{eq:tau}:
\begin{equation}
\label{eq:difftrans}
\tau_{\Omega}: \Omega^n(X) \to \Omega^{n-1}(LX): \omega \mapsto \int_{S^1}\ev^{*}\omega\text{;}
\end{equation}
it integrates the pullback of a differential form along the evaluation map $\ev:S^1 \times LX \to X$ over the factor $S^1$.
If $\omega\in\Omega^2(X)$ is a 2-form, and $\mathcal{I}_{\omega}$ is the associated trivial bundle gerbe with connection, then the above-mentioned canonical trivialization of $\tr_{\mathcal{I}_{\omega}}$ has covariant derivative $\tau_{\Omega}(\omega)\in\Omega^1(LX)$ \cite[Lemma 3.6]{waldorf13}. 

We continue the analysis of the central extension $\tr_{\mathcal{G}}$ of $LG$ obtained by transgression of a multiplicative bundle gerbe $\mathcal{G}$ over a Lie group $G$ with connection relative to the differential forms $H$ and $\rho$. As we have now lifted the transgression functor $\tr$ to the category of bundles \emph{with connection}, it follows that the central extension $\tr_{\mathcal{G}}$ has  a connection.

In the following we denote the central extension  by  $\lghat$, and we denote the connection  by $\nu\in\Omega^1(\lghat)$. According to Equation \erf{eq:trcurv} it has curvature $\mathrm{curv}(\nu)\eq-\tau_{\Omega}(H)$.
\begin{comment}
Since $H$ represents the generator $\gamma\in \h^3(G,\Z)$, $\mathrm{curv}(\nu)$ represents $\mathrm{c}_1(\lghat )\in\h^2(LM,\Z)$. 
\end{comment}
For us, the most important feature of the connection $\nu$  is that it is \emph{not strictly compatible} with the group structure of $\lghat$. Indeed, looking again at the transgression of the isomorphism $\mathcal{M}$, but now in the setting with connections, we obtain a connection-preserving bundle isomorphism
\begin{equation*}
\alxydim{@C=1.6cm}{\pr_1^{*}\tr_{\mathcal{G}} \otimes \pr_2^{*}\tr_{\mathcal{G}} \ar[r]^-{\tr_{\mathcal{M}}} & m^{*}\tr_{\mathcal{G}} \otimes \tr_{\mathcal{I}_{\rho}} \cong m^{*}\tr_{\mathcal{G}} \otimes \trivlin_{\varepsilon_{\nu}}\text{,}}
\end{equation*}
where $\trivlin_{\varepsilon_{\nu}}$ is the trivial $\ueins$-bundle over $LG$ equipped with the connection 1-form $\varepsilon_{\nu} \df \tau_{\Omega}(\rho)\in \Omega^1(LG \times LG)$. In terms of the connection 1-form $\nu$ and the group structure defined by the underlying bundle morphism, this can be expressed~as
\begin{equation}
\label{eq:eps}
 \nu_{\tilde\tau_1}(\tilde X_1) + \nu_{\tilde \tau_2}(\tilde X_2)=\nu_{\tilde \tau_1 \tilde \tau_2}(\tilde \tau_1 \tilde X_2 + \tilde X_1 \tilde \tau_1) + \varepsilon_{\nu}|_{\tau_1,\tau_2}(X_1,X_2)
\end{equation}
for elements $\tilde\tau_1,\tilde\tau_2\in \lghat$ projecting to loops $\tau_1,\tau_2\in LG$, and tangent vectors $\tilde X_1\in T_{\tilde\tau_1}\lghat$ and $\tilde X_2\in T_{\tilde\tau_2}\lghat$ projecting to $X_1 \in T_{\tau_1}LG$ and $X_2\in T_{\tau_2}LG$, respectively. The 1-form $\varepsilon_{\nu}$ can be computed explicitly from the given 2-form $\rho$ of \erf{eq:rho},
\begin{equation}
\label{eq:epsilonex}
\varepsilon_{\nu}|_{\tau_1,\tau_2}(X_1,X_2)= \int_{S^1} \left\lbrace \left \langle   \tau_1(z)^{-1}\partial_z\tau_1(z), X_2(z)\tau_2(z)^{-1} \right \rangle   - \left \langle  \tau_1(z)^{-1}X_1(z), \partial_z\tau_2(z)\tau_2(z)^{-1}  \right \rangle \right\rbrace \mathrm{d}z\text{.} 
\end{equation}
\begin{comment}
The full calculation is 
\begin{eqnarray}
&&\hspace{-0.8cm}\varepsilon_{\nu}|_{\tau_1,\tau_2}(X_1,X_2) \\&=& \int_{S^1} \rho_{\tau_1(z),\tau_2(z)}((\partial_z \tau_1(z),\partial_z\tau_2(z)),(X_1(z),X_2(z)))\mathrm{d}z
\nonumber\\ &=& \int_{S^1} \left \langle   \tau_1(z)^{-1}\partial_z\tau_1(z), X_2(z)\tau_2(z)^{-1} \right \rangle \mathrm{d}z -\int_{S^1} \left \langle  \tau_1(z)^{-1}X_1(z), \partial_z\tau_2(z)\tau_2(z)^{-1}  \right \rangle \mathrm{d}z\text{.} \nonumber
\end{eqnarray}
\end{comment}
Here, and in the following, we regard a tangent vector $X \in T_{\tau}LG$ as a section of $TG$ along $\tau$, i.e. as a smooth map $X:S^1 \to TG$ such that $X(z)\in T_{\tau(z)}G$, see \cite{pressley1}. 
\begin{comment}
We  also need the formulas
\begin{equation}
\label{eq:nugh}
\nu_{gh}(gv)=\nu_h(v)-\varepsilon_{\nu}|_{g,h}(0,v)
\end{equation}
and
\begin{equation*}
\nu_{gh}(wh)=\nu_g(w)-\varepsilon_{\nu}|_{g,h}(w,0)\text{.}
\end{equation*}
These imply
\begin{eqnarray*}
\nu_{ghg^{-1}}(gvg^{-1})&=&\nu_{hg^{-1}}(vg^{-1}) - \varepsilon_{\nu}|_{g,hg^{-1}}(0,vg^{-1})\\&=&\nu_h(v)-\varepsilon_{\nu}|_{h,g^{-1}}(v,0)- \varepsilon_{\nu}|_{g,hg^{-1}}(0,vg^{-1})
\end{eqnarray*}
\end{comment}

In general, a connection on a central extension induces a splitting of the  Lie algebra extension
\begin{equation}
\label{eq:liealgext}
\alxydim{}{0  \ar[r] & \R \ar[r] & \raisebox{0em}[0em][0.34em]{} \widetilde{L\mathfrak{g}}  \ar[r]^-{p_{*}} & L\mathfrak{g} \ar[r] & 0 \text{,}} 
\end{equation} 
i.e. a linear map $\split: L\mathfrak{g} \to \widetilde{L\mathfrak{g}}$ such that $p_{*}\circ \split=\id_{L\mathfrak{g}}$.
Indeed, the connection $\nu$ determines a horizontal subspace $H_1^{\nu}\lghat\subset T_1\lghat$ such that $p_{*}\maps H_1^{\nu}\lghat \to L\mathfrak{g}$ is an isomorphism. For $X\in L\mathfrak{g}$ we let $\split(X)\in H^{\nu}_1\lghat$ be its preimage under $p_{*}$. An equivalent definition that uses the connection 1-form $\nu$ directly is to first choose any lift $\tilde X\in \widetilde{L\mathfrak{g}}$ of $X$ and then define $s(X) := \tilde X - \nu(\tilde X)$. 
\begin{comment}
If $\hat X'$ is another lift, then $\hat X'=\hat X+Y$ for $Y\in \R$. Then,
\begin{equation*}
\hat X' - \nu(\hat X') = \hat X+Y-\nu(\hat X)-\nu(Y)=\hat X - \nu(\hat X)\text{,}
\end{equation*}
since $\nu(Y)=Y$ because $\nu$ is a connection and $Y$ is vertical. Thus, the definition of $\split$ is independent of the choice of $\hat X$. It is clearly linear, and $p_{*}(\split(X))=X$. Notice that $\nu(\split(X))=0$, so that $\split$ coincides with the horizontal lift of $X$. \end{comment}

Given the splitting $\split$ determined by the connection $\nu$ one can define the map
\begin{equation*}
Z: LG \times L\mathfrak{g} \to \R
\quomma
Z(\tau,X) := \mathrm{Ad}_{\tau}^{-1}(\split(X)) - s(\mathrm{Ad}_{\tau}^{-1}(X))\text{.}
\end{equation*}
\begin{comment}
It measures the error for $s$ being an intertwiner for the adjoint actions of $LG$ on $L\mathfrak{g}$ and $\widetilde{L\mathfrak{g}}$. 
\end{comment}
\begin{comment}
We have $Z(1,X)=0$ and 
\begin{eqnarray*}
Z(\tau_1\tau_2,X) &=& \mathrm{Ad}_{\tau_1\tau_2}^{-1}(\split(X)) - s(\mathrm{Ad}_{\tau_1\tau_2}^{-1}(X)) 
\\
&=& \mathrm{Ad}^{-1}_{\tau_2} (\mathrm{Ad}_{\tau_1}^{-1}(\split(X))) - s(\mathrm{Ad}^{-1}_{\tau_2}(\mathrm{Ad}_{\tau_1}^{-1}(X))) \\
&=& \mathrm{Ad}^{-1}_{\tau_2} (Z(\tau_1,X)+s(\mathrm{Ad}^{-1}_{\tau_1}(X))) - s(\mathrm{Ad}^{-1}_{\tau_2}(\mathrm{Ad}_{\tau_1}^{-1}(X))) \\
&=& Z(\tau_1,X)+\mathrm{Ad}^{-1}_{\tau_2} (s(\mathrm{Ad}^{-1}_{\tau_1}(X))) - s(\mathrm{Ad}^{-1}_{\tau_2}(\mathrm{Ad}_{\tau_1}^{-1}(X))) \\
&=& Z(\tau_1,X)+ Z(\tau_2,\mathrm{Ad}^{-1}_{\tau_1}(X))\text{.} \end{eqnarray*}
In particular
\begin{equation*}
0=Z(1,X)=Z(\tau\tau^{-1},X)=Z(\tau,X)+ Z(\tau^{-1},\mathrm{Ad}^{-1}_{\tau}(X))\text{,}
\end{equation*}
i.e.
\begin{equation}
\label{eq:Zinv}
Z(\tau^{-1},X) = -Z(\tau, \mathrm{Ad}_{\tau}(X))\text{.}
\end{equation}
\end{comment}

\begin{lemma}
\label{lem:Z}
$\displaystyle Z(\tau,X) = 2\int_{S^1}\left \langle \tau^{*}\bar\theta,X  \right \rangle$.
\end{lemma}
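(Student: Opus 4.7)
The plan is to reduce the computation of $Z(\tau, X) \in \R$ to the evaluation of the multiplicativity-error 1-form $\varepsilon_\nu$ of \erf{eq:eps}, and then to apply the explicit formula \erf{eq:epsilonex}.

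I first observe that both $\mathrm{Ad}_{\tilde\tau^{-1}}(s(X))$ and $s(\mathrm{Ad}_\tau^{-1}(X))$ project under $p_{*}$ to $\mathrm{Ad}_\tau^{-1}(X)$, so their difference $Z(\tau, X)$ is a vertical tangent vector at $1 \in \lghat$, i.e.\ it lies in the central Lie algebra $\R$. Because $\nu$ restricts to the identity on $\R$ and vanishes on the horizontal vector $s(\mathrm{Ad}_\tau^{-1}(X))$ by construction of the splitting, I may rewrite
\begin{equation*}
Z(\tau, X) \eq \nu_1(\mathrm{Ad}_{\tilde\tau^{-1}}(s(X))).
\end{equation*}

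Next I decompose the conjugation $\tilde\tau^{-1}\cdot s(X)\cdot\tilde\tau$ in $\lghat$ into two successive group products and apply \erf{eq:eps} twice. Taking a smooth curve $\tilde\gamma_t$ through $1$ with $\tilde\gamma_0' \eq s(X)$, and applying \erf{eq:eps} to the product $\tilde\tau^{-1}\cdot(\tilde\gamma_t\tilde\tau)$ (with the zero vector on the left factor), one obtains an $\varepsilon_\nu|_{\tau^{-1},\tau}$ contribution and reduces the problem to computing $\nu_{\tilde\tau}(s(X)\tilde\tau)$. Applying \erf{eq:eps} once more to the product $\tilde\gamma_t\cdot\tilde\tau$ (with the zero vector on the right factor, and using horizontality $\nu_1(s(X)) \eq 0$) yields $\nu_{\tilde\tau}(s(X)\tilde\tau) \eq -\varepsilon_\nu|_{1,\tau}(X,0)$. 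Combining,
\begin{equation*}
Z(\tau, X) \eq -\varepsilon_\nu|_{1,\tau}(X,0) - \varepsilon_\nu|_{\tau^{-1},\tau}(0, X\tau),
\end{equation*}
where $X\tau \in T_\tau LG$ denotes the right-translation of $X\in L\mathfrak{g}$ to $\tau$.

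Finally I substitute into \erf{eq:epsilonex}. In each of the two $\varepsilon_\nu$-terms one of the two integrands vanishes because one tangent-vector argument is zero, and the surviving integrand simplifies to $-\int_{S^1}\left\langle \tau^{*}\bar\theta,X\right\rangle$, using the defining identity $(\tau^{*}\bar\theta)(\partial_z) \eq \partial_z\tau\cdot\tau^{-1}$ in one term and additionally $\tau(z)\partial_z\tau(z)^{-1} \eq -(\tau^{*}\bar\theta)(\partial_z)$ together with the cancellation $\tau(z)\cdot(\tau(z)^{-1}X(z)) \eq X(z)$ in the other. The two minus signs cancel with the minus signs in the previous display and the two identical contributions add up to give the claimed $2\int_{S^1}\left\langle \tau^{*}\bar\theta,X\right\rangle$. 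The main obstacle is purely notational: one must correctly distinguish left and right translation in the group $\lghat$ (in \erf{eq:eps}) from their loop-wise counterparts in $LG$ (in \erf{eq:epsilonex}); once those conventions are pinned down, every step is a straightforward substitution.
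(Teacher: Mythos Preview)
Your proof is correct and follows essentially the same route as the paper's: reduce $Z(\tau,X)$ to an evaluation of $\nu$ on a conjugated horizontal vector, apply the multiplicativity error formula \erf{eq:eps} twice to obtain $Z(\tau,X)=-\varepsilon_\nu|_{1,\tau}(X,0)-\varepsilon_\nu|_{\tau^{-1},\tau}(0,X\tau)$, and then substitute the explicit expression \erf{eq:epsilonex}. The only cosmetic difference is that the paper reaches the intermediate expression $-\nu(\tilde X)+\nu(\tilde\tau^{-1}\tilde X\tilde\tau)$ via the formula $s(X)=\tilde X-\nu(\tilde X)$ for an arbitrary lift $\tilde X$, whereas you take $\tilde X=s(X)$ from the outset and invoke horizontality directly.
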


\begin{proof}
We express $Z$  in terms of the error 1-form $\varepsilon_{\nu}$ of the connection $\nu$. 
Consider $\tilde\tau\in \lghat$ and $\tilde X \in \widetilde{L\mathfrak{g}}$ projecting to $\tau\in LG$ and $X\in L\mathfrak{g}$, respectively. Then,
\begin{multline*}
Z(\tau,X) = \mathrm{Ad}_{\tilde \tau}^{-1}(\split(X))-\split(\mathrm{Ad}_\tau^{-1}(X))
= \tilde \tau^{-1}(\tilde X - \nu(\tilde X))\tilde \tau - \tilde \tau^{-1}\tilde X\tilde \tau + \nu(\tilde \tau^{-1}\tilde X\tilde \tau)
\\= - \nu(\tilde X) + \nu(\tilde\tau ^{-1}\tilde X\tilde \tau)
\stackrel{\erf{eq:eps}}{=} -\varepsilon_{\nu}|_{1,\tau}(X,0) -\varepsilon_{\nu}|_{\tau ^{-1},\tau}(0, X\tau)\text{.}
\end{multline*}
With \erf{eq:epsilonex} we see that these two terms are equal and add up to the claimed formula.
\begin{comment}
\begin{eqnarray*}
\varepsilon_{\nu}|_{1,\tau}(X,0) &=&-\int_0^1 \left \langle  X(z), \partial_z\tau(z)\tau(z)^{-1}  \right \rangle \;\mathrm{d}z
\\
\varepsilon_{\nu}|_{\tau^{-1},\tau}(0, X\tau) &=& \int_0^1 \left \langle \tau(z)\partial_z\tau(z)^{-1}, X(z) \right \rangle\;\mathrm{d}z
\end{eqnarray*}
\end{comment}
\begin{comment}
Explicitly, $Z(\tau,X)= 2 \int_{0}^1 \left \langle  \partial_z\tau(z)\tau(z)^{-1},X(z)  \right \rangle \mathrm{d}z$.
\end{comment}
\end{proof}

The formula
\begin{equation*}
\omega(X,Y) := \left.\frac{\mathrm{d}}{\mathrm{d}t}\right|_0 Z(e^{-tX},Y) = [\split(X),\split(Y)] - \split([X,Y])
\end{equation*}
defines a 2-cocycle $\omega$  for the Lie algebra cohomology of $L\mathfrak{g}$ with coefficients in the trivial module $\R$, and classifies the  Lie algebra extension. From Lemma \ref{lem:Z} we get the  following.

\begin{lemma}
\label{lem:omega}
$\displaystyle\omega(X,Y) =2\int_{S^1}\left \langle X,\mathrm{d}Y  \right \rangle$.
\end{lemma}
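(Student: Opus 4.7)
The plan is to apply Lemma \ref{lem:Z} directly, since it already reduces $Z(\tau,Y)$ to an explicit integral, and then differentiate at $t=0$ along the path $\tau_t \df e^{-tX}$ in $LG$.

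First, I would rewrite the formula of Lemma \ref{lem:Z} in the explicit form
\begin{equation*}
Z(\tau,Y) = 2\int_{S^1} \left\langle \partial_z\tau(z)\,\tau(z)^{-1},\, Y(z) \right\rangle \mathrm{d}z,
\end{equation*}
using that the right-invariant Maurer--Cartan form satisfies $\tau^{*}\bar\theta|_z = \partial_z\tau(z)\,\tau(z)^{-1}\,\mathrm{d}z$ for a loop $\tau\maps S^1 \to G$.

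Next, I would specialise to $\tau_t(z) \df e^{-tX(z)}$, viewed as a smooth family of loops in $G$ with $\tau_0 \eq 1$. Expanding in $t$ gives $\tau_t(z) \eq 1 - tX(z) + O(t^2)$ and $\tau_t(z)^{-1} \eq 1 + tX(z) + O(t^2)$, hence
\begin{equation*}
\partial_z\tau_t(z)\,\tau_t(z)^{-1} = -t\,\partial_z X(z) + O(t^2).
\end{equation*}
Substituting into the integrand and differentiating at $t=0$ yields
\begin{equation*}
\omega(X,Y) = \ddt Z(e^{-tX},Y) = -2\int_{S^1}\left\langle \partial_z X(z), Y(z)\right\rangle \mathrm{d}z.
\end{equation*}

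Finally, since $\left\langle-,-\right\rangle$ is symmetric and $S^1$ has no boundary, integration by parts produces the sign flip
\begin{equation*}
-2\int_{S^1}\left\langle \partial_z X, Y\right\rangle\mathrm{d}z = 2\int_{S^1}\left\langle X, \partial_z Y\right\rangle\mathrm{d}z = 2\int_{S^1}\left\langle X,\mathrm{d}Y\right\rangle,
\end{equation*}
which is the claimed formula. There is no real obstacle; the only point requiring a bit of care is that $X(z)$ varies with $z$, so one must keep track of the $\partial_z$-derivative of $X$ (rather than treating $X$ as a constant). The symmetry of the bilinear form and the closedness of $S^1$ then produce the overall factor of $2$ and the correct sign.
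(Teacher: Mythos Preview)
Your proof is correct and matches the paper's approach exactly: the paper derives Lemma~\ref{lem:omega} directly from Lemma~\ref{lem:Z} by substituting $\tau=e^{-tX}$, differentiating at $t=0$ to obtain $-2\int_{S^1}\langle\partial_z X,Y\rangle\,\mathrm{d}z$, and then integrating by parts over $S^1$.
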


\begin{comment}
The calculation is:
\begin{multline}
\label{eq:omega}
\omega(X,Y) =  2 \left.\frac{\mathrm{d}}{\mathrm{d}t}\right|_0  \int_{0}^1 \left \langle  -t\partial_z X(z),Y(z)  \right \rangle \mathrm{d}z\\=-2 \int_{0}^1 \left \langle   \partial_z X(z),Y(z) \right \rangle \mathrm{d}z=2 \int_{0}^1 \left \langle   X(z),\partial_z Y(z) \right \rangle \mathrm{d}z =2\int_{S^1}\left \langle X,\mathrm{d}Y  \right \rangle\text{.}
\end{multline}
\end{comment}

Up to the prefactor (which can always be absorbed into the normalization of the bilinear form $\left \langle  -,- \right \rangle$) this is the standard cocycle on the loop algebra, see \cite[Section 4.2]{pressley1}.
Note that the cocycle $\omega$ is not invariant; instead it satisfies \cite[Lemma 5.8 (b)]{gomi3}
\begin{equation}
\label{eq:omegainv}
\omega(\mathrm{Ad}_{\tau}^{-1}(X),\mathrm{Ad}_{\tau}^{-1}(Y))=
\omega(X,Y) + Z(\tau,[X,Y])\text{.}
\end{equation}
\begin{comment}
This is an abstract calculation:
\begin{eqnarray*}
\omega(\mathrm{Ad}_{\tau}^{-1}(X),\mathrm{Ad}_{\tau}^{-1}(Y)) &=&
[\split(\mathrm{Ad}_{\tau}^{-1}(X)),\split(\mathrm{Ad}_{\tau}^{-1}(Y))] - \split([\mathrm{Ad}_{\tau}^{-1}(X),\mathrm{Ad}_{\tau}^{-1}(Y)]) \\
&=& 
[\mathrm{Ad}_{\tau}^{-1}(\split (X))-Z(\tau,X),\mathrm{Ad}_{\tau}^{-1}(\split(Y))-Z(\tau,Y)] \\&&
- \split(\mathrm{Ad}_{\tau}^{-1}([X,Y])) \\&=&
\mathrm{Ad}_{\tau}^{-1}([\split (X),\split(Y)])- \mathrm{Ad}_{\tau}^{-1}(\split([X,Y]))+Z(\tau,[X,Y])
\\&=&
\omega(X,Y) + Z(\tau,[X,Y])\text{.} 
\end{eqnarray*}
\end{comment}

It is well-known that a given splitting $\split$ of a Lie algebra extension  \erf{eq:liealgext} induces, conversely, a connection $\nu_\split$ on the central extension $\lghat$, given by the formula 
\begin{equation*}
\nu_{\split} = \tilde\theta - \split \left ( p^{*}\theta  \right ) \in\Omega^1(\lghat)\text{,}
\end{equation*}
where $\tilde\theta$ stands for the left-invariant Maurer-Cartan form on $\lghat$.
Its curvature is given by $-\frac{1}{2}\omega(\theta\wedge\theta)\in \Omega^2(LG)$,
see e.g. \cite[Lemma 5.4]{gomi3}. We will later have to compare the original connection $\nu$ with the connection $\nu_{\split}$   determined by $\split$ and hence indirectly by $\nu$. For this purpose, we consider the 1-form $\beta\in \Omega^1(LG)$ given by the formula 
\begin{equation*}
\beta_{\tau}(X) :=\int_0^1 \left \langle \tau(z)^{-1}\partial_z\tau(z), \tau(z)^{-1}X(z)   \right \rangle\;\mathrm{d}z
\end{equation*} 
for $\tau\in LG$ and $X\in T_{\tau}LG$.

\begin{lemma}
\label{lem:beta}
The connection $\nu_{\split}$ is obtained by shifting the connection $\nu$ by  $\beta$, i.e.
$\nu_{\split}  = \nu + \beta$. In particular, the curvatures obey the following relation: 
\begin{equation}
\label{eq:curvatures}
-\frac{1}{2}\omega(\theta\wedge\theta)=\mathrm{curv}(\nu)+\mathrm{d}\beta\text{.}
\end{equation}
\end{lemma}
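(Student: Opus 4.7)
The strategy is to exploit that $\nu$ and $\nu_\split$ are two principal connection 1-forms on the same $\ueins$-bundle $p\maps\lghat\to LG$, so their difference descends to a uniquely determined 1-form $\alpha\in\Omega^1(LG)$ on the base. The plan is to compute $\alpha$ pointwise at arbitrary $\tau\in LG$ and $X\in T_\tau LG$ and then identify the result with $\beta_\tau(X)$; the curvature relation \erf{eq:curvatures} will drop out by exterior differentiation.

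For the pointwise computation, I would fix lifts $\tilde\tau\in\lghat$ and $\tilde X\in T_{\tilde\tau}\lghat$. Denoting by $\tilde\theta$ the left-invariant Maurer-Cartan form on $\lghat$, the value $\tilde\theta(\tilde X)=\tilde\tau^{-1}\tilde X\in T_1\lghat$ is a lift of $\theta(X)=\tau^{-1}X$ along $p_{*}$. Feeding this particular lift into the intrinsic characterization $\split(Y)=\tilde Y-\nu(\tilde Y)$ collapses the defining formula for $\nu_\split$ to $\nu_\split(\tilde X)=\nu(\tilde\theta(\tilde X))$, so the problem reduces to relating the value of $\nu$ at $\tilde\tau$ to its value at the identity after left translation. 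This is precisely what the compatibility relation \erf{eq:eps} of $\nu$ with the group structure does: applied with $(\tilde\tau_1,\tilde X_1)=(\tilde\tau^{-1},0)$ and $(\tilde\tau_2,\tilde X_2)=(\tilde\tau,\tilde X)$, the product element is $1\in\lghat$, the combined tangent vector equals $\tilde\theta(\tilde X)$, and \erf{eq:eps} rearranges to
\[
\nu_\split(\tilde X)-\nu(\tilde X)=-\varepsilon_\nu|_{\tau^{-1},\tau}(0,X)\text{.}
\]

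The remaining step is to insert the explicit formula \erf{eq:epsilonex}: the second integrand vanishes because $X_1=0$, and combining $\partial_z\tau^{-1}=-\tau^{-1}(\partial_z\tau)\tau^{-1}$ with the $\mathrm{Ad}$-invariance of $\left\langle -,-\right\rangle$ transforms the first integrand to $-\left\langle \tau^{-1}\partial_z\tau,\tau^{-1}X\right\rangle$. Integrating yields $\varepsilon_\nu|_{\tau^{-1},\tau}(0,X)=-\beta_\tau(X)$, whence $\nu_\split=\nu+\beta$. Taking the exterior derivative of this identity and invoking the stated formula $\mathrm{curv}(\nu_\split)=-\tfrac{1}{2}\omega(\theta\wedge\theta)$ produces \erf{eq:curvatures} at once. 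The main (albeit mild) obstacle is bookkeeping: selecting the right pair of tangent vectors in \erf{eq:eps} so that the combined vector lands on $\tilde\theta(\tilde X)\in T_1\lghat$, and carefully tracking signs and the $\mathrm{Ad}$-action when simplifying $\varepsilon_\nu|_{\tau^{-1},\tau}(0,X)$.
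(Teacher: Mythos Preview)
Your proof is correct and follows essentially the same route as the paper's. Both arguments reduce $\nu_\split(\tilde X)$ to $\nu(\tilde\tau^{-1}\tilde X)$ via the definition of the splitting, then apply the multiplicativity relation \erf{eq:eps} at the pair $(\tilde\tau^{-1},\tilde\tau)$ with tangent vectors $(0,\tilde X)$, and finally identify $-\varepsilon_\nu|_{\tau^{-1},\tau}(0,X)$ with $\beta_\tau(X)$ using the explicit formula \erf{eq:epsilonex}; your additional remarks about descent of the difference form and the curvature computation are just made explicit where the paper leaves them implicit.
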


\begin{proof}
For a tangent vector $\tilde X \in T_{\tilde\tau}\lghat$ we obtain from the definitions of the connection $\nu_{\split}$ and the splitting $\split$ that $\nu_{\split}(\tilde X)= \nu(\tilde\tau^{-1}\tilde X)$. Using the multiplicativity law \erf{eq:eps} for the connection $\nu$, we get $\nu(\tilde\tau^{-1}\tilde X)=\nu(\tilde X)- \varepsilon_{\nu}|_{\tau^{-1},\tau}(0,X)$. Looking at the explicit expression \erf{eq:epsilonex}, we see that $- \varepsilon_{\nu}|_{\tau^{-1},\tau}(0,X)=\beta_{\tau}(X)$.
\begin{comment}
The complete calculation is
\begin{eqnarray*}
\nu_{\split}(\hat X) &=& \tilde\theta(\hat X)-\split(p^{*}\theta(\hat X)) \\ &=&  \tilde\tau^{-1}\hat X - \split(\tau^{-1}X)
\\ &=&  \tilde\tau^{-1}\hat X - \tilde\tau^{-1} \hat X+ \nu(\tilde \tau^{-1}\hat X)
\\ &=&  \nu_1(\tilde\tau ^{-1}\hat X)
\\&\stackerf{eq:nugh}{=}& \nu_{\tilde\tau}(\hat X)- \varepsilon_{\nu}|_{\tau^{-1},\tau}(0,X)
\\&=& \nu(\hat X)+\beta(X)
\end{eqnarray*}
\end{comment}
\end{proof}

\label{sec:superficiality}

The connection $\nu$  on  $\lghat$ has an interesting property which  distinguishes it from other connections on $\lghat$, in particular from the  connection $\nu_{\split}$. The property is that $\nu$ is \emph{superficial}. In order to explain this, we  fix the following notation: if $\tau \in LLX$ is a loop in the loop space of a smooth manifold $X$, then by $\tau^{\vee}:S^1 \times S^1 \to X$ we denote the \quot{adjoint} map defined by $\tau^{\vee}(z_1,z_2)  := \tau(z_1)(z_2)$. We  use the following terminology: a map $f:X \to Y$ between smooth manifolds is said to be of rank $k$ if its differential $\mathrm{d}f_x$ has at most rank $k$ for all $x\in X$. The map $f$ is called \emph{thin}, if it is of rank $\dim(X)-1$.

\begin{definition}[{{\cite[Definition 2.2.1]{waldorf10}}}]
\label{def:superficial}
A connection $\nu$ on a Fréchet principal $\ueins$-bundle  over the loop space $LX$ of a smooth manifold $X$ is called \emph{superficial}, if the following two conditions are satisfied:
\begin{enumerate}[(i)]

\item 
The holonomy of a loop $\tau \in LLX$ vanishes if  $\tau^{\vee}$ thin.

\item
Two loops $\tau,\tau'\in LLX$ have the same holonomy, if $\tau^{\vee}$ and $\tau'^{\vee}$ are  thin homotopic. 

\end{enumerate}
\end{definition}

By \cite[Corollary 4.3.3]{waldorf10} all connections in the image of the transgression functor \erf{eq:trcon} are superficial. This comes from the fact that the holonomy of such connections can be expressed in terms of the surface holonomy of the bundle gerbe $\mathcal{G}$ via the formula
\begin{equation*}
\hol{\nu}\tau = \hol{\mathcal{G}}{\tau^{\vee}}\text{.}
\end{equation*}
The surface holonomy of a  connection on a gerbe has the two properties (i) and (ii).

\begin{comment}
Since superficial connections pullback and can be tensorized, it follows that $\varepsilon\in\Omega^1(LG \times LG)$ is a superficial 1-form (in the sense that it is a superficial connection on the trivial $\ueins$-bundle over $LG \times LG$).
We may check this explicitly. Suppose we have a loop $\tau \in L(LG \times LG)$ with $\tau^{\vee}:S^1\times S^1 \to G \times G$ of rank one. Then,
\begin{eqnarray*}
\int \varepsilon_{\nu}|_{\tau(z)}(\partial_z\tau(z))\mathrm{d}z  &=& \int_{S^1\times S^1} \left \langle   \tau_1(z,w)^{-1}\partial_w\tau_1(z,w), \partial_z\tau_2(z,w)\tau_2(z,w)^{-1} \right \rangle \mathrm{d}w\mathrm{d}z \\&&-\int_{S^1\times S^1} \left \langle  \tau_1(z,w)^{-1}\partial_z\tau_1(z,w), \partial_w\tau_2(z,w)\tau_2(z,w)^{-1}  \right \rangle \mathrm{d}w\mathrm{d}z 
\end{eqnarray*}
That $\tau^{\vee}$ is thin means that $\partial_z\tau(z,w)$ and $\partial_w \tau(z,w)$ are linearly dependent. So there is -- for each $z,w$ -- a constant $\alpha_{z,w} \in \R$ such that either $\partial_z\tau_i(z,w)=\alpha\partial_w\tau_i(z,w)$ or $\partial_w\tau_i(z,w)=\alpha\partial_z\tau_i(z,w)$ for $i=1,2$. In both cases, the expression vanishes. The calculation shows, in fact, that the parallel transport of $\varepsilon$ along any thin path (not just around thin loops) vanishes.  
\end{comment}

\subsection{Thin structures and fusion}

\label{sec:thinfusion}

In this article, the most important aspect of superficial connections is that they induce  \emph{thin structures}, a kind of equivariance with respect to thin homotopies. We use \emph{diffeological spaces} as an auxiliary tool. In short, a diffeological space is a set $X$ with specified \emph{plots}: maps $c:U \to X$ defined on open subsets $U \subset  \R^n$, $n\geq 0$. There are full and faithful functors
\begin{equation*}
\man \incl \frech \incl \diff
\end{equation*}
that realize smooth manifolds and Fréchet manifolds as diffeological spaces with plots given by all smooth maps $c:U \to X$ from all open subsets $U\subset \R^n$ for all $n$.
\begin{comment}
That the  functor $\frech \to \diff$ is full and faithful is a claim of \cite{losik1} and also proved in detail in \cite[Theorem 4.10]{Wockel2013}.
\end{comment}
In almost all aspects relevant for this article, diffeological spaces behave exactly as smooth manifolds -- there are just more of them. For example, differential forms, principal bundles, and connections can be defined on diffeological spaces in a manner  consistent with above inclusions, see \cite{waldorf9}.

\begin{comment}
Thin structures can be defined for principal bundles over loop spaces, see \cite[Section 3]{waldorf11}. 
\end{comment}
If $X$ is a smooth manifold, we denote by $\thinpairs {LX} \subset LX \times LX$ the set  consisting of pairs $(\tau_1,\tau_2)$ of thin homotopic loops, i.e. there exists a  homotopy $h: [0,1] \times S^1 \to X$  of rank one. The set $\thinpairs {LX}$ carries a natural diffeology \cite[Section 3.1]{waldorf11}.
\begin{comment}
This diffeology is finer than just regarding it as a submanifold of $LX \times LX$ in the sense that it takes into account that thin homotopies can be chosen in smooth families.  
\end{comment}

\begin{definition}[{{\cite[Definition 3.1.1]{waldorf11}}}]
A \emph{thin homotopy equivariant structure} on   a Fréchet principal $\ueins$-bundle $P$ over $LX$ is a smooth bundle isomorphism
\begin{equation*}
d: \mathrm{pr}_1^{*}P \to \mathrm{pr}_2^{*}P
\end{equation*}
over $\thinpairs {LX}$ that satisfies the  cocycle condition
$d_{\tau_2,\tau_3} \circ d_{\tau_1,\tau_2} = d_{\tau_1,\tau_3}$
for any triple $(\tau_1,\tau_2,\tau_3)$ of thin homotopic loops.
\end{definition}

A bundle
morphism $\varphi:P_1 \to P_2$ between bundles with thin homotopy equivariant structures $d_1$ and $d_2$ is called \emph{thin}, if the diagram
\begin{equation}
\label{eq:thinmorphism}
\alxydim{@=\xypicst}{\mathrm{pr}_1^{*}P_1 \ar[d]_{d_1} \ar[r]^{\mathrm{pr}_1^{*}\varphi} & \mathrm{pr}_1^{*}P_2 \ar[d]^{d_2} \\ \mathrm{pr}_2^{*}P_1 \ar[r]_{\mathrm{pr}_2^{*}\varphi} & \mathrm{pr}_2^{*}P_2}
\end{equation}
of bundle morphisms over $\thinpairs{LX}$ is commutative.

Now suppose $P$ is equipped with a superficial connection $\omega$.
Property (i) implies that the parallel transport of $\omega$ along a path $\gamma:[0,1] \to LX$ between two loops $(\tau_1,\tau_2)\in \thinpairs{LX}$ is independent of the choice of the path (provided it is chosen so that $\gamma^{\vee}$ is thin). Thus, we have a well-defined map
\begin{equation*}
d^{\omega}_{\tau_1,\tau_2}: P_{\tau_1} \to P_{\tau_2}\text{.}
\end{equation*}
\begin{comment}
The smooth dependence of parallel transport  on the path, together with above-mentioned diffeology on $\thinpairs{LX}$ imply
 the following
\end{comment} 
 The maps $d^{\omega}_{\tau_1,\tau_2}$ form a thin homotopy equivariant structure \cite[Lemma 3.1.5]{waldorf11}. A thin homotopy equivariant structure $d$ is called a \emph{thin structure}, if there is a superficial connection $\omega$ with $d=d^{\omega}$.

Summarizing, a thin structure on a bundle $P$ over a loop space $LX$ is a consistent way of identifying its fibres over thin homotopic loops. As orientation-preserving diffeomorphisms of $S^{1}$ induce thin homotopies ($\diff^{+}(S^1)$ is connected), we have the following. 

\begin{proposition}[{{\cite[Proposition 3.1.2]{waldorf11}}}]
\label{prop:equiv}
A thin structure on a Fréchet principal $\ueins$-bundle $P$ over  $LX$ determines a
 $\diff^{+}(S^1)$-equivariant structure on $P$. 
\end{proposition}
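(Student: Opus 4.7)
The plan is to produce, from a thin structure $d$ on $P$, a smooth right action of $\diff^{+}(S^1)$ on $P$ covering the natural action of $\diff^{+}(S^1)$ on $LX$ by reparameterization. The key input is that any $\varphi\in\diff^{+}(S^1)$ gives a pair of thin homotopic loops $(\tau,\tau\circ\varphi)\in\thinpairs{LX}$ for every $\tau\in LX$, so the thin structure supplies canonical identifications $d_{\tau,\tau\circ\varphi}\maps P_{\tau} \to P_{\tau\circ\varphi}$.

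First I would verify that $(\tau,\tau\circ\varphi)\in\thinpairs{LX}$ for every $\tau\in LX$ and every $\varphi\in\diff^{+}(S^1)$. Since $\diff^{+}(S^1)$ is connected, one can choose a smooth path $\varphi_t$ with $\varphi_0\eq\id$ and $\varphi_1\eq\varphi$. The map $h\maps [0,1]\times S^1 \to X$, $h(t,z)\df\tau(\varphi_t(z))$, factors as $\tau\circ H$ where $H\maps [0,1]\times S^1 \to S^1$, $H(t,z)\df\varphi_t(z)$. Hence $\mathrm{d}h$ factors through a one-dimensional space at every point, so $h$ has rank at most $1$, which is the rank condition defining $\thinpairs{LX}$ (with $\dim(S^1)\eq 1$). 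Moreover, this construction is smooth in $\tau$ and $\varphi$ in the sense of the  diffeology on $\thinpairs{LX}$ introduced in \cite{waldorf11}, because a plot $c\maps U \to \diff^{+}(S^1)\times LX$ can be lifted to a plot of  pairs of smoothly parameterized thin homotopic loops.

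Next, define  $R_{\varphi}\maps P \to P$ by $R_{\varphi}(p) \df d_{\tau,\tau\circ\varphi}(p)$ for $p\in P_{\tau}$. The fact that $d$ is a smooth bundle isomorphism over $\thinpairs{LX}$, together with the smoothness of the plot constructed above, implies that $R_{\varphi}$ is a smooth bundle morphism covering the reparameterization map $\tau\mapsto\tau\circ\varphi$ on $LX$; smooth dependence on $\varphi$ gives the desired smoothness of the assignment $\varphi\mapsto R_{\varphi}$. The cocycle condition on $d$ immediately yields $R_{\psi\circ\varphi} = R_{\psi}\circ R_{\varphi}$: for $p\in P_{\tau}$ with $\tau' \df \tau\circ\varphi$ and $\tau'' \df \tau\circ(\psi\circ\varphi)\eq\tau'\circ\psi$ we have
\begin{equation*}
R_{\psi}(R_{\varphi}(p))
= d_{\tau',\tau''}\circ d_{\tau,\tau'}(p)
= d_{\tau,\tau''}(p)
= R_{\psi\circ\varphi}(p)\text{,}
\end{equation*}
and $R_{\id}\eq\id$ since $d_{\tau,\tau}\eq\id$ (apply the cocycle condition with $\tau_1\eq\tau_2\eq\tau_3\eq\tau$).

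The main technical obstacle, and the only step that really requires the diffeological setup, is verifying smoothness of the assignment $(\varphi,p)\mapsto R_{\varphi}(p)$ as a map of diffeological spaces. This reduces to checking  that for any plot $c\maps U \to \diff^{+}(S^1)\times LX$, $u\mapsto(\varphi_u,\tau_u)$, the composite $u\mapsto (\tau_u,\tau_u\circ\varphi_u)$ is a plot of $\thinpairs{LX}$; this follows because the family $\{\varphi_u\}$ can locally be joined to $\id$ through a smoothly $U$-parameterized family of diffeomorphisms, yielding a smooth family of rank-one homotopies. Once smoothness is in hand, the multiplicative and unital properties verified above show that $R$ is a smooth right action of $\diff^{+}(S^1)$ on $P$ covering the reparameterization action on $LX$, which is exactly a $\diff^{+}(S^1)$-equivariant structure on $P$.
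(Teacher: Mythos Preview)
Your approach is correct and matches the paper's own justification, which amounts to the single observation that $\diff^{+}(S^1)$ is connected, so reparameterizations induce thin homotopies; the thin structure then supplies the lift. Your elaboration of smoothness in the diffeological sense is an appropriate filling-in of what the paper leaves to the cited reference.

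There is, however, a slip in your verification of the group law. With $R_{\varphi}(p)\df d_{\tau,\tau\circ\varphi}(p)$ for $p\in P_{\tau}$, one has $R_{\varphi}(p)\in P_{\tau\circ\varphi}$, and then
\[
R_{\psi}\bigl(R_{\varphi}(p)\bigr)
= d_{\tau\circ\varphi,\,(\tau\circ\varphi)\circ\psi}\bigl(d_{\tau,\tau\circ\varphi}(p)\bigr)
= d_{\tau,\,\tau\circ(\varphi\circ\psi)}(p)
= R_{\varphi\circ\psi}(p),
\]
so $R_{\psi}\circ R_{\varphi}=R_{\varphi\circ\psi}$, which is the correct identity for a \emph{right} action covering $\tau\mapsto\tau\circ\varphi$. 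Your claimed equality $\tau\circ(\psi\circ\varphi)=\tau'\circ\psi$ is false (in fact $\tau'\circ\psi=\tau\circ(\varphi\circ\psi)$), and the identity $R_{\psi\circ\varphi}=R_{\psi}\circ R_{\varphi}$ you wrote would be a left action. This does not affect the validity of the argument once the composition order is corrected.
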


We continue to discuss the central extension $\lghat$ obtained by transgression of a multiplicative bundle gerbe over $G$ with connection relative to the forms $H$ and $\rho$ given by \erf{eq:H} and \erf{eq:rho}. Since the connection $\nu$ on $\lghat$ is superficial, the central extension $\lghat$ is equipped with a thin structure $d^{\nu}$. 

\begin{proposition}
\label{prop:thinmult}
The thin  structure $d^{\nu}$ is  multiplicative in the sense that
\begin{equation*}
d^{\nu}_{\tau_0\gamma_0,\tau_1\gamma_1}(\tilde\tau \cdot \tilde\gamma)=d^{\nu}_{\tau_0,\tau_1}(\tilde\tau)\cdot d^{\nu}_{\gamma_0,\gamma_1}(\tilde\gamma)
\end{equation*}
for all $((\tau_0,\gamma_0),(\tau_1,\gamma_1))\in \thinpairs{L(G\times G)}$ and all $\tilde\tau,\tilde\gamma \in \lghat$ projecting to $\tau_0$ and $\gamma_0$, respectively. 
\end{proposition}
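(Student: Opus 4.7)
The plan is to use that the thin structure $d^{\nu}$ is realised by $\nu$-parallel transport, together with the almost-multiplicativity relation \erf{eq:eps}, to argue that multiplying two horizontal lifts in $\lghat$ yields a horizontal lift of their pointwise product. A thin homotopic pair $((\tau_0,\gamma_0),(\tau_1,\gamma_1))\in \thinpairs{L(G\times G)}$ comes with a rank-one homotopy $h=(h_1,h_2)\maps [0,1]\times S^1 \to G\times G$; I would first produce paths $c_i\maps [0,1]\to LG$ by $c_i(t)\df h_i(t,\cdot)$. Each $c_i$ is thin, and so is the pointwise product $c_1\cdot c_2=m\circ(c_1,c_2)$, because postcomposition with the smooth multiplication $m$ cannot raise the rank of the adjoint map. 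Taking horizontal lifts $\tilde c_i\maps [0,1]\to\lghat$ of $c_i$ starting at $\tilde\tau$ and $\tilde\gamma$, the definition of the thin structure yields $d^{\nu}_{\tau_0,\tau_1}(\tilde\tau)=\tilde c_1(1)$ and $d^{\nu}_{\gamma_0,\gamma_1}(\tilde\gamma)=\tilde c_2(1)$.

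The crucial step is to show that the product path $t\mapsto \tilde c_1(t)\cdot\tilde c_2(t)$, whose endpoint is the right-hand side of the claim, is itself horizontal; once that is done, this product path must coincide with the unique $\nu$-horizontal lift of the thin path $c_1\cdot c_2$ starting at $\tilde\tau\cdot\tilde\gamma$, so its endpoint equals $d^{\nu}_{\tau_0\gamma_0,\tau_1\gamma_1}(\tilde\tau\cdot\tilde\gamma)$ by the very definition of the thin structure, yielding the claim. For horizontality, the Leibniz rule identifies the tangent vector to the product curve as $\dot{\tilde c_1}(t)\tilde c_2(t)+\tilde c_1(t)\dot{\tilde c_2}(t)$, so plugging into \erf{eq:eps} gives
\begin{equation*}
\nu\bigl(\dot{\tilde c_1}(t)\tilde c_2(t)+\tilde c_1(t)\dot{\tilde c_2}(t)\bigr)=\nu(\dot{\tilde c_1}(t))+\nu(\dot{\tilde c_2}(t))-\varepsilon_{\nu}|_{c_1(t),c_2(t)}(\dot c_1(t),\dot c_2(t))\text{.}
\end{equation*}

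The two summands $\nu(\dot{\tilde c_i}(t))$ vanish by horizontality, and the only real obstacle I foresee is the verification that the error term $\varepsilon_{\nu}$ vanishes on tangent vectors coming from thin homotopies. For this, I would use $\varepsilon_{\nu}=\tau_{\Omega}(\rho)$ and the definition of $\tau_{\Omega}$ from \erf{eq:difftrans}: the error term reduces to $\int_{S^1}\rho_{h(t,z)}(\partial_z h(t,z),\partial_t h(t,z))\,\mathrm{d}z$. Rank one of $h$ forces $\partial_z h$ and $\partial_t h$ to be linearly dependent at each $(t,z)$, and $\rho$ is a $2$-form, so the integrand vanishes pointwise. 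This completes the argument; no other step presents difficulty, the key conceptual point being that the failure $\varepsilon_{\nu}$ of $\nu$ to be strictly multiplicative happens to be itself a superficial $1$-form, so it is invisible to the thin structure.
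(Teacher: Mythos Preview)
Your proof is correct and follows essentially the same approach as the paper's. Both arguments hinge on the observation that the error form $\varepsilon_{\nu}=\tau_{\Omega}(\rho)$ vanishes along thin paths because the integrand $\rho(\partial_z h,\partial_t h)$ is identically zero when $h$ has rank one; the paper packages this as the statement that the thin structure on $\trivlin_{\varepsilon_{\nu}}$ is trivial and hence $\tr_{\mathcal{M}}$ is a thin bundle morphism, whereas you unpack the same computation directly at the level of horizontal lifts and parallel transport.
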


\begin{proof}
We consider the connection-preserving bundle morphism
\begin{equation*}
\alxydim{@C=1.6cm}{\pr_1^{*}\tr_{\mathcal{G}} \otimes \pr_2^{*}\tr_{\mathcal{G}} \ar[r]^-{\tr_{\mathcal{M}}} & m^{*}\tr_{\mathcal{G}} \otimes \tr_{\mathcal{I}_{\rho}} = m^{*}\tr_{\mathcal{G}} \otimes \trivlin_{\varepsilon_{\nu}}\text{,}}
\end{equation*}
that describes the relation between the group structure of $\lghat$ and the superficial connection $\nu$. Now we pass from superficial connections to thin structures, and observe that the thin structure on the trivial bundle $\trivlin_{\varepsilon_{\nu}}$ is the trivial one. This comes simply from the fact that the parallel transport of the connection $\varepsilon_{\nu}=\tau_{\Omega}(\rho)$ along a path $\gamma\in P(G \times G)$ can be expressed as an integral of the 2-form $\rho$ over $\gamma^{\vee}$. If $\gamma^{\vee}$ is of rank one, that integral vanishes; see \cite[Proposition 3.1.8]{waldorf11} for the full argument.  Thus, 
\begin{equation*}
\alxydim{@C=1.6cm}{\pr_1^{*}\tr_{\mathcal{G}} \otimes \pr_2^{*}\tr_{\mathcal{G}} \ar[r]^-{\tr_{\mathcal{M}}} & m^{*}\tr_{\mathcal{G}} }
\end{equation*}
is a thin bundle morphism over $\thinpairs{L(G \times G)}$. Now, diagram \erf{eq:thinmorphism} evaluated over the point 
$((\tau_0,\gamma_0),(\tau_1,\gamma_1))\in \thinpairs{L(G \times G)}$
gives the claimed identity. 
\end{proof}

We remark that $((\tau_0,\gamma_0),(\tau_1,\gamma_1))\in \thinpairs{L(G\times G)}$ means that there exists a thin path $(\tau,\gamma)$ in $L(G \times G)$ connecting $(\tau_0,\gamma_0)$ with $(\tau_1,\gamma_1)$. It is necessary, but not sufficient, that the paths $\tau$, $\gamma$, and $\tau\gamma$ in $LG$ are separately thin. 

\label{sec:fusion}

Finally, we come to another additional structure on the central extension
 $\lghat$: a \emph{fusion product}. By $PX$ we denote the set of paths in a smooth manifold $X$ with \quot{sitting instants}, i.e. smooth maps $\gamma\maps [0,1] \to X$ that are locally constant near the endpoints. Due to the sitting instants, $PX$ is not a Fréchet manifold, but still a nice diffeological space, with plots $c:U \to PX$ those maps whose adjoints $c^{\vee}:U \times [0,1] \to X$ are smooth. We denote by $PX^{[k]}$ the $k$-fold fibre product of $PX$ over the evaluation map $\ev\maps PX \to X \times X$, i.e. the diffeological space  of $k$-tuples of paths with a common initial point and a common end point. Due to the sitting instants, we have a well-defined and smooth map
\begin{equation*}
\lop\maps  \p X^{[2]} \to L X\maps  (\gamma_1,\gamma_2) \mapsto \prev{\gamma_2} \pcomp \gamma_1\text{,}
\end{equation*}
where $\pcomp$ denotes the path concatenation, and $\prev{\gamma}$ denotes the reversed path; see \cite[Section 2]{waldorf9} for a more detailed discussion.  For $ij \in \{12,23,13\}$, we denote by $\lop_{ij}$ the composition of $\lop$ with  the  projection $\pr_{ij}: PX^{[3]} \to PX^{[2]}$.

\begin{definition}[{{\cite[Definition 2.1.3]{waldorf10}}}]
\label{def:fusionproduct}
A \emph{fusion product} on a Fréchet principal $\ueins$-bundle $P$ over the loop space $LX$ of a smooth manifold $X$ is a smooth bundle morphism
\begin{equation*}
\lambda: \lop_{23}^{*}P \otimes \lop_{12}^{*}P \to {\lop_{13}^{*}}P
\end{equation*}
over $PX^{[3]}$
that is associative in the sense that 
\begin{equation*}
\lambda(\lambda(p_{34} \otimes p_{23} ) \otimes p_{12} ) = \lambda(p_{34}\otimes \lambda(p_{23} \otimes p_{12}))
\end{equation*}
for all $p_{ij} \in P_{\gamma _i \lop \gamma_j}$ and all $(\gamma_1,\gamma_2,\gamma_3,\gamma_4) \in PX^{[4]}$.
\end{definition}

A morphism $\varphi: P_1 \to P_2$ between principal $\ueins$-bundles over $LX$ equipped with fusion products $\lambda_1$ and $\lambda_2$, respectively, is called \emph{fusion-preserving} if the diagram
\begin{equation*}
\alxydim{@=\xypicst}{\lop_{23}^{*}P_1 \otimes \lop_{12}^{*}P_1 \ar[r]^-{\lambda_1} \ar[d]_{\lop_{23}^{*}\varphi \otimes \lop_{12}^{*}\varphi} &  {\lop_{13}^{*}}P \ar[d]^{{\lop_{13}^{*}}\varphi} \\ \lop_{23}^{*}P_2 \otimes \lop_{12}^{*}P_2 \ar[r]_-{\lambda_2} & {\lop_{13}^{*}}P_2 }
\end{equation*}
of bundle morphisms over $PX^{[3]}$ is commutative.

If $P$ is equipped with a fusion product $\lambda$, then a connection $\nu$ is called \emph{fusive}, if the following  conditions are satisfied:
\begin{enumerate}[(i)]

\item 
\label{def:fusionconnection:1}
The fusion product $\lambda$ is a \emph{connection-preserving} bundle morphism over $PX^{[3]}$.

\item
\label{def:fusionconnection:2}
The rotation by an angle of $\pi$ is an orientation-preserving diffeomorphism of $S^1$ and induces a diffeomorphism $r_{\pi}:LX \to LX$. The $\diff^{+}(S^1)$-equivariant structure of Proposition \ref{prop:equiv} provides a lift $r_{\pi}^{d^{\nu}}:P \to P$. We demand that the condition
\begin{equation*}
\lambda(r^{d^{\nu}}_{\pi}(p_{12}) \otimes r^{d^{\nu}}_{\pi}(p_{23})) = r^{d^{\nu}}_{\pi} (\lambda(p_{23} \otimes p_{12}))
\end{equation*}
is satisfied for all $p_{12} \in P_{\gamma_1 \lop \gamma_2}$, $p_{23} \in P_{\gamma_2 \lop \gamma_3}$, and $(\gamma_1,\gamma_2) \in PX^{[2]}$. 

\end{enumerate}

In \cite{waldorf10} a category $\ufusbunconsf {LX}$
is considered with objects the  principal $\ueins$-bundles over $LX$ equipped with fusion products and superficial fusive connections, and morphisms the fusion-preserving, connection-preserving bundle morphisms. By a construction performed in \cite[Section 4.2]{waldorf10}, the transgression functor \erf{eq:trcon} lifts into this category:
\begin{equation}
\label{eq:trfull}
\tr: \hc 1 \ugrbcon X \to \ufusbunconsf {LX}\text{.}
\end{equation}
Before we return to the central extension $\lghat$, we relate fusion products to thin structures. 

\begin{definition}
\label{def:fusionthin}
Let $P$ be a principal $\ueins$-bundle $P$ over $LX$ with a fusion product $\lambda$. A thin structure $d$ on $P$ is called  \emph{fusive} with respect to $\lambda$, if there exists a superficial fusive connection $\nu$ on $P$ such that $d=d^{\nu}$. 
\end{definition}

In particular, the fusion product $\lambda$ is a thin bundle morphism with respect to a fusive thin structure. In \cite{waldorf11} a category $h\ufusbunth {LX}$ is considered with objects the   principal $\ueins$-bundles over $LX$ equipped with fusion products and fusive thin structures, and morphisms the homotopy classes of  fusion-preserving, thin bundle morphisms.

The two categories $\ufusbunconsf {LX}$ and $h\ufusbunth{LX}$ are loop space analogues of the  categories $\hc 1 \ugrbcon X$ and $\hc 1 \ugrb X$ of bundle gerbes with and without connections over $X$, respectively. The procedure of inducing a thin structure from a superficial connection (and projecting to the homotopy class of a morphism) defines a functor
\begin{equation*}
\ufusbunconsf{LX} \to h\ufusbunth{LX}\text{;}
\end{equation*}
it is the loop space analogue of the 2-functor \erf{eq:forgetconnections} that passes from gerbes with connection to gerbes without connections. These analogies are the content of the following theorem, which is the main result of the series of articles \cite{waldorf9,waldorf10,waldorf11}.

\begin{theorem}
\label{th:equiv}
Let $X$ be a connected smooth manifold. 
There is a strictly commutative diagram
\begin{equation*}
\alxydim{@=\xypicst}{\ufusbunconsf {LX} \ar[d] \ar[r] & \hc 1 \ugrbcon X \ar[d] \\ h\ufusbunth{LX} \ar[r]  &\hc 1 \ugrb X }
\end{equation*}
of monoidal categories and functors, natural in $X$,  whose horizontal functors are monoidal equivalences of categories. 
\end{theorem}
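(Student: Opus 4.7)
The plan is to assemble the results of the series \cite{waldorf9,waldorf10,waldorf11}, to which the statement here is essentially a summary. First I would define the upper horizontal functor as the transgression functor $\tr$ of \erf{eq:trfull}, already discussed in Sections \ref{sec:centralextension}--\ref{sec:thinfusion}: it lands in $\ufusbunconsf{LX}$ because the transgressed connection $\nu$ is superficial (its holonomy computes surface holonomy of $\mathcal{G}$, which obeys the thin invariance conditions of Definition \ref{def:superficial}), it carries a fusion product obtained by gluing bigons along a common boundary path, and it is fusive with respect to this product. Functoriality and monoidality are inherited from the construction of $\tr$.

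The lower horizontal functor is the composite of $\tr$ with the two projection operations: forgetting the connection on the gerbe side, and passing from the superficial connection $\nu$ to the induced thin structure $d^{\nu}$ followed by projection to homotopy classes of morphisms on the bundle side. Strict commutativity of the square is then built into these definitions, and monoidality is again inherited.

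The main content of the theorem is that the two horizontal functors are equivalences, which requires \emph{regression} functors $\mathcal{R}$ and $\mathcal{R}^{\nabla}$ quasi-inverse to $\tr$ in the two settings. The construction of $\mathcal{R}$ uses the path space $\p X$ as a surjective submersion onto $X$ via endpoint evaluation, the two-fold fibre product of which is precisely $\p X^{[2]}$. Given $(P,\lambda)$, the bundle gerbe $\mathcal{R}(P)$ has line bundle $\lop^{*}P$ over $\p X^{[2]}$ obtained by pullback along $\lop\maps (\gamma_1,\gamma_2)\mapsto \prev{\gamma_2}\pcomp\gamma_1$, with bundle gerbe multiplication supplied directly by $\lambda$. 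The thin structure $d$ is essential in order to verify the reparameterization invariance and descent through path space implicit in this definition. One then shows $\mathcal{R}\circ \tr \cong \id$ and $\tr\circ \mathcal{R}\cong \id$ by comparing fibres and matching the fusion and gerbe-multiplication data, exploiting the fact that sections of a gerbe over a loop are captured via fusion by sections along pairs of based paths, and conversely that every loop arises as $\prev{\gamma_2}\pcomp\gamma_1$ for some $(\gamma_1,\gamma_2)\in \p X^{[2]}$.

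The hardest ingredient, and the step I expect to be the main obstacle, is the connection-level regression $\mathcal{R}^{\nabla}$. Given a fusive superficial connection $\omega$ on $P$, one must reconstruct a gerbe connection (a curving 2-form together with a connection on $\lop^{*}P$) whose transgression reproduces $\omega$. The reconstruction proceeds via parallel transport of $\omega$ along smooth families of paths in $LX$ whose adjoints sweep out surfaces in $X$; superficiality guarantees that any two thin-homotopic such families yield the same answer, so the reconstruction is unambiguous, while fusivity guarantees compatibility of the resulting curving with the bundle gerbe multiplication. Matching the surface-holonomy formulas on both sides then gives $\tr\circ\mathcal{R}^{\nabla}\cong \id$ and $\mathcal{R}^{\nabla}\circ \tr\cong \id$. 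Monoidality of $\mathcal{R}$ and $\mathcal{R}^{\nabla}$ is inherited from the compatibility of all constructions with tensor products, and naturality of the entire diagram in $X$ is immediate from the naturality of $\tr$ together with the purely pullback-based definition of $\mathcal{R}$ along the induced maps $Lf$.
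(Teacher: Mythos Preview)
Your outline captures the substance of the regression construction correctly, but you have the direction of the horizontal arrows reversed. In the diagram as stated, the arrows go from the loop space side to the gerbe side: they are the \emph{regression} functors, not transgression. The paper makes this explicit immediately after the theorem: the top functor is the inverse of the transgression functor $\tr$ of \erf{eq:trfull}, and the bottom functor is regression directly. Your description of the lower horizontal as ``the composite of $\tr$ with the two projection operations'' therefore cannot be right: one cannot transgress a gerbe without connection, so there is no functor $\hc 1 \ugrb X \to h\ufusbunth{LX}$ built that way. What actually happens is that the bottom regression functor is constructed directly from the fusion and thin data (via the path-fibration construction you sketch), and it depends on the choice of a base point $x\in X$; this is why the paper says it ``has no canonical inverse functor''. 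The connectedness hypothesis on $X$ enters precisely here, to make the base-point choice harmless up to isomorphism.

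Apart from this orientation issue, your plan matches the strategy of the cited series \cite{waldorf9,waldorf10,waldorf11}: the path fibration $PX\to X$ supplies the surjective submersion, $\lop^{*}P$ the line bundle, the fusion product the bundle gerbe multiplication, and in the geometric case the superficial fusive connection is used to reconstruct the curving. Note that the present paper does not give a self-contained proof of this theorem; it is quoted as the main result of that series, so your task is really to point to those references and get the arrows the right way round.
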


The functor in the first row of the diagram is inverse to the transgression functor $\tr$ of \erf{eq:trfull}, i.e. the two functors form an equivalence of categories. The functor in the second row is  essentially surjective, full and faithful, but has no canonical inverse functor.   The two functors are called \emph{regression} \cite[Section 5]{waldorf10}.

Let us now return to the discussion of the central extension $\lghat$ defined by transgression of a multiplicative bundle gerbe $\mathcal{G}$ over $G$. According to above discussion, $\lghat$ is equipped with a fusion product, which we denote by $\lambda_{\mathcal{G}}$. 
\begin{comment}
\begin{example}
We recall that in case that $G$ is compact, simple and simply-connected, and $\mathcal{G}=\gbas$ the basic bundle gerbe, the central extension $\lghat$ is the universal one. The universal central extension has another  model based on smooth maps $f: D^2 \to G$ defined on the disc, subject to an equivalence relation involving the Wess-Zumino term on $G$, see \cite{mickelsson1}. In that model, one can explicitly write down the fusion product $\lambda_{\mathcal{G}}$, see \cite{Waldorfa}.
\end{example}
\end{comment}
The connection $\nu$ on $\lghat$ and the induced thin structure $d^{\nu}$ are fusive.  Since transgression is a functor, the multiplication $\tr_{\mathcal{M}}$ is fusion-preserving. This can be rephrased as follows.

\begin{lemma}[{{}}]
\label{lem:multfus}
The fusion product on $\lghat$ is multiplicative in the sense that
\begin{equation}
\label{eq:multfus}
\lambda_{}(p_{23} \otimes p_{12}) \cdot \lambda(p_{23}' \otimes p_{12}') = \lambda(p^{}_{23}p_{23}' \otimes p^{}_{12}p_{12}')
\end{equation}
for all elements $p_{ij},p_{ij}'\in \lghat$ projecting to loops ${\gamma_i\lop \gamma_j}$ and ${\gamma_i' \lop \gamma_j'}$, respectively, for all $(\gamma_1,\gamma_2,\gamma_3),(\gamma_1',\gamma_2',\gamma_3') \in PG^{[3]}$.
\end{lemma}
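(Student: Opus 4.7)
The plan is to derive multiplicativity of the fusion product $\lambda_{\mathcal{G}}$ from the functoriality of transgression, essentially reducing \erf{eq:multfus} to the statement that the transgressed isomorphism $\tr_{\mathcal{M}}$ is a \emph{fusion-preserving} morphism of bundles over $LG \times LG = L(G \times G)$. Since by \erf{eq:trfull} transgression lands in the category $\ufusbunconsf{L(G \times G)}$, every morphism in its image -- in particular the image of the 1-morphism $\mathcal{M}\maps\pr_1^{*}\mathcal{G}\otimes\pr_2^{*}\mathcal{G}\to m^{*}\mathcal{G}\otimes\mathcal{I}_{\rho}$ -- is automatically fusion-preserving.

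First I would spell out the fusion products on the various bundles appearing as source and target of $\tr_{\mathcal{M}}$. The pullbacks $\pr_i^{*}\tr_{\mathcal{G}}$ and $m^{*}\tr_{\mathcal{G}}$ acquire their fusion products from naturality of transgression under the smooth maps $\pr_i, m\maps G\times G \to G$, and the tensor product of fusion products gives the fusion product on $\pr_1^{*}\tr_{\mathcal{G}}\otimes\pr_2^{*}\tr_{\mathcal{G}}$ because transgression is monoidal. The trivial bundle $\trivlin_{\varepsilon_{\nu}}=\tr_{\mathcal{I}_{\rho}}$ carries the canonical trivial fusion product obtained via transgression from the trivial gerbe $\mathcal{I}_{\rho}$. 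Thus $\tr_{\mathcal{M}}\maps \pr_1^{*}\tr_{\mathcal{G}}\otimes\pr_2^{*}\tr_{\mathcal{G}}\to m^{*}\tr_{\mathcal{G}}\otimes\trivlin_{\varepsilon_{\nu}}$ is a fusion-preserving morphism.

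Next I would evaluate the fusion-preservation condition on a suitable point. Given $(\gamma_1,\gamma_2,\gamma_3),(\gamma_1',\gamma_2',\gamma_3')\in PG^{[3]}$, the pair of triples $((\gamma_i,\gamma_i'))_{i=1,2,3}$ lies in $P(G\times G)^{[3]}$, and the loops $\gamma_i\lop\gamma_j$ and $\gamma_i'\lop\gamma_j'$ get identified under $m\maps L(G\times G)\to LG$ with the fusion loops of the products $\gamma_i\gamma_i'$. The commutative square expressing that $\tr_{\mathcal{M}}$ preserves fusion, evaluated on elements $p_{ij},p_{ij}'\in\lghat$ sitting in the fibres over $\gamma_i\lop\gamma_j$ and $\gamma_i'\lop\gamma_j'$, produces precisely equation \erf{eq:multfus}, once one observes that the trivial fusion product on $\trivlin_{\varepsilon_{\nu}}$ contributes only the identity.

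The only point that requires some care is checking that the identification of fibres used in the last step is consistent: namely, that the pullback of the fusion product along $m\maps L(G\times G)\to LG$ really corresponds to pointwise group multiplication of the fusion products of the two factors. This is a formal consequence of how pullback and tensor product interact on fusion products in $\ufusbunconsf{-}$, and amounts to bookkeeping rather than a new calculation. I do not expect any serious obstacle beyond unwinding these definitions carefully.
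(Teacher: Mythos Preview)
Your proposal is correct and follows essentially the same approach as the paper: the paper simply observes that since transgression is a functor into $\ufusbunconsf{L(G\times G)}$, the morphism $\tr_{\mathcal{M}}$ is fusion-preserving, and the lemma is just a rephrasing of that fact. Your version spells out in more detail how to unwind the fusion-preservation condition into equation \erf{eq:multfus}, including the observation that the trivial fusion product on $\trivlin_{\varepsilon_{\nu}}$ contributes nothing, but the underlying argument is identical.
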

 
\begin{comment}
The transgression of $\mathcal{M}$ is a bundle isomorphism
\begin{equation*}
\phi: \pr_1^{*}\tr_{\mathcal{G}}\otimes \pr_2^{*}\tr_{\mathcal{G}} \to m^{*}\tr_{\mathcal{G}} \otimes \tr_{\mathcal{I}_{\rho}}\cong m^{*}\tr_{\mathcal{G}}\text{.}
\end{equation*}
over $LG \times LG$, in which the isomorphism $\tr_{\mathcal{I}_{\rho}}\cong \trivlin$ is fusion-preserving \cite[Lemma 3.6]{waldorf13}.  
\end{comment}

We have now listed all additional structures and properties of the  central extension $\lghat$ that arise from our approach using the transgression multiplicative bundle gerbes, and that we need in the following. In \cite{Waldorfc} we show how \quot{transgressive} central extensions can be characterized by fusion products and thin structures.

\setsecnumdepth{2}

\section{Thin fusion spin structures}
\label{sec:spinstructures}

In Section \ref{sec:fusionspin}
we first recall the definition of  spin structures on loop spaces following Killingback \cite{killingback1}. Based upon this definition we develop the notion of  thin fusion spin structures, which constitute our  loop space analogue for string structures. In Section \ref{sec:lifting} we prepare one part of the  proof of this analogy: we provide a lifting gerbe formulation for thin fusion spin structures.

\subsection{Versions of spin structures on loop spaces}

\label{sec:fusionspin}

Let $M$ be a spin manifold of dimension $n=3$ or $n>4$, so that $\spin n$ is compact, simple and simply-connected. We denote  by $\pi:FM\to M$ the spin-oriented frame bundle of $M$, which is a $\spin n$-principal bundle over $M$. 
Since $\spin n$ is connected, $LFM$ is a principal $\lspin n$-bundle over $LM$.  

\begin{definition}[{{\cite{killingback1}}}]
\label{def:spinstructure}
A \emph{spin structure on $LM$} is a lift of the structure group of the looped  frame bundle $LFM$ from $\lspin n$ to the universal central extension $\lspinhat n$.
\end{definition}

Thus, a spin structure on $LM$ is a pair $(\inf S,\sigma)$ of a Fréchet principal $\lspinhat n$-bundle $\inf S$ over $LM$ together with a smooth map $\sigma:\inf S \to LFM$ such that the diagram
\begin{equation*}
\alxydim{@C=\xypicst@R=1.3em}{\inf S \times \lspinhat n \ar[dd]_{\sigma \times p} \ar[r] & \inf S \ar[dd]_{\sigma} \ar[dr] \\ &&LM \\ LFM \times \lspin n \ar[r] & LFM \ar[ur] }
\end{equation*}
is commutative.  A morphism between spin structures $(\inf S_1,\sigma_1)$ and $(\inf S_2,\sigma_2)$ is a bundle morphism $\varphi:\inf S_1 \to \inf S_2$ such that $\sigma_1=\sigma_2 \circ \varphi$. Spin structures on $LM$ form a category that we denote by $\spst$. It is a module for the monoidal category $\ubun{LM}$ of Fréchet principal $\ueins$-bundles over $LM$, under an action functor
\begin{equation}
\label{eq:actionspst}
\ubun{LM} \times \spst \to \spst: (K,(\inf S,\sigma))\mapsto K\otimes (\inf S, \sigma)\text{.}
\end{equation}
Here, $K\otimes (\inf S, \sigma)$ is the spin structure with the $\lspinhat n$-bundle  $K\otimes \,\inf S :=(K \times_{LM} \inf S)/\,\ueins$  over $LM$, and the map $(k,s)\mapsto \sigma(s)$ to $LFM$. By Corollary \ref{co:spsttorsor} proved below, the action \erf{eq:actionspst} exhibits $\spst$ as a \emph{torsor} over $\ubun{LM}$ in the sense that the associated functor
\begin{equation*}
\ubun{LM} \times \spst \to \spst \times \spst: (K,(\inf S,\sigma))\mapsto (K\otimes (\inf S, \sigma),(\inf S, \sigma))
\end{equation*}
is an equivalence of categories.

The notion of a spin structure in the sense of Definition \ref{def:spinstructure} suffers from the fact that there are manifolds that are not string manifolds but  whose loop space admits spin structures \cite{Pilch1988}. The plan we follow in this article is to add additional conditions/structure to spin structures on loop spaces, in order to better reflect string structures on the base manifold. 
\begin{comment}
We recall from Section \ref{sec:fusion} that the central extension $\lspinhat n$ is equipped with a fusion product $\lambda_{\gbas}$. 
\end{comment}

If $(\inf S,\sigma)$ is a spin structure, then $\sigma:\inf S \to LFM$ is a principal $\ueins$-bundle under the $\ueins$-action obtained by restriction of the $\lspinhat n$-action. We use the notation $T_{\inf S}$ for explicit reference to this principal $\ueins$-bundle. Any morphism $\varphi:\inf S \to \inf S'$  between spin structures is a morphism $\varphi: T_{\inf S} \to T_{\inf S'}$ between the associated principal $\ueins$-bundles.  Under the action  \erf{eq:actionspst} we have 
\begin{equation}
\label{eq:taction}
T_{K \otimes\, \inf S}=L\pi^{*}K \otimes T_{\inf S}\text{.}
\end{equation}
\begin{comment}
Now we formulate additional structures for spin structures in terms of the associated principal $\ueins$-bundle $T_{\inf S}$.
\end{comment}

\begin{definition}[{{\cite[Definition 3.6]{Waldorfa}}}]
\label{def:fusionspinstructure}
A \emph{fusion product} on a spin structure $(\inf S, \sigma)$ is a fusion product $\lambda_{\inf S}$ on $T_{\inf S}$  such that the $\lspinhat n$-action on $\inf S$ is fusion-preserving:
\begin{equation*}
\lambda_{\inf S}(t_{23}\cdot \tilde\gamma_{23} \otimes t_{12} \cdot \tilde\gamma_{12}) = \lambda_{\inf S}(t_{23} \otimes t_{12}) \cdot \lambda_{\gbas}(\tilde\gamma_{23} \otimes \tilde\gamma_{12})\text{,}
\end{equation*}
for all
 $t_{12},t_{23} \in \inf S$ and $\tilde\gamma_{12},\tilde\gamma_{23} \in \lspinhat n$  such that the fusion products are defined. A morphism $\varphi:  \inf S \to \inf S'$ between spin structures with fusion products is called \emph{fusion-preserving} if the associated morphism $\varphi: T_{\inf S} \to T_{\inf S'}$ is fusion-preserving.
\end{definition}

Spin structures with fusion products form a category that we denote by $\spstfus$. Similar to the action functor \erf{eq:actionspst}, the category $\spstfus$ carries an action of the monoidal category $\ufusbun {LM}$ of Fréchet principal $\ueins$-bundles with fusion products, under which \erf{eq:taction} holds as an equation of bundles with fusion products.

The main result of the paper \cite{Waldorfa} was that a spin manifold $M$ is a string manifold if and only if its loop space $LM$ admits a spin structure with fusion product. Next we explain how to add thin structures into the picture in order to improve that result.
\begin{comment}
We recall from Section \ref{sec:superficiality} that a thin structure on a $\ueins$-bundle $P$ over a loop space $LX$ is a bundle morphism $d:\pr_1^{*}P \to \pr_2^{*}P$ over the diffeological space $\thinpairs{LX}\subset LX \times LX$. 
\end{comment}
We recall that the central extension $\lspinhat n$ is equipped with a thin structure $d^{\nu}$ induced from the superficial connection $\nu$.

\begin{definition}
\label{def:thinspin}
 A \emph{thin  structure} on a spin structure  $(\inf S, \sigma)$ is a thin structure $d$ on $T_{\inf S}$ such that
\begin{equation*}
d_{\tau_1\cdot \gamma_1,\tau_2\cdot \gamma_2}(t\cdot \tilde\gamma ) = d_{\tau_1,\tau_2}(t) \cdot d^{\nu}_{\gamma_1,\gamma_2}(\tilde\gamma)\text{.}
\end{equation*}
for all $((\tau_1,\gamma_1),(\tau_2,\gamma_2)) \in \thinpairs{L(FM \times \spin n)}$, all $t\in T_{\inf S}$ projecting to $\tau_1$, and all $\tilde\gamma\in\lspinhat n$ projecting $\gamma_1$.  A \emph{thin spin structure} is a spin structure together with a thin structure. A \emph{morphism between thin spin structures} is a morphism $\varphi\maps \inf S \to \inf S'$ between spin structures such that the induced morphism $\varphi: T_{\inf S} \to T_{\inf S'}$ is thin. \end{definition}

In this definition it is relevant to observe that $((\tau_1,\gamma_1),(\tau_2,\gamma_2)) \in \thinpairs{L(FM \times \spin n)}$ implies that $(\tau_1,\tau_2)\in\thinpairs{LFM}$, $(\gamma_1,\gamma_2)\in \thinpairs{\lspin n}$, and $(\tau_1\cdot\gamma_1,\tau_2\cdot \gamma_2)\in \thinpairs{LFM}$.

Thin spin structures form a category that we denote by $\spstth$. Based on the action functor \erf{eq:actionspst}, it carries an action of the monoidal category $\ubunth{LM}$ of Fréchet principal $\ueins$-bundles with thin structures, under which \erf{eq:taction} is an equality of bundles with thin structures. 
\begin{comment}
We have the following  consequence of the presence of a thin structure.
\end{comment}

\begin{proposition}
A thin  structure on a spin structure $(\inf S,\sigma)$ on $LM$ determines a  $\diff^{+}(S^1)$-equivariant structure on the principal $\lspinhat n$-bundle $\inf S$ over $LM$, such that the map  $\sigma:\inf S \to LFM$ is $\diff^{+}(S^1)$-equivariant.
\end{proposition}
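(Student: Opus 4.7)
The plan is to apply Proposition \ref{prop:equiv} to the $\ueins$-bundle $T_{\inf S} \to LFM$ to extract a $\diff^{+}(S^1)$-equivariant structure, and then to upgrade this to a $\diff^{+}(S^1)$-equivariant structure on the principal $\lspinhat n$-bundle $\inf S \to LM$ using the compatibility identity in Definition \ref{def:thinspin}. For any $\phi \in \diff^{+}(S^1)$ and any loop $\tau \in LFM$, connectedness of $\diff^{+}(S^1)$ provides a smooth path $\phi_s$ from $\id$ to $\phi$; the induced homotopy $H(s,z) = \tau(\phi_s(z))$ is a reparameterization and hence thin, so that $(\tau, \tau \circ \phi) \in \thinpairs{LFM}$ (and analogously in any associated loop space). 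I would then \emph{define} the lift of the $\phi$-action by
\begin{equation*}
r_\phi(t) \df d_{\sigma(t),\sigma(t) \circ \phi}(t)\text{,}
\end{equation*}
for $t \in \inf S$. Since $d$ is a bundle morphism covering the identity on the base in the first factor and $\cdot \circ \phi$ on the second, $r_\phi$ covers the reparameterization action of $\phi$ on $LFM$, and in particular projects to the reparameterization action on $LM$. The cocycle condition for thin structures immediately gives $r_{\phi_1 \circ \phi_2} = r_{\phi_1} \circ r_{\phi_2}$, so this really is an action.

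Next I would check equivariance with respect to the principal $\lspinhat n$-action. Since $(\tau,\tau \circ \phi) \in \thinpairs{LFM}$ and $(\gamma, \gamma \circ \phi) \in \thinpairs{L\spin n}$ for $\tau$ the image of $t$ and $\gamma$ the image of $\tilde\gamma \in \lspinhat n$, the pair $((\tau,\gamma),(\tau \circ \phi, \gamma \circ \phi))$ lies in $\thinpairs{L(FM \times \spin n)}$ (connected by the reparameterization path). The key compatibility identity of Definition \ref{def:thinspin} then yields
\begin{equation*}
r_\phi(t\cdot \tilde\gamma) = d_{\tau\gamma,(\tau \circ \phi)(\gamma \circ \phi)}(t \cdot \tilde\gamma) = d_{\tau,\tau\circ\phi}(t) \cdot d^{\nu}_{\gamma,\gamma \circ \phi}(\tilde\gamma) = r_\phi(t) \cdot r_\phi^{d^{\nu}}(\tilde\gamma)\text{,}
\end{equation*}
where $r_\phi^{d^{\nu}}$ is the $\phi$-action on $\lspinhat n$ provided by Proposition \ref{prop:equiv} applied to the thin structure $d^{\nu}$ on $\lspinhat n$. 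This is the desired intertwining condition for a $\diff^{+}(S^1)$-equivariant structure on the principal $\lspinhat n$-bundle $\inf S$. Equivariance of $\sigma$ is automatic: by construction $\sigma(r_\phi(t)) = \sigma(t) \circ \phi$, since the thin structure $d$ on $T_{\inf S}$ is a bundle morphism along $\thinpairs{LFM} \hookrightarrow LFM \times LFM$.

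The main obstacle is to verify that $r_\phi$ depends smoothly on $(\phi,t)$ in the appropriate diffeological sense, i.e.\ that we really obtain a smooth action of the diffeological group $\diff^{+}(S^1)$ rather than just a set-theoretic family of bundle maps. This reduces to smoothness of the map $\phi \mapsto (\tau,\tau \circ \phi)$ into the diffeological space $\thinpairs{LFM}$, which in turn follows because the reparameterization path $\phi_s$ can be chosen smoothly in $\phi$, so the associated thin homotopy sits in a smooth family; this is exactly what the diffeology on $\thinpairs{LFM}$ used in \cite[Section 3.1]{waldorf11} is designed to encode. Combined with the smoothness of $d$ as a morphism of diffeological bundles over $\thinpairs{LFM}$, this gives smoothness of $r_\phi$, completing the construction.
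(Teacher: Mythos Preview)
Your proof is correct and follows essentially the same approach as the paper: use Proposition~\ref{prop:equiv} to extract the $\diff^{+}(S^1)$-action from the thin structure on $T_{\inf S}$, and observe that it covers the natural reparameterization action on $LFM$. The paper's proof is considerably more terse---it simply states that the thin structure ``lifts this action to $\inf S$''---whereas you have explicitly verified the $\lspinhat n$-equivariance of the lifted action via the compatibility identity of Definition~\ref{def:thinspin}, which is exactly the content needed to conclude that the equivariant structure is one of principal $\lspinhat n$-bundles and not merely of principal $\ueins$-bundles. In that sense your argument fills in a step the paper leaves implicit.
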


\begin{proof}
We note that $LFM$ is obviously $\diff(S^1)$-equivariant as a $\lspin n$-bundle over $LM$, since $\diff^{+}(S^1)$ acts on $LFM$. By Proposition \ref{prop:equiv}, the thin structure on $T_{\inf S}$ lifts this action to $\inf S$. 
\end{proof}

Thin structures and fusion products for spin structures combine in the following way.

\begin{definition}
\label{def:thinfusionspin}
A \emph{thin fusion spin structure} on $LM$ is a spin structure $(\inf S,\sigma)$ with a fusion product $\lambda$ in the sense of Definition \ref{def:fusionspinstructure} and a thin structure $d$ in the sense of Definition \ref{def:thinspin}, such that $d$ is fusive with respect to $\lambda$ in the sense of Definition \ref{def:fusionthin}. \end{definition}

Particular care has to be taken with the correct notion of morphisms between thin fusion spin structures. 
\begin{comment}
If $(\inf S,\sigma,\lambda,d)$ and $(\inf S',\sigma',\lambda',d')$ are thin fusion spin structures, then an obvious notion of  morphism
\begin{equation*}
\varphi: (\inf S,\sigma,\lambda,d)
\to
(\inf S',\sigma',\lambda',d')
\end{equation*}
would be a morphism between spin structures such that the induced bundle morphism  $\varphi\maps T_{\inf S} \to T_{\inf S'}$ is thin and fusion-preserving. This leads to a category which carries an action of the monoidal category $\ufusbunth{LM}$ under which \erf{eq:taction} is an equality of thin fusion bundles over $LFM$.

We  now give a definition of morphisms between thin fusion spin structures that leads to a category which is equivalent to the category of string structures. We  prove this in a combination of results from the next subsection and Section \ref{sec:transgression}. 
First we recall the notion of a fusion homotopy \cite[Definitions 2.2.3 and 2.2.5]{waldorf9}. 
\end{comment}
If $X$ is a smooth manifold, a \emph{fusion map} $f: LX \to \ueins$ is a smooth map with the following properties:
\begin{enumerate}[(i)]

\item
If $\tau, \tau'\in LX$ are thin homotopic loops, then  $f(\tau)=f(\tau')$.

\item
If $(\gamma_1,\gamma_2,\gamma_3)\in PX^{[3]}$, then $f(\gamma_1\lop \gamma_2) \cdot f(\gamma_2\lop \gamma_3) = f(\gamma_1 \lop \gamma_3)$.
\end{enumerate}  
A \emph{fusion homotopy} is a smooth map $h: [0,1] \times LX \to \ueins$ such that $h_t: LX \to \ueins$ is a fusion map for all $t\in [0,1]$.

\begin{definition}
\label{def:thinfusionspinmorph}
Let  $(\inf S,\sigma,\lambda,d)$ and $(\inf S',\sigma',\lambda',d')$ be thin fusion spin structures. A morphism is a smooth map $\varphi: \inf S \to \inf S'$ satisfying the following conditions:
\begin{enumerate}[(i)]

\item 
$\sigma' \circ \varphi = \sigma$; in particular, $\varphi$ covers the identity on $LM$. 

\item
$\varphi$ is equivariant with respect to the $\ueins$-actions on $\inf S$ and $\inf S'$, i.e. it induces a morphism  $\varphi\maps T_{\inf S} \to T_{\inf S'}$ between $\ueins$-bundles over $LFM$.

\item
The bundle morphism $\varphi: T_{\inf S} \to T_{\inf S'}$ is fusion-preserving and thin. 

\item
$\varphi$ is fusion-homotopy-equivariant with respect to the $\lspinhat n$-action, i.e.  there exists  a fusion homotopy
$h: [0,1] \times LFM \times \lspin n \to \ueins$ with $h_0=1$ and 
\begin{equation}
\label{eq:proofcond2}
\varphi(t\cdot \tilde\tau)\cdot h_1(\beta,\tau) = \varphi(t)\cdot \tilde\tau
\end{equation}
for all $t\in \inf S$ and $\tilde\tau\in\lspinhat n$ over $\beta\in LFM$ and $\tau\in \lspin n$, respectively. 

\end{enumerate}
\end{definition}

\begin{comment}
We remark that morphisms between thin fusion spin structures only respect the $\lspinhat n$-action up to homotopy. So it might happen that two thin fusion spin structures are isomorphic, although the underlying spin structures are not isomorphic. 
\end{comment}

Definitions \ref{def:thinfusionspin} and \ref{def:thinfusionspinmorph} result in a category of thin fusion spin structures which we denote by $\spstthfus$. It carries an action of the monoidal category $\ufusbunth{LM}$.  

In the end, the category that is equivalent to the category of string structures on $M$ is the \emph{homotopy category} $h\spstthfus$, i.e. two morphisms $\varphi_0,\varphi_1: \inf S \to \inf S'$ become identified if there is a smooth map $h:[0,1] \times \inf S \to \inf S'$ with $h_0=\varphi_0$, $h_1=\varphi_1$, and $h_t$ is a morphism between thin fusion spin structures for all $t\in[0,1]$.
The homotopy category $h\spstthfus$ inherits an action of the homotopy category $h\ufusbunth{LM}$. As a consequence of Theorem \ref{th:trequiv}, this action exhibits $h\spstthfus$ as a torsor over $h\ufusbunth{LM}$.

\subsection{Lifting theory for  spin structures}

\label{sec:lifting}

As any lifting problem, spin structures on loop spaces can be described by a bundle gerbe, the \emph{spin lifting  gerbe} $\mathcal{S}_{LM}$ \cite{Carey1998}. We refer to \cite[Section 4.1]{Waldorfa} for a  detailed treatment. In short, the spin lifting  gerbe $\mathcal{S}_{LM}$ is the following bundle gerbe over $LM$:
\begin{enumerate}[(i)]

\item 
it has the surjective submersion $L\pi:LFM \to LM$.

\item
over the 2-fold fibre product $LFM^{[2]}$ it carries the Fréchet principal $\ueins$-bundle \begin{equation*}
\q:=L\delta^{*}\lspinhat n\text{,}
\end{equation*}
where $\delta: FM^{[2]} \to \spin n$ is the \quot{difference map} defined by $p'\cdot \delta(p,p')=p$.

\item
over the 3-fold fibre product $LFM^{[3]}$ it has the bundle gerbe product
\begin{equation*}
\mu: \pr_{23}^{*}\q \otimes \pr_{12}^{*}\q \to \pr_{13}^{*}\q: ((\beta_2,\beta_3,\tilde \tau_{23}) \otimes(\beta_1,\beta_2,\tilde \tau_{12}))\mapsto (\beta_1,\beta_3,\tilde\tau_{23}\cdot \tilde\tau_{12})
\end{equation*}
defined from the group structure of $\lspinhat n$. 
\begin{comment}
If $\tilde \tau_{12}$ is over $\delta(p_1,p_2)$ and $\tilde\tau_{23}$ is over $\delta(p_2,p_3)$ then $\tilde \tau_{23}\cdot \tilde\tau_{12}$ is over $\delta(p_2,p_3)\cdot \delta(p_1,p_2)=\delta(p_1,p_3)$. 

Alternatively, we recall that $\lspinhat n=\tr_{\gbas}$ is a multiplicative $\ueins$-bundle over $\lspin n$, i.e. we have this isomorphism
\begin{equation*}
\tr_{\mathcal{M}}: \pr_1^{*}\tr_{\gbas} \otimes \pr_2^{*}\tr_{\gbas} \to m^{*}\tr_{\gbas}
\end{equation*}
between bundles over $\lspin n\times \lspin n$. We consider the map 
\begin{equation*}
\delta_2: LFM^{[3]} \to \lspin n\times \lspin n
\end{equation*}
defined by $(\tau_3,\tau_2) \cdot \delta_2(\tau_1,\tau_2,\tau_3)=(\tau_2,\tau_1)$. We have $\pr_1\circ \delta_2=\delta\circ \pr_{23}$, $\pr_2\circ \delta_2=\delta\circ \pr_{12}$ and $m\circ \delta_2 = \delta\circ \pr_{13}$. So we simply define
\begin{equation*}
\mu := \delta_2^{*}\tr_{\mathcal{M}}\text{.}
\end{equation*}

\end{comment}
\end{enumerate}

The purpose of the lifting bundle gerbe is to provide a reformulation of spin structures in terms of trivializations of $\mathcal{S}_{LM}$. A trivialization of $\mathcal{S}_{LM}$ is by definition a pair $\mathcal{T}=(T,\kappa)$ consisting of a principal $\ueins$-bundle $T$ over $LFM$ and  a bundle isomorphism
\begin{equation*}
\kappa\maps \pr_2^{*}T\otimes  \q \to \pr_1^{*}T
\end{equation*} 
over $LFM^{[2]}$ such that the diagram
\begin{equation*}
\alxydim{@C=2cm@R=\xypicst}{\pr_3^{*}T\otimes \pr_{23}^{*}\q \otimes \pr_{12}^{*}\q  \ar[r]^-{\pr_{23}^{*}\kappa \otimes \id} \ar[d]_{\id \otimes \mu} & \pr_2^{*}T \otimes \pr_{12}^{*}\q \ar[d]^{\pr_{12}^{*}\kappa} \\ \pr_3^{*}T \otimes \pr_{13}^{*}\q \ar[r]_-{\pr_{13}^{*}\kappa} & \pr_1^{*}T}
\end{equation*}
of bundle morphisms over $LFM^{[3]}$ is commutative.
A morphism between trivializations $(T,\kappa)$ and $(T',\kappa')$ is a bundle isomorphism $\varphi: T \to T'$ such that the diagram
\begin{equation*}
\alxydim{@=\xypicst}{\pr_2^{*}T\otimes \q  \ar[r]^-{\kappa} \ar[d]_{\pr_2^{*}\varphi} & \pr_1^{*}T \ar[d]^{\pr_1^{*}\varphi} \\ \pr_2^{*}T' \otimes \q \ar[r]_-{\kappa'} & \pr_1^{*}T'}
\end{equation*}
is commutative. Trivializations of $\mathcal{S}_{LM}$ form a category $\triv{\mathcal{S}_{LM}}$, which is a module for the monoidal category $\ubun{LM}$ of Fréchet principal $\ueins$-bundles over $LM$ under the action functor
\begin{equation}
\label{eq:actiontrivspin}
\ubun{LM} \times \triv{\mathcal{S}_{LM}} \to \triv{\mathcal{S}_{LM}}: (K,\mathcal{T})\mapsto K \otimes \mathcal{T}\text{.}
\end{equation}
Here, the trivialization $K\otimes \mathcal{T}$ consists of the principal $\ueins$-bundle $L\pi^{*}K \otimes T$ over $LFM$ and of the bundle isomorphism $\id\otimes\kappa$. The action \erf{eq:actiontrivspin} exhibits $\triv{\mathcal{S}_{LM}}$ as a torsor over $\ubun{LM}$.
\begin{comment}
This can be seen by regarding  trivializations of $\mathcal{S}_{LM}$ as isomorphisms $\mathcal{S}_{LM} \to \mathcal{I}$ \cite[Section 3.1]{waldorf1}; but isomorphisms between two bundle gerbes over any smooth manifold $X$ form a torsor  over $\ubun{X}$.
\end{comment}
\begin{comment}
We recall that associated to a spin structure $(\inf S,\sigma)$ is a principal $\ueins$-bundle $T_{\inf S}$ over $LFM$.
\end{comment}
For a spin structure $(\inf S,\sigma)$ we have  a bundle isomorphism
\begin{equation*}
\kappa_{\inf S}: \pr_2^{*}T_{\inf S} \otimes \q \to \pr_1^{*}T_{\inf S}: (\beta_2,t) \otimes (\beta_1,\beta_2,\tilde\tau) \mapsto (\beta_1,t \cdot \tilde\tau)
\end{equation*}
over $LFM^{[2]}$,
where  $t\cdot \tilde\tau$ is  the $\lspinhat n$-action on $\inf S$. It is easy to see that $(T_{\inf S},\kappa_{\inf S})$ is a trivialization of the spin lifting gerbe $\mathcal{S}_{LM}$.
A morphism $\varphi: \inf S \to \inf S'$  between spin structures induces a morphism $\varphi: T_{\inf S} \to T_{\inf S'}$ between bundles over $LFM$, which is in fact a morphism $(T_{\inf S},\kappa_{\inf S}) \to (T_{\inf S'},\kappa_{\inf S'})$  between trivializations.
\begin{comment}
For the definition of $\kappa$ we have to check: if $t\in T$ sits over $\sigma(t)=\tau\in LFM$ and $q\in \lspinhat n$ sits over $\gamma\in \lspin n$, then $t\cdot q$ sits over $\sigma(t\cdot q)=\tau\cdot \gamma$, and $\delta(\tau\cdot\gamma,\tau)=\gamma$.

A morphism between spin structures is a morphism between trivializations.

The inverse of this construction goes as follows. If $(T,\kappa)$ is a trivialization, then put $\inf S := T$ with the projection $T \to LFM \to LM$, and $\sigma: \inf S \to LFM$ just the bundle projection $T \to LFM$. The $\lspinhat n$-action on $\inf S$ is defined by $t \cdot q := \kappa(t \otimes q)$. 
\end{comment}
As a consequence of a general theorem of Murray about lifting gerbes \cite{murray} we obtain the following result; also see \cite[Theorem 4.1.3]{Waldorfa}.

\begin{proposition}
\label{prop:lifting}
The assignment $(\inf S,\sigma)\mapsto (T_{\inf S},\kappa_{\inf S})$ establishes an equivalence of categories:
\begin{equation*}
\spst \cong \bigset{11em}{Trivializations of the spin lifting gerbe $\mathcal{S}_{LM}$}\text{.}
\end{equation*}
\end{proposition}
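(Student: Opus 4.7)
The plan is to construct an explicit quasi-inverse functor $(T,\kappa)\mapsto(\inf S,\sigma)$; this is Murray's general lifting gerbe theorem \cite{murray} specialized to the central extension $1\to\ueins\to\lspinhat n\to\lspin n\to1$ and the principal $\lspin n$-bundle $L\pi\maps LFM\to LM$. Given a trivialization $(T,\kappa)$ of $\mathcal{S}_{LM}$, I set $\inf S\df T$ as a Fr\'echet manifold and take $\sigma\maps\inf S\to LFM$ to be the bundle projection to $LFM$. For $t\in T$ over $\beta\in LFM$ and $\tilde\gamma\in\lspinhat n$ projecting to $\gamma\in\lspin n$, the pair $(\beta\cdot\gamma,\beta)$ lies in $LFM^{[2]}$ with $\delta(\beta\cdot\gamma,\beta)=\gamma$, so that $(\beta\cdot\gamma,\beta,\tilde\gamma)$ is a point of $\q$. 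I define
\begin{equation*}
t\cdot\tilde\gamma \df \kappa\bigl((\beta,t)\otimes(\beta\cdot\gamma,\beta,\tilde\gamma)\bigr)\in T_{\beta\cdot\gamma}.
\end{equation*}
The cocycle condition on $\kappa$ over $LFM^{[3]}$ translates, via the multiplicativity of the bundle gerbe product $\mu$, directly into associativity of this operation; the restriction to $\ueins\subset\lspinhat n$ reproduces the original $\ueins$-action on $T$ since $\kappa$ is $\ueins$-equivariant; and by construction $\sigma$ intertwines the $\lspinhat n$-action with the $\lspin n$-action on $LFM$.

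Next I verify that $\inf S\to LM$ is a genuine principal $\lspinhat n$-bundle. Smoothness and freeness of the action are inherited from those of $\kappa$ and of the $\lspin n$-action on $LFM$; local triviality is obtained by pulling back a local trivialization of the $\ueins$-bundle $T\to LFM$ along a local section of $L\pi$ and extending it $\lspinhat n$-equivariantly using the action just defined. For morphisms, a smooth bundle map $\varphi\maps T\to T'$ covering the identity on $LFM$ intertwines $\kappa$ with $\kappa'$ if and only if the induced map between the reconstructed spin structures is $\lspinhat n$-equivariant; this is a direct unravelling of definitions. Thus the inverse assignment is functorial, and the compatibility with morphisms shows that the given functor is fully faithful.

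Finally, the two constructions are mutually inverse on the nose. Starting from a spin structure $(\inf S,\sigma)$, applying both constructions recovers the same underlying Fr\'echet manifold $\inf S=T_{\inf S}$ and the same $\lspinhat n$-action, since $\kappa_{\inf S}$ was defined precisely by $(\beta_2,t)\otimes(\beta_1,\beta_2,\tilde\tau)\mapsto(\beta_1,t\cdot\tilde\tau)$. Starting from a trivialization $(T,\kappa)$, the associated $\ueins$-bundle is tautologically $T$ again, and the associated trivialization isomorphism equals $\kappa$ by definition of the reconstructed action. The main obstacle -- although essentially routine -- is the verification of local triviality of $\inf S\to LM$ in the Fr\'echet-manifold sense, where one needs the local sections of $L\pi$ to glue consistently with the $\lspinhat n$-action built from $\kappa$; every other step is a diagram chase exploiting the cocycle and multiplicativity relations of the lifting gerbe.
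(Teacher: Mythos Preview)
Your proof is correct and follows exactly the approach the paper has in mind. The paper does not spell out a proof but cites Murray's general lifting gerbe theorem \cite{murray} and \cite[Theorem 4.1.3]{Waldorfa}; the inverse construction the paper indicates (in a suppressed comment) is precisely yours: set $\inf S:=T$, take $\sigma$ to be the bundle projection, and define the $\lspinhat n$-action by $t\cdot\tilde\gamma:=\kappa(t\otimes\tilde\gamma)$, with associativity coming from the cocycle condition for $\kappa$ and the multiplicativity of $\mu$.
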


Formula \erf{eq:taction} shows that the equivalence of Proposition \ref{prop:lifting} is equivariant under the actions \erf{eq:actionspst} and \erf{eq:actiontrivspin} of $\ubun{LM}$.
In particular, we obtain the following consequence.

\begin{corollary}
\label{co:spsttorsor}
The category $\spst$ of spin structures on $LM$ is a torsor over the monoidal category $\ubun{LM}$.
\end{corollary}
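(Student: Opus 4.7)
The plan is to deduce this directly from Proposition \ref{prop:lifting} together with the already-established fact that the category $\triv{\mathcal{S}_{LM}}$ of trivializations of the spin lifting gerbe is a torsor over $\ubun{LM}$ under the action functor \erf{eq:actiontrivspin}. Since a torsor structure is preserved by equivariant equivalences of categories, it suffices to observe that the equivalence
\begin{equation*}
\spst \to \triv{\mathcal{S}_{LM}} : (\inf S,\sigma) \mapsto (T_{\inf S},\kappa_{\inf S})
\end{equation*}
of Proposition \ref{prop:lifting} intertwines the two $\ubun{LM}$-actions. Concretely, I would verify that for a bundle $K\in \ubun{LM}$ and a spin structure $(\inf S,\sigma)$, the associated trivialization of $K\otimes(\inf S,\sigma)$ agrees, up to canonical identification, with $K$ acting on $(T_{\inf S},\kappa_{\inf S})$ in the sense of \erf{eq:actiontrivspin}. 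This is precisely the content of the equation \erf{eq:taction}, and the compatibility of the bundle gerbe descent datum $\kappa$ under this identification is a direct unpacking of definitions.

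With equivariance in place, the torsor property for $\spst$ reduces to the torsor property for $\triv{\mathcal{S}_{LM}}$. More explicitly, the functor
\begin{equation*}
\ubun{LM} \times \spst \to \spst \times \spst \quomma (K,\inf S) \mapsto (K\otimes \inf S,\inf S)
\end{equation*}
fits into a strictly commutative square with the analogous functor for $\triv{\mathcal{S}_{LM}}$, in which three of the four sides are the equivalences $\id_{\ubun{LM}}\times \eqref{prop:lifting}$ and $\eqref{prop:lifting}\times \eqref{prop:lifting}$. Since the latter functor is an equivalence by hypothesis, so is the former by the two-out-of-three principle.

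The only step that requires any genuine verification is the equivariance of the lifting-gerbe equivalence, and this is essentially unpacked already: on the bundle underlying the trivialization, \erf{eq:taction} gives $T_{K\otimes \inf S}=L\pi^{*}K\otimes T_{\inf S}$, and the descent isomorphism $\kappa_{K\otimes \inf S}$ is obtained from $\kappa_{\inf S}$ by tensoring with $\id_{L\pi^{*}K}$, which matches the definition of the action \erf{eq:actiontrivspin}. Thus no obstacle arises; the corollary is a formal consequence of Proposition \ref{prop:lifting}.
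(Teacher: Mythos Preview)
Your proposal is correct and follows essentially the same approach as the paper: the paper deduces the corollary from Proposition~\ref{prop:lifting} by noting that formula~\erf{eq:taction} makes the equivalence $\spst \cong \triv{\mathcal{S}_{LM}}$ equivariant under the $\ubun{LM}$-actions, and then invokes the already-established torsor property of $\triv{\mathcal{S}_{LM}}$. Your write-up is in fact more explicit than the paper's one-line justification, spelling out the two-out-of-three argument and the verification that $\kappa_{K\otimes\inf S}=\id\otimes\kappa_{\inf S}$.
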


\begin{comment}
Next we want to include fusion products into the  lifting gerbe-theoretic description of  spin structures. 
\end{comment}
The fusion product $\lambda_{\gbas}$ of the central extension $\lspinhat n$ pulls back along
the map $L\delta$ to a fusion product $\lambda_P := L\delta^{*}\lambda_{\gbas}$ on the $\ueins$-bundle $\q$ of the lifting gerbe $\mathcal{S}_{LM}$. The bundle gerbe product $\mu$ of $\mathcal{S}_{LM}$ is fusion-preserving according to Lemma \ref{lem:multfus}. 
\begin{comment}
In the terminology of \cite{Waldorfa}, this equips $\mathcal{S}_{LM}$ with an \emph{internal fusion product}. 
\end{comment}

Suppose $\mathcal{T}=(T,\kappa)$ is a trivialization of $\mathcal{S}_{LM}$. A fusion product $\lambda$ on $T$ is called \emph{compatible} if  the bundle morphism $\kappa$ is fusion-preserving (with respect to the fusion product $\lambda_{\q}$ on $\q$). A morphism $\varphi:\mathcal{T}_1 \to \mathcal{T}_2$ between two trivializations with fusion products is called \emph{fusion-preserving}, if it is fusion-preserving as a bundle morphism $\varphi: T_1 \to T_2$.

\begin{proposition}[{{\cite[Corollary 4.4.8]{Waldorfa}}}]
\label{prop:fusionlifting}
The assignment $(\inf S,\sigma,\lambda)\mapsto (T_{\inf S},\kappa_{\inf S},\lambda)$ establishes an equivalence of categories:
\begin{equation*}
\spstfus 
\cong
\bigset{12em}{Trivializations of the spin lifting gerbe $\mathcal{S}_{LM}$ with compatible fusion products}\text{.}
\end{equation*}
\end{proposition}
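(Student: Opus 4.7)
The plan is to upgrade the equivalence of Proposition \ref{prop:lifting} to the setting with fusion products by checking that the two notions of compatibility match under the correspondence $(\inf S,\sigma)\mapsto (T_{\inf S},\kappa_{\inf S})$. Since this assignment has $T_{\inf S}$ as the very same $\ueins$-bundle as the one carrying a fusion product (namely the $\ueins$-reduction of $\inf S$ along $\sigma$), a fusion product in the sense of Definition \ref{def:fusionproduct} is literally the same datum on both sides of the equivalence. Thus the only work is to match the compatibility conditions and to verify the morphism condition.

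First I would translate the compatibility between the $\lspinhat n$-action and $\lambda_{\inf S}$ into fusion-preservation of the trivialization morphism. Unwinding the definition $\kappa_{\inf S}((\beta_2,t)\otimes(\beta_1,\beta_2,\tilde\tau)) = (\beta_1, t\cdot\tilde\tau)$ and using that $\lambda_{\q} = L\delta^{*}\lambda_{\gbas}$, the square expressing fusion-preservation of $\kappa_{\inf S}$ over $LFM^{[2]}\times_{M^{[2]}}^{[3]}\cdots$ evaluated on elements reads
\begin{equation*}
\kappa_{\inf S}\bigl(\lambda_{\inf S}(t_{23}\otimes t_{12})\otimes \lambda_{\q}(\tilde\tau_{23}\otimes\tilde\tau_{12})\bigr) = \lambda_{\inf S}\bigl(\kappa_{\inf S}(t_{23}\otimes\tilde\tau_{23})\otimes \kappa_{\inf S}(t_{12}\otimes\tilde\tau_{12})\bigr),
\end{equation*}
which, after applying the defining formula for $\kappa_{\inf S}$ and for $\lambda_{\q}$, is exactly the condition
$\lambda_{\inf S}(t_{23}\cdot\tilde\tau_{23}\otimes t_{12}\cdot\tilde\tau_{12}) = \lambda_{\inf S}(t_{23}\otimes t_{12})\cdot \lambda_{\gbas}(\tilde\tau_{23}\otimes\tilde\tau_{12})$ from Definition \ref{def:fusionspinstructure}. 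Hence compatible fusion products on $\mathcal{T}_{\inf S}$ correspond bijectively to fusion products on $(\inf S,\sigma)$ in the sense of Definition \ref{def:fusionspinstructure}.

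Second, I would observe that morphisms in both categories are defined by requiring the induced bundle morphism $\varphi\maps T_{\inf S}\to T_{\inf S'}$ to be fusion-preserving, with no further data. Under the equivalence of Proposition \ref{prop:lifting} the morphism sets coincide, and the fusion-preservation condition is manifestly the same on both sides. Thus the functor $(\inf S,\sigma,\lambda)\mapsto (T_{\inf S},\kappa_{\inf S},\lambda)$ is fully faithful.

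Essential surjectivity follows from essential surjectivity of the functor in Proposition \ref{prop:lifting}: given a trivialization $(T,\kappa)$ with compatible fusion product $\lambda$, one recovers a spin structure $(\inf S,\sigma)$ with $T_{\inf S}=T$ and $\kappa_{\inf S}=\kappa$, and transports $\lambda$ to a fusion product on $T_{\inf S}$; the reverse translation of the compatibility condition guarantees that the $\lspinhat n$-action on $\inf S$ is fusion-preserving. No real obstacle arises, as everything is a direct unravelling of definitions — the only point that deserves care is that $\lambda_{\q}$ is by construction the pullback of $\lambda_{\gbas}$ along $L\delta$, so that the fusion behaviour of $\kappa_{\inf S}$ is governed by the same fusion product on $\lspinhat n$ that appears in Definition \ref{def:fusionspinstructure}.
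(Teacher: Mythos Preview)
Your proposal is correct and follows essentially the same approach as the paper: build on the equivalence of Proposition~\ref{prop:lifting} and verify that the compatibility condition of Definition~\ref{def:fusionspinstructure} unwinds, via the formula $\kappa_{\inf S}(t\otimes\tilde\tau)=t\cdot\tilde\tau$ and the identification $\lambda_{\q}=L\delta^{*}\lambda_{\gbas}$, to precisely the statement that $\kappa_{\inf S}$ is fusion-preserving. The paper records exactly this calculation (and defers to \cite[Corollary~4.4.8]{Waldorfa} for the full argument), so there is nothing to add.
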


\begin{comment}
This equivalence is based on  Proposition \ref{prop:lifting}; additionally it just states that the compatibility condition in Definition \ref{def:fusionspinstructure} between the fusion product $\lambda_{\inf S}$ on a spin structure and the fusion product $\lambda_{\gbas}$ is the same condition as the one that the isomorphism $\kappa$ of the corresponding trivialization is fusion-preserving. \end{comment}

\begin{comment}
In the conventions here the key calculation (that above condition is satisfied if and only if the bundle isomorphism $\kappa$ is fusion-preserving) is a lot easier:
\begin{eqnarray*}
\lambda_{\inf S}(q_{23}\cdot \beta_{23} \otimes q_{12} \cdot \beta_{12}) &=& \lambda_{\inf S}(\kappa(q_{23}\otimes \beta_{23}) \otimes \kappa(q_{12},\beta_{12}))
\\&=&
\kappa(\lambda_{\inf S}(q_{23} \otimes q_{12}) \otimes \lambda_{L\delta^{*}\gbas}(\beta_{23} \otimes \beta_{12}))
\\&=&
\lambda_{\inf S}(q_{23} \otimes q_{12}) \cdot \lambda_{L\delta^{*}\gbas}(\beta_{23} \otimes \beta_{12})
\end{eqnarray*}
\end{comment}

Next we include thin structures into the lifting-gerbe description. 
The thin structure $d^{\nu}$ on the central extension $\lspinhat n$ pulls back along
the map $L\delta$ to a  thin structure $d_{\q}$ on the $\ueins$-bundle $\q$ of the lifting gerbe $\mathcal{S}_{LM}$. 
\begin{comment}
The bundle gerbe product $\mu$ is thin according to Proposition \ref{prop:thinmult}. 

This leads to the following definition, analogous the the one of an internal fusion product.

\begin{definition}
Let $\mathcal{G}$ be a bundle gerbe over $LM$ whose surjective submersion is the looping of a surjective submersion $\pi:Y \to M$. An \emph{internal thin structure} on $\mathcal{G}$ is a thin structure on its principal $\ueins$-bundle $\q$ over $LY^{[2]}$, such that its bundle gerbe product $\mu$ is a thin bundle morphism.
\end{definition}

\begin{remark}
A  \emph{thin structure} on $\mathcal{G}$ would be an isomorphism
\begin{equation*}
\mathcal{D}:\pr_1^{*}\mathcal{G} \to \pr_2^{*}\mathcal{G}
\end{equation*}
over $\thinpairs{LM}$, together with a transformation
\begin{equation*}
\delta: \pr_{23}^{*}\mathcal{D} \circ \pr_{12}^{*}\mathcal{D} \Rightarrow \pr_{13}^{*}\mathcal{D}
\end{equation*}
over $\thinpairs{LM} \times_{LM} \thinpairs{LM}$ satisfying a coherence condition. A thin structure can be considered on arbitrary bundle gerbes over $LM$, while an internal thin structure only makes sense for bundle gerbes whose surjective submersions are loopings. An internal thin structure determines a thin structure on $\mathcal{G}$, and any thin structure on a bundle gerbe $\mathcal{G}$ over $LM$ determines a $\diff^{+}(S^1)$-equivariant structure on $\mathcal{G}$. The problem with this remark is that it actually only defined an almost thin structure, and it is not very clear what the integrability condition has to be.
\end{remark}
\end{comment}
Suppose $\mathcal{T}=(T,\kappa)$ is a trivialization of $\mathcal{S}_{LM}$. A thin structure $d$ on $T$ is called \emph{compatible}, if  $\kappa$ is a thin bundle morphism (with respect to the thin structure  $d_{\q}$ on $\q$). A morphism $\varphi: \mathcal{T}_1 \to \mathcal{T}_2$ between trivializations with thin structures is called \emph{thin}, if it is thin as a morphism $\varphi: T_1 \to T_2$.

\begin{proposition}
\label{prop:thinlifting}
The assignment $(\inf S,\sigma,d)\mapsto (T_{\inf S},\kappa_{\inf S},d)$ establishes an equivalence of categories:

\begin{equation*}
\spstth
\cong
\bigset{12em}{Trivializations of $\mathcal{S}_{LM}$ with compatible thin structures}\text{.}
\end{equation*}
\end{proposition}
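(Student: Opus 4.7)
The plan is to reduce the claim to Proposition \ref{prop:lifting} and then match the additional thin-structure data. Proposition \ref{prop:lifting} already establishes a bijective correspondence between spin structures $(\inf S,\sigma)$ and trivializations $(T_{\inf S},\kappa_{\inf S})$ of $\mathcal{S}_{LM}$, and matches their morphisms; so it suffices to show that a thin structure on the spin structure in the sense of Definition \ref{def:thinspin} is the same data as a compatible thin structure on the associated trivialization, and that the two notions of morphism coincide.

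For the key compatibility check, I would use the diffeomorphism $FM^{[2]}\cong FM\times\spin n$ determined by $(\beta_1,\beta_2)\mapsto(\beta_2,\delta(\beta_1,\beta_2))$. Looping and passing to thin pairs yields a diffeomorphism
\begin{equation*}
\thinpairs{LFM^{[2]}}\cong\thinpairs{L(FM\times\spin n)}
\end{equation*}
that sends $((\beta_1,\beta_2),(\beta_1',\beta_2'))$ to $((\tau_1,\gamma_1),(\tau_2,\gamma_2))$ with $\tau_1=\beta_2$, $\tau_2=\beta_2'$, $\gamma_1=\delta(\beta_1,\beta_2)$, and $\gamma_2=\delta(\beta_1',\beta_2')$, so that $\beta_1=\tau_1\gamma_1$ and $\beta_1'=\tau_2\gamma_2$. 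For a thin structure $d$ on $T_{\inf S}$, the tensor-product thin structure $d\otimes L\delta^{*}d^{\nu}$ on $\pr_2^{*}T_{\inf S}\otimes\q$ together with the thin structure $d$ on $\pr_1^{*}T_{\inf S}$ turn $\kappa_{\inf S}\maps(t,\tilde\tau)\mapsto t\cdot\tilde\tau$ into a thin bundle morphism over $\thinpairs{LFM^{[2]}}$ exactly when
\begin{equation*}
d_{\tau_1\gamma_1,\tau_2\gamma_2}(t\cdot\tilde\tau)=d_{\tau_1,\tau_2}(t)\cdot d^{\nu}_{\gamma_1,\gamma_2}(\tilde\tau)
\end{equation*}
for all relevant data. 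This is literally the equation of Definition \ref{def:thinspin}, so compatible thin structures on $(T_{\inf S},\kappa_{\inf S})$ and thin structures on $(\inf S,\sigma)$ are indeed the same data.

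For morphisms, a morphism $\varphi\maps\inf S\to\inf S'$ in $\spstth$ is by definition a morphism of spin structures whose induced bundle morphism $T_{\inf S}\to T_{\inf S'}$ is thin, while a morphism between trivializations with compatible thin structures is one whose underlying bundle morphism $T\to T'$ is thin. Under the identification provided by Proposition \ref{prop:lifting} these conditions coincide, so the morphism structure transfers for free and the assignment $(\inf S,\sigma,d)\mapsto(T_{\inf S},\kappa_{\inf S},d)$ is the required equivalence of categories, visibly equivariant under the action of $\ubunth{LM}$. The only step requiring genuine care is the bookkeeping of the tensor-product thin structure on $\pr_2^{*}T_{\inf S}\otimes\q$ and its transport under the diffeomorphism $FM^{[2]}\cong FM\times\spin n$; once this translation is carried out, the remainder of the argument is purely formal.
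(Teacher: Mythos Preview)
Your proposal is correct and follows essentially the same approach as the paper: reduce to Proposition \ref{prop:lifting}, then verify that the condition for $\kappa_{\inf S}$ to be a thin bundle morphism (via the identification $\thinpairs{LFM^{[2]}}\cong\thinpairs{L(FM\times\spin n)}$ and the formula $\kappa_{\inf S}(t\otimes\tilde\tau)=t\cdot\tilde\tau$) is exactly the equation in Definition \ref{def:thinspin}, with morphisms matching for free. The paper's proof is slightly terser but structurally identical; your added remark on $\ubunth{LM}$-equivariance is not part of the proposition statement but is harmless.
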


\begin{proof}
Based on the equivalence of Proposition \ref{prop:lifting}, we observe that the structure on the objects on both hand sides is the same, namely a thin structure $d$ on the $\ueins$-bundle $T=T_{\inf S}$.
It remains to check that the conditions are the same. For the trivialization, the condition is that the diagram
\begin{equation*}
\alxydim{@=\xypicst}{\pr_1^{*}T \otimes \pr_1^{*}\q \ar[r]^-{\pr_1^{*}\kappa} \ar[d]_{d \otimes d_{\q}}  & \pr_1^{*}T \ar[d]^{d} \\  \pr_2^{*}T \otimes \pr_2^{*}\q \ar[r]_-{\pr_2^{*}\kappa} & \pr_2^{*}T}
\end{equation*}
over $\thinpairs{L(FM^{[2]})}=\thinpairs{L(FM \times \spin n)}$ is commutative. We recall the relation  $\kappa(t \otimes \tilde\tau ) = t\cdot \tilde\tau$
between $\kappa$ and the principal $\lspinhat n$-action on $T$. Under this relation, the commutativity of the diagram is equivalent to the following equation:
\begin{equation*}
d(t \cdot \tilde\tau)=d(\kappa_{}(t \otimes \tilde \tau) = \kappa_{}(d(t) \otimes d_{\q}(\tilde\tau)) = d_{}(t)\cdot d_{\q}(\tilde\tau)\text{.} 
\end{equation*}
This is precisely the  condition of Definition \ref{def:thinspin}. For the morphisms, we have on both  sides the same condition, namely that $\varphi: T_1 \to T_2$ is thin with respect to the thin structures on $T_1$ and $T_2$. \end{proof}

Finally, we  combine fusion products and thin structures on trivializations in the following definition.  \begin{comment}
We have the spin lifting gerbe $\mathcal{S}_{LM}$  equipped with an internal fusion product $\lambda_{\q}$ and with an internal thin structure $d_{\q}$. The strict multiplicativity of the thin structure $d^{\nu}$   (Proposition \ref{prop:thinmult}) shows that $d_{\q}$ is fusion with respect to $\lambda_{\q}$ ... though this doesn't seem to be important!
\end{comment}

\begin{definition}
\label{def:thinfusiontriv}
\begin{enumerate}[(i)]
\item 
A \emph{thin fusion trivialization} of the spin lifting gerbe $\mathcal{S}_{LM}$ is a trivialization $\mathcal{T}=(T,\kappa)$ with a fusion product $\lambda$ on $T$ compatible with $\lambda_{\q}$ and a thin structure $d$ on $T$ that is fusive with respect to $\lambda$ and compatible with $d_{\q}$.

\item
A \emph{morphism between thin fusion trivializations} $(T,\kappa,\lambda,d)$ and $(T',\kappa',\lambda',d')$ is a fusion-preserving, thin bundle morphism $\varphi \maps T \to T'$, such that the diagram 
\begin{equation*}
\alxydim{@=\xypicst}{\pr_2^{*}T  \otimes \q  \ar[r]^-{\kappa} \ar[d]_{\pr_2^{*}\varphi} & \pr_1^{*}T \ar[d]^{\pr_1^{*}\varphi} \\ \pr_2^{*}T' \otimes \q \ar[r]_-{\kappa'} & \pr_1^{*}T'}
\end{equation*}
commutes in the homotopy category $h\ufusbunth {LFM^{[2]}}$. 
\end{enumerate}
\end{definition}

The condition that the diagram in (ii) commutes in $h\ufusbunth{LFM^{[2]}}$, i.e. up to homotopy through thin, fusion-preserving bundle morphisms means, explicitly, that there is a smooth map 
\begin{equation}
\label{eq:Phi}
H: [0,1] \times \pr_2^{*}T_{\inf S}  \otimes \q \to \pr_1^{*}T'_{\inf S}
\end{equation}
such that $H_t: \pr_2^{*}T_{\inf S}  \otimes \q \to \pr_1^{*}T'_{\inf S}$ is a thin, fusion-preserving bundle morphism for all $t\in[0,1]$ and we have $H_0=\pr_1^{*}\varphi \circ \kappa$ and $H_1=\kappa' \circ \pr_2^{*}\varphi$.

The category of thin fusion trivializations is denoted by $\trivthfus{\mathcal{S}_{LM}}$. Based on the action functor \erf{eq:actiontrivspin}, it is straightforward to see that it is a module over the monoidal category $\ufusbunth{LM}$.

\begin{proposition}
\label{prop:equivthinfusion}
The assignment $(\inf S,\sigma,\lambda,d)\mapsto (T_{\inf S},\kappa_{\inf S},\lambda,d)$ establishes an equivalence of categories:
\begin{equation*}
\spstthfus
\cong
\trivthfus{\mathcal{S}_{LM}}\text{.}
\end{equation*}
Moreover, it is equivariant with respect to the $\ufusbunth{LM}$-actions on both categories.
\end{proposition}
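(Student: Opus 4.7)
The plan is to build on the chain of equivalences established in Propositions \ref{prop:lifting}, \ref{prop:fusionlifting}, and \ref{prop:thinlifting}. On objects, those propositions already match up the structure piece by piece: a trivialization $(T_{\inf S},\kappa_{\inf S})$ corresponds to a spin structure $(\inf S,\sigma)$; compatibility of a fusion product on $T$ with $\lambda_{\q}$ corresponds to compatibility in the sense of Definition \ref{def:fusionspinstructure}; and compatibility of a thin structure on $T$ with $d_{\q}$ corresponds to the condition of Definition \ref{def:thinspin}. The remaining fusivity relation between $\lambda$ and $d$ (Definition \ref{def:fusionthin}) is an intrinsic property of the pair on the bundle $T = T_{\inf S}$ alone, and therefore transports identically between the two sides. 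Hence the functor is well-defined on objects, and essential surjectivity follows from that of the three preceding equivalences.

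The main obstacle is to reconcile the two morphism conventions: Definition \ref{def:thinfusionspinmorph}(iv) asks only for fusion-homotopy equivariance under the $\lspinhat n$-action, whereas Definition \ref{def:thinfusiontriv}(ii) asks only for commutativity of the $\kappa$-square up to homotopy through thin, fusion-preserving bundle morphisms. The key observation is that $LFM^{[2]}\cong LFM \times \lspin n$ via the difference map $L\delta$. Using the identification $\kappa(t \otimes \tilde\tau) = t\cdot \tilde\tau$, relation \erf{eq:proofcond2} can be rewritten as
\begin{equation*}
(\pr_1^{*}\varphi\circ\kappa)(t\otimes\tilde\tau)\cdot h_1(\sigma(t),p(\tilde\tau)) = (\kappa'\circ\pr_2^{*}\varphi)(t\otimes\tilde\tau)\text{,}
\end{equation*}
which says precisely that the \quot{ratio} of the two composites in the $\kappa$-square equals the pullback of $h_1$. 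Consequently, a fusion homotopy $h_t$ with $h_0 = 1$ yields a homotopy $H_t := (\pr_1^{*}\varphi\circ\kappa)\cdot h_t$ interpolating between $\pr_1^{*}\varphi\circ\kappa$ and $\kappa'\circ \pr_2^{*}\varphi$; each $H_t$ is thin and fusion-preserving because multiplication by the fusion map $h_t$ preserves both properties. Conversely, the space of thin, fusion-preserving bundle morphisms between two fixed thin fusion bundles is a torsor over the group of fusion maps $LFM^{[2]}\to \ueins$, so any homotopy $H$ through such morphisms uniquely determines a fusion homotopy $h$ via this torsor structure. This establishes full faithfulness.

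Equivariance under the $\ufusbunth{LM}$-actions follows directly from the constructions: both actions tensor the underlying bundle $T$ with $L\pi^{*}K$ via formulas \erf{eq:taction} and \erf{eq:actiontrivspin}, while leaving $\kappa$, $\lambda$, and $d$ intact. In particular, the assignment of the proposition intertwines the two actions on the nose.
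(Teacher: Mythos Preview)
Your proof is correct and follows essentially the same approach as the paper's own proof: both build on Propositions \ref{prop:lifting}, \ref{prop:fusionlifting}, \ref{prop:thinlifting} for the object-level correspondence, and both reduce the comparison of morphism conditions to the observation that a homotopy $H$ through thin, fusion-preserving bundle morphisms corresponds, via $H_t = h_t \cdot H_0$, to a fusion homotopy $h$ with $h_0 = 1$, with the identification $\kappa(t\otimes\tilde\tau) = t\cdot\tilde\tau$ translating Equation \erf{eq:proofcond2} into the commutativity of the $\kappa$-square. Your version is slightly more explicit in invoking the diffeomorphism $LFM^{[2]}\cong LFM\times\lspin n$ and in phrasing the converse direction as a torsor argument, but these are presentational differences rather than a different strategy.
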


\begin{proof}
Based on the equivalence of Proposition \ref{prop:lifting} and its extension to fusion products (Proposition \ref{prop:fusionlifting}) and thin structures (Proposition \ref{prop:thinlifting}) it remains to notice, on the level of objects, that  the  compatibility condition between fusion product and thin structure is the same on both sides. Concerning the morphisms, we first observe that we have, in both categories, thin, fusion-preserving morphisms $\varphi: T_{\inf S} \to T_{\inf S'}$ between $\ueins$-bundles over $LFM$. It remains to check that the commutativity in $h\ufusbunth{LFM^{[2]}}$ of  (ii) of Definition \ref{def:thinfusiontriv} is equivalent to (iv) of Definition \ref{def:thinfusionspinmorph}. In order to see this, we notice that the existence of the map $H$ in \erf{eq:Phi} is equivalent to the existence of a fusion homotopy
\begin{equation*}
h: [0,1] \times LFM^{[2]} \to \ueins
\end{equation*}
with $h_0=1$ and 
\begin{equation}
\label{eq:proofcond}
h_1\cdot (\pr_1^{*}\varphi \circ \kappa)= \kappa' \circ \pr_2^{*}\varphi
\end{equation}
\begin{comment}
Indeed, the relation between $H$ and $h$ is
$H_t=h_t\cdot H_0$,
over which one determines the other. The condition that $h_t$ is a fusion map for all $t$ is equivalent to $H_t$ being a thin, fusion-preserving bundle morphism \cite[Section 3.3]{waldorf11}. 
\end{comment}
Now, under the correspondence ($\kappa(t \otimes \tilde\tau ) = t\cdot \tilde\tau$) between the bundle morphisms $\kappa$ and $\kappa'$ with the $\lspinhat n$-action on $\inf S$ and $\inf S'$, respectively, \erf{eq:proofcond} is precisely Equation \erf{eq:proofcond2} in Definition \ref{def:thinfusionspinmorph}. \end{proof}

\begin{comment}
In Theorem \ref{th:trequiv} we will consider the homotopy category of the category $\trivthfus{\mathcal{S}_{LM}}$ of thin fusion trivializations. In this homotopy category,  two morphisms $\varphi_0,\varphi_1: T \to T'$ are identified if there is a smooth map $h:[0,1] \times T \to T'$ with $h_0=\varphi_0$, $h_1=\varphi_1$, and $h_t$ is a morphism in $\trivthfus{\mathcal{S}_{LM}}$ for all $t\in[0,1]$.
The equivalence of Proposition \ref{prop:equivthinfusion} induces an equivalence between the homotopy categories, equivariant under the actions of the homotopy category $h\ufusbunth{LM}$.
\end{comment}

\setsecnumdepth{2}

\section{Superficial spin connections}

\label{sec:geometricspin}

In Section \ref{sec:spinconnections} we study the notion of a spin connection introduced by Coquereaux and Pilch \cite{Coquereaux1998}, and  circumstances under which they induce thin spin structures. We couple spin connections to fusion products and introduce the notion of a superficial geometric fusion spin  structure. In Section \ref{sec:liftingtheoryspinconnections} we develop the corresponding lifting gerbe theory.

\subsection{Spin connections on loop spaces}

\label{sec:spinconnections}

In the following we denote  by $\mathfrak{g}$ the Lie algebra of $\spin n$. 
The Levi-Cevita connection on $M$ induces a  connection $A \in \Omega^1(FM,\mathfrak{g})$  on the spin-oriented frame bundle $FM$. 
\begin{comment}
Recall that a connection satisfies
\begin{equation*}
A_{pg}(\partial_t(\rho(t)\gamma(t))) = \mathrm{Ad}^{-1}_{g}(A_{p}(\partial_t\rho(t)))+g^{-1}\partial_t\gamma(t)\text{.}
\end{equation*}
Its curvature is defined by $F := \mathrm{d}A +\frac{1}{2} [A \wedge A]$. 
\end{comment}
One can define a 1-form
 $\bar A \in \Omega^1(LFM,L\mathfrak{g})$  by
\begin{equation*}
\bar A|_{\tau}(X)(z) := A|_{\tau(z)}(X(z))\text{,}
\end{equation*}
where $\tau\in LFM$ and $X\in T_{\tau}LFM$. 
It is straightforward to  check that $\bar A$ is a connection on $LFM$.

\begin{comment}
\begin{remark}
The assignment $A \mapsto \bar A$ can be generalized to a map
\begin{equation*}
\Omega^n(X,V) \to \Omega^{n}(LX,LV)
\end{equation*}
for differential forms with values in a vector space $V$. In \cite{Coquereaux1998} this is called the \emph{looping of forms}, and $\bar A$ is called the \emph{loop form} of $A$. 
\end{remark} 
\end{comment}

\begin{definition}[{{\cite{Coquereaux1998}}}]
\label{def:spinconnection}
Let $(\inf S,\sigma)$ be a spin structure on $LM$. A \emph{spin connection} on $(\inf S,\sigma)$ is a connection $\Omega \in \Omega^1(\inf S,\widetilde{L\mathfrak{g}})$ on $\inf S$ such that $p_{*}(\Omega) = \sigma^{*}\bar A$, where $p_{*}: \widetilde{L\mathfrak{g}} \to L\mathfrak{g}$ is the projection in the Lie algebra extension.
\end{definition}

A triple $(\inf S,\sigma,\Omega)$ consisting of a spin structure and a spin connection is called a \emph{geometric spin structure on $LM$}.    Geometric spin structures form a category $\spstcon$ whose morphisms are connection-preserving morphisms between spin structures. This category is a module for the monoidal category $\ubuncon{LM}$ of Fréchet principal $\ueins$-bundles over $LM$ with connection, in terms of an action functor
\begin{equation}
\label{eq:actionspstcon}
\ubuncon{LM} \times \spstcon \to \spstcon
\end{equation}
lifting the action \erf{eq:actionspst} of $\ubun{LM}$ on $\spst$ to a setting with connections. If $K$ is a principal $\ueins$-bundle over $LM$ with connection $\eta \in \Omega^1(K)$, and $(\inf S,\sigma,\Omega)$ is a geometric spin structure, then a spin connection on the spin structure $K \otimes \inf S = (K \times_{LM} \inf S)/\,\ueins$ is defined by the 1-form 
\begin{equation*}
\eta \otimes \Omega := \pr_1^{*}\eta + \pr_2^{*}\Omega \in \Omega^1(K \times_{LM} \inf S)
\end{equation*}
that descends to a connection on $K \otimes \inf S$.

We introduce a notion of  \emph{scalar curvature} of a spin connection. For this purpose, we need  the splitting $\split \maps  L\mathfrak{g} \to \widetilde {L\mathfrak{g}}$  described in Section \ref{sec:connsplits} as the horizontal lift with respect to the connection $\nu$, as well as the associated map $Z$ of Lemma \ref{lem:Z}.
Further, we need a \emph{reduction of $LFM$ adapted to $\split$}, i.e. a map
\begin{equation*}
r: LFM \times L\mathfrak{g} \to \R
\end{equation*}
that is linear in the second argument and satisfies
\begin{equation}
\label{eq:redsplit}
r(\tau \cdot \gamma, \mathrm{Ad}^{-1}_{\gamma}(X)) =r(\tau,X)- Z(\gamma,X)
\end{equation}
for all $\tau\in LFM$, $X \in L\mathfrak{g}$ and $\gamma \in L\spin n$. 
Such a reduction can be defined using the connection $A$ on $FM$, by setting \cite[Proposition 6.2]{gomi3}
\begin{equation}
\label{eq:reduction}
r(\tau,X) :=-2 \int_{S^1} \left \langle  \tau^{*} A,X  \right \rangle\text{.}
\end{equation}
\begin{comment}
Explicitly, this means
\begin{equation*}
r(\tau,X)=-2\int_{S^1}\left \langle  A_{\tau(z)}(\partial_z\tau(z)),X(z)  \right \rangle\mathrm{d}z\text{.}
\end{equation*}
We check the condition:
\begin{eqnarray*}
r(\tau\gamma, \mathrm{Ad}^{-1}_{\gamma}(X)) &=&  -2\int_{S^1} \left \langle \tau\gamma^{*}A,\gamma^{-1}X\gamma  \right \rangle \\&=&  -2\int_{S^1} \left \langle  \gamma^{-1}(\tau^{*}A)\gamma + \gamma^{*}\theta,\gamma^{-1}X\gamma  \right \rangle 
 \\&=& -2\int_{S^1} \left \langle  \tau^{*}A,X \right \rangle -2 \int_{S^1} \left \langle \gamma^{-1}\mathrm{d}\gamma,\gamma^{-1}X\gamma  \right \rangle
 \\&=& r(\tau,X) - Z(\gamma,X)\text{.}
\end{eqnarray*}
\end{comment}

In order to define the announced scalar curvature we produce the auxiliary  map
\begin{equation*}
R: \inf S \times \widetilde {L\mathfrak{g}} \to \R : (\beta,\hat X)  \mapsto \hat X - \split(p_{*}(\hat X))+r(\sigma(\beta),p_{*}(\hat X))\text{.}
\end{equation*}
Then we define a 2-form $\psi\in \Omega^2(\inf S)$ by the formula
$\psi_t(X,Y) :=R(t, \mathrm{curv}(\Omega)_{t}(X,Y))$
where $t\in \inf S$ and $X,Y\in T_t\inf S$. The scalar curvature is now defined as follows.

\begin{lemma}
There is a unique 2-form $\mathrm{scurv}(\Omega) \in \Omega^2(LM)$ such that $L\pi^{*}\mathrm{scurv}(\Omega)=\psi$.
\end{lemma}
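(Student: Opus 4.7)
The plan is to show that $\psi\in\Omega^2(\inf S)$ is $\lspinhat n$-basic for the principal action on $\inf S\to LM$ (which factors through $\sigma\maps\inf S\to LFM$), so that it descends uniquely to a 2-form $\mathrm{scurv}(\Omega)\in\Omega^2(LM)$. Uniqueness is automatic because $\inf S\to LM$ is a surjective submersion of Fr\'echet manifolds, so pullback of differential forms is injective. For existence I verify the two standard basic-ness conditions: horizontality and invariance under the $\lspinhat n$-action.

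Horizontality is immediate. Since $\psi$ is obtained by applying the fibrewise-linear map $R(t,-)\maps \widetilde{L\mathfrak{g}}\to \R$ to $\mathrm{curv}(\Omega)$, and the curvature of any principal connection is horizontal, $\psi$ vanishes whenever one argument is vertical for the $\lspinhat n$-action. For invariance, I use the standard equivariance $R_g^{*}\mathrm{curv}(\Omega)=\mathrm{Ad}_g^{-1}\mathrm{curv}(\Omega)$ for $g\in\lspinhat n$ to reduce $R_g^{*}\psi=\psi$ to the pointwise identity
\begin{equation*}
R(t\cdot g,\mathrm{Ad}_g^{-1}(\hat X))=R(t,\hat X)
\end{equation*}
for all $t\in\inf S$, $g\in\lspinhat n$, $\hat X\in\widetilde{L\mathfrak{g}}$.

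Setting $\gamma\df p(g)$ and $X\df p_{*}(\hat X)$, I expand the left-hand side using the definition of $R$. The adapted reduction condition \erf{eq:redsplit} rewrites $r(\sigma(t)\cdot\gamma,\mathrm{Ad}_\gamma^{-1}(X))$ as $r(\sigma(t),X)-Z(\gamma,X)$, while the definition of $Z$ gives $\split(\mathrm{Ad}_\gamma^{-1}(X))=\mathrm{Ad}_\gamma^{-1}(\split(X))-Z(\gamma,X)$, so the two $Z$-contributions cancel. Using that the adjoint action of $\lspinhat n$ on $\widetilde{L\mathfrak{g}}$ factors through $\lspin n$ (central elements act trivially by conjugation), so $\mathrm{Ad}_g^{-1}(\split(X))=\mathrm{Ad}_\gamma^{-1}(\split(X))$, the difference $R(t\cdot g,\mathrm{Ad}_g^{-1}(\hat X))-R(t,\hat X)$ collapses to $\mathrm{Ad}_g^{-1}(\hat X-\split(X))-(\hat X-\split(X))$. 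The key observation is that $\hat X-\split(X)\in\ker(p_{*})=\R$ is central in $\widetilde{L\mathfrak{g}}$, where the adjoint action is trivial, so this difference vanishes. Hence $\psi$ is basic, and descends. The main delicacy is the precise cancellation of the two $Z$-terms, which is exactly the compatibility encoded in the notion of an adapted reduction; so no genuine obstacle arises.
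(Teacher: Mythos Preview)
Your proof is correct and takes essentially the same approach as the paper: both establish the key identity $R(t\cdot g,\mathrm{Ad}_g^{-1}(\hat X))=R(t,\hat X)$ via the cancellation of the two $Z$-terms coming from the definition of $Z$ and the adapted-reduction condition \erf{eq:redsplit}, together with the equivariance of $\mathrm{curv}(\Omega)$. The only cosmetic difference is that you phrase descent as ``$\psi$ is basic'' (horizontal $+$ invariant), whereas the paper verifies the equivalent condition $\pr_1^{*}\psi=\pr_2^{*}\psi$ on $\inf S^{[2]}$.
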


\begin{proof}
We show that $\psi$ descends.
Using \erf{eq:redsplit} it is straightforward to show that
\begin{equation*}
R(t\gamma,\mathrm{Ad}_{\gamma}^{-1}(\hat X)) = R(t,\hat X)
\end{equation*}
for all $t\in \inf S$, $\gamma\in \lspinhat n$, and $\hat X\in \widetilde{L\mathfrak{g}}$.  
\begin{comment}
Indeed,
\begin{eqnarray*}
R(\beta\gamma,\mathrm{Ad}_{g}^{-1}(\hat X))&=&\mathrm{Ad}_{g}^{-1}(\hat X) - \split(p_{*}(\mathrm{Ad}_{g}^{-1}(\hat X)))+r(\sigma(\beta\gamma),p_{*}(\mathrm{Ad}_{g}^{-1}(\hat X)))
\\&=&\mathrm{Ad}_{g}^{-1}(\hat X) - \split(\mathrm{Ad}_{g}^{-1}(p_{*}(\hat X)))+r(\sigma(\beta)g,\mathrm{Ad}_{g}^{-1}(p_{*}(\hat X)))
\\&=&\mathrm{Ad}_{g}^{-1}(\hat X) - \mathrm{Ad}_{g}^{-1}(\split(p_{*}(\hat X)))+Z(g,p_{*}(\hat X))+r(\sigma(\beta),p_{*}(\hat X))-Z(g,p_{*}(\hat X))
\\&=& R(\beta,\hat X)\text{.}
\end{eqnarray*}
\end{comment}
On the other hand, the curvature satisfies
\begin{equation*}
\pr_2^{*}\mathrm{curv}(\Omega)=\mathrm{Ad}^{-1}_{\delta}(\pr_1^{*}\mathrm{curv}(\Omega))
\end{equation*}
over $\inf S^{[2]}$, where $\delta:\inf S^{[2]} \to \lspinhat n$ is the difference map of the principal $\lspinhat n$-bundle $\inf S$. This shows that $\pr_2^{*}\psi = \pr_1^{*}\psi$ over $\inf S^{[2]}$.  
\end{proof}

We will  see  (Theorems \ref{th:spinconlift} and  \ref{th:trequivcon}) that under the correspondence between geometric string structures on $M$ and geometric spin structures on $LM$,  the scalar curvature is (minus) the transgression of the 3-form $K\in\Omega^3(M)$  associated to a geometric string structure (see Theorem \erf{th:stringconnections} \erf{th:stringconnections:3form}).

\label{sec:superficial}

\begin{comment}
We recall that we have formulated thin structures and fusion products for spin structures in terms of the $\ueins$-bundle $T_\inf S$ associated to a spin structure $(\inf S,\sigma)$. We would like to treat spin connections in a similar manner in terms of  connections on $T_\inf S$. The main problem  we encounter is that there are many connections on $T_{\inf S}$ corresponding to a single spin connection $\Omega$, and  a priori it is not clear which to choose.
\end{comment}

Let $(\inf S,\sigma,\Omega)$ be a geometric spin structure on $LM$, and let $T_{\inf S}$ be the associated principal $\ueins$-bundle over $LFM$. We consider the 1-form $\zeta\in\Omega^1(LFM)$ defined by
\begin{equation}
\label{eq:zeta}
\zeta_\tau(X) := r(\tau,\bar A_{\tau}(X))
\end{equation}
for $\tau\in LFM$ and $X\in T_{\tau}LFM$.
\begin{comment}
Explicitly, this is $\zeta_{\tau}(X)=-2\int_0^1 \left \langle  A_{\tau(z)}(\partial_z\tau(z)),A_{\tau(z)}(X(z))  \right \rangle \mathrm{d}z$. In case of the trivial bundle with $A=\theta$ we have
\begin{equation*}
\zeta_{\tau}(X)=-2\int_0^1 \left \langle  \theta_{\tau(z)}(\partial_z\tau(z)),\theta_{\tau(z)}(X(z))  \right \rangle \mathrm{d}z=\beta_{\tau}(X)\text{.}
\end{equation*}
\end{comment}

\begin{lemma}
\label{lem:omegafamily}
For every $x\in \R$, the formula
\begin{equation*}
\omega_{\Omega,x} :=\Omega - \split(\sigma^{*}\bar A) + \frac{x}{2}\sigma^{*}\zeta \in \Omega^1(T_\inf S)
\end{equation*}
defines  a  connection on $T_\inf S$,  of curvature
\begin{equation*}
\mathrm{curv}(\omega_{\Omega,x})=L\pi^{*}\mathrm{scurv}(\Omega)-\frac{1}{2}\omega(\bar A \wedge \bar A)-r(\mathrm{curv}(\bar A))+\frac{x}{2}\mathrm{d}\zeta\text{.}
\end{equation*} 
\end{lemma}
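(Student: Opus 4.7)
The strategy is to verify the connection axioms first and then carry out an explicit curvature computation by exploiting the splitting $\Omega=\split(\sigma^{*}\bar A)+(\Omega-\split(\sigma^{*}\bar A))$, which decomposes the $\widetilde{L\mathfrak{g}}$-valued spin connection into its $\split$-horizontal lift of $\bar A$ and a central, $\R$-valued remainder.

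First I would check that $\omega_{\Omega,x}$ is a connection on $T_\inf S$. The form is $\R$-valued because $p_{*}(\Omega-\split(\sigma^{*}\bar A))=\sigma^{*}\bar A-\sigma^{*}\bar A=0$ by Definition \ref{def:spinconnection}, and $\zeta\in\Omega^{1}(LFM)$ is scalar. $\ueins$-invariance follows from the fact that $\ueins$ is central in $\lspinhat n$, which makes $\Omega$ $\ueins$-invariant, while $\sigma^{*}\bar A$ and $\sigma^{*}\zeta$ are $\ueins$-basic because $\sigma$ factors through $LFM$. On the fundamental vector field $\xi^{*}$ of $\xi\in\R\subset\widetilde{L\mathfrak{g}}$ we have $\Omega(\xi^{*})=\xi$, and $\sigma_{*}\xi^{*}=0$ kills the remaining two terms, yielding $\omega_{\Omega,x}(\xi^{*})=\xi$.

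The core computation is the curvature. Starting from $\mathrm{d}\omega_{\Omega,x}=\mathrm{d}\Omega-\split(\mathrm{d}\sigma^{*}\bar A)+\tfrac{x}{2}\mathrm{d}\sigma^{*}\zeta$ (using $\R$-linearity of $\split$) and inserting the structure equations $\mathrm{d}\Omega=\mathrm{curv}(\Omega)-\tfrac12[\Omega\wedge\Omega]$ and $\mathrm{d}\bar A=\mathrm{curv}(\bar A)-\tfrac12[\bar A\wedge\bar A]$, the crucial observation is that $\eta\df\Omega-\split(\sigma^{*}\bar A)$ takes values in the central subspace $\R\subset\widetilde{L\mathfrak{g}}$, so $[\eta\wedge\eta]=[\eta\wedge\split(\sigma^{*}\bar A)]=0$ and hence
\begin{equation*}
[\Omega\wedge\Omega]=[\split(\sigma^{*}\bar A)\wedge\split(\sigma^{*}\bar A)].
\end{equation*}
Combining this with the defining identity $\omega(X,Y)=[\split(X),\split(Y)]-\split([X,Y])$ of the Lie algebra cocycle (from the paragraph preceding Lemma \ref{lem:omega}) produces the term $-\tfrac12\omega(\sigma^{*}\bar A\wedge\sigma^{*}\bar A)$ and leaves the residual expression $\mathrm{curv}(\Omega)-\split(\sigma^{*}\mathrm{curv}(\bar A))$, which is automatically $\R$-valued since $p_{*}\mathrm{curv}(\Omega)=\mathrm{curv}(p_{*}\Omega)=\sigma^{*}\mathrm{curv}(\bar A)$.

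To finish I would recognise this residual expression inside the definition $R(t,\hat X)=\hat X-\split(p_{*}\hat X)+r(\sigma(t),p_{*}\hat X)$, obtaining
\begin{equation*}
\mathrm{curv}(\Omega)-\split(\sigma^{*}\mathrm{curv}(\bar A))=\psi-r(\sigma(\cdot),\sigma^{*}\mathrm{curv}(\bar A))=\sigma^{*}\bigl(L\pi^{*}\mathrm{scurv}(\Omega)-r(\mathrm{curv}(\bar A))\bigr),
\end{equation*}
since $\psi=L\pi^{*}\mathrm{scurv}(\Omega)$ was established in the preceding lemma. Collecting all contributions expresses $\mathrm{d}\omega_{\Omega,x}$ as $\sigma^{*}$ of the formula in the statement, which simultaneously establishes descent to $LFM$ and the claimed curvature identity. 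The main obstacle is purely bookkeeping: the claimed formula lives on $LFM$ while every intermediate expression lives on $\inf S$, and one has to confirm that the $\ueins$-basic nature of $\omega_{\Omega,x}$ makes the descent unambiguous; no new ideas are required once the central-valuedness of $\eta$ has been exploited.
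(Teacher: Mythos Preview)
Your proposal is correct and follows essentially the same route as the paper: both hinge on the observation that $\Omega-\split(\sigma^{*}\bar A)$ is $\R$-valued (central), which forces $[\Omega\wedge\Omega]=[\split(\sigma^{*}\bar A)\wedge\split(\sigma^{*}\bar A)]$, and both then identify the remaining $\R$-valued piece $\mathrm{curv}(\Omega)-\split(\sigma^{*}\mathrm{curv}(\bar A))$ via the auxiliary map $R$ and the definition of $\psi=L\pi^{*}\mathrm{scurv}(\Omega)$. The only cosmetic difference is ordering: the paper substitutes the scalar-curvature identity into $\mathrm{d}\Omega$ at the outset, whereas you expand via the structure equations first and recognise $\psi$ at the end.
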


\begin{proof}
 It suffices to prove that $\omega_{\Omega,0}$ is a connection, which is a standard calculation. 
\begin{comment}
For $\tilde\tau\in P\inf S$ with underlying $\sigma(\tilde\tau)=:\tau\in PLFM$ and $\delta\in P\ueins$ with associated $i(\delta)=:\tilde\delta\in P\lspinhat n$ we calculate
\begin{eqnarray*}
\omega_{\split}(\partial_t(\tilde\tau(t)\delta(t)))&=&\Omega(\partial_t(\tilde\tau(t)\tilde\delta(t)))-s(\bar A(\partial_t\tau(t))) \\
&=&
 \Omega(\partial_t\tilde\tau(t))+\tilde\delta(t)^{-1}\partial_t\tilde\delta(t)-s(\bar A(\partial_t\tau(t)))
\\
&=& \omega_{\split}(\partial_t\tilde\tau(t))+\tilde\delta(t)^{-1}\partial_t\delta(t)\text{.}
\end{eqnarray*}
The calculation uses that $\ueins$ acts as the central subgroup of $\lspinhat n$.
\end{comment}
The connection $\omega_{\Omega,x}$ is then obtained by shifting the connection $\omega_{\Omega,0}$ by the 1-form $\frac{x}{2}\zeta$.
In order to compute the curvature of $\omega_{\Omega,0}$ we use the definition of the scalar curvature and obtain:
\begin{multline*}
\mathrm{d}\omega_{\Omega,0} = \mathrm{d}\Omega - \split(\sigma^{*}\mathrm{d}\bar A)  
\possiblelinebreak
= L\pi^{*}\mathrm{scurv}(\Omega)-\frac{1}{2}[\Omega\wedge\Omega]+\split(p_{*}(\mathrm{curv}(\Omega)))- \split(\sigma^{*}\mathrm{d}\bar A)-r(\sigma,p_{*}(\mathrm{curv}(\Omega)))
\end{multline*}
Then we  use that $\Omega - \split(\sigma^{*}\bar A)=\Omega - \split(p_{*}\Omega)\in \R$, so that
\begin{multline*}
0=[\Omega - \split(\sigma^{*}\bar A) \wedge \Omega - \split(\sigma^{*}\bar A)]=[\Omega\wedge\Omega]+[\split(\sigma^{*}\bar A) \wedge \split(\sigma^{*}\bar A)]-2[\Omega \wedge \split(\sigma^{*}\bar A)]\\=[\Omega\wedge\Omega]-[\split(\sigma^{*}\bar A) \wedge \split(\sigma^{*}\bar A)]\text{.}
\end{multline*}
This yields the claimed result.
\begin{comment}
Indeed, the calculation goes so:
\begin{eqnarray*}
 \mathrm{d}\omega_{\Omega,0}&=& \mathrm{scurv}(\Omega)+\frac{1}{2}[\split(\sigma^{*}\bar A)\wedge \split(\sigma^{*}\bar A)]+\split(\mathrm{curv}(\sigma^{*}\bar A))- \split(\sigma^{*}\mathrm{d}\bar A)-r(\sigma,\mathrm{curv}(\sigma^{*}\bar A))
\\
&=& \mathrm{scurv}(\Omega)+\frac{1}{2}\split([\sigma^{*}\bar A\wedge \sigma^{*}\bar A])+\frac{1}{2}\omega(\sigma^{*}\bar A \wedge \sigma^{*}\bar A)+\frac{1}{2}\split([\sigma^{*}\bar A\wedge \sigma^{*}\bar A])-r(\sigma,\mathrm{curv}(\sigma^{*}\bar A))
\\
&=& \mathrm{scurv}(\Omega)-\frac{1}{2}\sigma^{*}\omega(\bar A \wedge \bar A)-\sigma^{*}r(\mathrm{curv}(\bar A))
\end{eqnarray*}
\end{comment}
\end{proof}

We recall that a morphism between geometric spin structures $(\inf S, \sigma, \Omega)$ and $(\inf S', \sigma', \Omega')$ is an isomorphism $f: \inf S \to \inf S'$ of $\lspinhat n$-bundles over $LM$ such that $\Omega = f^{*}\Omega'$ and $\sigma' \circ f=\sigma$. It follows that $\omega_{\Omega,x}= f^{*}\omega_{\Omega',x}$ for all $x\in \R$, i.e. the induced isomorphism $f: T_\inf S \to T_{\inf S'}$ of $\ueins$-bundles is  connection-preserving for all connections  $\omega_{\Omega,x}$. Further, we find  under the action \erf{eq:actionspstcon} of the monoidal category $\ubuncon{LM}$ on the category $\spstcon$ of geometric spin structures the formula  $\omega_{\eta \otimes \Omega,x}=L\pi^{*}\eta + \omega_{\Omega,x}$. 

The following result explains which connection of the one-parameter family $\omega_{\Omega,x}$ should be used.

\begin{proposition}
\label{prop:pass}
Let $(\inf S,\sigma,\Omega)$ be a geometric spin structure.
Suppose the connection $\omega_{\Omega,x}$ is superficial for a parameter $x\in \R$, and let $d^{\omega_{\Omega,x}}$ be the associated thin structure on $T_{\inf S}$. Then, the pair $(\inf S,\sigma,d^{\omega_x})$ is a thin spin structure if and only if $x=1$. \end{proposition}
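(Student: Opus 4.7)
The plan is as follows. Via the lifting gerbe reformulation (Proposition \ref{prop:thinlifting}), the compatibility of Definition \ref{def:thinspin} becomes the statement that the trivialization isomorphism $\kappa\maps\pr_2^{*}T_{\inf S}\otimes\q\to\pr_1^{*}T_{\inf S}$ is thin as a bundle morphism over $LFM^{[2]}$. Equipping $T_{\inf S}$ with the superficial connection $\omega_{\Omega,x}$ and $\q=L\delta^{*}\lspinhat n$ with $L\delta^{*}\nu$, thinness of $\kappa$ is equivalent to the discrepancy 1-form
\begin{equation*}
\tilde\delta_{x}\df\kappa^{*}(\pr_1^{*}\omega_{\Omega,x})-\pr_2^{*}\omega_{\Omega,x}-L\delta^{*}\nu\in\Omega^1(LFM^{[2]})
\end{equation*}
being superficial, i.e.\ having vanishing holonomy on loops in $LFM^{[2]}\cong LFM\times L\spin n$ whose adjoint $S^1\times S^1\to FM\times\spin n$ is thin.

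The next step is to compute $\tilde\delta_{x}$ explicitly. Expanding $\omega_{\Omega,x}=\Omega-\split(\sigma^{*}\bar A)+\frac{x}{2}\sigma^{*}\zeta$ and using the $\lspinhat n$-equivariance $\Omega_{t\tilde\tau}(\xi\tilde\tau+t\tilde X)=\mathrm{Ad}_{\tilde\tau}^{-1}\Omega_{t}(\xi)+\tilde\theta(\tilde X)$, the identity $\tilde\theta(\tilde X)=\split(\theta(X_\gamma))+\nu(\tilde X)+\beta_{\gamma}(X_\gamma)$ (from Lemma \ref{lem:beta}), the identity $\mathrm{Ad}_{\tilde\tau}^{-1}\split(Y)=\split(\mathrm{Ad}_{\gamma}^{-1}Y)+Z(\gamma,Y)$, and the standard transformation $A_{\tau\gamma}(\tau X_\gamma+X\gamma)=\mathrm{Ad}_{\gamma}^{-1}A_{\tau}(X)+\gamma^{-1}X_\gamma$, the $\split$-directions cancel and, after inserting the explicit formulas for $Z$ (Lemma \ref{lem:Z}), $\beta$, and $\zeta$, one arrives at the decomposition
\begin{equation*}
\tilde\delta_{x}=\tau_{\Omega}(\alpha)+(1-x)E,
\end{equation*}
with 2-form $\alpha\df\left \langle \pr_2^{*}\bar\theta\wedge\pr_1^{*}A\right \rangle\in\Omega^2(FM\times\spin n)$ and
\begin{equation*}
E|_{(\tau,\gamma)}(X,X_\gamma)=\int_{S^1}\!\left[\left \langle \bar\theta(\partial_z\gamma),A(X)\right \rangle+\left \langle A(\partial_z\tau),\bar\theta(X_\gamma)\right \rangle+\left \langle \theta(\partial_z\gamma),\theta(X_\gamma)\right \rangle\right]\!\mathrm{d}z.
\end{equation*}

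With this decomposition both directions follow cleanly. For $x=1$, the residual 1-form $\tilde\delta_{1}=\tau_{\Omega}(\alpha)$ is a transgressed 2-form and so has vanishing holonomy on loops with thin adjoint, because the holonomy reduces to an integral of $\alpha$ over a surface of rank at most one. For $x\neq 1$, one shows that $E$ is not superficial by pulling it back along the slice $\iota\maps L\spin n\hookrightarrow LFM\times L\spin n$, $\gamma\mapsto(\mathrm{const}_{p_0},\gamma)$, at any fixed $p_0\in FM$. This map sends thin loops to thin loops, and $\iota^{*}E=\beta$. By Lemma \ref{lem:beta}, $\beta=\nu_{\split}-\nu$, and non-superficiality of $\beta$ is detected by the thin loop $s\mapsto[z\mapsto\exp((s+z)X_0)]$ in any circle subgroup of $\spin n$ generated by $X_0\in\mathfrak{g}$ with $\exp(X_0)=1$, whose $\beta$-holonomy equals $\left \langle X_0,X_0\right \rangle\neq 0$. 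Hence $(1-x)E$ is non-superficial, and so is $\tilde\delta_{x}$.

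The main obstacle is the algebraic bookkeeping that produces the decomposition $\tilde\delta_{x}=\tau_{\Omega}(\alpha)+(1-x)E$: the value $x=1$ is privileged precisely because it is the coefficient at which the antisymmetric combination of $Z(\gamma,\bar A(X))$ with the cross terms of $\zeta_{\tau\gamma}(\tau X_\gamma+X\gamma)$ assembles into the transgression of a genuine 2-form on $FM\times\spin n$, while the symmetric remainder $E$ cannot arise from any 2-form and is obstructed by the same $\beta$-term already responsible for the non-superficiality of the Maurer-Cartan-type connection $\nu_{\split}$ observed in Section \ref{sec:connsplits}.
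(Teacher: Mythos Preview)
Your overall strategy and your decomposition $\tilde\delta_{x}=\tau_{\Omega}(\alpha)+(1-x)E$ are correct; in fact this is exactly the content of the paper's Lemma~\ref{lem:partranscalc} rewritten as a 1-form on $LFM^{[2]}\cong LFM\times L\spin n$. The ``if'' direction is also fine: for a transgressed form $\tau_{\Omega}(\alpha)$ the integral $\int_{\gamma}\tau_{\Omega}(\alpha)=\pm\int_{\gamma^{\vee}}\alpha$ vanishes already along thin \emph{paths}, not just thin loops, so $\kappa$ is thin at $x=1$.

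The ``only if'' direction, however, has a genuine gap. You equate ``$\kappa$ is thin'' with ``$\tilde\delta_{x}$ is superficial'', i.e.\ with trivial holonomy around thin \emph{loops}. That is not the right condition: via Lemma~\ref{lem:thinspinhor}, thinness of $\kappa$ means that $\exp\bigl(2\pi i\int_{\gamma}\tilde\delta_{x}\bigr)=1$ for every thin \emph{path} $\gamma$ in $LFM^{[2]}$, which is strictly stronger. In fact, under the standing hypothesis that $\omega_{\Omega,x}$ is superficial, the pullbacks $\pr_{1}^{*}\omega_{\Omega,x}$, $\pr_{2}^{*}\omega_{\Omega,x}$ and $L\delta^{*}\nu$ are all superficial on $LFM^{[2]}$ (projections of a thin map are thin), so their difference $\tilde\delta_{x}$ is automatically superficial for \emph{every} $x$. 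Your loop computation therefore cannot detect the failure: either $(1-x)\langle X_{0},X_{0}\rangle\in\Z$ and your test sees nothing, or it is non-integral and you have contradicted the hypothesis rather than established the conclusion.

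The repair is minimal and is precisely what the paper does: keep your slice $\iota$ and your identification $\iota^{*}E=\beta$, but replace the full rotation by the thin \emph{path} $s\mapsto[z\mapsto\exp((s+z)X_{0})]$ for $s\in[0,\varepsilon]$. Then $\int\tilde\delta_{x}=(1-x)\int_{0}^{\varepsilon}\langle X_{0},X_{0}\rangle\,\mathrm{d}s=(1-x)\,\varepsilon\,\langle X_{0},X_{0}\rangle$, which for $x\neq 1$ can be made non-integral by choosing $\varepsilon$ suitably. This is the scaling argument in the paper's proof of the ``only if'' part.
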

 
 We prepare the proof of this proposition with three lemmata. 
 \begin{comment}
 The first expresses the condition that $(\inf S,\sigma,d^{\omega_x})$ is a thin spin structure directly in terms of the connection $\omega_{\Omega,x}$.
 \end{comment}
 
\begin{lemma}
\label{lem:thinspinhor}
Suppose $\omega_{\Omega,x}$ is superficial. Then,
$(\inf S,\sigma,d^{\omega_{\Omega,x}})$ is a thin spin structure if and only if the following holds: for $(\gamma_1,\gamma_2):[0,1] \to\ LFM^{[2]}$ a thin path,  $\tilde\gamma_2$ a $\omega_{\Omega,x}$-horizontal lift of $\gamma_2$, $\tilde\delta$ a $\nu$-horizontal lift of the path $\delta$ defined by $\delta(t):= \delta(\gamma_1(t),\gamma_2(t))$,  we have for $\tilde\gamma_1 := \tilde\gamma_2 \cdot \tilde\delta$
\begin{equation*}
\int_{\tilde\gamma_1} \omega_{\Omega,x}\in \Z\text{.}
\end{equation*}
\end{lemma}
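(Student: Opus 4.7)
The plan is to translate the defining condition of a thin spin structure (Definition \ref{def:thinspin}) directly into parallel transport along thin paths, using that $d^{\omega_{\Omega,x}}$ and $d^{\nu}$ are by hypothesis both induced from superficial connections. First I would rewrite an element $((\tau_1,g_1),(\tau_2,g_2))\in\thinpairs{L(FM\times\spin n)}$ in terms of a thin path $(\tau,g):[0,1]\to L(FM\times\spin n)$ joining them, and observe that setting $\gamma_2 := \tau$ and $\gamma_1 := \tau\cdot g$ yields exactly a thin path $(\gamma_1,\gamma_2):[0,1]\to LFM^{[2]}$ with difference path $\delta = g$. This exhibits a bijection between the data appearing in Definition \ref{def:thinspin} and the data of the lemma, so it suffices to check the compatibility condition on paths of this form.

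Next I would choose, for a given element $t\in(T_{\inf S})_{\gamma_2(0)}$ and $\tilde g\in(\lspinhat n)_{\delta(0)}$, the horizontal lifts $\tilde\gamma_2$ and $\tilde\delta$ starting at $t$ and $\tilde g$ respectively. By the defining property of parallel transport, $d^{\omega_{\Omega,x}}_{\gamma_2(0),\gamma_2(1)}(t) = \tilde\gamma_2(1)$ and $d^{\nu}_{\delta(0),\delta(1)}(\tilde g) = \tilde\delta(1)$. Therefore the right-hand side of the compatibility condition in Definition \ref{def:thinspin} equals $\tilde\gamma_2(1)\cdot\tilde\delta(1) = \tilde\gamma_1(1)$. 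The left-hand side is $d^{\omega_{\Omega,x}}_{\gamma_1(0),\gamma_1(1)}(\tilde\gamma_1(0))$, so the condition reduces to the statement that $\tilde\gamma_1$ has the same endpoint as the $\omega_{\Omega,x}$-horizontal lift of $\gamma_1$ starting at $\tilde\gamma_1(0)$.

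The last step is to express this endpoint condition through the integral of the connection: for any smooth lift $\tilde\gamma$ of a path $\gamma$ in a principal $\ueins$-bundle with connection $\omega$, the endpoint of $\tilde\gamma$ and the endpoint of the horizontal lift starting at $\tilde\gamma(0)$ differ by the element of $\ueins = \R/\Z$ represented by $\int_{\tilde\gamma}\omega\in\R$. Applied to $\tilde\gamma_1$ and $\omega_{\Omega,x}$, this shows that the compatibility condition holds for this particular choice of thin path if and only if $\int_{\tilde\gamma_1}\omega_{\Omega,x}\in\Z$. Since the choice of starting lifts $t$ and $\tilde g$ only shifts $\tilde\gamma_1$ by a global $\ueins$-element — which does not affect the integral of a connection along the curve — the condition is independent of these choices, giving the desired equivalence.

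The only step where one has to pay attention is the first: one must verify that every thin pair in $\thinpairs{L(FM\times\spin n)}$ genuinely arises from a thin path in $LFM^{[2]}$ in the manner described, so that no generality is lost. This is a diffeological check using that $(\tau,g)\mapsto(\tau\cdot g,\tau)$ defines a diffeomorphism $LFM\times_{LM} L\spin n \to LFM^{[2]}$ which preserves thinness of paths, both directions being straightforward consequences of the smoothness of the group action of $L\spin n$ on $LFM$.
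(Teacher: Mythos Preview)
Your proposal is correct and follows essentially the same approach as the paper: both translate the thin spin structure condition into a parallel-transport statement via the superficiality of $\omega_{\Omega,x}$ and $\nu$, choose horizontal lifts $\tilde\gamma_2$ and $\tilde\delta$, and then express the remaining discrepancy as $\exp\bigl(2\pi\im\int_{\tilde\gamma_1}\omega_{\Omega,x}\bigr)$. Your additional remarks on the diffeomorphism $LFM\times L\spin n\cong LFM^{[2]}$ and on independence of the initial lifts make explicit points the paper leaves implicit, but the argument is the same.
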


\begin{proof}
$(\inf S,\sigma,d^{\omega_{\Omega,x}})$ is a thin spin structure if and only condition of Definition \ref{def:thinspin} is satisfied:
\begin{equation*}
d^{\omega_{\Omega,x}}_{\tau_1\cdot \beta_1,\tau_2\cdot \beta_2}(t \cdot \tilde\tau) = d^{\omega_{\Omega_x}}_{\tau_1,\tau_2}(t) \cdot d^{\nu}_{\beta_1,\beta_2}(\tilde\tau)
\end{equation*}
for all $((\tau_1,\beta_1),(\tau_2,\beta_2)) \in \thinpairs{L(FM \times \spin n)}$, all $t\in T_{\inf S}$ projecting to $\tau_1$, and all $\tilde\tau\in\lspinhat n$ projecting $\beta_1$.  
\begin{comment}
We recall from Section \ref{sec:thinfusion}  that the thin structure induced from a superficial connection is defined by parallel transport along any thin path $\gamma$, i.e. a path $\gamma$ whose adjoint $\gamma^{\vee}$ is of  rank one. 
\end{comment}
We  denote by $pt_{\delta}^{\nu}$ and $pt_{\gamma_k}^{\omega_{\Omega,x}}$ the parallel transport maps associated to the connections $\nu$ on $\lspinhat n$ and $\omega_{\Omega,x}$ on $T_{\inf S}$, along the paths $\delta$, $\gamma_1$, and $\gamma_2$, respectively.  Then,  above condition is equivalent to the assertion that 
\begin{equation}
\label{eq:partranseq}
pt^{\omega_{\Omega,x}}_{\gamma_1}(t\cdot \tilde\tau) = pt_{\gamma_2}^{\omega_{\Omega,x}}(t)\cdot pt^{\nu}_{\delta}(\tilde\tau)
\end{equation}
holds for all thin paths $(\gamma_1,\gamma_2):[0,1] \to LFM^{[2]}$ with difference path $\delta:=L\delta(\gamma_1,\gamma_2)$, elements $t\in \inf S$ projecting to $\gamma_1(0)$ and $\tilde \tau\in \lspinhat n$ projecting to $\delta(0)$. 
We note that
\begin{equation*}
pt^{\omega_{\Omega,x}}_{\gamma_1}(t\cdot \tilde\tau) = \tilde\gamma_1(1)\cdot \exp \left ( 2\pi\im \int_{\tilde\gamma_1} \omega_{\Omega,x} \right )
\end{equation*} 
for \emph{all} lifts $\tilde\gamma_1$ of $\gamma_1$ with $\tilde\gamma_1(0)=t\cdot \tilde\tau$.  For horizontal lifts $\tilde\gamma_2$ with $\tilde\gamma_2(0)=t$ and $\tilde \delta$ with $\tilde\delta(0)=\tilde\tau$ we have $pt_{\gamma_2}^{\omega_{\Omega,x}}(t)=\tilde\gamma_2(1)$ and $pt^{\nu}_{\delta}(\tilde\tau)=\tilde\delta(1)$. With these formulas, \erf{eq:partranseq} is equivalent to the assertion that 
\begin{equation*}
\exp \left ( 2\pi\im\int_{\tilde\gamma_1} \omega_{\Omega,x} \right ) =1
\end{equation*}
for every thin path $(\gamma_1,\gamma_2)$, difference $\delta$, horizontal lifts $\tilde\gamma_2$ and $\tilde\delta$, and $\tilde\gamma_1:=\tilde\gamma_2\cdot \tilde\delta$. 
\end{proof}

\begin{comment} 
Motivated by the previous lemma, we now compute the integral of $\omega_{\Omega,x}$. \end{comment}
The following straightforward calculation only uses  property \erf{eq:redsplit} of the reduction $r$ and the defining property of the connection $A$.

\begin{lemma}
\label{lem:deltazeta}
The 1-form $\zeta$ satisfies the identity \begin{equation*}
(\Delta\zeta)_{\tau_1,\tau_2}(X_1,X_2)=-r(\tau_2,Y\delta^{-1})+Z(\delta,\bar A_{\tau_2}(X_2))+Z(\delta,Y\delta^{-1})\text{,}
\end{equation*}
where $(\tau_1,\tau_2)\in LFM^{[2]}$, $X_1\in T_{\tau_1}LFM$, $X_2\in T_{\tau_2}LFM$,   $\delta\in \lspin n$ is defined by the formula $\delta(z)\df\delta(\tau_1(z),\tau_2(z))$, and $Y \in T_{\delta}\lspin n$ is defined by $Y(z) := \mathrm{d}\delta(X_1(z),X_2(z))$. \end{lemma}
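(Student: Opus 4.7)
The plan is to compute $(\Delta\zeta)_{\tau_1,\tau_2}(X_1,X_2)=\zeta_{\tau_2}(X_2)-\zeta_{\tau_1}(X_1)$ by reducing $\zeta_{\tau_1}(X_1)$ to the form $\zeta_{\tau_2}(X_2)$ plus error terms, using only the defining properties of $A$ and the reduction $r$.

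First I would unpack the tangent-geometric information. Since $\tau_1=\tau_2\cdot\delta$ fibrewise over $S^1$, the tangent vector $X_1\in T_{\tau_1}LFM$ is, by product rule, $X_1(z)=X_2(z)\cdot\delta(z)+\tau_2(z)\cdot Y(z)$ where $Y=\mathrm d\delta(X_1,X_2)\in T_\delta L\spin n$. Applying  the standard transformation rule for the connection $A$ under the right $\spin n$-action,
\begin{equation*}
A_{pg}(X g+p v)=\mathrm{Ad}_g^{-1}(A_p(X))+g^{-1}v\text{,}
\end{equation*}
pointwise in $z$ and looping, gives the key identity
\begin{equation*}
\bar A_{\tau_1}(X_1)=\mathrm{Ad}_{\delta}^{-1}\!\bigl(\bar A_{\tau_2}(X_2)\bigr)+\delta^{-1}Y=\mathrm{Ad}_{\delta}^{-1}\!\bigl(\bar A_{\tau_2}(X_2)+Y\delta^{-1}\bigr)\text{,}
\end{equation*}
using $\delta^{-1}Y=\mathrm{Ad}_{\delta}^{-1}(Y\delta^{-1})$.

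Next I substitute this into $\zeta_{\tau_1}(X_1)=r(\tau_1,\bar A_{\tau_1}(X_1))=r(\tau_2\cdot\delta,\,\mathrm{Ad}_\delta^{-1}(\bar A_{\tau_2}(X_2)+Y\delta^{-1}))$, and apply property \erf{eq:redsplit} of $r$, together with linearity of $r$ in the second argument. This yields
\begin{equation*}
\zeta_{\tau_1}(X_1)=r(\tau_2,\bar A_{\tau_2}(X_2))+r(\tau_2,Y\delta^{-1})-Z(\delta,\bar A_{\tau_2}(X_2))-Z(\delta,Y\delta^{-1})\text{.}
\end{equation*}
Since the first summand on the right is exactly $\zeta_{\tau_2}(X_2)$, subtracting gives
\begin{equation*}
\zeta_{\tau_2}(X_2)-\zeta_{\tau_1}(X_1)=-r(\tau_2,Y\delta^{-1})+Z(\delta,\bar A_{\tau_2}(X_2))+Z(\delta,Y\delta^{-1})\text{,}
\end{equation*}
which is the claimed formula once one identifies $\Delta\zeta=\mathrm{pr}_2^{*}\zeta-\mathrm{pr}_1^{*}\zeta$ on the two-fold fibre product $LFM^{[2]}$ (the same sign convention used in \erf{eq:simpderham}).

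There is no serious obstacle; the only point that requires care is the bookkeeping of the $\mathrm{Ad}_\delta$-conjugation so that \erf{eq:redsplit} applies directly. In particular, one should note that the appearance of $Y\delta^{-1}$ (rather than $\delta^{-1}Y$) in the final answer is forced by the requirement that the second slot of $r(\tau_2,-)$ transforms covariantly under the adjoint action relative to the base point $\tau_2$ — this is precisely how $\mathrm{Ad}_{\delta}^{-1}$ was pulled out in the key identity above.
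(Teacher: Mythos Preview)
Your proof is correct and follows essentially the same route as the paper: expressing $\bar A_{\tau_1}(X_1)$ via the connection transformation rule as $\mathrm{Ad}_{\delta}^{-1}(\bar A_{\tau_2}(X_2)+Y\delta^{-1})$, then applying property \erf{eq:redsplit} of the reduction $r$ and linearity to split off the three terms. The paper phrases the tangent decomposition through the action map $\rho:LFM\times L\spin n\to LFM$ rather than the product rule, but this is only a notational difference.
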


Now we are prepared for the following key calculation.

\begin{lemma}
\label{lem:partranscalc}
Let $(\gamma_1,\gamma_2):[0,1] \to LFM^{[2]}$ be a path with $\delta:[0,1] \to \lspin n$  defined by $\delta(t):=\delta(\gamma_1(t),\gamma_2(t))$. Assume that $\tilde\gamma_2$ is a $\omega_{\Omega,x}$-horizontal lift of $\gamma_2$, and that $\tilde\delta$ is a $\nu$-horizontal lift of $\delta$. Then, $\tilde\gamma_1 := \tilde\gamma_2\cdot\tilde\delta$ is a lift of $\gamma_1$, and
\begin{multline*}
\omega_{\Omega,x}(\partial_t\tilde\gamma_1(t))= \frac{1-x}{2} Z(\delta(t),\partial_t\delta(t)\delta(t)^{-1})\possiblelinebreak+\frac{2-x}{2}Z(\delta(t),\bar A(\partial_t\gamma_2(t)))+\frac{x}{2}r(\gamma_2(t),\partial_t\delta(t)\delta(t)^{-1})\text{.}
\end{multline*}
\end{lemma}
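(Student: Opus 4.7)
The plan is to directly expand $\omega_{\Omega,x}(\partial_t\tilde\gamma_1)$ via its definition $\omega_{\Omega,x} = \Omega - \split(\sigma^{*}\bar A) + \tfrac{x}{2}\sigma^{*}\zeta$ and then eliminate everything in favour of quantities living over $\gamma_2$ and $\delta$, exploiting the product decomposition $\tilde\gamma_1 = \tilde\gamma_2 \cdot \tilde\delta$. The three ingredients to feed in are (a) the transformation rule for a connection on the $\lspinhat n$-bundle $\inf S$, (b) the precise form of the left Maurer--Cartan form $\tilde\delta^{-1}\partial_t\tilde\delta$ along a $\nu$-horizontal lift, and (c) Lemma \ref{lem:deltazeta} to handle the $\zeta$-term.

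For step (a), the connection rule gives $\Omega(\partial_t\tilde\gamma_1) = \mathrm{Ad}^{-1}_{\tilde\delta}\!\bigl(\Omega(\partial_t\tilde\gamma_2)\bigr) + \tilde\delta^{-1}\partial_t\tilde\delta$, and the $\omega_{\Omega,x}$-horizontality of $\tilde\gamma_2$ gives $\Omega(\partial_t\tilde\gamma_2) = \split(\bar A(\partial_t\gamma_2)) - \tfrac{x}{2}\zeta(\partial_t\gamma_2)$. Applying $\mathrm{Ad}^{-1}_{\tilde\delta}$ I would then use the defining identity $\mathrm{Ad}^{-1}_{\tilde\delta}(\split(X)) = \split(\mathrm{Ad}^{-1}_\delta(X)) + Z(\delta,X)$ of the map $Z$ of Section~\ref{sec:connsplits}, keeping in mind that $\zeta$ takes values in the central $\R$ and is therefore $\mathrm{Ad}$-invariant.

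For step (b), I would use the identity $\tilde\theta = \nu_\split + \split(p^{*}\theta)$ from the definition of $\nu_\split$, together with Lemma~\ref{lem:beta} which states $\nu_\split = \nu + \beta$; since $\tilde\delta$ is $\nu$-horizontal this gives $\tilde\delta^{-1}\partial_t\tilde\delta = \beta(\partial_t\delta) + \split(\delta^{-1}\partial_t\delta)$. Plugging both pieces back into the expansion of $\omega_{\Omega,x}(\partial_t\tilde\gamma_1)$ and using the connection rule $\bar A(\partial_t\gamma_1) = \mathrm{Ad}^{-1}_\delta(\bar A(\partial_t\gamma_2)) + \delta^{-1}\partial_t\delta$, all the $\split$-valued terms combine into $\split(\bar A(\partial_t\gamma_1))$, which cancels against the $-\split(\sigma^{*}\bar A)$ in the definition of $\omega_{\Omega,x}$. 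What survives at this point is the scalar expression $Z(\delta, \bar A(\partial_t\gamma_2)) + \beta(\partial_t\delta) + \tfrac{x}{2}\bigl(\zeta(\partial_t\gamma_1) - \zeta(\partial_t\gamma_2)\bigr)$.

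For step (c), Lemma~\ref{lem:deltazeta} rewrites $\zeta(\partial_t\gamma_1) - \zeta(\partial_t\gamma_2)$ as $r(\gamma_2,\partial_t\delta\cdot\delta^{-1}) - Z(\delta, \bar A(\partial_t\gamma_2)) - Z(\delta,\partial_t\delta\cdot\delta^{-1})$, while the $\mathrm{Ad}$-invariance of $\langle-,-\rangle$ combined with the explicit formula of Lemma~\ref{lem:Z} shows that $\beta(\partial_t\delta) = \tfrac{1}{2}Z(\delta, \partial_t\delta\cdot\delta^{-1})$. Collecting the three independent terms $Z(\delta,\bar A(\partial_t\gamma_2))$, $Z(\delta,\partial_t\delta\cdot\delta^{-1})$ and $r(\gamma_2,\partial_t\delta\cdot\delta^{-1})$ produces the prefactors $\tfrac{2-x}{2}$, $\tfrac{1-x}{2}$ and $\tfrac{x}{2}$ respectively, which is the asserted formula.

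The main obstacle I expect is purely bookkeeping: keeping track of which objects live in $\widetilde{L\mathfrak{g}}$, $L\mathfrak{g}$, or $\R$, and carefully applying the compatibility identities linking $\split$, $Z$, $\beta$, and $\nu$. The $x$-dependence is linear throughout, so the cancellations producing the coefficients $\tfrac{2-x}{2}$ and $\tfrac{1-x}{2}$ are forced once the $x=0$ and $x=1$ pieces are treated separately.
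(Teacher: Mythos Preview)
Your proposal is correct and follows essentially the same strategy as the paper: expand $\omega_{\Omega,x}$ along $\tilde\gamma_1=\tilde\gamma_2\cdot\tilde\delta$, use the connection law for $\Omega$ together with horizontality of $\tilde\gamma_2$, handle the Maurer--Cartan term $\tilde\delta^{-1}\partial_t\tilde\delta$, and finish with Lemma~\ref{lem:deltazeta}. The only organisational difference is in your step~(b): you evaluate $\tilde\delta^{-1}\partial_t\tilde\delta$ via the identity $\tilde\theta=\nu_{\split}+\split(p^{*}\theta)$ and Lemma~\ref{lem:beta}, obtaining $\beta(\partial_t\delta)+\split(\delta^{-1}\partial_t\delta)$ and then identifying $\beta(\partial_t\delta)=\tfrac12 Z(\delta,\partial_t\delta\,\delta^{-1})$, whereas the paper keeps $\tilde\delta^{-1}\partial_t\tilde\delta-\split(\delta^{-1}\partial_t\delta)$ intact and rewrites it through the multiplicativity error $\varepsilon_\nu$ (using $\nu_{\tilde\delta}(\partial_t\tilde\delta)=0$) before making the same identification with $\tfrac12 Z(\delta,\partial_t\delta\,\delta^{-1})$ at the very end. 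Since Lemma~\ref{lem:beta} is itself proved from the $\varepsilon_\nu$-identity, the two routes are equivalent; yours is arguably a touch more streamlined because it invokes an already-packaged lemma rather than unwinding $\varepsilon_\nu$ again.
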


\begin{proof}
The assumptions of horizontality mean that
\begin{equation}
\label{eq:horlifts}
\omega_{\Omega,t}(\partial_t\tilde\gamma_2(t))=0
\quand
\nu(\partial_t\tilde\delta(t))=0
\end{equation} 
for all $t\in [0,1]$.
The relation $\omega_{\Omega,x} = \Omega-\split(\sigma^{*}\bar A) + \frac{x}{2}\sigma^{*}\zeta$ from the definition of $\omega_{\Omega,x}$  shows:
\begin{eqnarray}
\label{eq:help1}
\mathrm{Ad}^{-1}_{\tilde\delta(t)}(\Omega(\partial_t\tilde\gamma_2(t)))) &=& \mathrm{Ad}^{-1}_{\tilde\delta(t)}(\split(\sigma^{*}\bar A(\partial_t\tilde\gamma_2(t)))+\omega_{\Omega,x}(\partial_t\tilde\gamma_2(t))) -\frac{x}{2} \zeta(\partial_t\gamma_2(t))
\\
&\hspace{-0.2cm}\stackerf{eq:horlifts}{=}\hspace{-0.2cm}& \mathrm{Ad}^{-1}_{\tilde\delta(t)}(\split(\bar A(\partial_t\gamma_2(t)))-\frac{x}{2} \zeta(\partial_t\gamma_2(t))
\nonumber\\
&=& Z(\tilde\delta(t),\bar A(\partial_t\gamma_2(t))) + s(\mathrm{Ad}_{\delta(t)}^{-1}(\bar A(\partial_t\gamma_2(t)))) -\frac{x}{2} \zeta(\partial_t\gamma_2(t))\text{.}\nonumber
\end{eqnarray}
That $\Omega$ is a $\lspinhat n$-connection implies then the following:
\begin{eqnarray}
\label{eq:help2}
\Omega(\partial_t\tilde\gamma_1(t))\hspace{-0.7em} 
&=& \mathrm{Ad}^{-1}_{\tilde\delta(t)}(\Omega(\partial_t\tilde\gamma_2(t)))+\tilde\delta(t)^{-1}\partial_t\tilde\delta(t)
\\\nonumber&\stackerf{eq:help1}{=}& Z(\delta(t),\bar A(\partial_t\gamma_2(t))) + s(\mathrm{Ad}_{\delta(t)}^{-1}(\bar A(\partial_t\gamma_2(t))))-\frac{x}{2} \zeta(\partial_t\gamma_2(t))+\tilde\delta(t)^{-1}\partial_t\tilde\delta(t)\text{.}\nonumber 
\end{eqnarray}
The definition of the splitting $\split$ from the connection $\nu$ implies:
\begin{equation}
\label{eq:help3}
X-s(p_{*}(X))=\nu_{1}(g^{-1}X)\stackerf{eq:eps}{=}\nu_{g}(X)-\varepsilon_{\nu}|_{g^{-1},g}(0,X)\text{.}
\end{equation}
Now we put the pieces together and obtain:
\begin{eqnarray*}
&&\hspace{-3.2em}\omega_{\Omega,x}(\partial_t\tilde\gamma_1(t)) \\&=& \Omega(\partial_t\tilde\gamma_1(t)) -\split(\sigma^{*}\bar A(\partial_t\tilde\gamma_1(t)))+\frac{x}{2} \sigma^{*}\zeta(\partial_t\tilde\gamma_1(t))
\\
&\hspace{-0.6em}\stackerf{eq:help2}{=}\hspace{-0.6em}&  Z(\delta(t),\bar A(\partial_t\gamma_2(t))) + s(\mathrm{Ad}_{\delta(t)}^{-1}(\bar A(\partial_t\gamma_2(t))))\\&&\hspace{9em}-\frac{x}{2} \zeta(\partial_t\gamma_2(t))+\tilde\delta(t)^{-1}\partial_t\tilde\delta(t)-\split(\bar A(\partial_t\gamma_1(t)))+\frac{x}{2} \zeta(\partial_t\gamma_1(t)) \\
&=& -\split(\delta(t)^{-1}\partial_t\delta(t))+\tilde\delta(t)^{-1}\partial_t\tilde\delta(t) +Z(\delta(t),\bar A(\partial_t\gamma_2(t)))+\frac{x}{2}\Delta \zeta(\partial_t\gamma_1(t),\partial_t\gamma_2(t)) \\
&\hspace{-0.6em}\stackerf{eq:help3}{=}\hspace{-0.6em}& \nu_{\tilde\delta(t)}(\partial_t\tilde\delta(t))-\varepsilon_{\nu}|_{\tilde\delta(t)^{-1},\tilde\delta(t)}(0,\partial_t\tilde\delta(t)) +Z(\delta(t),\bar A(\partial_t\gamma_2(t)))+\frac{x}{2}\Delta \zeta(\partial_t\gamma_1(t),\partial_t\gamma_2(t)) \\
&\hspace{-0.6em}\stackerf{eq:horlifts}{=}\hspace{-0.6em}& -\varepsilon_{\nu}|_{\delta(t)^{-1},\delta(t)}(0,\partial_t\delta(t)) +Z(\delta(t),\bar A(\partial_t\gamma_2(t)))+\frac{x}{2}\Delta \zeta(\partial_t\gamma_1(t),\partial_t\gamma_2(t))\text{.} \end{eqnarray*}
With Lemma \ref{lem:deltazeta} this simplifies to
\begin{multline*}
\omega_{\Omega,x}(\partial_t\tilde\gamma_1(t)) =-\varepsilon_{\nu}|_{\delta(t)^{-1},\delta(t)}(0,\partial_t\delta(t)) +\frac{2-x}{2}Z(\delta(t),\bar A(\partial_t\gamma_2(t)))\\ -\frac{x}{2}Z(\delta(t),\partial_t\delta(t)\delta(t)^{-1})+\frac{x}{2}r(\gamma_2(t),\partial_t\delta(t)\delta(t)^{-1})\text{.}
\end{multline*}
Substituting explicit expressions, we get  $\frac{1}{2}Z(\delta(t),\partial_t\delta(t)\delta(t)^{-1})=-\varepsilon_{\nu}|_{\delta(t)^{-1},\delta(t)}(0,\partial_t\delta(t))$; this yields the claimed formula.
\begin{comment}
We have
\begin{eqnarray*}
-\varepsilon_{\nu}|_{\delta(t)^{-1},\delta(t)}(0,\partial_t\delta(t))&\stackerf{eq:epsilonex}{=}&\int_0^1 \left \langle \partial_z\delta^{\vee}(t,z)\delta^{\vee}(t,z)^{-1}, \partial_t\delta^{\vee}(t,z)\delta^{\vee}(t,z)^{-1}   \right \rangle\;\mathrm{d}z
  \\
  -\frac{1}{2}Z(\delta(t),\partial_t\delta(t)\delta(t)^{-1})&=&-   \int_{0}^1 \left \langle  \partial_z\delta^{\vee}(t,z)\delta^{\vee}(t,z)^{-1},\partial_t\delta(t,z)\delta(t,z)^{-1}  \right \rangle \mathrm{d}z
\end{eqnarray*}
\end{comment} 
\end{proof}

\begin{proof}[Now we are in position to prove Proposition \ref{prop:pass}, and start with the \quot{if}-part] 
Suppose  $\omega_{\Omega,1}$ is superficial. According to Lemma \ref{lem:thinspinhor}, it suffices to prove that for all thin paths $(\gamma_1,\gamma_2):[0,1] \to\ LFM^{[2]}$, all horizontal lifts $\tilde\gamma_2$ of $\gamma_2$ and $\tilde\delta$ of $\delta$ we get $\omega_{\Omega,1}(\partial_t\tilde\gamma_1(t))=0$ for $\tilde\gamma_1 := \tilde\gamma_2 \cdot \tilde\delta$ and all $t\in[0,1]$. By Lemma \ref{lem:partranscalc} this is given by
\begin{equation*}
\omega_{\Omega,1}(\partial_t\tilde\gamma_1(t))= \frac{1}{2}Z(\delta(t),\bar A(\partial_t\gamma_2(t)))+\frac{1}{2}r(\gamma_2(t),\partial_t\delta(t)\delta(t)^{-1})\text{.}
\end{equation*}
Explicitly, this is
\begin{multline*}
\omega_{\Omega,1}(\partial_t\tilde\gamma_1(t)) =   \int_{0}^1 \bigg\lbrace  \left \langle  \partial_z\delta^{\vee}(t,z)\delta^{\vee}(t,z)^{-1},A(\partial_t\gamma_2^{\vee}(t,z))  \right \rangle
 \possiblelinebreak
- \left \langle  A(\partial_z\gamma_2^{\vee}(t,z)),\partial_t\delta^{\vee}(t,z)\delta^{\vee}(t,z)^{-1} \right \rangle \bigg\rbrace\, \mathrm{d}z\text{.} \end{multline*}
The assumption that  $(\gamma_1,\gamma_2)^{\vee}:[0,1] \times S^1 \to FM^{[2]}$ is a rank one map implies that for every $(t,z)$ there exist $\alpha,\beta\in \R$, not both equal to zero, such that 
\begin{equation*}
\alpha\partial_t(\gamma_1,\gamma_2)^{\vee}(t,z)=\beta\partial_z(\gamma_1,\gamma_2)^{\vee}(t,z)
\end{equation*}
in $TFM \oplus TFM$, i.e. $\alpha\partial_t\gamma_1^{\vee}(t,z)=\beta\partial_z\gamma_1^{\vee}(t,z)$ and $\alpha\partial_t\gamma_2^{\vee}(t,z)=\beta\partial_z\gamma_2^{\vee}(t,z)$. These imply $\alpha\partial_t\delta^{\vee}(t,z)=\beta\partial_z\delta^{\vee}(t,z)$. Assuming either $\alpha\neq 0$ or $\beta \neq 0$, one can see by inspection that the integrand in above formula vanishes identically for every $(t,z)$. 
\end{proof}

\begin{proof}[We are left with the proof of the \quot{only if}-part of Proposition \ref{prop:pass}]
We assume $x\neq 1$ and produce a counterexample, i.e. appropriate paths for which the integral in Lemma \ref{lem:thinspinhor} does not vanish.  
Let $\gamma_2$ be the constant path at a constant loop  at a point $p\in FM$, i.e. $\gamma_2(t)(z):= p$. Let $\delta$ be a thin path in $\lspin n$, to be specified later. Then, with $\gamma_1:=\gamma_2\cdot\delta$ we have a thin path $(\gamma_1,\gamma_2)$ in $LFM^{[2]}$. 
\begin{comment}
Indeed, $(\gamma_1,\gamma_2)^{\vee}$ is given by $(t,z)\mapsto (p,p\cdot \delta^{\vee}(t,z))$. As $\delta$ is thin, there exist $\alpha,\beta\in \R$ such that $\alpha\partial_t\delta^{\vee}=\beta\partial_z\delta^{\vee}$. Thus \begin{equation*}
\alpha\partial_t(\gamma_1,\gamma_2)^{\vee}=(0,p\cdot \alpha\partial_t\delta^{\vee})=(0,p\cdot \beta\partial_z\delta^{\vee})=\beta\partial_z(\gamma_1,\gamma_2)^{\vee}\text{.}
\end{equation*}
Therefore, $(\gamma_1,\gamma_2)$ is thin. 
\end{comment}
We compute the quantity $\omega_{\Omega,x}(\partial_t\tilde\gamma_1(t))$ of Lemma \ref{lem:partranscalc}. The second term in the formula of Lemma \ref{lem:partranscalc}, $Z(\delta(t),\bar A(\partial_t\gamma_2(t)))$, vanishes 
since $\gamma_2$ is constant and $Z$ is linear in the second argument. Likewise, the third term vanishes, using the definition \erf{eq:reduction} of the reduction $r$ and again that $\gamma_2$ is constant:
\begin{comment}
\begin{equation*}
r(\gamma_2(t),\partial_t\delta(t)\delta(t)^{-1}) =-2\int_{S^1}\left \langle  A_{\tau(z)}(\partial_z\gamma_2^{\vee}(t,z)),\partial_t\delta(t,z)\delta(t,z)^{-1}  \right \rangle\mathrm{d}z = 0\text{.} 
\end{equation*}
\end{comment}
For the first term, however, we have
\begin{equation}
\label{eq:counter}
Z(\delta(t),\partial_t\delta(t)\delta(t)^{-1}) = 2 \int_{0}^1 \left \langle  \partial_z\delta^{\vee}(t,z)\delta^{\vee}(t,z)^{-1},\partial_t\delta^{\vee}(t,z)\delta^{\vee}(t,z)^{-1}  \right \rangle \mathrm{d}z\text{.} 
\end{equation}
Now we construct a specific thin path $\delta$.
Let $\tau \in \lspin n$ be a non-constant loop,  and let $\delta(t)(z)\df \tau(z e^{2\pi\im t})$, the full rotation of  the loop $\tau$. Note that 
\begin{equation*}
\partial_z\delta^{\vee}(t,z)= \partial_z\tau(ze^{2\pi\im t}) =\partial_t\delta^{\vee}(t,z)\text{,}
\end{equation*}
i.e. $\delta^{\vee}$ is thin and the linear dependence is expressed by constant coefficients. 
Thus, the integrand
in \erf{eq:counter} is quadratic, hence  non-negative, and even positive at at least one $z\in S^1$ point as $\tau$ is non-constant. Thus therefore,
\begin{equation*}
y_t :=Z(\delta(t),\partial_t\delta(t)\delta(t)^{-1}) >0
\end{equation*}  
for all $t\in [0,1]$.
It follows that
\begin{equation*}
\int_{\tilde\gamma_1}\omega_{\Omega,x} =\int_0^1 \omega_{\Omega,x}(\partial_t\tilde\gamma_1(t)) \mathrm{d}t = \frac{1-x}{2} \int_0^1 y_t \;\mathrm{d}t  \text{,}
\end{equation*}
which is non-zero as $x\neq 1$. Note that one can scale this quantity continuously down to zero with a parameter $0\leq \varepsilon\leq 1$, by simply letting all paths end at $\varepsilon$ instead at $1$. In particular, it can be arranged to be not an integer, hence
\begin{equation*}
\exp \left (2\pi\im \int_{\tilde\gamma_1} \omega_{\Omega,x} \right ) \neq 1\text{.}
\end{equation*}  
\end{proof}

Due to Proposition \ref{prop:pass} we promptly set $\omega_{\Omega} := \omega_{\Omega,1}$ as the connection of our choice on $T_\inf S$. Note that this is a non-standard choice, other treatments of geometric lifting problems  choose $\omega_{\Omega,0}$ -- e.g. \cite{gomi3,waldorf13}.

\begin{definition}
\label{def:superficialspinconnection}
A spin connection $\Omega$ on $\inf S$ is called \emph{superficial}, if the connection $\omega_{\Omega}$ on $T_{\inf S}$ is superficial. A geometric spin structure with superficial spin connection is called a \emph{superficial geometric spin structure}.
\end{definition}

Together with the connection-preserving isomorphisms between spin structures, superficial geometric spin structures form a category that we denote by $\spstconsf$. Due to Proposition \ref{prop:pass}, we obtain a functor
\begin{equation*}
\spstconsf \to \spstth: (\inf S,\sigma,\Omega)\mapsto (\inf S,\sigma,d^{\omega_{\Omega}})\text{.}
\end{equation*}
This functor guarantees the consistency of the various versions of spin structures upon passing from the setting with connections to the setting without connections.

\begin{comment}
It remains to add fusion products to the picture, which is straightforward to do. 
\end{comment}

\begin{definition}
\label{def:geometricfusion}
A \emph{superficial geometric fusion spin structure} on $LM$ is a spin structure $(\inf S,\sigma)$ together with a fusion product $\lambda$ and a superficial  spin connection $\Omega$, such that $\omega_{\Omega}$ is  fusive with respect to $\lambda$. 
\end{definition}

Morphisms between superficial geometric fusion spin structures are connection-preserving, fusion-preserving  morphisms of spin structures. Superficial geometric fusion spin structures
form a category that we denote by $\spstconsffus$. This category  is our loop space formulation of the category of geometric string structures. The action \erf{eq:actionspstcon} of the monoidal category $\ubuncon {LM}$ on geometric spin structures extends to an action
\begin{equation*}
\ufusbunconsf {LM} \times \spstconsffus \to \spstconsffus\text{.}
\end{equation*}
We will see (Corollary \ref{co:equivsupfusion} and Theorem \ref{th:trequivcon}) that this action exhibits $\spstconsffus$ as a torsor over $\ufusbunconsf{LM}$.

It is clear from the construction that the passage from the setting with connections to the setting without connections also works in the presence of fusion products, i.e. we have a functor 
\begin{equation}
\label{eq:forgetfussf}
\spstconsffus \to \spstthfus: (\inf S,\sigma,\lambda,\Omega)\mapsto (\inf S,\sigma,\lambda,d^{\omega_{\Omega}})\text{.}
\end{equation}
On the level of morphisms, this functor produces honest morphisms $f:\inf S \to \inf S'$ between $\lspinhat n$-bundles; these form a subset of the morphisms of $\spstthfus$ defined in Definition \ref{def:thinfusionspinmorph} that is characterized by the condition that the fusion homotopy in (iv) of that definition is constant. In particular, the functor \erf{eq:forgetfussf} is not full -- just as one would expect
it from the passage from a setting with connections to a setting without connections. 

\subsection{Lifting theory for spin connections}

\label{sec:liftingtheoryspinconnections}

We equip the spin lifting gerbe $\mathcal{S}_{LM}$ with a connection.
We recall that $\q \df L\delta^{*}\lspinhat n$ is the principal $\ueins$-bundle of $\mathcal{S}_{LM}$ over $LFM^{[2]}$. 
It is equipped with the pullback connection $L\delta^{*}\nu$, but the bundle gerbe product $\mu$ is not connection-preserving for the connection  $\nu$. Indeed, we have seen in Section \ref{sec:connsplits} that
\begin{comment}
there is an error 1-form $\varepsilon_{\nu}\in\Omega^1(\lspin n^2)$ satisfying
\begin{equation*}
\pr_1^{*}\nu + \pr_2^{*}\nu = m^{*}\nu + \varepsilon_{\nu}
\end{equation*}
over $\lspinhat n \times \lspinhat n$. Pullback along the map $L\delta_2:LFM^{[3]} \to L\spin n^2$ with $(\tau_3,\tau_2) \cdot \delta_2(\tau_1,\tau_2,\tau_3)=(\tau_2,\tau_1)$ gives
\end{comment}
\begin{equation}
\label{eq:nuerror}
\pr_{23}^{*}L\delta^{*}\nu + \pr_{12}^{*}L\delta^{*}\nu=\pr_{13}^{*}L\delta^{*}\nu+L\delta_2^{*}\varepsilon_{\nu}\text{.}
\end{equation}
\begin{comment}
This means that the bundle gerbe product $\mu$ is not connection-preserving, and the error is the 1-form $L\delta_2^{*}\varepsilon_{\nu}$. 
\end{comment}

We  now modify the connection $L\delta^{*}\nu$  such that $\mu$ becomes connection-preserving. For this purpose, we consider the 1-form
 $Z(L\delta,\pr_2^{*}\bar A)\in \Omega^1(LFM^{[2]})$
\begin{comment} 
 , which is, more explicitly,
\begin{equation*}
Z(L\delta,\pr_2^{*}\bar A)|_{\tau_1,\tau_2}(X_1,X_2) = 2\int_0^1 \left \langle \partial_z\delta(z)\delta(z)^{-1},A_{\tau_2(z)}(X_2(z))  \right \rangle \mathrm{d}z \text{,}
\end{equation*}
where $\delta:=L\delta(\tau_1,\tau_2)$, 
\end{comment}
and  the sum
\begin{equation}
\label{eq:xi}
\xi := L\delta^{*}\beta  +Z(L\delta,\pr_2^{*}\bar A)\in\Omega^1(LFM^{[2]})\text{,}
\end{equation}
where $\beta$ is the 1-form defined in Lemma \ref{lem:beta}. 
\begin{comment}
Explicitly, we have
\begin{multline*}
\xi_{\tau_1,\tau_2}(X_1,X_2)=\int_0^1 \left \langle \delta(z)^{-1}\partial_z\delta(z), \delta(z)^{-1}Y(z)   \right \rangle\;\mathrm{d}z \\+2\int_0^1 \left \langle \partial_z\delta(z)\delta(z)^{-1},A_{\tau_2(z)}(X_2(z))  \right \rangle \mathrm{d}z\text{.} 
\end{multline*}
\end{comment}
Under the simplicial operator 
\begin{equation*}
\Delta:=\pr_{23}^{*} + \pr_{12}^{*} - \pr_{13}^{*}:\Omega^k(FM^{[2]}) \to \Omega^{k}(FM^{[3]})
\end{equation*}
we obtain the following result. 

\begin{lemma}
\label{lem:epsiloncancel}
$\Delta\xi = -L\delta_2^{*}\varepsilon_{\nu}$. \end{lemma}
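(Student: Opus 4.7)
The plan is a direct computation, treating the two summands of $\xi$ separately. Fix a triple $(\tau_1,\tau_2,\tau_3)\in LFM^{[3]}$ with tangent $(X_1,X_2,X_3)$, and set $\delta_{ij}\df\delta(\tau_i,\tau_j)\in \lspin n$; then $\delta_{13}\eq \delta_{23}\delta_{12}$, and writing $Y_{ij}\in T_{\delta_{ij}}\lspin n$ for the induced infinitesimal variations, one has
$Y_{13}\eq Y_{23}\delta_{12}+\delta_{23}Y_{12}$. The definition of $\delta_2$ gives $L\delta_2(\tau_1,\tau_2,\tau_3)\eq(\delta_{23},\delta_{12})$ with tangent $(Y_{23},Y_{12})$, so by \erf{eq:epsilonex} the right-hand side equals
\begin{equation*}
-L\delta_2^{*}\varepsilon_{\nu} \eq -\int_{S^1}\!\Big\{\!\left \langle \delta_{23}^{-1}\partial_z\delta_{23},Y_{12}\delta_{12}^{-1}\right \rangle-\left \langle \delta_{23}^{-1}Y_{23},\partial_z\delta_{12}\delta_{12}^{-1}\right \rangle\!\Big\}\,\mathrm{d}z\text{.}
\end{equation*}

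First I would handle $\Delta Z(L\delta,\pr_2^{*}\bar A)$. The cocycle identity $Z(\tau_1\tau_2,X)\eq Z(\tau_1,X)+Z(\tau_2,\mathrm{Ad}_{\tau_1}^{-1}(X))$ (which follows from Lemma \ref{lem:Z} and $\mathrm{Ad}$-invariance of $\left \langle -,-\right \rangle$) gives $Z(\delta_{13},\bar A_{\tau_3}(X_3))=Z(\delta_{23},\bar A_{\tau_3}(X_3))+Z(\delta_{12},\mathrm{Ad}_{\delta_{23}}^{-1}(\bar A_{\tau_3}(X_3)))$. Combined with the transformation law $\bar A_{\tau_2}(X_2)\eq\mathrm{Ad}_{\delta_{23}}^{-1}(\bar A_{\tau_3}(X_3))+\delta_{23}^{-1}Y_{23}$ (the connection property of $A$ pulled to loops), this yields
\begin{equation*}
\Delta Z(L\delta,\pr_2^{*}\bar A) \eq Z(\delta_{12},\delta_{23}^{-1}Y_{23})\eq 2\!\int_{S^1}\!\left \langle \partial_z\delta_{12}\delta_{12}^{-1},\delta_{23}^{-1}Y_{23}\right \rangle\mathrm{d}z\text{,}
\end{equation*}
where the second equality uses Lemma \ref{lem:Z}.

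Next I would compute $\Delta(L\delta^{*}\beta)$ by direct expansion using the explicit formula for $\beta$. Plugging in
$\delta_{13}^{-1}\partial_z\delta_{13}\eq\mathrm{Ad}_{\delta_{12}}^{-1}(\delta_{23}^{-1}\partial_z\delta_{23})+\delta_{12}^{-1}\partial_z\delta_{12}$ and $\delta_{13}^{-1}Y_{13}\eq\mathrm{Ad}_{\delta_{12}}^{-1}(\delta_{23}^{-1}Y_{23})+\delta_{12}^{-1}Y_{12}$ into $\beta_{\delta_{13}}(Y_{13})$ and expanding, the two diagonal terms reproduce $\beta_{\delta_{23}}(Y_{23})+\beta_{\delta_{12}}(Y_{12})$ (by $\mathrm{Ad}$-invariance), while the cross terms survive. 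Simplifying these cross terms once more by $\mathrm{Ad}$-invariance ($\left \langle \mathrm{Ad}_{\delta_{12}}^{-1}(A),\delta_{12}^{-1}B\right \rangle\eq\left \langle A,B\delta_{12}^{-1}\right \rangle$ and the analogous identity) yields
\begin{equation*}
\Delta(L\delta^{*}\beta) \eq -\!\int_{S^1}\!\Big\{\!\left \langle \delta_{23}^{-1}\partial_z\delta_{23},Y_{12}\delta_{12}^{-1}\right \rangle+\left \langle \partial_z\delta_{12}\delta_{12}^{-1},\delta_{23}^{-1}Y_{23}\right \rangle\!\Big\}\,\mathrm{d}z\text{.}
\end{equation*}

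Finally, adding the two contributions the first summand matches the first summand of $-L\delta_2^{*}\varepsilon_{\nu}$, while the remaining $-\!\int\left \langle \partial_z\delta_{12}\delta_{12}^{-1},\delta_{23}^{-1}Y_{23}\right \rangle$ from $\Delta(L\delta^{*}\beta)$ combines with the $+2\!\int\left \langle \partial_z\delta_{12}\delta_{12}^{-1},\delta_{23}^{-1}Y_{23}\right \rangle$ from $\Delta Z$ to yield exactly $+\!\int\left \langle \delta_{23}^{-1}Y_{23},\partial_z\delta_{12}\delta_{12}^{-1}\right \rangle$, matching the second summand of $-L\delta_2^{*}\varepsilon_{\nu}$. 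This proves the claim. The main obstacle is bookkeeping: one must correctly track the Maurer--Cartan and $\mathrm{Ad}$-manipulations and observe that the $Z$-correction in the definition \erf{eq:xi} of $\xi$ is precisely what symmetrizes the cross terms of $\Delta\beta$ into the antisymmetric combination appearing in $\varepsilon_{\nu}$.
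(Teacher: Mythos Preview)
Your proof is correct and follows essentially the same approach as the paper: split $\xi$ into its two summands, compute $\Delta$ on each, and match against the explicit formula \erf{eq:epsilonex} for $\varepsilon_{\nu}$. The only organizational difference is that the paper computes $\Delta\beta$ first on $LG^2$ (with $\Delta=\pr_1^{*}+\pr_2^{*}-m^{*}$) and then invokes the simplicial chain-map relation $\Delta\circ L\delta^{*}=L\delta_2^{*}\circ\Delta$ to transport the result to $LFM^{[3]}$, whereas you expand $\Delta(L\delta^{*}\beta)$ directly on $LFM^{[3]}$; the underlying bilinear-form manipulation is identical. Your use of the cocycle identity for $Z$ (derived from Lemma \ref{lem:Z}) in place of the raw integral expansion is likewise just a repackaging of the same computation.
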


\begin{proof}
We make two calculations. First we calculate $\Delta\beta := \pr_1^{*}\beta + \pr_2^{*}\beta - m^{*}\beta\in \Omega^1(LG^2)$, using that
$ \mathrm{d}m_{\tau_1,\tau_2}(X_1,X_2)=X_1\tau_2 + \tau_1X_2$.
The result is
\begin{multline*}
(\Delta\beta)_{\tau_1,\tau_2}(X_1,X_2)=-\int_0^1 \left\lbrace \left \langle \tau_1(z)^{-1}\partial_z\tau_1(z), X_2(z)\tau_2^{-1}(z)   \right \rangle \right . \possiblelinebreak \left .- \left \langle \partial_z\tau_2(z)\tau_2(z)^{-1}, \tau_1(z)^{-1}X_1(z)   \right \rangle \right\rbrace\mathrm{d}z\text{.} 
\end{multline*}
\begin{comment}
The calculation is here:
\begin{eqnarray*}
&&\hspace{-1cm}(\Delta\beta)_{\tau_1,\tau_2}(X_1,X_2)
\\
&=& \beta_{\tau_1}(X_1) + \beta_{\tau_2}(X_2) - \beta_{\tau_1\tau_2}(X_1\tau_2 + \tau_1X_2)
\\
&=& \int_0^1 \left \langle \partial_z\tau_1(z)\tau_1(z)^{-1}, X_1(z)\tau_1^{-1}(z)   \right \rangle\;\mathrm{d}z
\\&&
+\int_0^1 \left \langle \partial_z\tau_2(z)\tau_2(z)^{-1}, X_2(z)\tau_2^{-1}(z)   \right \rangle\;\mathrm{d}z \\&&
-\int_0^1 \left \langle \partial_z\tau_1(z)\tau_1(z)^{-1}, (X_1(z)\tau_2(z) + \tau_1(z)X_2(z))\tau_2^{-1}(z)\tau_1(z)^{-1}   \right \rangle\;\mathrm{d}z 
\\&&
-\int_0^1 \left \langle \tau_1(z)\partial_z\tau_2(z)\tau_2(z)^{-1}\tau_1(z)^{-1}, (X_1(z)\tau_2(z) + \tau_1(z)X_2(z))\tau_2^{-1}(z)\tau_1(z)^{-1}   \right \rangle\;\mathrm{d}z 
\\
&=&
-\int_0^1 \left \langle \tau_1(z)^{-1}\partial_z\tau_1(z), X_2(z)\tau_2^{-1}(z)   \right \rangle\;\mathrm{d}z -\int_0^1 \left \langle \partial_z\tau_2(z)\tau_2(z)^{-1}, \tau_1(z)^{-1}X_1(z)   \right \rangle\;\mathrm{d}z \end{eqnarray*}
\end{multline*}
\end{comment}
For the second calculation we use
the notation $\delta_{ij}:=L\delta(\tau_i,\tau_j)$, in which $\delta_{13}=\delta_{23}\delta_{12}$ holds, and obtain
\begin{equation*}
(\Delta Z(L\delta,\pr_2^{*}\bar A))_{\tau_1,\tau_2,\tau_3}(X_1,X_2,X_3)=2 \int_0^1 \left \langle \partial_z\delta_{12}(z)\delta_{12}(z)^{-1},\delta_{23}(z)^{-1}Y_{23}(z)  \right \rangle \mathrm{d}z   \text{,}
\end{equation*}
where $Y_{23} := \mathrm{d}L\delta(X_{2},X_{3})$.
\begin{comment}
This calculation goes here:
\begin{eqnarray*}
&&\hspace{-2cm}(\Delta Z(L\delta,\pr_2^{*}\bar A))_{\tau_1,\tau_2,\tau_3}(X_1,X_2,X_3) 
\\&=&  2\int_0^1 \left \langle \partial_z\delta_{23}(z)\delta_{23}(z)^{-1},A_{\tau_3(z)}(X_3(z))  \right \rangle \mathrm{d}z  
 \\ && +2\int_0^1 \left \langle \partial_z\delta_{12}(z)\delta_{12}(z)^{-1},A_{\tau_2(z)}(X_2(z))  \right \rangle \mathrm{d}z  
\\&& -2\int_0^1 \left \langle \partial_z\delta_{13}(z)\delta_{13}(z)^{-1},A_{\tau_3(z)}(X_3(z))  \right \rangle \mathrm{d}z  
\\&=&   2\int_0^1 \left \langle \partial_z\delta_{12}(z)\delta_{12}(z)^{-1},A_{\tau_2(z)}(X_2(z))  \right \rangle \mathrm{d}z  
\\&& -2\int_0^1 \left \langle \delta_{23}(z)\partial_z\delta_{12}(z)\delta_{12}(z)^{-1}\delta_{23}(z)^{-1} ,A_{\tau_3(z)}(X_3(z))  \right \rangle \mathrm{d}z \\&=&  2\int_0^1 \left \langle \partial_z\delta_{12}(z)\delta_{12}(z)^{-1},A_{\tau_2(z)}(X_2(z))+\delta_{23}(z)^{-1}A_{\tau_3(z)}(X_3(z))\delta_{23}(z)  \right \rangle \mathrm{d}z  
\\&=& 2 \int_0^1 \left \langle \partial_z\delta_{12}(z)\delta_{12}(z)^{-1},\delta_{23}(z)^{-1}Y_{23}(z)  \right \rangle \mathrm{d}z  \text{.}
\end{eqnarray*}
This used the transformation formula
\begin{equation*}
A_{\tau_2(z)}(X_2(z))=A_{\tau_3(z)\delta_{23}(z)}(X_3(z)+Y_{23}(z))=\mathrm{Ad}_{\delta_{23}(z)}^{-1}(A_{\tau_3}(z)(X_3(z)))+\delta_{23}(z)^{-1}Y_{23}(z)
\end{equation*}
\end{comment}
We have the relation
$\Delta \circ L\delta^{*} = L\delta_2^{*} \circ \Delta$ which implements the fact that $\delta$ is a chain map between simplicial manifolds.
\begin{comment}
Indeed, for $f:G \to \R$
\begin{multline*}
(\Delta\circ \delta^{*})(f)(p_1,p_2,p_3) =  (\delta^{*}f)(p_2,p_3)+(\delta^{*}f)(p_1,p_2) - (\delta^{*}f)(p_1,p_3) \\= f(\delta_{23})+f(\delta_{12})-f(\delta_{23}\delta_{12}) =(\Delta f)(\delta_{23},\delta_{12})=(\delta_2^{*}\circ \Delta)(f)(p_1,p_2,p_3)
\end{multline*}
\end{comment}
Putting the two calculations together and identifying the result under  \erf{eq:eps} we obtain the claimed result.
\begin{comment}
This is done here:
\begin{eqnarray*}
\Delta\xi_{\tau_1,\tau_2,\tau_3}(X_1,X_2,X_3) &=& (\Delta\beta)_{\delta_{23},\delta_{12}}(Y_{23},Y_{12}) +\Delta Z(L\delta,\pr_2^{*}\bar A)_{\tau_1,\tau_2,\tau_3}(X_1,X_2,X_3) \\&=& -\int_0^1 \left \langle \delta_{23}(z)^{-1}\partial_z\delta_{23}(z), Y_{12}(z)\delta_{12}^{-1}(z)   \right \rangle\;\mathrm{d}z \\&&-\int_0^1 \left \langle \partial_z\delta_{12}(z)\delta_{12}(z)^{-1}, \delta_{23}(z)^{-1}Y_{23}(z)   \right \rangle\;\mathrm{d}z
\\&&+2 \int_0^1 \left \langle \partial_z\delta_{12}(z)\delta_{12}(z)^{-1},\delta_{23}(z)^{-1}Y_{23}(z)  \right \rangle \mathrm{d}z  
\\&=& - \int_0^1 \left \langle \delta_{23}(z)^{-1}\partial_z\delta_{23}(z),Y_{12}(z)\delta_{12}(z)^{-1}   \right \rangle \mathrm{d}z \\&&+ \int_0^1 \left \langle \delta_{23}(z)^{-1}Y_{23}(z),\partial_z\delta_{12}(z)\delta_{12}(z)^{-1}   \right \rangle \mathrm{d}z
\\&=& -\varepsilon_{\delta_{23},\delta_{12}}(Y_{23},Y_{12})
\\&=& -(L\delta_2^{*}\varepsilon)_{\tau_1,\tau_2,\tau_3}(X_1,X_2,X_3)\text{.}
\end{eqnarray*}
The formula for $\varepsilon$ was
\begin{eqnarray*}
&&\hspace{-0.8cm}\varepsilon_{\nu}|_{\tau_1,\tau_2}(X_1,X_2) \\ &=& \int_{S^1} \left \langle   \tau_1(z)^{-1}\partial_z\tau_1(z), X_2(z)\tau_2(z)^{-1} \right \rangle \mathrm{d}z -\int_{S^1} \left \langle  \tau_1(z)^{-1}X_1(z), \partial_z\tau_2(z)\tau_2(z)^{-1}  \right \rangle \mathrm{d}z\text{.}
\end{eqnarray*}
\end{comment}
\end{proof}

We  also need to calculate the derivative of the 1-form $\xi$.

\begin{lemma}
\label{lem:dxi}
$\mathrm{d}\xi= -\frac{1}{2}L\delta^{*}\omega(\theta\wedge\theta)-L\delta^{*}\mathrm{curv}(\nu)+Z(L\delta,\pr_2^{*}\mathrm{d}\bar A)+\omega(L\delta^{*}\theta\wedge \mathrm{Ad}^{-1}_{L\delta}(\pr_2^{*}\bar A))$.
\end{lemma}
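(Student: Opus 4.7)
The plan is to split $\mathrm{d}\xi$ along the two summands of $\xi = L\delta^{*}\beta + Z(L\delta,\pr_2^{*}\bar A)$ and treat them separately. For the first summand I would use that exterior differentiation commutes with pullback and invoke Lemma \ref{lem:beta}, which yields $\mathrm{d}\beta = -\frac{1}{2}\omega(\theta\wedge\theta) - \mathrm{curv}(\nu)$; pulling this identity back along $L\delta$ produces the first two terms of the claimed formula immediately.

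The second summand carries the real content. My plan is to evaluate $\mathrm{d}(Z(L\delta,\pr_2^{*}\bar A))$ on a pair of vector fields $X = (X_1,X_2)$, $Y = (Y_1,Y_2)$ on $LFM^{[2]}$ via the Cartan formula. Writing $\dot\delta := \partial_z\delta\cdot\delta^{-1}$, $B_2 := \pr_2^{*}\bar A(X)$ and $B_2' := \pr_2^{*}\bar A(Y)$, the defining formula $Z(L\delta,\pr_2^{*}\bar A)(X) = 2\int_{S^1}\langle\dot\delta,B_2\rangle\,\mathrm{d}z$ unpacks $\mathrm{d}(Z(L\delta,\pr_2^{*}\bar A))(X,Y)$ into two integrals:
\begin{equation*}
2\int_0^1\!\bigl(\langle X\dot\delta,B_2'\rangle - \langle Y\dot\delta,B_2\rangle\bigr)\,\mathrm{d}z \;+\; 2\int_0^1\!\langle\dot\delta,\, XB_2' - YB_2 - \pr_2^{*}\bar A([X,Y])\rangle\,\mathrm{d}z\text{.}
\end{equation*}
The Cartan formula applied to the $L\mathfrak{g}$-valued 1-form $\pr_2^{*}\bar A$ identifies the integrand of the second integral, pointwise in $z$, as $\mathrm{d}\bar A(X_2,Y_2)$, so this contribution becomes exactly $Z(L\delta,\pr_2^{*}\mathrm{d}\bar A)(X,Y)$.

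The first integral is the wedge term, and establishing its shape will be the main obstacle. Setting $u_X := L\delta^{*}\theta(X) = \delta^{-1}\mathrm{d}L\delta(X)$ and $v_X := \mathrm{Ad}^{-1}_{L\delta}(\pr_2^{*}\bar A)(X)$, the key step is to prove the identity $X\dot\delta = \mathrm{Ad}_{\delta}(\partial_z u_X)$. I would obtain this by differentiating $\partial_z\delta\cdot\delta^{-1}$ along the curve through $\delta$ in direction $X$, expanding to $\partial_z(W_X\delta^{-1}) + [W_X\delta^{-1},\dot\delta]$ with $W_X := \mathrm{d}L\delta(X)$, and then using $W_X\delta^{-1} = \mathrm{Ad}_{\delta}(u_X)$ and $\dot\delta = \mathrm{Ad}_{\delta}(\delta^{-1}\partial_z\delta)$ together with the Leibniz rule for $\partial_z$ through $\mathrm{Ad}_\delta$ to see the two commutator contributions cancel. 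Once this is in place, $\mathrm{Ad}$-invariance of $\langle -,-\rangle$ reduces $\langle X\dot\delta,B_2'\rangle$ to $\langle\partial_z u_X,v_Y\rangle$, and the formula $\omega(A,B) = 2\int_{S^1}\langle A,\mathrm{d}B\rangle$ from Lemma \ref{lem:omega}, combined with integration by parts on $S^{1}$, turns the antisymmetrized expression into $\omega(u_X,v_Y)-\omega(u_Y,v_X) = \omega(L\delta^{*}\theta\wedge\mathrm{Ad}^{-1}_{L\delta}(\pr_2^{*}\bar A))(X,Y)$. Combining the two contributions with the result of the first step yields the claimed identity.
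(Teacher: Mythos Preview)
Your approach is correct and follows the same decomposition as the paper: split $\xi$ into $L\delta^{*}\beta$ and $Z(L\delta,\pr_2^{*}\bar A)$, use Lemma~\ref{lem:beta} for the first summand, and compute the derivative of the second separately. The paper handles the second summand in one line by invoking \cite[Lemma 5.8(a)]{gomi3}, which states precisely
\[
\mathrm{d}Z(L\delta,\pr_{2}^{*}\bar A)= Z(L\delta,\pr_2^{*}\mathrm{d}\bar A)-\omega\bigl(L\delta^{*}\theta\wedge \mathrm{Ad}^{-1}_{L\delta}(\pr_2^{*}\bar A)\bigr),
\]
whereas you re-derive this identity from scratch via the explicit integral formula $Z(\tau,X)=2\int_{S^1}\langle\tau^{*}\bar\theta,X\rangle$ and the Cartan formula. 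Your key identity $X\dot\delta=\mathrm{Ad}_{\delta}(\partial_z u_X)$ is correct and is exactly what makes the wedge term appear. So the content is the same; your version is more self-contained, while the paper's is shorter by outsourcing to Gomi.

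One small point: track the sign in your last step. From $\langle X\dot\delta,B_2'\rangle=\langle\partial_z u_X,v_Y\rangle$ and $\omega(A,B)=2\int\langle A,\partial_z B\rangle$, integration by parts gives $2\int\langle\partial_z u_X,v_Y\rangle\,\mathrm{d}z=-\omega(u_X,v_Y)$, so the antisymmetrized expression is $-\omega(u\wedge v)(X,Y)$ rather than $+\omega(u\wedge v)(X,Y)$. This matches the sign in the paper's displayed intermediate formula; note that the paper's final lemma statement and its own displayed proof differ by this sign as well, so you are in good company.
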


\begin{proof}
\begin{comment}
We had defined $\xi := L\delta^{*}\beta  + Z(L\delta,\pr_2^{*}\bar A)$. 
\end{comment}
We have \cite[Lemma 5.8 (a)]{gomi3}
\begin{comment}
\begin{equation*}
\mathrm{d}Z(Lg,\pr_{1}^{*}\bar A)= Z(Lg,\pr_1^{*}\mathrm{d}\bar A)-\omega(Lg^{*}\theta\wedge \mathrm{Ad}^{-1}_{Lg}(\pr_1^{*}\bar A))\text{,}
\end{equation*}
and the equivalent formula
\end{comment}
\begin{equation*}
\mathrm{d}Z(L\delta,\pr_{2}^{*}\bar A)= Z(L\delta,\pr_2^{*}\mathrm{d}\bar A)-\omega(L\delta^{*}\theta\wedge \mathrm{Ad}^{-1}_{L\delta}(\pr_2^{*}\bar A))\text{.}
\end{equation*}
With Lemma \ref{lem:beta}  we obtain the claimed formula.
\begin{comment}
\begin{eqnarray*}
\mathrm{d}\xi &=& L\delta^{*}\mathrm{d}\beta  +\mathrm{d} Z(L\delta,\pr_2^{*}\bar A)
\\&=& -\frac{1}{2}L\delta^{*}\omega(\theta\wedge\theta)-L\delta^{*}\mathrm{curv}(\nu)+Z(L\delta,\pr_2^{*}\mathrm{d}\bar A)+\omega(L\delta^{*}\theta\wedge \mathrm{Ad}^{-1}_{L\delta}(\pr_2^{*}\bar A))
\text{.}
\end{eqnarray*} 
\end{comment}
\end{proof}

In the following we  consider the 1-form $\xi-\frac{1}{2}\Delta\zeta\in \Omega^1(FM^{[2]})$, with $\zeta$ the 1-form defined at the beginning of Section \ref{sec:superficial}. 
\begin{comment}
An explicit expression is obtained from
\begin{multline*}
\xi_{\tau_1,\tau_2}(X_1,X_2)=\int_0^1 \left \langle \delta(z)^{-1}\partial_z\delta(z), \delta(z)^{-1}Y(z)   \right \rangle\;\mathrm{d}z \\+2\int_0^1 \left \langle \partial_z\delta(z)\delta(z)^{-1},A_{\tau_2(z)}(X_2(z))  \right \rangle \mathrm{d}z
\end{multline*}
and from Lemma \ref{lem:deltazeta},
\begin{multline*}
(\Delta\zeta)_{\tau_1,\tau_2}(X_1,X_2) =2 \int_0^1 \left  \langle A_{\tau_2(z)}(\partial_z\tau_2(z)), Y(z)\delta(z)^{-1}    \right \rangle \mathrm{d}z\\+2 \int_0^1 \left \langle \partial_z\delta(z)\delta(z)^{-1},A_{\tau_2(z)}(X_2(z))  \right \rangle\mathrm{d}z  +2 \int_0^1 \left \langle  \partial_z\delta(z)\delta(z)^{-1}, Y(z)\delta(z)^{-1} \right \rangle\mathrm{d}z\text{.}
\end{multline*}
Together they give
\begin{multline*}
(\xi-\frac{1}{2}\Delta\zeta)_{\tau_1,\tau_2}(X_1,X_2) =\int_0^1 \left \langle \partial_z\delta(z)\delta(z)^{-1},A_{\tau_2(z)}(X_2(z))  \right \rangle \mathrm{d}z\\
 - \int_0^1 \left  \langle A_{\tau_2(z)}(\partial_z\tau_2(z)), Y(z)\delta(z)^{-1}    \right \rangle \mathrm{d}z\text{.} \end{multline*}
Thus it can also be expressed as
\begin{equation*}
\xi-\frac{1}{2}\Delta\zeta = \frac{1}{2}Z(L\delta,\pr_2^{*}\bar A)+\frac{1}{2}r(\pr_2,L\delta^{*}\bar\theta)\text{.}
\end{equation*}
\end{comment}

\begin{lemma}
\label{lem:fusionform}
The 1-form $\xi-\frac{1}{2}\Delta\zeta\in\Omega^1(LFM^{[2]})$  is a  superficial fusion form, i.e. a su\-perficial fusive connection on the trivial bundle with respect to the trivial fusion product.
\end{lemma}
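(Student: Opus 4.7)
The plan is to identify $\xi-\tfrac{1}{2}\Delta\zeta$ explicitly with the transgression of a suitable 2-form on $FM^{[2]}$, and then to invoke the general fact that the transgression of a 2-form is automatically a superficial fusion connection on the trivial bundle.

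First, I would combine the definition \erf{eq:xi} of $\xi$ with Lemma \ref{lem:deltazeta} to read off the explicit formula
\begin{equation*}
(\xi - \tfrac{1}{2}\Delta\zeta)_{\tau_1,\tau_2}(X_1,X_2) = \int_0^1 \left\lbrace \langle \partial_z\delta\cdot\delta^{-1},\, A_{\tau_2}(X_2)\rangle - \langle A_{\tau_2}(\partial_z\tau_2),\, Y\cdot\delta^{-1}\rangle \right\rbrace \mathrm{d}z,
\end{equation*}
with $\delta(z) \df \delta(\tau_1(z),\tau_2(z))$ and $Y(z) \df \mathrm{d}\delta(X_1(z),X_2(z))$. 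Using Lemma \ref{lem:Z} and the definition \erf{eq:reduction} of the reduction $r$, this can be rewritten compactly as
\begin{equation*}
\xi - \tfrac{1}{2}\Delta\zeta = \tfrac{1}{2}Z(L\delta,\pr_2^{*}\bar A) + \tfrac{1}{2}r(\pr_2,L\delta^{*}\bar\theta).
\end{equation*}

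The key observation is that this 1-form on $LFM^{[2]} = L(FM^{[2]})$ is precisely the transgression \erf{eq:difftrans} of the 2-form
\begin{equation*}
\omega \df \langle\delta^{*}\bar\theta \wedge \pr_2^{*}A\rangle \in \Omega^2(FM^{[2]}),
\end{equation*}
which is well-defined (antisymmetric) thanks to the symmetry of $\langle-,-\rangle$. A direct pointwise evaluation of $\tau_\Omega(\omega)|_{(\tau_1,\tau_2)}(X_1,X_2) = \int_0^1 \omega_{(\tau_1(z),\tau_2(z))}((\partial_z\tau_1,\partial_z\tau_2),(X_1,X_2))\,\mathrm{d}z$ reproduces the explicit formula above, yielding $\tau_\Omega(\omega) = \xi - \tfrac{1}{2}\Delta\zeta$.

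Once this identification is secured, the conclusion is immediate: the transgression functor \erf{eq:trfull} sends the trivial bundle gerbe $\mathcal{I}_\omega$ to the trivial principal $\ueins$-bundle over $L(FM^{[2]})$ equipped with its canonical fusion product and with the superficial, fusive connection 1-form $\tau_\Omega(\omega)$. Therefore $\xi-\tfrac{1}{2}\Delta\zeta$ is a superficial fusion form in the required sense. The main obstacle is guessing the correct 2-form $\omega$ on $FM^{[2]}$ and verifying the identification $\xi-\tfrac{1}{2}\Delta\zeta = \tau_\Omega(\omega)$ by pointwise computation; once this is done, both the fusion property and superficiality come for free from the general properties of the transgression functor already established earlier in the paper.
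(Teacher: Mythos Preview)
Your proposal is correct and follows essentially the same route as the paper: the paper's proof simply forward-references Lemma~\ref{lem:tromega}, which establishes exactly your identification $\xi-\tfrac{1}{2}\Delta\zeta=\tau_{\Omega}(\omega)$ for the 2-form $\omega=\langle\delta^{*}\bar\theta\wedge\pr_2^{*}A\rangle=\langle\delta^{*}\theta\wedge\pr_1^{*}A\rangle$ of \erf{eq:CSomega}, and then invokes the general facts that transgressed forms are automatically superficial and fusive. You have effectively anticipated and inlined the computation that the paper defers to Section~\ref{sec:transgression}.
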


\begin{proof}
This can be verified directly; however, we  show in Lemma \ref{lem:tromega} that $\xi-\frac{1}{2}\Delta\zeta$ is in the image of the transgression homomorphism \ref{eq:difftrans}; such forms are automatically superficial \cite[Lemma 3.1.7]{waldorf11} and  fusion \cite[Proposition 3.2.3]{waldorf11}.
\begin{comment}
That the form is thin can be seen explicitly:
With the explicit expression, for any paths $\gamma \in PLFM$ and $\delta\in P\lspin n$:
\begin{eqnarray*}
(\xi-\frac{1}{2}\Delta\zeta)_{\gamma(t)\delta(t),\gamma(t)}(\partial_t(\gamma(t)\delta(t)),\partial_t\gamma(t))
&=& - \int_0^1 \left  \langle A_{\gamma(t,z)}(\partial_z\gamma(t,z)), \partial_t\delta(t,z)\delta(t,z)^{-1}    \right \rangle \mathrm{d}z 
\\&& 
+\int_0^1 \left \langle \partial_z\delta(t,z)\delta(t,z)^{-1},A_{\gamma(t,z)}(\partial_t\gamma(t,z))  \right \rangle \mathrm{d}z
\end{eqnarray*}
Suppose $t\mapsto (\gamma(t)\delta(t),\gamma(t))$ is thin, i.e. the differential of 
\begin{equation*}
(t,z)\mapsto (\gamma(t,z)\delta(t,z),\gamma(t,z))
\end{equation*}
has at most rank 1. This means that the vectors $(\partial_t\gamma(t,z)\delta(t,z)+\gamma(t,z)\partial_t\delta(t,z),\partial_t\gamma(t,z))$
 and $(\partial_z\gamma(t,z)\delta(t,z)+\gamma(t,z)\partial_z\delta(t,z),\partial_z\gamma(t,z))$ are linearly dependent. So there are constants $\alpha,\beta$ such that
\begin{eqnarray*}
\alpha \partial_t\gamma(t,z) &=&\beta \partial_z\gamma(t,z)
\\\alpha\partial_t\gamma(t,z)\delta(t,z)+\alpha\gamma(t,z)\partial_t\delta(t,z)&=&\beta \partial_z\gamma(t,z)\delta(t,z)+\beta\gamma(t,z)\partial_z\delta(t,z)\text{.}
\end{eqnarray*}
Plugging the first equation into the second, it gives $\alpha\partial_t\delta(t,z)=\beta\partial_z\delta(t,z)$.
We may assume that $\alpha\neq0$ and $\beta\neq 0$. 
\end{comment}
\end{proof}

We consider on $\q = L\delta^{*}\lspinhat n$ the connection
\begin{equation}
\label{eq:nukorr}
\chi_{\corr} := L\delta^{*}\nu + (\xi-\frac{1}{2}\Delta \zeta)\text{.}
\end{equation}

\begin{proposition}
\label{prop:chi}
The connection $\chi_{\corr}$ has the following properties:
\begin{enumerate}[(i)]

\item 
It makes the bundle gerbe product $\mu$  connection-preserving.  

\item
It is superficial, and the induced thin structure on $\q$ coincides with the one induced by the original connection: $d^{\chi_{\corr}}=d_P=d^{L\delta^{*}\nu}$. 
\item
It is a fusive connection with respect to the fusion product $\lambda_{\q}=L\delta^{*}\lambda$ on $\q$. 
\end{enumerate}
\end{proposition}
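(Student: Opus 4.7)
My plan is to handle the three parts separately, each time exploiting the fact that $\chi_{\corr}$ differs from the pullback connection $L\delta^{*}\nu$ by the 1-form $\xi-\tfrac{1}{2}\Delta\zeta$, which by Lemma~\ref{lem:fusionform} is superficial and fusive on the trivial bundle. In each case the strategy is to transport the corresponding property from $\nu$ (which is known to satisfy it on $\lspinhat n$) to $L\delta^{*}\nu$ by pullback along $L\delta$, and then to check that adding the correction 1-form preserves that property.

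For (i), I will compute the simplicial differential $\Delta\chi_{\corr}$ over $LFM^{[3]}$. The failure of $\mu$ to preserve $L\delta^{*}\nu$ is precisely $L\delta_2^{*}\varepsilon_{\nu}$ by Equation \erf{eq:nuerror}. Adding the correction contributes $\Delta(\xi-\tfrac{1}{2}\Delta\zeta) = \Delta\xi - \tfrac{1}{2}\Delta^{2}\zeta$, and since $\Delta^{2}=0$ and $\Delta\xi=-L\delta_2^{*}\varepsilon_{\nu}$ by Lemma~\ref{lem:epsiloncancel}, the two error terms cancel exactly. Hence $\mu$ becomes $\chi_{\corr}$-preserving.

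For (ii), I will first argue that $L\delta^{*}\nu$ is superficial: $\nu$ is superficial (being in the image of the transgression functor of \erf{eq:trcon}), and the map $L\delta$ is the looping of a smooth map, hence sends thin paths to thin paths, so the pullback of a superficial connection stays superficial. Since $\xi-\tfrac{1}{2}\Delta\zeta$ is itself superficial by Lemma~\ref{lem:fusionform}, the sum $\chi_{\corr}$ is superficial. To compare the induced thin structures, I will use that on a thin path $\gamma$ in $LFM^{[2]}$ between thin homotopic loops, the parallel transports of the two connections differ by the factor $\exp(2\pi\im\int_{\gamma}(\xi-\tfrac{1}{2}\Delta\zeta))$, which equals $1$ because the integrand is a superficial 1-form. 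Hence $d^{\chi_{\corr}} = d^{L\delta^{*}\nu}$, and the latter coincides with $d_{\q} = L\delta^{*}d^{\nu}$ by naturality of thin structures under pullback.

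For (iii), I split the fusivity conditions of Section~\ref{sec:fusion}. The connection-preservation condition for $\lambda_{\q}$ amounts to $\lop_{23}^{*}\chi_{\corr} + \lop_{12}^{*}\chi_{\corr} = \lop_{13}^{*}\chi_{\corr}$ pulled back via $\lambda_{\q}$. Because $\nu$ is fusive with respect to $\lambda_{\gbas}$, the same cocycle identity holds for $L\delta^{*}\nu$ with respect to $\lambda_{\q}=L\delta^{*}\lambda_{\gbas}$, and by Lemma~\ref{lem:fusionform} the correction $\xi-\tfrac{1}{2}\Delta\zeta$ independently satisfies the fusion identity on the trivial bundle; adding the two yields the required identity for $\chi_{\corr}$. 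The rotation condition (ii) of Definition~\ref{def:fusionthin} involves only the induced thin structure, which by (ii) equals $d_{\q}$; since $d^{\nu}$ is fusive for $\lambda_{\gbas}$, its pullback $d_{\q}$ is fusive for $\lambda_{\q}$, so the condition transfers to $\chi_{\corr}$.

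The main obstacle is the sign/convention bookkeeping in (i)---one has to be careful that the $-\tfrac{1}{2}\Delta\zeta$ summand is needed not for cancellation with $\varepsilon_{\nu}$ but to ensure fusivity and compatibility with the thin structure; in (i) it disappears automatically from $\Delta^{2}=0$, and its role only becomes visible in (ii) and (iii). Identifying the precise interplay of the two summands of $\chi_{\corr}-L\delta^{*}\nu$ across the three properties is the subtle point that the proof must make transparent.
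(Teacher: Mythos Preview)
Your approach is essentially the same as the paper's: use Lemma~\ref{lem:epsiloncancel} and $\Delta^2=0$ for (i), and Lemma~\ref{lem:fusionform} together with the tensor-product/sum decomposition $\chi_{\corr}=L\delta^{*}\nu+(\xi-\tfrac12\Delta\zeta)$ for (ii) and (iii). The paper phrases (ii) and (iii) as ``$\q$ with $\chi_{\corr}$ is the tensor product of $\q$ with $L\delta^{*}\nu$ and the trivial bundle with $\xi-\tfrac12\Delta\zeta$, and superficiality and fusivity are preserved under tensor products'', which is exactly your additive argument.

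One small imprecision in your treatment of (ii): you justify $d^{\chi_{\corr}}=d^{L\delta^{*}\nu}$ by saying the integral $\int_{\gamma}(\xi-\tfrac12\Delta\zeta)$ vanishes along thin paths ``because the integrand is a superficial 1-form''. Superficiality in the sense of Definition~\ref{def:superficial} only concerns holonomy around thin \emph{loops}, not parallel transport along open thin paths, so as stated this does not quite justify triviality of the induced thin structure. The paper closes this by invoking \cite[Proposition 3.1.8]{waldorf11}: since $\xi-\tfrac12\Delta\zeta=\tau_{\Omega}(\omega)$ is in the image of transgression (Lemma~\ref{lem:tromega}, which is also how Lemma~\ref{lem:fusionform} is proved), its parallel transport along a thin path equals the integral of $\omega$ over the rank-one adjoint surface, which vanishes. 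You are citing the right lemma, just attributing the conclusion to the wrong property.
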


\begin{proof}
(i) holds because the correction term satisfies
\begin{equation*}
\Delta(\xi-\frac{1}{2}\Delta \zeta)=\Delta\xi =-L\delta_2^{*}\varepsilon_{\nu}\text{,}
\end{equation*} 
by Lemma \ref{lem:epsiloncancel} and so cancels the error in the multiplicativity of $\nu$.
\begin{comment}
In more detail, if we compute the error form of $\chi_{\corr}$, we get with \erf{eq:nuerror} and Lemma \ref{lem:epsiloncancel}:
\begin{equation*}
\Delta\chi_{\corr} = \Delta L\delta^{*}\nu+\Delta\xi=L\delta_2^{*}\varepsilon_{\nu}-L\delta_2^{*}\varepsilon_{\nu}=0\text{.}
\end{equation*}
\end{comment}
(ii)  and (iii) hold because of Lemma \ref{lem:fusionform}. We regard $\q$ (equipped with the fusion product $\lambda_{\q}$ and  connection $\chi_{\corr}$) as the tensor product of $\q$ (equipped with $\lambda_{\q}$ and the superficial fusive connection $L\delta^{*}\nu$) and the trivial bundle (equipped with the trivial fusion product and the superficial fusive connection $\xi-\frac{1}{2}\Delta\zeta$). Since the conditions of being superficial and fusion are preserved under the tensor product, $\chi_{\corr}$ is superficial and fusion. The same argument works for thin structures instead of connections. Here, the thin structure $d^{\;\xi-\frac{1}{2}\zeta}$ is the trivial one \cite[Proposition 3.1.8]{waldorf11}, so that $d^{\chi_{\corr}}=d^{L\delta^{*}\nu}$.
\end{proof}

It remains to find a \emph{curving} adapted to the connection $\chi_{\corr}$, i.e. a 2-form $B_{\corr}$ on  $LFM$ such that
$\Delta B_{\corr}= \mathrm{curv}(\chi_{\corr})$. 

\begin{proposition}
\label{prop:DeltaB}
The 2-form
\begin{equation*}
B_{\corr} := \frac{1}{2}\omega(\bar A \wedge \bar A) +r(\mathrm{curv}(\bar A))-\frac{1}{2}\mathrm{d}\zeta\in\Omega^2(LFM^{[2]}) \end{equation*}
is a curving for the connection $\chi_{\corr}$. \end{proposition}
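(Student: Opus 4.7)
The plan is to expand both $\mathrm{curv}(\chi_{\corr})$ and $\Delta B_{\corr}$ into a common set of basic terms and match them. Starting from $\chi_{\corr} = L\delta^{*}\nu + \xi - \tfrac{1}{2}\Delta\zeta$, and taking the exterior derivative (using that $\mathrm{d}$ commutes with the simplicial operator $\Delta$), I substitute Lemma \ref{lem:dxi} for $\mathrm{d}\xi$. The two occurrences of $L\delta^{*}\mathrm{curv}(\nu)$ cancel, leaving
\begin{equation*}
\mathrm{curv}(\chi_{\corr}) = -\tfrac{1}{2}L\delta^{*}\omega(\theta\wedge\theta) + Z(L\delta,\pr_{2}^{*}\mathrm{d}\bar A) + \omega\bigl(L\delta^{*}\theta \wedge \mathrm{Ad}^{-1}_{L\delta}(\pr_{2}^{*}\bar A)\bigr) - \tfrac{1}{2}\mathrm{d}\Delta\zeta\text{.}
\end{equation*}

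On the other side, I expand $\Delta B_{\corr} = \tfrac{1}{2}\Delta\omega(\bar A\wedge \bar A) + \Delta r(\mathrm{curv}(\bar A)) - \tfrac{1}{2}\mathrm{d}\Delta\zeta$. The $-\tfrac{1}{2}\mathrm{d}\Delta\zeta$ terms match directly. For the first summand the key input is the  transformation formula $\pr_{1}^{*}\bar A = \mathrm{Ad}^{-1}_{L\delta}(\pr_{2}^{*}\bar A) + L\delta^{*}\theta$ that comes from the principal $\spin n$-action on $FM$ together with $\pr_{2}\cdot L\delta = \pr_{1}$ on $LFM^{[2]}$. Inserting this into $\omega(\pr_{1}^{*}\bar A \wedge \pr_{1}^{*}\bar A)$, expanding by bilinearity and using the antisymmetry of $\omega$ (so that the two mixed terms coincide), and finally applying the $\mathrm{Ad}$-covariance identity \erf{eq:omegainv} to convert $\omega(\mathrm{Ad}^{-1}_{L\delta}\cdot,\mathrm{Ad}^{-1}_{L\delta}\cdot)$ into $\omega(\cdot,\cdot) + Z(L\delta,[\cdot,\cdot])$, I expect to obtain
\begin{equation*}
\tfrac{1}{2}\Delta\omega(\bar A \wedge \bar A) = -\tfrac{1}{2}Z\bigl(L\delta,[\pr_{2}^{*}\bar A\wedge \pr_{2}^{*}\bar A]\bigr) - \omega\bigl(L\delta^{*}\theta \wedge \mathrm{Ad}^{-1}_{L\delta}(\pr_{2}^{*}\bar A)\bigr) - \tfrac{1}{2}L\delta^{*}\omega(\theta\wedge\theta)\text{.}
\end{equation*}
For the second summand I use that $\mathrm{curv}(\bar A)$ transforms $\mathrm{Ad}$-equivariantly, i.e.\ $\pr_{1}^{*}\mathrm{curv}(\bar A) = \mathrm{Ad}^{-1}_{L\delta}(\pr_{2}^{*}\mathrm{curv}(\bar A))$, and combine this with the reduction identity \erf{eq:redsplit} to conclude $\Delta r(\mathrm{curv}(\bar A)) = Z(L\delta,\pr_{2}^{*}\mathrm{curv}(\bar A))$.

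The final step is to match the two expressions. The $-\tfrac{1}{2}L\delta^{*}\omega(\theta\wedge\theta)$ terms agree directly. Expanding $\mathrm{curv}(\bar A) = \mathrm{d}\bar A + \tfrac{1}{2}[\bar A\wedge \bar A]$ in the $Z$-term coming from $\Delta r(\mathrm{curv}(\bar A))$ produces a summand $\tfrac{1}{2}Z(L\delta,[\pr_{2}^{*}\bar A \wedge \pr_{2}^{*}\bar A])$ which precisely cancels the $-\tfrac{1}{2}Z$-term produced in the $\Delta\omega(\bar A\wedge\bar A)$ computation, and a summand $Z(L\delta,\pr_{2}^{*}\mathrm{d}\bar A)$ matching the corresponding term in $\mathrm{curv}(\chi_{\corr})$. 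The remaining $\omega$-term matches as well.

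The main obstacle will be purely bookkeeping: correctly handling the bilinear-form-valued wedge products for an antisymmetric cocycle $\omega$, keeping signs straight between the simplicial operator $\Delta$ and the exterior derivative, and verifying that the two $\omega(L\delta^{*}\theta \wedge \mathrm{Ad}^{-1}_{L\delta}\pr_{2}^{*}\bar A)$-contributions match in sign rather than double (which requires the correct sign in Lemma \ref{lem:dxi}, consistent with $\mathrm{d}Z(L\delta,\pr_{2}^{*}\bar A) = Z(L\delta,\pr_{2}^{*}\mathrm{d}\bar A) - \omega(L\delta^{*}\theta\wedge \mathrm{Ad}^{-1}_{L\delta}\pr_{2}^{*}\bar A)$). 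All other steps are mechanical applications of formulas established earlier in the paper.
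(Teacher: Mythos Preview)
Your proposal is correct and follows essentially the same route as the paper's own proof: compute $\mathrm{curv}(\chi_{\corr})$ via Lemma~\ref{lem:dxi}, compute $\Delta B_{\corr}$ by expanding $\Delta\omega(\bar A\wedge\bar A)$ with the transformation formula for $\bar A$ together with \erf{eq:omegainv} and $\Delta r(\mathrm{curv}(\bar A))$ with \erf{eq:redsplit}, then match terms after splitting $\mathrm{curv}(\bar A)=\mathrm{d}\bar A+\tfrac{1}{2}[\bar A\wedge\bar A]$. Your caution about the sign bookkeeping in the $\omega(L\delta^{*}\theta\wedge\mathrm{Ad}^{-1}_{L\delta}\pr_2^{*}\bar A)$ term is well placed, since swapping the wedge factors for the antisymmetric cocycle $\omega$ does not introduce a sign, so the match relies on the stated form of Lemma~\ref{lem:dxi} exactly as you note.
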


\begin{proof}
With Lemmata \ref{lem:beta} and  \ref{lem:dxi} the curvature of $\chi_{\corr}$ is:
\begin{eqnarray*}
\mathrm{curv}(\chi_{\corr}) = -\frac{1}{2}L\delta^{*}\omega(\theta\wedge\theta)+Z(L\delta,\pr_2^{*}\mathrm{d}\bar A)+\omega(L\delta^{*}\theta\wedge \mathrm{Ad}^{-1}_{L\delta}(\pr_2^{*}\bar A))-\frac{1}{2}\Delta\mathrm{d}\zeta\text{.}
\end{eqnarray*}
\begin{comment}
More explicitly,
\begin{eqnarray*}
\mathrm{curv}(\chi_{\corr}) &=& L\delta^{*}\mathrm{curv}(\nu)+ \mathrm{d}\xi-\frac{1}{2}\Delta\mathrm{d}\zeta
\\&=&L\delta^{*}\mathrm{curv}(\nu)-\frac{1}{2}L\delta^{*}\omega(\theta\wedge\theta)-L\delta^{*}\mathrm{curv}(\nu)+Z(L\delta,\pr_2^{*}\mathrm{d}\bar A)\\&&\hspace{14em}+\omega(L\delta^{*}\theta\wedge \mathrm{Ad}^{-1}_{L\delta}(\pr_2^{*}\bar A))-\frac{1}{2}\Delta\mathrm{d}\zeta
\\&=&-\frac{1}{2}L\delta^{*}\omega(\theta\wedge\theta)+Z(L\delta,\pr_2^{*}\mathrm{d}\bar A)+\omega(L\delta^{*}\theta\wedge \mathrm{Ad}^{-1}_{L\delta}(\pr_2^{*}\bar A))-\frac{1}{2}\Delta\mathrm{d}\zeta\text{.}
\end{eqnarray*}
\end{comment}
In order to calculate $\Delta B_{\corr}$, we compute
with \erf{eq:omegainv} and \erf{eq:redsplit} the formulas
\begin{eqnarray*}
\Delta\omega(\bar A\wedge \bar A)&=&-Z(L\delta,[\pr_2^{*}\bar A\wedge\pr_2^{*}\bar A])-2\omega(\mathrm{Ad}_{L\delta}^{-1}(\pr_2^{*}\bar A) \wedge  L\delta^{*}\theta)-\omega( L\delta^{*}\theta\wedge L\delta^{*}\theta)
\\
\Delta r(\mathrm{curv}(\bar A))&=& Z(L\delta,\mathrm{curv}(\pr_2^{*}\bar A)) 
\end{eqnarray*}
\begin{comment}
Actually we rather compute the equivalent formulas
\begin{eqnarray*}
\Delta\omega(\bar A\wedge \bar A)&=&Z(Lg,[\pr_1^{*}\bar A\wedge\pr_1^{*}\bar A])+2\omega(\mathrm{Ad}_{Lg}^{-1}(\pr_1^{*}\bar A) \wedge  Lg^{*}\theta)+\omega( Lg^{*}\theta\wedge Lg^{*}\theta)
\\
\Delta r(\mathrm{curv}(\bar A))&=& -Z(Lg,\mathrm{curv}(\pr_1^{*}\bar A)) 
\end{eqnarray*}
Indeed, we have
\begin{eqnarray*}
\Delta\omega(\bar A\wedge \bar A) &=& \omega(\pr_2^{*}\bar A\wedge \pr_2^{*}\bar A)-\omega(\pr_1^{*}\bar A\wedge \pr_1^{*}\bar A)
\\&=&\omega((\mathrm{Ad}_{Lg}^{-1}(\pr_1^{*}\bar A) + Lg^{*}\theta)\wedge (\mathrm{Ad}_{Lg}^{-1}(\pr_1^{*}\bar A) + Lg^{*}\theta))-\omega(\pr_1^{*}\bar A\wedge \pr_1^{*}\bar A)
\\&=&\omega(\mathrm{Ad}_{Lg}^{-1}(\pr_1^{*}\bar A) \wedge \mathrm{Ad}_{Lg}^{-1}(\pr_1^{*}\bar A))+2\omega(\mathrm{Ad}_{Lg}^{-1}(\pr_1^{*}\bar A) \wedge  Lg^{*}\theta)\\&&+\omega( Lg^{*}\theta\wedge Lg^{*}\theta)-\omega(\pr_1^{*}\bar A\wedge \pr_1^{*}\bar A)
\\&=&\omega(\pr_1^{*}\bar A \wedge \pr_1^{*}\bar A)+Z(Lg,[\pr_1^{*}\bar A\wedge\pr_1^{*}\bar A])+2\omega(\mathrm{Ad}_{Lg}^{-1}(\pr_1^{*}\bar A) \wedge  Lg^{*}\theta)\\&&+\omega( Lg^{*}\theta\wedge Lg^{*}\theta)-\omega(\pr_1^{*}\bar A\wedge \pr_1^{*}\bar A)
\end{eqnarray*}
Further we have
\begin{eqnarray*}
\Delta r(\mathrm{curv}(\bar A)) &=& r(\pr_2,\mathrm{curv}(\pr_2^{*}\bar A))-r(\pr_1,\mathrm{curv}(\pr_1^{*}\bar A))
\\
&=& r(\pr_1 \cdot Lg,\mathrm{Ad}_{Lg}^{-1}(\mathrm{curv}(\pr_1^{*}\bar A)))-r(\pr_1,\mathrm{curv}(\pr_1^{*}\bar A))
\\
&=& -Z(Lg,\mathrm{curv}(\pr_1^{*}\bar A))  
\end{eqnarray*}
\end{comment}
These show the required identity $\Delta B_{\corr}=\mathrm{curv}(\chi_{\corr})$.
\begin{comment}
Indeed:
\begin{eqnarray*}
\Delta B &=& \frac{1}{2}\Delta\omega(\bar A \wedge \bar A) +\Delta r(\mathrm{curv}(\bar A))-\frac{1}{2}\mathrm{d}\Delta\zeta
\\&=&-\frac{1}{2}Z(L\delta,[\pr_2^{*}\bar A\wedge\pr_2^{*}\bar A])-\omega(\mathrm{Ad}_{L\delta}^{-1}(\pr_2^{*}\bar A) \wedge  L\delta^{*}\theta)-\frac{1}{2}\omega( L\delta^{*}\theta\wedge L\delta^{*}\theta)\\&& +Z(L\delta,\mathrm{curv}(\pr_2^{*}\bar A))  -\frac{1}{2}\mathrm{d}\Delta\zeta
\\&=& -\frac{1}{2}L\delta^{*}\omega(\theta\wedge\theta)+Z(L\delta,\pr_2^{*}\mathrm{d}\bar A)+\omega(L\delta^{*}\theta\wedge \mathrm{Ad}^{-1}_{L\delta}(\pr_2^{*}\bar A))-\frac{1}{2}\Delta\mathrm{d}\zeta
\\&=& \mathrm{curv}(\chi_{\corr})
\end{eqnarray*}
\end{comment}
\end{proof}

It is worthwhile to compare the connection $(\chi_{\corr},B_{\corr})$ on $\mathcal{S}_{LM}$ with another connection developed by Gomi \cite{gomi3} for general lifting gerbes (not only for \emph{loop} group extensions). That connection  takes as input data just the splitting $\split$ of the Lie algebra extension and the reduction $r$ adapted to $\split$. It is defined by
\begin{equation*}
\chi_{Go} = L\delta^{*}\nu_{\split} + Z(L\delta,\pr_2^{*}\bar A)\in \Omega^1(\q)\text{,}
\end{equation*}
where $\nu_{\split}$ is the connection on $\lspinhat n$  determined by $\split$. The corresponding curving is given by
\begin{equation*}
B_{Go} := \frac{1}{2}\omega(\bar A \wedge \bar A) +r(\mathrm{curv}(\bar A)) \in \Omega^2(LFM)\text{.}
\end{equation*}
Since connections on bundle gerbes form an affine space \cite{murray}, we obtain the following.

\begin{lemma}
The assignment
\begin{equation*}
x \mapsto (\chi_x,B_{x}) := ( \chi_{Go} -\frac{x}{2}\Delta\zeta,B_{Go}-\frac{x}{2}\mathrm{d}\zeta)
\end{equation*}
is a one-parameter family of connections on the spin lifting gerbe $\mathcal{S}_{LM}$, which contains the connection of Gomi at $x=0$ and the connection $(\chi_{\corr},B_{\corr})$ at $x=1$. 
\end{lemma}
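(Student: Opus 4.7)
The plan is to verify the lemma by combining two endpoint identifications with the standard affine-space structure on the set of connection-with-curving pairs on a fixed bundle gerbe (see \cite{murray}). The upshot is that I do not need to re-derive either the multiplicativity relation $\Delta\chi_x=0$ or the curving relation $\Delta B_x = \mathrm{curv}(\chi_x)$ from scratch for every $x$: it is enough to know them at $x=0$ and $x=1$ and then argue by linearity in $x$.

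First I would handle the endpoints. The case $x=0$ is nothing but the definition of $(\chi_0,B_0)=(\chi_{Go},B_{Go})$, and Gomi's construction \cite{gomi3} already guarantees that this pair satisfies both axioms. For $x=1$, I would substitute Lemma~\ref{lem:beta}, which yields $L\delta^{*}\nu_{\split} = L\delta^{*}\nu + L\delta^{*}\beta$, into the definition of $\chi_{Go}$, and then recognise the sum $L\delta^{*}\beta + Z(L\delta,\pr_2^{*}\bar A)$ as the 1-form $\xi$ from \erf{eq:xi}. This gives
\[
\chi_1 \;=\; L\delta^{*}\nu + \xi - \tfrac{1}{2}\Delta\zeta \;\stackerf{eq:nukorr}{=}\; \chi_{\corr},
\]
while $B_1 = B_{Go} - \tfrac12 \mathrm{d}\zeta = B_{\corr}$ holds by direct comparison of the formulas in Proposition~\ref{prop:DeltaB} and in the definition of $B_{Go}$. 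Combined with Proposition~\ref{prop:chi} and Proposition~\ref{prop:DeltaB}, this shows that $(\chi_1,B_1)$ is also a bundle gerbe connection on $\mathcal{S}_{LM}$.

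For the intermediate values I would use the affine argument. The displacement $(\chi_{\corr}-\chi_{Go},\,B_{\corr}-B_{Go}) = (-\tfrac12\Delta\zeta,\,-\tfrac12\mathrm{d}\zeta)$ is tangent to the affine space of connections-with-curving, and so $(\chi_x,B_x) = (\chi_{Go},B_{Go}) + x\cdot(-\tfrac12\Delta\zeta,-\tfrac12\mathrm{d}\zeta)$ is a connection-with-curving for every $x\in\R$. Concretely, the multiplicativity $\Delta\chi_x=0$ propagates from $x=0$ because $\Delta(\Delta\zeta)=0$ (the simplicial operator satisfies $\Delta^{2}=0$), and the curving equation propagates because $\mathrm{d}$ commutes with $\Delta$, giving
\[
\Delta B_x \;=\; \Delta B_{Go} - \tfrac{x}{2}\,\mathrm{d}(\Delta\zeta) \;=\; \mathrm{curv}(\chi_{Go}) - \tfrac{x}{2}\,\mathrm{d}(\Delta\zeta) \;=\; \mathrm{curv}(\chi_x).
\]

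There is no real obstacle here; the proof is essentially a bookkeeping check that the two correction terms enter linearly in $x$ and that the relevant algebraic compatibilities ($\Delta^{2}=0$, commutation of $\mathrm{d}$ and $\Delta$) hold simplicially on the nerve of $L\pi\maps LFM \to LM$. Everything nontrivial — in particular that the endpoints $(\chi_{Go},B_{Go})$ and $(\chi_{\corr},B_{\corr})$ really define gerbe connections — has already been verified earlier in this section.
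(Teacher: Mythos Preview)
Your argument is correct and follows the same idea as the paper, which dispatches the lemma in a single sentence by citing Murray's result that connections on a bundle gerbe form an affine space. You additionally spell out the endpoint identification $\chi_{Go}-\tfrac12\Delta\zeta=\chi_{\corr}$ via Lemma~\ref{lem:beta} and the definition of $\xi$, and make the linearity checks $\Delta^2=0$, $\mathrm{d}\Delta=\Delta\mathrm{d}$ explicit; these are exactly the verifications the paper leaves to the reader.
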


\begin{comment}
The connection is superficial if and only if $t=1$. 
\end{comment}

We recall from Section \ref{sec:lifting} that  spin structures on $LM$ correspond to trivializations of the spin lifting gerbe $\mathcal{S}_{LM}$, under the assignment of sending a spin structure $(\inf S,\sigma)$ to the trivialization $(T_{\inf S},\kappa_{\inf S})$ consisting of the principal $\ueins$-bundle $T_\inf S$ over $LFM$, and of the bundle isomorphism $\kappa_{\inf S}: \pr_2^{*}T_{\inf S} \otimes \q \to \pr_1^{*}T_{\inf S}$ defined by $\kappa_{\inf S}(t \otimes q):=t\cdot q$. 
To Gomi's connection on the spin lifting gerbe, and to the connection $\omega_{\Omega,0}$ on $T_{\inf S}$ applies a general lifting theorem, see \cite{gomi3} and \cite[Theorem 2.2]{waldorf13}, which in the present situation has the following form.

\begin{proposition}
\label{prop:liftcon0}
The assignment $(\inf S,\sigma,\Omega) \mapsto (T_{\inf S},\kappa_{\inf S},\omega_{\Omega,0})$
induces an equivalence of categories:
\begin{equation*}
\spstcon
\cong
\bigset{12em}{Trivializations of $\mathcal{S}_{LM}$ with connection compatible with  $(\chi_{Go},B_{Go})$}\text{.}
\end{equation*}
\end{proposition}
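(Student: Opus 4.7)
The plan is to bootstrap Proposition \ref{prop:lifting} (the equivalence without connections) to the present statement. Since the underlying assignment $(\inf S,\sigma)\mapsto (T_{\inf S},\kappa_{\inf S})$ is already an equivalence of categories, it suffices to exhibit, for each object of $\spstcon$, a natural bijection between the set of spin connections on $(\inf S,\sigma)$ and the set of connections on the trivialization $(T_{\inf S},\kappa_{\inf S})$ that are compatible with the bundle gerbe connection $(\chi_{Go},B_{Go})$. Compatibility of a connection $\omega$ on the $\ueins$-bundle $T_{\inf S}$ means two things: first, that the bundle morphism $\kappa_{\inf S}$ is connection-preserving with respect to $\pr_2^{*}\omega + \chi_{Go}$ on the source and $\pr_1^{*}\omega$ on the target; second, that the 2-form $\mathrm{curv}(\omega) - B_{Go}$ descends along $L\pi$ to a 2-form on $LM$.

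For the forward direction, I would send a spin connection $\Omega$ to the connection $\omega_{\Omega,0}=\Omega-\split(\sigma^{*}\bar A)$ constructed in Lemma \ref{lem:omegafamily}. The main verification is the first compatibility condition. Using $\kappa_{\inf S}(t\otimes\tilde\tau)=t\cdot\tilde\tau$, the defining identity $p_{*}(\Omega)=\sigma^{*}\bar A$, and the multiplicativity law \erf{eq:eps} of $\nu$ (repackaged through the splitting as the map $Z$ of Lemma \ref{lem:Z}), one expresses the error of $\kappa_{\inf S}$ with respect to $\omega_{\Omega,0}$  and recognises it as the sum $L\delta^{*}\nu_{\split} + Z(L\delta,\pr_2^{*}\bar A)=\chi_{Go}$. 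For the second condition, the curvature formula of Lemma \ref{lem:omegafamily} specialised at $x=0$ gives $\mathrm{curv}(\omega_{\Omega,0}) - B_{Go}=L\pi^{*}\mathrm{scurv}(\Omega)$, which evidently descends. The functoriality of the assignment is automatic: a connection-preserving morphism $f\maps\inf S\to\inf S'$ satisfies $\Omega=f^{*}\Omega'$, hence also $\omega_{\Omega,0}=f^{*}\omega_{\Omega',0}$.

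For the converse direction, given a compatible connection $\omega$ on $T_{\inf S}$, I would first fix any auxiliary spin connection $\Omega_0$ (which exists by \cite{manoharan1}), so that $\omega-\omega_{\Omega_0,0}$ is a $\ueins$-invariant horizontal 1-form on $T_{\inf S}$; the compatibility with $\chi_{Go}$ ensures that this difference is pulled back from a 1-form on $LM$. Adding this 1-form (viewed through the splitting $\split$) to $\Omega_0$ yields a spin connection $\Omega$ with $\omega_{\Omega,0}=\omega$, and this assignment is inverse to the first, up to the identification of spin connections as an affine space over $\Omega^1(LM)$. The main obstacle and the bulk of the technical work lies in the first compatibility computation: one must carefully bookkeep the additive corrections stemming from the non-strict multiplicativity of $\nu$ on $\lspinhat n$, so that the contributions from $\Omega$, from the splitting $\split$, and from $Z(L\delta,\pr_2^{*}\bar A)$  combine to exactly $\chi_{Go}$ over $LFM^{[2]}$. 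Once this verification is in place, both the bijectivity and the naturality of the assignment follow directly from the corresponding statements in Proposition \ref{prop:lifting}.
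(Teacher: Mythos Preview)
Your proposal is essentially correct and amounts to unpacking, in this specific instance, the general lifting theorem that the paper simply cites (Gomi \cite{gomi3} and \cite[Theorem 2.2]{waldorf13}). The paper gives no argument beyond that citation, so your route is more explicit rather than genuinely different.

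A few small corrections are in order. First, compatibility of a connection on a trivialization only requires that $\kappa_{\inf S}$ be connection-preserving; the descent of $\mathrm{curv}(\omega)+B_{Go}$ (note the sign: $+$, not $-$, by Lemma \ref{lem:omegafamily} at $x=0$) is then automatic from $\Delta B_{Go}=\mathrm{curv}(\chi_{Go})$ and the curvature identity implied by $\kappa_{\inf S}$ being connection-preserving. Second, in your converse, the descended $1$-form is $\R$-valued, so it is added to $\Omega_0$ via the inclusion $\R\hookrightarrow\widetilde{L\mathfrak{g}}$, not via the splitting $\split$. In fact the detour through an auxiliary $\Omega_0$ is unnecessary: given a compatible $\omega$ one may set $\Omega:=\omega+\split(\sigma^{*}\bar A)$ directly and verify that the compatibility of $\omega$ with $\chi_{Go}$ is exactly what forces $\Omega$ to transform as a $\lspinhat n$-connection; the condition $p_{*}\Omega=\sigma^{*}\bar A$ is immediate from $p_{*}\circ\split=\id$ and $p_{*}(\omega)=0$.
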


We recall that a connection on a trivialization $\mathcal{T}=(T,\kappa)$ is a connection $\omega$ on $T$, and it is called \emph{compatible} with a connection $(\chi,B)$ on the lifting gerbe if $\kappa$ is connection-preserving.  The curving $B$ is  used in order to associate to each compatible connection on $\mathcal{T}$ a \emph{covariant derivative}: a 2-form $\rho_{\mathcal{T}}\in \Omega^2(LM)$ uniquely determined by the condition that  $L\pi^{*}\rho_{\mathcal{T}}=\mathrm{curv}(\omega)+ B$.
\begin{comment}
That sign is good:
\begin{eqnarray*}
\pr_2^{*}(\mathrm{curv}(\omega) + B) 
&=& 
\pr_2^{*}\mathrm{curv}(\omega) + \pr_2^{*}B \\
&=& 
-\mathrm{curv}(\q) + \pr_1^{*}\mathrm{curv}(\omega) + \pr_2^{*}B
\\&=& \pr_1^{*}(B+\mathrm{curv}(\omega))\text{.}
\end{eqnarray*}
\end{comment}

Together with Lemma \ref{lem:omegafamily} we deduce the following result.

\begin{corollary}
Under the equivalence of Proposition \ref{prop:liftcon0}, the scalar curvature of a geometric spin connection corresponds to  the covariant derivative of a trivialization, i.e.
\begin{equation*}
L\pi^{*}\mathrm{scurv}(\Omega) = \mathrm{curv}(\omega_{\Omega,0})+B_{Go}\text{.}
\end{equation*}
\end{corollary}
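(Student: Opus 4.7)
The proof is essentially a direct substitution, so the plan is short. First I would recall the explicit curvature formula for $\omega_{\Omega,0}$ computed in Lemma \ref{lem:omegafamily}: specializing the one-parameter family at $x=0$ yields
\begin{equation*}
\mathrm{curv}(\omega_{\Omega,0})=L\pi^{*}\mathrm{scurv}(\Omega)-\tfrac{1}{2}\omega(\bar A \wedge \bar A)-r(\mathrm{curv}(\bar A))\text{.}
\end{equation*}
Then I would simply add the curving $B_{Go}=\tfrac{1}{2}\omega(\bar A \wedge \bar A) +r(\mathrm{curv}(\bar A))$ of Gomi to both sides. The two correction terms cancel exactly, leaving the asserted identity $L\pi^{*}\mathrm{scurv}(\Omega) = \mathrm{curv}(\omega_{\Omega,0})+B_{Go}$.

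There is no real obstacle here: the identity holds as an equality of differential forms on $\inf S$, both sides are by construction basic with respect to $L\pi$, and the covariant derivative of the trivialization $(T_{\inf S},\kappa_{\inf S},\omega_{\Omega,0})$ is by definition the unique 2-form $\rho_{\mathcal{T}}\in\Omega^2(LM)$ with $L\pi^{*}\rho_{\mathcal{T}}=\mathrm{curv}(\omega_{\Omega,0})+B_{Go}$. Uniqueness of this 2-form, together with the identity above, forces $\rho_{\mathcal{T}}=\mathrm{scurv}(\Omega)$, which is precisely the statement of the corollary.
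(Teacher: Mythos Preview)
Your proposal is correct and matches the paper's own argument: the paper simply deduces the corollary from Lemma \ref{lem:omegafamily} by setting $x=0$ in the curvature formula and observing that the remaining terms are exactly $-B_{Go}$, which is precisely the cancellation you perform.
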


Trivializations with compatible connections together with the connection-preserving isomorphisms between trivializations form a category, which is, analogously to \erf{eq:actiontrivspin} a torsor over the monoidal category $\ubuncon{LM}$ of principal $\ueins$-bundles with connection over $LM$.
The equivalence of Proposition \ref{prop:liftcon0} is equivariant with respect to the $\ubuncon{LM}$-actions on both categories; in particular, we have the following consequence.

\begin{corollary}
The category $\spstcon$ is a torsor over the monoidal category $\ubuncon{LM}$ of principal $\ueins$-bundles with connection over $LM$. 
\end{corollary}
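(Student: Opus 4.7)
The plan is to deduce this corollary in exactly the same way that Corollary \ref{co:spsttorsor} was deduced from Proposition \ref{prop:lifting}, but now in the setting with connections, using the equivalence provided by Proposition \ref{prop:liftcon0} in place of Proposition \ref{prop:lifting}. The proof has three steps: first verify that the equivalence of Proposition \ref{prop:liftcon0} is equivariant with respect to the $\ubuncon{LM}$-actions on both sides; second, reduce the torsor statement for the category of trivializations with compatible connections to a general statement about isomorphisms between bundle gerbes with connection; third, invoke this general statement.

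For the first step, recall the explicit formula $\omega_{\eta\otimes\Omega,x}=L\pi^{*}\eta+\omega_{\Omega,x}$ noted just before Proposition \ref{prop:pass}, specialized at $x=0$. Together with the evident identity $T_{K\otimes\inf S}=L\pi^{*}K\otimes T_{\inf S}$ and $\kappa_{K\otimes\inf S}=\id_{L\pi^{*}K}\otimes\kappa_{\inf S}$, this shows that the assignment $(\inf S,\sigma,\Omega)\mapsto(T_{\inf S},\kappa_{\inf S},\omega_{\Omega,0})$ intertwines the action functor \erf{eq:actionspstcon} of $\ubuncon{LM}$ on $\spstcon$ with the natural action of $\ubuncon{LM}$ on the category of trivializations of $\mathcal{S}_{LM}$ carrying connections compatible with $(\chi_{Go},B_{Go})$, the latter action being defined by $(K,\eta)\otimes(T,\kappa,\omega):=(L\pi^{*}K\otimes T,\id\otimes\kappa,L\pi^{*}\eta+\omega)$.

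For the second step, observe that trivializations of $\mathcal{S}_{LM}$ with compatible connections can equivalently be viewed as connection-preserving isomorphisms $\mathcal{S}_{LM}\to\mathcal{I}_{0}$ from $\mathcal{S}_{LM}$ (with its chosen connection $(\chi_{Go},B_{Go})$) to the trivial bundle gerbe with vanishing connection. Under this reformulation the action of $\ubuncon{LM}$ corresponds to tensoring isomorphisms with principal $\ueins$-bundles with connection. Thus the torsor claim reduces to the general statement that the category $\hom^{\nabla}(\mathcal{G}_1,\mathcal{G}_2)$ of connection-preserving isomorphisms between any two bundle gerbes with connection over a smooth manifold $X$ is a torsor over $\ubuncon{X}$, provided it is non-empty.

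For the third step, this general fact is standard bundle gerbe theory: fixing one isomorphism $\mathcal{J}:\mathcal{G}_1\to\mathcal{G}_2$, every other connection-preserving isomorphism is of the form $K\otimes\mathcal{J}$ for a unique (up to isomorphism) principal $\ueins$-bundle $K$ with connection on $X$, and this identification is functorial; see \cite{waldorf1}. Non-emptiness in our case is guaranteed because Proposition \ref{prop:liftcon0} is an equivalence and $\spstcon$ is non-empty (every spin structure $(\inf S,\sigma)$ admits a spin connection by \cite{manoharan1}, producing a compatible trivialization with connection). The main obstacle worth attention is just the verification that the compatibility condition $\kappa$ being connection-preserving is indeed preserved under the tensor product action $(K,\eta)\otimes(T,\kappa,\omega)$, which is immediate from $\id\otimes\kappa$ being the new structure morphism and $L\pi^{*}\eta$ pulling back trivially along both projections $\pr_1,\pr_2:LFM^{[2]}\to LFM$ since $L\pi\circ\pr_1=L\pi\circ\pr_2$.
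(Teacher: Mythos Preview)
Your overall strategy matches the paper's: transport the torsor structure along the equivariant equivalence of Proposition \ref{prop:liftcon0}. Step 1 (equivariance) is correct and is exactly what the paper asserts just before the corollary.

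However, steps 2 and 3 contain a genuine error. A trivialization $(T,\kappa,\omega)$ of $\mathcal{S}_{LM}$ with compatible connection does \emph{not} correspond to a connection-preserving isomorphism $\mathcal{S}_{LM}\to\mathcal{I}_0$; it corresponds to a connection-preserving isomorphism $\mathcal{S}_{LM}\to\mathcal{I}_{\rho_{\mathcal{T}}}$ where $\rho_{\mathcal{T}}\in\Omega^2(LM)$ is the covariant derivative (determined by $L\pi^{*}\rho_{\mathcal{T}}=\mathrm{curv}(\omega)+B_{Go}$), which is generically non-zero. Consequently, your reduction in step 3 is to the wrong statement: for \emph{fixed} bundle gerbes with connection $\mathcal{G}_1,\mathcal{G}_2$, the category $\hom^{\nabla}(\mathcal{G}_1,\mathcal{G}_2)$ is a torsor over $\ubunconflat X$ (flat bundles), not over $\ubuncon X$ --- the paper itself uses exactly this fact later, see \erf{eq:actionbundlescon}. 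Tensoring with a non-flat bundle shifts the curving of the target.

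The fix is either (a) allow the target $\mathcal{I}_\rho$ to vary and note that the $\ubuncon{LM}$-action moves between the various $\rho$ via $\rho\mapsto\rho+\mathrm{curv}(\eta)$ while $\ubunconflat{LM}$ acts transitively within each fixed $\hom^{\nabla}(\mathcal{S}_{LM},\mathcal{I}_\rho)$, or more directly (b) verify freeness and transitivity by hand: given two trivializations $(T,\kappa,\omega)$ and $(T',\kappa',\omega')$, the bundle $T'\otimes T^{\vee}$ with connection $\omega'-\omega$ descends along $L\pi$ to $LM$ via the connection-preserving isomorphism $\kappa'\otimes\kappa^{-1}$, yielding the required $(K,\eta)\in\ubuncon{LM}$. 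The paper simply asserts the torsor property for trivializations with compatible connections as analogous to the non-connection case, without spelling out either argument.
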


We want to generalize the equivalence of Proposition \ref{prop:liftcon0} to a version for the connection $\omega_{\Omega,x}$ for all $x\in \R$, and so in particular to the case $x=1$. In order to do so, we have the following result.  

\begin{lemma}
\label{lem:connshift}
The assignment $(T,\kappa,\omega) \mapsto (T,\kappa,\omega + \frac{x}{2}\zeta)$
induces  an equivalence of categories:
\begin{equation*}
\bigset{12em}{Trivializations of $\mathcal{S}_{LM}$ with connection compatible with   $(\chi_{0},B_{0})$} \cong \bigset{10.5em}{Trivializations of $\mathcal{S}_{LM}$ with connection compatible with $(\chi_{x},B_{x})$}\text{.}
\end{equation*}
Moreover, the equivalence is equivariant with respect to the $\ubuncon{LM}$-actions, and it preserves the covariant derivative of trivializations.
\end{lemma}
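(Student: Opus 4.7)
The plan is to carry out a direct verification. The key observation is that the one-parameter families $(\chi_x, B_x)$ of connections on $\mathcal{S}_{LM}$ and $\omega_{\Omega,x} = \omega_{\Omega,0} + \frac{x}{2}\sigma^{*}\zeta$ of connections on $T_{\inf S}$ are matched: they differ from the $x=0$ case by pullbacks of the same 1-form $\zeta \in \Omega^1(LFM)$. So the claimed equivalence is literally the identity on underlying bundles and bundle morphisms, combined with a uniform shift of connections by $\frac{x}{2}\zeta$.

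First I would check compatibility. Suppose $(T,\kappa,\omega)$ is a trivialization of $\mathcal{S}_{LM}$ with connection compatible with $(\chi_0,B_0)$; the compatibility means $\kappa$ is connection-preserving, i.e. $\pr_1^{*}\omega - \pr_2^{*}\omega = \chi_0$ over $LFM^{[2]}$. Setting $\omega' := \omega + \frac{x}{2}\zeta$, a one-line computation using $\Delta\zeta = \pr_2^{*}\zeta - \pr_1^{*}\zeta$ gives
\begin{equation*}
\pr_1^{*}\omega' - \pr_2^{*}\omega' = \chi_0 - \tfrac{x}{2}\Delta\zeta = \chi_x,
\end{equation*}
so $\kappa$ is connection-preserving for $(\omega',\chi_x)$. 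Since morphisms of trivializations are required only to be connection-preserving between the bundles over $LFM$, and the shift by $\frac{x}{2}\zeta$ is the same on source and target, every morphism in the first category remains a morphism in the second. The inverse functor is given by the opposite shift $\omega' \mapsto \omega' - \frac{x}{2}\zeta$, which shows that the assignment is in fact an isomorphism of categories (hence in particular an equivalence).

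Next I would verify equivariance. Under the action of $\ubuncon{LM}$ on trivializations, the pair $(K,\eta) \otimes (T,\kappa,\omega)$ has underlying bundle $L\pi^{*}K \otimes T$ with connection $L\pi^{*}\eta + \omega$. Adding $\frac{x}{2}\zeta$ commutes with this operation, which gives the required equivariance. Finally, for the covariant derivative, one uses the definitions $L\pi^{*}\rho_{\mathcal{T}} = \mathrm{curv}(\omega) + B_0$ and $B_x = B_0 - \frac{x}{2}\mathrm{d}\zeta$, together with $\mathrm{curv}(\omega') = \mathrm{curv}(\omega) + \frac{x}{2}\mathrm{d}\zeta$, to conclude
\begin{equation*}
\mathrm{curv}(\omega') + B_x = \mathrm{curv}(\omega) + B_0 = L\pi^{*}\rho_{\mathcal{T}},
\end{equation*}
so the covariant derivative is unchanged. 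There is no real obstacle in this proof; every step is a short formal manipulation of the 1-form $\zeta$. The substance of the matter has already been extracted in Proposition \ref{prop:chi} and Proposition \ref{prop:DeltaB}, which established the relationship between the connections $(\chi_x,B_x)$ for different values of $x$.
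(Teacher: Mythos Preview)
Your proof is correct and follows essentially the same route as the paper: check that $\kappa$ remains connection-preserving after the shift (using $\Delta\zeta = \pr_2^{*}\zeta - \pr_1^{*}\zeta$), observe the inverse is the opposite shift, and verify equivariance and preservation of the covariant derivative by direct computation. One minor remark: the equation $\pr_1^{*}\omega - \pr_2^{*}\omega = \chi_0$ is a slight abuse of notation since these are 1-forms on different total spaces, but the intended content is clear and matches the paper's argument.
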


\begin{proof}
It is enough to show that the given functor is well-defined for all $x\in \R$; it is then invertible by the functor associated to $-x$.
For well-definedness it suffices to show that the given isomorphism $\kappa$ is connection-preserving for the shifted connections. Indeed, in 
\begin{equation*}
\kappa: \pr_2^{*}T \otimes \q \to \pr_1^{*}T
\end{equation*}
we shift on the right hand side by $\frac{x}{2}\pr_1^{*}\zeta$ and on the left hand side by $\frac{x}{2}\pr_2^{*}\zeta -\frac{x}{2}\Delta \zeta=\frac{x}{2}\pr_1^{*}\zeta$; thus, $\kappa$ is connection-preserving. 

The equivariance under the $\ubuncon {LM}$-actions follows directly from the definitions. If $\rho$ is the covariant derivative of $(T,\kappa,\omega)$ with respect to $B_0$, i.e. $L\pi^{*}\rho = \mathrm{curv}(\omega) + B_{0}$, then the covariant derivative of $(T,\kappa,\omega+\frac{x}{2}\zeta)$ with respect to $B_{t}=B_{0}-\frac{x}{2}\mathrm{d}\zeta$ is the same $\rho$, since 
\begin{equation*}
L\pi^{*}\rho = \mathrm{curv}(\omega) + B_{0} = \mathrm{curv}(\omega+\frac{x}{2}\zeta) - \frac{x}{2}\mathrm{d}\zeta+B_0=\mathrm{curv}(\omega+\frac{x}{2}\zeta)+B_t\text{.}
\end{equation*}
\end{proof}

From Proposition \ref{prop:liftcon0} and Lemma \ref{lem:connshift} we obtain for each $x\in \R$ an equivalence between geometric spin structures on $LM$ and trivializations of the spin lifting gerbe equipped with the connection $(\chi_x,B_x)$. In particular, we have for $x=1$:

\begin{theorem}
\label{th:spinconlift}
The assignment $(\inf S,\sigma,\Omega) \mapsto (T_{\inf S},\kappa_{\inf S},\omega_{\Omega})$
induces an equivalence of categories
\begin{equation*}
\spstcon  \cong 
\bigset{12em}{Trivializations of  $\mathcal{S}_{LM}$ with connection compatible with $(\chi_{\corr},B_{\corr})$}\text{.}
\end{equation*}
This equivalence is equivariant for the $\ubuncon{LM}$-actions, and the scalar curvature of a geometric spin structure corresponds to the covariant derivative of the trivialization. 
\end{theorem}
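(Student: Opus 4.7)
The plan is to obtain Theorem \ref{th:spinconlift} as the composition of two equivalences already established, namely Proposition \ref{prop:liftcon0} at parameter $x=0$ and the shift equivalence of Lemma \ref{lem:connshift} at parameter $x=1$. First, Proposition \ref{prop:liftcon0} provides an equivalence
\begin{equation*}
\spstcon \cong \bigset{11em}{Trivializations of $\mathcal{S}_{LM}$ compatible with $(\chi_{Go},B_{Go})=(\chi_{0},B_{0})$}
\end{equation*}
via $(\inf S,\sigma,\Omega)\mapsto (T_{\inf S},\kappa_{\inf S},\omega_{\Omega,0})$. Next, Lemma \ref{lem:connshift} at $x=1$ yields an equivalence between trivializations compatible with $(\chi_{0},B_{0})$ and those compatible with $(\chi_{1},B_{1}) = (\chi_{\corr},B_{\corr})$, acting by $(T,\kappa,\omega)\mapsto (T,\kappa,\omega+\tfrac{1}{2}\zeta)$.

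Composing these two functors sends $(\inf S,\sigma,\Omega)$ to $(T_{\inf S},\kappa_{\inf S},\omega_{\Omega,0}+\tfrac{1}{2}\zeta)$. By Lemma \ref{lem:omegafamily} we have $\omega_{\Omega,0}+\tfrac{1}{2}\zeta=\omega_{\Omega,1}=\omega_{\Omega}$, so the composed functor agrees on objects with the one asserted in the statement. On morphisms both equivalences act identically (each morphism $\varphi\maps \inf S\to \inf S'$ induces the corresponding $\ueins$-bundle morphism $T_{\inf S}\to T_{\inf S'}$), so the composite is exactly the functor in the statement.

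Equivariance with respect to the $\ubuncon{LM}$-action follows because both Proposition \ref{prop:liftcon0} and Lemma \ref{lem:connshift} are equivariant, and because the formula $\omega_{\eta\otimes\Omega,x}=L\pi^{*}\eta+\omega_{\Omega,x}$ noted after Lemma \ref{lem:omegafamily} shows that the action of $K\in\ubuncon{LM}$ commutes with the shift by $\tfrac{1}{2}\zeta$. For the covariant-derivative assertion, Lemma \ref{lem:connshift} explicitly preserves covariant derivatives, so it suffices to verify it at $x=0$: by Lemma \ref{lem:omegafamily} and the definition of $B_{Go}$, one has $\mathrm{curv}(\omega_{\Omega,0})+B_{Go}=L\pi^{*}\mathrm{scurv}(\Omega)$, which identifies $\mathrm{scurv}(\Omega)$ with the covariant derivative of the trivialization $(T_{\inf S},\kappa_{\inf S},\omega_{\Omega,0})$, and hence of $(T_{\inf S},\kappa_{\inf S},\omega_{\Omega})$ with respect to $B_{\corr}$.

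Essentially all the substantive work has already been done: Proposition \ref{prop:liftcon0} encapsulates Murray--Gomi lifting theory, while Lemma \ref{lem:connshift} records the affine freedom in the choice of connection on $\mathcal{S}_{LM}$. The only conceptual input beyond bookkeeping is the identification $\omega_{\Omega,0}+\tfrac{1}{2}\zeta=\omega_{\Omega}$, which is immediate from Lemma \ref{lem:omegafamily}. No real obstacle is anticipated; the argument is a direct composition.
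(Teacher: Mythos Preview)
Your proposal is correct and follows exactly the paper's route: the theorem is deduced as the composition of Proposition~\ref{prop:liftcon0} with the shift equivalence of Lemma~\ref{lem:connshift} at $x=1$, using $\omega_{\Omega,0}+\tfrac{1}{2}\sigma^{*}\zeta=\omega_{\Omega,1}=\omega_{\Omega}$ from Lemma~\ref{lem:omegafamily}. Your treatment of equivariance and of the covariant derivative likewise matches the paper's, the latter being precisely the content of the corollary following Proposition~\ref{prop:liftcon0} combined with the covariant-derivative preservation in Lemma~\ref{lem:connshift}.
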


Definition \ref{def:superficialspinconnection} of superficial spin connections shows   that the equivalence of Theorem \ref{th:spinconlift} exchanges superficial spin connections $\Omega$ with trivializations of $\mathcal{S}_{LM}$ whose connection $\omega_{\Omega}$ is superficial. Likewise, Definition \ref{def:geometricfusion} of geometric fusion spin structures shows that the equivalence persists in the setting with fusion products, where it becomes the following result.

\begin{corollary}
\label{co:equivsupfusion}
The assignment $(\inf S,\sigma,\lambda,\Omega) \mapsto (T_{\inf S},\kappa_{\inf S},\lambda,\omega_{\Omega})$
induces an equivalence of categories,
\begin{equation*}
\spstconsffus \cong \bigset{15em}{Fusion trivializations of $\mathcal{S}_{LM}$ with superficial fusive connection compatible with $(\chi_{\corr},B_{\corr})$}\text{.}
\end{equation*}
This equivalence is equivariant under the action of the monoidal category $\ufusbunconsf{LM}$.
\end{corollary}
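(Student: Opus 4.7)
The plan is to combine Theorem \ref{th:spinconlift} with the fusion-product equivalence of Proposition \ref{prop:fusionlifting} and to read off the compatibility conditions one by one. First I would observe that Theorem \ref{th:spinconlift} already establishes, for the underlying objects without fusion products, a bijection between geometric spin structures $(\inf S,\sigma,\Omega)$ and triples $(T_{\inf S},\kappa_{\inf S},\omega_{\Omega})$ in which $\omega_{\Omega}$ is a connection on $T_{\inf S}$ compatible with $(\chi_{\corr},B_{\corr})$. The assignment on morphisms is the identity on the underlying bundle morphisms, so the functor is automatically fully faithful once it is shown to be well-defined; essential surjectivity is also inherited.

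Next I would add fusion products on top. A fusion product $\lambda$ on a spin structure $(\inf S,\sigma)$ is by Definition \ref{def:fusionspinstructure} a fusion product on $T_{\inf S}$ that is compatible with the $\lspinhat n$-action; by Proposition \ref{prop:fusionlifting} this is precisely the same data as a fusion product on $T_{\inf S}$ compatible with $\lambda_{\q}$, i.e. making $\kappa_{\inf S}$ fusion-preserving. Morphisms in both categories are required to be fusion-preserving on the underlying $\ueins$-bundle, so the equivalence extends tautologically.

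It then remains to match the two extra adjectives, \emph{superficial} and \emph{fusive}. By Definition \ref{def:superficialspinconnection}, $\Omega$ is called superficial precisely when $\omega_{\Omega}$ is superficial; by Definition \ref{def:geometricfusion}, the fusion product $\lambda$ and the spin connection $\Omega$ make up a superficial geometric fusion spin structure exactly when, in addition, $\omega_{\Omega}$ is fusive with respect to $\lambda$. These two conditions are word-for-word the conditions on the right-hand side that upgrade a compatible connection to a superficial fusive one. Thus, restricting the equivalence of Theorem \ref{th:spinconlift} (enriched by fusion products via Proposition \ref{prop:fusionlifting}) to this full subcategory of objects yields the claimed equivalence.

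Finally, equivariance under the $\ufusbunconsf{LM}$-action is inherited from the equivariance under the $\ubuncon{LM}$-action already noted in Theorem \ref{th:spinconlift}, combined with the easily checked identity $\omega_{\eta\otimes\Omega}=L\pi^{*}\eta+\omega_{\Omega}$ at the level of connections and the compatibility of the tensor product with fusion products and superficiality (the latter being preserved under tensoring, as used in the proof of Proposition \ref{prop:chi}). The main thing to verify in the whole argument is really just that the adjectives \emph{superficial} and \emph{fusive} transport consistently under the chain of equivalences; no serious new obstacle arises beyond the one already overcome in Proposition \ref{prop:pass}, which fixed the correct normalization $x=1$ making superficiality of $\omega_{\Omega}$ equivalent to the thin-spin compatibility.
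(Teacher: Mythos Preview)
Your proposal is correct and follows essentially the same approach as the paper: the paper treats this as an immediate consequence of Theorem \ref{th:spinconlift}, noting (in the sentence preceding the corollary) that Definitions \ref{def:superficialspinconnection} and \ref{def:geometricfusion} are formulated directly in terms of $\omega_{\Omega}$ and $\lambda$ on $T_{\inf S}$, so the additional adjectives ``superficial'' and ``fusive'' transport verbatim to the trivialization side. Your version simply spells this out in more detail, invoking Proposition \ref{prop:fusionlifting} explicitly for the fusion-product compatibility; this is entirely appropriate and nothing is missing.
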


To close, we observe by inspection that the passage from a setting with connections to a setting without connections is consistent with the lifting theory, i.e. with Proposition \ref{prop:equivthinfusion} and Corollary \ref{co:equivsupfusion}): there is a commutative diagram of categories and functors 
\begin{equation}
\label{eq:commdiagleft}
\alxydim{@=\xypicst}{\spstconsffus  \ar[r] \ar[d] & *++{\bigset{15em}{Fusion trivializations of $\mathcal{S}_{LM}$ with superficial fusive connection compatible with $(\chi_{\corr},B_{\corr})$}} \ar@{->}[d] \\  \spstthfus \ar[r] & \trivthfus{\mathcal{S}_{LM}}\text{,}}
\end{equation}
with the horizontal functors equivalences of categories.

\section{String structures and decategorification}

\label{sec:stringstructures}

String structures as defined in \cite{waldorf8} form a bicategory.  The main point of this section is to introduce several decategorified versions of this bicategory of string structures,  which are tailored into a form that allows a direct application of the duality between gerbes and $\ueins$-bundles over loop spaces, see Section \ref{sec:fusionextensions}. 

\subsection{String structures as trivializations}

The idea behind string structures as defined in \cite{waldorf8} is to realize the  class $\frac{1}{2}p_1(M)\in \h^4(M,\Z)$ using bundle 2-gerbes. 
\begin{comment}
Since we anyway need some details in Section \ref{sec:trivializations}, we shall recall the definition of a bundle 2-gerbe.  
\end{comment}

\begin{definition}[{{\cite[Definition 5.3]{stevenson2}}}]
\label{def:bundle2gerbe}
A \emph{bundle 2-gerbe} over a smooth manifold $M$ is a surjective submersion $\pi:Y \to M$ together with a bundle gerbe $\mathcal{P}$  over $Y^{[2]}$, an isomorphism 
\begin{equation*}
\mathcal{M}: \pr_{23}^{*}\mathcal{P} \otimes \pr_{12}^{*}\mathcal{P} \to \pr_{13}^{*}\mathcal{P}
\end{equation*}
of bundle gerbes over $Y^{[3]}$, and a transformation 
\begin{equation*}
\alxydim{@=\xypicst@C=5em}{\pr_{34}^{*}\mathcal{P} \otimes \pr_{23}^{*}\mathcal{P} \otimes \pr_{12}^{*}\mathcal{P} \ar[r]^-{\pr_{234}^{*}\mathcal{M} \otimes \id} \ar[d]_{\id \otimes \pr_{123}^{*}\mathcal{M}} & \pr_{24}^{*}\mathcal{P} \otimes \pr_{12}^{*}\mathcal{P} \ar@{=>}[dl]|*+{\mu} \ar[d]^{\pr_{124}^{*}\mathcal{M}} \\ \pr_{34}^{*}\mathcal{P} \otimes \pr_{13}^{*}\mathcal{P} \ar[r]_-{\pr_{134}^{*}\mathcal{M}} & \pr_{14}^{*}\mathcal{P}}
\end{equation*}
over $Y^{[4]}$ that satisfies a  pentagon axiom. 
\begin{comment}
It is shown in Figure \ref{fig:pentagon}.
\end{comment}
\end{definition}

\begin{comment}
\begin{figure}[h]
\begin{equation*}
\alxydim{@C=0.5cm@R=1.2cm}{&&\ast \ar@{=>}[dll]_*+{\id \circ (\pr_{1234}^{*}\mu \otimes \id)} \ar@{=>}[drr]^*+{\pr_{1345}^{*}\mu \circ \id}&&\\\ast \ar@{=>}[dr]_*+{\pr_{1245}^{*}\mu \circ \id}&&&&\ast \ar@{=>}[dl]^*+{\pr_{1235}^{*}\mu \circ \id}\\&\ast \ar@{=>}[rr]_*+{\id \circ (\id \otimes \pr_{2345}^{*}\mu)}&&\ast&}
\end{equation*}
\caption{The pentagon axiom for a bundle gerbe product $\mu$. It is an equation between transformations over $Y^{[5]}$. }
\label{fig:pentagon}
\end{figure}
\end{comment}

The isomorphism $\mathcal{M}$ is called the \emph{bundle 2-gerbe product} and the transformation $\mu$ is called the  \emph{associator}. The pentagon axiom implies the cocycle condition for a certain degree three \v Cech cocycle on $M$ with values in $\ueins$, which defines -- via the exponential sequence -- a class 
\begin{equation*}
\mathrm{CC}(\mathbb{G}) \in \h^4(M,\Z)\text{;}
\end{equation*}
see \cite[Proposition 7.2]{stevenson2} for the details.

We recall from \cite{carey4} the construction of the Chern-Simons  2-gerbe $\mathbb{CS}_M$, whose characteristic class is $\mathrm{CC}(\mathbb{CS}_M)=\frac{1}{2}p_1(M)$.
It uses the basic gerbe $\gbas$ over $\spin n$, together with its multiplicative structure $(\mathcal{M},\alpha)$ described in Section \ref{sec:centralextension}. Here we first ignore the connections -- they  become relevant in Section \ref{sec:geometricstringstructures}. The Chern-Simons  2-gerbe $\mathbb{CS}_M$ consists of the following structure: 
\begin{itemize}

\item 
Its surjective submersion is the frame bundle $\pi:FM \to M$.  

\item
Its bundle gerbe $\mathcal{P}$ over $FM^{[2]}$ is $\mathcal{P} := \delta^{*}\mathcal{G}_{bas}$, where $\delta:FM^{[2]} \to \spin n$ is the difference map (i.e. $p' \cdot \delta(p,p') = p$).

\item
Its bundle 2-gerbe product is
\begin{equation*}
\mathcal{M}' :=\delta_2^{*}\mathcal{M}: \pr_{23}^{*}\mathcal{P} \otimes \pr_{12}^{*}\mathcal{P} \to \pr_{13}^{*}\mathcal{P}\text{,}
\end{equation*}
where  $\delta_2: FM^{[3]} \to \spin n^2$ is defined by $(p'',p') \cdot \delta_2(p,p',p'') = (p',p)$.

\item
Its associator is $\mu := \delta_3^{*}\alpha$, where $\delta_3: FM^{[4]}\to \spin n$ is defined analogously.
The pentagon axiom for $\alpha$ implies  the  pentagon  axiom 
\begin{comment}
(Figure \ref{fig:pentagon}) 
\end{comment}
for $\mu$. 

\end{itemize}
More detailed discussions of  the Chern-Simons 2-gerbe are given in \cite{carey4,waldorf5,waldorf8,Nikolausa}.

\begin{definition}[{{\cite[Definition 11.1]{stevenson2}}}]
\label{deftriv}
A \emph{trivialization} of a bundle 2-gerbe $\mathbb{G}$ as in Definition \ref{def:bundle2gerbe} is a bundle gerbe $\mathcal{S}$ over $Y$, together with an  isomorphism
\begin{equation*}
\mathcal{A}: \pr_2^{*}\mathcal{S} \otimes \mathcal{P} \to \pr_1^{*}\mathcal{S}
\end{equation*}
of bundle gerbes over $Y^{[2]}$ and a connection-preserving transformation
\begin{equation}
\label{eq:diagfillsigma}
\alxydim{@=\xypicst@C=4em}{\pr_{3}^{*}\mathcal{S} \otimes \pr_{23}^{*}\mathcal{P} \otimes \pr_{12}^{*}\mathcal{P} \ar[r]^-{\pr_{23}^{*}\mathcal{A} \otimes \id} \ar[d]_{\id \otimes \mathcal{M}} & \pr_{2}^{*}\mathcal{S} \otimes \pr_{12}^{*}\mathcal{P} \ar@{=>}[dl]|*+{\sigma} \ar[d]^{\pr_{12}^{*}\mathcal{A}} \\ \pr_{3}^{*}\mathcal{S} \otimes \pr_{13}^{*}\mathcal{P} \ar[r]_-{\pr_{13}^{*}\mathcal{A}} & \pr_1^{*}\mathcal{S}}
\end{equation}
over $Y^{[3]}$ that is compatible with the associator $\mu$ in the sense of Figure \ref{compass}.
\begin{comment}
In full beauty, we have drawn the compatibility condition as a pasting diagram in Figure \ref{compasspasting}.
\end{comment}
\end{definition}

\begin{figure}[h]
\begin{footnotesize}
\begin{equation*}
\alxydim{@C=0.5cm@R=1.2cm}{&&\ast \ar@{=>}[dll]_*+{\pr^{*}_{123}\sigma \circ \id} \ar@{=>}[drr]^*+{\id \circ (\id \otimes \pr_{234}^{*}\sigma)}&&\\\ast \ar@{=>}[dr]_*+{\pr_{134}^{*}\sigma \circ \id}&&&&\ast \ar@{=>}[dl]^*+{\pr_{124}^{*}\sigma \circ \id}\\&\ast \ar@{=>}[rr]_*+{\id \circ (\mu \otimes \id)}&&\ast&}
\end{equation*}
\end{footnotesize}
\caption{The compatibility condition between the associator $\mu$ of a bundle 2-gerbe and the transformation $\sigma$ of a trivialization. It is an equation of transformations over $Y^{[4]}$.}
\label{compass}
\end{figure}

The characteristic class $\mathrm{CC}(\mathbb{G}) \in \h^4(M,\Z)$ of $\mathbb{G}$ vanishes if and only if $\mathbb{G}$ admits a trivialization \cite[Proposition 11.2]{stevenson2}. In particular, the Chern-Simons 2-gerbe $\mathbb{CS}_M$ has trivializations if and only if $M$ is a string manifold.
This is the motivation for the following definition. 

\begin{definition}[{{\cite[Definition 1.1.5]{waldorf8}}}]
\label{def:stringstructure}
A \emph{string structure} on $M$ is a trivialization $\mathbb{T}$ of  $\mathbb{CS}_M$. 
\end{definition}

The main problem with establishing a relation between string structures and  loop space geometry via the transgression and regression functors of Section \ref{sec:fusionextensions} is that these functors are defined on the truncated   \emph{categories} $\hc 1 \ugrbcon X$ and $\hc 1 \ugrb X$ of bundle gerbes and not on the full \emph{bicategories}. This problem is solved in the next subsections by reformulating the notion of trivializations of bundle 2-gerbes internal to these truncated categories.

\subsection{Decategorification of trivializations}

\label{sec:trivializations}

In this section  $\mathbb{G}$ is a general bundle 2-gerbe  over $M$, composed of the same structure as in Definition \ref{def:bundle2gerbe}. According to \cite[Lemma 2.2.4]{waldorf8}, trivializations  of $\mathbb{G}$ form a bicategory, which we denote by $\triv {\mathbb{G}}$. We recall how the 1-morphisms and 2-morphisms are defined. 

Given   trivializations $\mathbb{T}=(\mathcal{S},\mathcal{A},\sigma)$ and $\mathbb{T}'=(\mathcal{S}',\mathcal{A}',\sigma')$ of $\mathbb{G}$, a \emph{1-morphism} $\mathbb{B}: \mathbb{T} \to \mathbb{T}'$ in  $\triv {\mathbb{G}}$ is an  isomorphism $\mathcal{B}: \mathcal{S} \to \mathcal{S}'$ between bundle gerbes over $Y$ together with a  transformation
\begin{equation}
\label{eq:beta}
\alxydim{@=\xypicst}{\pr_2^{*}\mathcal{S} \otimes \mathcal{P} \ar[r]^-{\mathcal{A}} \ar[d]_{\id \otimes \pr_2^{*}\mathcal{B}} & \pr_1^{*}\mathcal{S} \ar@{=>}[dl]|*+{\beta} \ar[d]^{\pr_1^{*}\mathcal{B}} \\ \pr_2^{*}\mathcal{S}' \otimes \mathcal{P}\ar[r]_-{\mathcal{A}'} & \pr_1^{*}\mathcal{S}'}
\end{equation} 
over $Y^{[2]}$ that is compatible with the transformations $\sigma$ and $\sigma'$ in the sense of the pentagon diagram shown in Figure \ref{compmorph}.
\begin{figure}[h]
\begin{footnotesize}
\begin{equation*}
\alxydim{@C=0.5cm@R=1.2cm}{&&\ast \ar@{=>}[dll]_*+{\pr_{12}^{*}\beta \circ \id} \ar@{=>}[drr]^*+{\id \circ \pr_{23}^{*}\beta}&&\\\ast \ar@{=>}[dr]_*+{\id \circ \sigma}&&&&\ast \ar@{=>}[dl]^*+{\sigma' \circ \id}\\&\ast \ar@{=>}[rr]_*+{\pr_{13}^{*}\beta \circ \id}&&\ast&}
\end{equation*}
\end{footnotesize}
\caption{The compatibility between the transformations $\sigma$ and $\sigma'$ of two trivializations $\mathbb{T}$ and $\mathbb{T}'$ and the transformation $\beta$ of a 1-morphism $\mathbb{B}=(\mathcal{B},\beta)$ between $\mathbb{T}$ and $\mathbb{T}'$. It is an equation of transformations over $Y^{[3]}$.}
\label{compmorph}
\end{figure}
If $\mathbb{B}_1=(\mathcal{B}_1,\beta_1)$ and $\mathbb{B}_2=(\mathcal{B}_2,\beta_2)$ are 1-morphisms between $\mathbb{T}$ and $\mathbb{T}'$, a \emph{2-morphism} is a  transformation $\varphi: \mathcal{B}_1 \Rightarrow \mathcal{B}_2$ that is compatible with the transformations $\beta_1$ and $\beta_2$ in such a way  that the diagram
\begin{equation}
\label{eq:2morphtriv}
\alxydim{@=\xypicst}{\pr_1^{*}\mathcal{B}_1 \circ \mathcal{A} \ar@{=>}[d]_{\pr_1^{*}\varphi \circ \id} \ar@{=>}[r]^-{\beta_1} & \mathcal{A}' \circ (\pr_2^{*}\mathcal{B}_1 \otimes \id) \ar@{=>}[d]^{\id \circ (\pr_2^{*}\varphi \otimes \id)} \\ \pr_1^{*}\mathcal{B}_2 \circ \mathcal{A} \ar@{=>}[r]_-{\beta_2} & \mathcal{A}' \circ (\pr_2^{*}\mathcal{B}_2 \otimes \id)}
\end{equation}
of transformations over $Y^{[2]}$ is commutative.

The  bicategory $\triv{\mathbb{G}}$ is a module over the monoidal bicategory $\ugrb M$ in terms of an action 2-functor
\begin{equation}
\label{eq:action}
\ugrb M \otimes \triv{\mathbb{G}} \to \triv{\mathbb{G}}: (\mathcal{K} , \mathbb{T}) \mapsto \mathcal{K} \otimes \mathbb{T}\text{,}
\end{equation}
For a trivialization $\mathbb{T}=(\mathcal{S},\mathcal{A},\sigma)$ and a bundle gerbe $\mathcal{K}$ over $M$, the  trivialization $\mathcal{K} \otimes \mathbb{T}$ is given by the bundle gerbe $\pi^{*}\mathcal{K} \otimes \mathcal{S}$, the isomorphism $\id \otimes \mathcal{A}$ and the transformation $\id \otimes \sigma$. We recall the following result. 

\begin{lemma}[{{\cite[Lemma 2.2.5]{waldorf8}}}]
\label{lem:torsor}
The action \erf{eq:action} exhibits the bicategory $\triv {\mathbb{G}}$ as a torsor over the monoidal bicategory $\ugrb M$.
\end{lemma}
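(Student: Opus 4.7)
The plan is to show that the 2-functor in \erf{eq:action}, together with the projection to $\triv{\mathbb{G}}$, defines an equivalence of bicategories $\ugrb M \otimes \triv{\mathbb{G}} \to \triv{\mathbb{G}} \times \triv{\mathbb{G}}$ via $(\mathcal{K},\mathbb{T}) \mapsto (\mathcal{K} \otimes \mathbb{T}, \mathbb{T})$; this is the appropriate bicategorical formulation of  freeness and transitivity of the action.

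First I would establish essential surjectivity on objects. Given two trivializations $\mathbb{T}=(\mathcal{S},\mathcal{A},\sigma)$ and $\mathbb{T}'=(\mathcal{S}',\mathcal{A}',\sigma')$ of $\mathbb{G}$, the task is to produce a bundle gerbe $\mathcal{K}$ over $M$ together with an equivalence $\mathcal{K} \otimes \mathbb{T} \simeq \mathbb{T}'$. The natural candidate is the descent of $\mathcal{S}' \otimes \mathcal{S}^{*}$ from $Y$ to $M$. Indeed, rearranging and inverting $\mathcal{A}$ and $\mathcal{A}'$ yields a canonical isomorphism $\pr_2^{*}(\mathcal{S}' \otimes \mathcal{S}^{*}) \to \pr_1^{*}(\mathcal{S}' \otimes \mathcal{S}^{*})$ of bundle gerbes over $Y^{[2]}$ (because both $\pr_1^{*}\mathcal{S}\otimes \pr_2^{*}\mathcal{S}^{*}$ and $\pr_1^{*}\mathcal{S}' \otimes \pr_2^{*}\mathcal{S}'^{*}$ are isomorphic to $\mathcal{P}$), and the transformations $\sigma$ and $\sigma'$ furnish the coherent 2-isomorphism over $Y^{[3]}$ that turns this into a descent datum in the 2-stack of bundle gerbes. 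Since bundle gerbes form a 2-stack on $M$, this datum is effective and yields the desired $\mathcal{K}$.

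Essential surjectivity on 1-morphisms and full-faithfulness on 2-morphisms follow by analogous descent arguments applied, respectively, to the transformation $\beta$ in \erf{eq:beta} defining a 1-morphism between trivializations, and to the equation \erf{eq:2morphtriv} defining a 2-morphism. In both cases the descent datum arises directly from the compatibility axioms with $\sigma$, $\sigma'$, $\beta_1$, and $\beta_2$, and its effectivity reduces to classical facts: bundle gerbe isomorphisms form a stack on $M$, and transformations between them form a sheaf.

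The main obstacle will be the coherence check in the descent step for objects: one must verify that the 2-isomorphism over $Y^{[3]}$ produced from $\sigma$ and $\sigma'$ satisfies exactly the cocycle condition demanded by 2-stack descent over $Y^{[\bullet]}$. This is to be done by pasting the compatibility pentagon of Figure \ref{compass} for both trivializations with the pentagon axiom for the associator $\mu$ of $\mathbb{G}$, arranging the diagram over $Y^{[4]}$ so that the contributions of $\mu$ cancel pairwise. Once this coherence is in place, the remainder of the proof reduces to a routine invocation of the 2-stack property of bundle gerbes, and the $\ugrb M$-equivariance of the resulting equivalence is immediate from the construction.
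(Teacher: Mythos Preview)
The paper does not prove this lemma; it is imported verbatim from \cite[Lemma 2.2.5]{waldorf8} and stated without argument. So there is no proof in the present paper to compare against.

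That said, your strategy is the standard one and is correct. The descent argument you sketch is exactly how the cited reference proceeds: given two trivializations $\mathbb{T}=(\mathcal{S},\mathcal{A},\sigma)$ and $\mathbb{T}'=(\mathcal{S}',\mathcal{A}',\sigma')$, the gerbe $\mathcal{S}'\otimes\mathcal{S}^{*}$ over $Y$ acquires a descent datum from $\mathcal{A}'\circ\mathcal{A}^{-1}$ (suitably interpreted) with cocycle 2-isomorphism assembled from $\sigma$ and $\sigma'$, and the 2-stack property of bundle gerbes produces $\mathcal{K}$ over $M$. Your identification of the coherence check over $Y^{[4]}$ --- that the contributions of the associator $\mu$ cancel when the two pentagons of Figure~\ref{compass} are pasted together --- is the right obstacle and the right resolution. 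The remaining levels (1-morphisms via descent of bundles, 2-morphisms via the sheaf property of transformations) are, as you say, routine once the object-level case is done.

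One small point worth making explicit in an actual write-up: after constructing $\mathcal{K}$, you still need to exhibit the full 1-morphism $\mathcal{K}\otimes\mathbb{T}\to\mathbb{T}'$ in $\triv{\mathbb{G}}$, i.e.\ not just the isomorphism $\mathcal{B}:\pi^{*}\mathcal{K}\otimes\mathcal{S}\to\mathcal{S}'$ but also the transformation $\beta$ as in \erf{eq:beta} satisfying the pentagon of Figure~\ref{compmorph}. This falls out of the descent construction, but it is an additional verification beyond merely producing $\mathcal{K}$.
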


There are two methods to produce a category from the bicategory $\triv {\mathbb{G}}$ of trivializations of a bundle 2-gerbe. The first method is to take the truncation $\hc 1 \triv{\mathbb{G}}$, whose objects are those of $\triv {\mathbb{G}}$, and whose morphisms are 2-isomorphism classes of 1-morphisms in $\triv {\mathbb{G}}$. 

The second method is to consider the truncated presheaf of categories $\hc 1 \ugrb-$ and then formally repeat the definition of trivializations in that ambient category. This gives a category which we denote by $\trivtrunc {\mathbb{G}}$.
An object in $\trivtrunc {\mathbb{G}}$ is a pair $(\mathcal{S},[\mathcal{A}])$ of a bundle gerbe $\mathcal{S}$  over $Y$ and an equivalence class of  isomorphisms
\begin{equation*}
\mathcal{A}: \pr_2^{*}\mathcal{S} \otimes \mathcal{P} \to \pr_1^{*}\mathcal{S}\text{,}
\end{equation*}
such that there exists a  transformation $\sigma$ as in \erf{eq:diagfillsigma}.
\begin{comment}
It is easy to see that if one representative $\mathcal{A}$ of the class $[\mathcal{A}]$ admits such a transformation, than any other representative admits one, too. \end{comment}
Note that it is \emph{not} required that   $\sigma$ makes the diagram of Figure \ref{compass} commutative. 
A morphism $(\mathcal{S}_1,[\mathcal{A}_1]) \to (\mathcal{S}_2,[\mathcal{A}_2])$ in $\trivtrunc{\mathbb{G}}$ is an equivalence class $[\mathcal{B}]$ of  isomorphisms $\mathcal{B}: \mathcal{S}_1 \to \mathcal{S}_2$ such that there exists a  transformation $\beta$ as in \erf{eq:beta}.
\begin{comment}
Again, if one choice $(\mathcal{A}_1,\mathcal{A}_2,\mathcal{B})$ of representatives admits such a transformation, than any other choice admits one, too. 
\end{comment}
Note that it is  \emph{not} required that   $\beta$ makes the diagram of Figure \ref{compmorph} commutative.

The two categories of trivializations are related by a functor
\begin{equation*}
\mathrm{t}_1: \hc 1 \triv{\mathbb{G}} \to \trivtrunc {\mathbb{G}}\text{.}
\end{equation*}
Indeed, an object in $\hc 1 \triv{\mathbb{G}}$ is just an object $(\mathcal{S},\mathcal{A},\sigma)$ in $\triv{\mathbb{G}}$, and the functor sets $\mathrm{t}_1(\mathcal{S},\mathcal{A},\sigma) \df (\mathcal{S},[\mathcal{A}])$. A morphism in $\hc 1 \triv{\mathbb{G}}$ is a 2-isomorphism class $[(\mathcal{B},\beta)]$ of 1-morphisms, and the constructions above show that  $\mathrm{t}_1([(\mathcal{B},\beta)]) := [\mathcal{B}]$ is well-defined.

We recall that $\triv{\mathbb{G}}$ is a torsor  over $\ugrb M$, see Lemma \ref{lem:torsor}. By purely formal reasons, it follows that $\hc 1 \triv {\mathbb{G}}$ is a torsor  over $\hc 1 \ugrb M$. There is a similar action of $\hc 1 \ugrb M$ on $\trivtrunc {\mathbb{G}}$, i.e. a functor
\begin{equation}
\label{eq:actiontrunc}
\hc 1 \ugrb M \times \trivtrunc{\mathbb{G}}   \to \trivtrunc{\mathbb{G}}
\end{equation} 
that exhibits $\trivtrunc{\mathbb{G}}$ as a module category over the monoidal category $\hc 1 \ugrb M$.
On objects, it is given by $(\mathcal{K},(\mathcal{S},[\mathcal{A}])) \mapsto (\pi^{*}\mathcal{K} \otimes \mathcal{S},[\mathcal{A} \otimes \id])$, and on morphisms it is given by $([\mathcal{J}],[\mathcal{B}]) \mapsto [\pi^{*}\mathcal{J} \otimes \mathcal{B}]$.
The  functor $\mathrm{t}_1$ is obviously $\hc 1 \ugrb M$-equivariant.

Next we produce three  sets  from the bicategory $\triv{\mathbb{G}}$ of trivializations. The first is the set $\hc 0 \triv{\mathbb{G}}$ of isomorphism classes of trivializations of $\mathbb{G}$. The second is the set $\hc 0 (\trivtrunc{\mathbb{G}})$ of isomorphism classes of objects in $\trivtrunc{\mathbb{G}}$. The third is the set $\trivttrunc {\mathbb{G}}$ obtained by formally repeating the definition of a trivialization ambient to the  presheaf $\hc 0 \ugrb -$. 
 In detail, an element of  $\trivttrunc {\mathbb{G}}$ is an isomorphism class $[\mathcal{S}]$ of bundle gerbes $\mathcal{S}$  over $Y$, such that there exists an isomorphism
$\pr_2^{*}\mathcal{S} \otimes \mathcal{P} \cong \pr_1^{*}\mathcal{S}$ over $Y^{[2]}$.
\begin{comment}
It is clear that if one representative $\mathcal{S}$ admits such an isomorphism, than any other representative admits one, too.  
\end{comment}

The three sets of trivializations are related by maps
\begin{equation*}
\alxydim{}{\hc 0 \triv{\mathbb{G}}  \ar[r]^-{\hc 0 \mathrm{t}_1} & \hc 0 (\trivtrunc {\mathbb{G}}) \ar[r]^-{\mathrm{t}_2} & \trivttrunc{\mathbb{G}}\text{,}}
\end{equation*}
where $\hc 0 \mathrm{t}_1$ is the map induced by the functor $\mathrm{t}_1$ on isomorphism classes, and $\mathrm{t}_2$ sends an element $[(\mathcal{S},[\mathcal{A}])]$ to $[\mathcal{S}]$. Again by purely formal reasons, $\hc  0 \triv {\mathbb{G}}$ is a torsor over the group $\hc 0 \ugrb M$. Further, the group $\hc 0 \ugrb M$ acts on $\hc 0 (\trivtrunc{\mathbb{G}})$, and $\hc 0 \mathrm{t}_1$ is equivariant. Finally, we have an action of $\hc 0 \ugrb M$ on $\trivttrunc{\mathbb{G}}$, defined by $([\mathcal{K}],[\mathcal{S}]) \mapsto [\pi^{*}\mathcal{K} \otimes \mathcal{S}]$, for which the map $\mathrm{t}_2$ is equivariant.

\begin{comment} 
For a general bundle 2-gerbe, the two categories of trivializations are not  equivalent, and the three sets of trivializations are not the same. 
In the next section we show that in case of the Chern-Simons 2-gerbe we obtain best possible  coincidence.
\end{comment}

\subsection{From string structures to string classes}

\label{sec:stringclasses}

\begin{comment}
We recall that a string structure is a trivialization of the Chern-Simons 2-gerbe $\mathbb{CS}_M$.
\end{comment}
We now have the following versions of string structures:
\begin{enumerate}[(a)]
\item 
a \emph{bicategory of string structures}, $\stst\relax M := \triv{\mathbb{CS}_M}$, 

\item
two \emph{categories of string structures}, $\hc  1 \stst\relax M$ and $\stst1M := \trivtrunc{\mathbb{CS}_M}$, related by a functor 
\begin{equation*}
\mathrm{t}_1: \hc 1 \stst\relax M \to \stst1M\text{,} \end{equation*}

\item
three \emph{sets of string structures}, namely  $\hc 0 \stst\relax M$, $\hc 0 \stst 1M$ and $\stst 0M \df \trivttrunc{\mathbb{CS}_M}$, related by maps
\begin{equation*}
\alxydim{}{\hc 0 \stst\relax M  \ar[r]^-{\hc 0 \mathrm{t}_1} & \hc 0 \stst1M \ar[r]^-{\mathrm{t}_2} & \stst0M\text{.}}
\end{equation*}
\end{enumerate}
Additionally, there is a fourth set consisting of so-called string classes. A \emph{string class} on $M$ is a class $\xi\in \h^3(FM,\Z)$ that restricts on each fibre to the generator  $\gamma \in \h^3(\spin n,\Z)$. We denote the set of string classes on $M$ by $\sc(M)$. We have a map
\begin{equation*}
\mathrm{t}_3:\stst0M \to \sc(M) : [\mathcal{S}] \mapsto \mathrm{DD}(\mathcal{S})\text{.}
\end{equation*}
This map is well-defined: indeed, for a point $p\in FM$ we have the map $\iota_p\maps \spin n \to FM$ defined by $\iota_p(g) := pg$, implementing the \quot{restriction to the fibre of $p$}. We have another map  $j_p\maps \spin n \to FM^{[2]}$ defined by $j_p( g) := (pg,p)$. Recall that an element $[\mathcal{S}]$ in $\stst0M$
is represented by a bundle gerbe $\mathcal{S}$ over $FM$ that admits an isomorphism $\pr_2^{*}\mathcal{S} \otimes \delta^{*}\gbas \cong \pr_1^{*}\mathcal{S}$.  Pullback along $j_p$ followed by taking the Dixmier-Douady class yields $\gamma = \mathrm{DD}(\gbas) = \iota_p^{*}\mathrm{DD}(\mathcal{S})$. Thus, $\mathrm{DD}(\mathcal{S})$ is a string class.

The set $\sc (M)$ of string classes carries an action of $\h^3(M,\Z)$ via pullback to $FM$ and addition, and under the identification $\hc 0 \ugrb M\cong \h^3(M,\Z)$ the map $\mathrm{t}_3$ is equivariant.
\begin{comment}
In the remainder of this section we prove the following theorem, showing that the two categories of string structures are equivalent, and the four sets of string structures are all the same. 
\end{comment}

\begin{theorem}
\label{th:truncation}
The functor
\begin{equation*}
\mathrm{t}_1: \hc 1 \stst\relax M \to \stst1M
\end{equation*}
is an equivalence of categories; in particular, it is an equivariant functor between $\hc 1 \ugrb M$-torsors.
The maps
\begin{equation*}
\alxydim{@C=\xypicst}{\hc 0 \stst\relax M  \ar[r]^-{\hc 0 \mathrm{t}_1} & \hc 0 \stst1M \ar[r]^-{\mathrm{t}_2} & \stst0M \ar[r]^-{\mathrm{t}_3} & \sc(M)}
\end{equation*}
are all bijections; in particular, they are equivariant maps between $\h^3(M,\Z)$-torsors. 
\end{theorem}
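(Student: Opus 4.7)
The plan is to split the theorem into the categorical equivalence for $\mathrm{t}_1$ and the chain of set bijections, then reduce each claim to an obstruction calculation on the simplicial manifold $FM^{[\bullet]}$. The key cohomological input throughout is that $\spin n$ is compact, simple and simply-connected --- so $\h^1(\spin n,\Z) = \h^2(\spin n,\Z) = 0$ --- together with the iterated action diffeomorphism $FM^{[k]} \cong FM \times \spin n^{k-1}$, which makes Künneth applicable.

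First I would treat the rightmost map $\mathrm{t}_3$. Under the diffeomorphism $FM^{[2]} \cong FM \times \spin n$, the projection $\pr_1$ becomes the $FM$-projection, $\pr_2$ becomes the $\spin n$-action, and $\delta$ becomes the $\spin n$-projection. An isomorphism class $[\mathcal{S}]$ therefore lies in $\stst 0 M$ iff $\xi := \mathrm{DD}(\mathcal{S})$ satisfies
\begin{equation*}
\mathrm{act}^{*}\xi = \pr_{FM}^{*}\xi - \pr_{\spin n}^{*}\gamma
\end{equation*}
in $\h^3(FM \times \spin n,\Z)$. Künneth together with $\h^1(\spin n,\Z) = \h^2(\spin n,\Z) = 0$ gives $\h^3(FM \times \spin n,\Z) \cong \h^3(FM,\Z) \oplus \h^3(\spin n,\Z)$, and the above equation becomes equivalent to saying that the restriction of $\xi$ to any fibre of $\pi$ is $\gamma$, i.e., that $\xi$ is a string class. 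Since Dixmier-Douady is a bijection of isomorphism classes of bundle gerbes onto $\h^3(-,\Z)$, this proves $\mathrm{t}_3$ is an $\h^3(M,\Z)$-equivariant bijection.

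For the remaining claims I would use obstruction theory with a uniform pattern: at each of the four levels --- surjectivity of $\mathrm{t}_2$ (existence of a filler $\sigma$ in Figure \ref{compass}), essential surjectivity, fullness, and faithfulness of $\mathrm{t}_1$ --- one has data in a truncated world and must produce an extra 1- or 2-cell satisfying a coherence axiom (Figure \ref{compass}, Figure \ref{compmorph}, or \erf{eq:2morphtriv}). Existence of some (not yet coherent) filler is guaranteed by the definitions of $\stst 0 M$ and $\stst 1 M$; the set of possible fillers is a torsor over $\h^2(FM^{[k]},\Z)$, and the failure of a chosen filler to satisfy the relevant axiom is a class in $\h^2(FM^{[k+1]},\Z)$. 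Modifying the filler alters the obstruction by the simplicial differential of the modification, so the claim amounts to showing that the obstruction lies in the image of this differential. Under the identification $FM^{[n]} \cong FM \times \spin n^{n-1}$, Künneth and the vanishing of $\h^{\leq 2}(\spin n,\Z)$ identify the simplicial complex $\h^2(FM^{[\bullet]},\Z)$ with the bar complex of a trivial $\spin n$-module, where exactness in the required degrees is direct. Injectivity of $\mathrm{t}_2$ is a routine consequence of the same method, and equivariance under $\hc 1 \ugrb M$ is immediate from the definition \erf{eq:action} of the action 2-functor and its truncations. The bijectivity of $\hc 0 \mathrm{t}_1$ then follows formally once $\mathrm{t}_1$ is known to be an equivalence.

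The main obstacle is organising the four obstruction calculations uniformly and verifying exactness of the bar complex in precisely the correct degrees, since one must track carefully which cohomological group each filler inhabits and how the simplicial differential acts on it. A conceptually cleaner alternative, which I would pursue if feasible, is to identify trivializations of $\mathbb{CS}_M$ with $(\mathcal{M},\alpha)$-equivariant structures on bundle gerbes over $FM$ with respect to the multiplicative basic gerbe from Section \ref{sec:centralextension}; since both $\mathcal{M}$ and $\alpha$ are unique up to canonical isomorphism over the compact simply-connected group $\spin n$, all coherence data would collapse automatically, and the equivalence $\mathrm{t}_1$ together with the bijectivity of $\mathrm{t}_2$ would follow without explicit bar complex computations.
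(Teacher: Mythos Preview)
Your argument for $\mathrm{t}_3$ is correct and more direct than the paper's. The paper instead first observes that $\hc 0\stst\relax M$ and $\sc(M)$ are both $\h^3(M,\Z)$-torsors (the latter via the Serre spectral sequence, Corollary~\ref{co:stringclasses}), so the equivariant composite $\mathrm{t}_3\circ\mathrm{t}_2\circ\hc 0\mathrm{t}_1$ is automatically a bijection; it then proves $\mathrm{t}_3$ and $\mathrm{t}_2$ are separately bijections and deduces $\hc 0\mathrm{t}_1$ is too. For $\mathrm{t}_1$ itself, the paper never runs an obstruction argument for essential surjectivity, fullness, or faithfulness: essential surjectivity follows from the bijectivity of $\hc 0\mathrm{t}_1$, and full faithfulness is obtained by showing that the Hom-sets on both sides are torsors over $\hc 0\ubun M$ (using that $\pi^*:\h^2(M,\Z)\to\h^2(FM,\Z)$ is an isomorphism), whence the equivariant map between them is a bijection. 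This is considerably shorter than a four-level obstruction analysis.

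There is a genuine gap in your proposal as stated. You assert that at all four levels the fillers form a torsor over $\h^2(FM^{[k]},\Z)$ with obstruction in $\h^2(FM^{[k+1]},\Z)$. This is correct for the one level where the filler is a $1$-cell (surjectivity of $\mathrm{t}_2$, where one adjusts $\mathcal{A}$), but for the three levels concerning $\mathrm{t}_1$ the relevant fillers are $2$-cells --- the transformation $\sigma$ satisfying Figure~\ref{compass}, the transformation $\beta$ satisfying Figure~\ref{compmorph}, and the transformation $\varphi$ satisfying \erf{eq:2morphtriv}. Transformations between fixed bundle gerbe isomorphisms form a torsor over $C^\infty(-,\ueins)$, not over $\h^2(-,\Z)$, so the obstruction groups and the bar-complex you would need are entirely different from the ones you describe. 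Your K\"unneth argument (vanishing of $\h^{\leq 2}(\spin n,\Z)$) is tailored to the $\h^2$ case and does not apply. The corrected statement --- exactness of the augmented \v{C}ech complex $C^\infty(M,\ueins)\to C^\infty(FM^{[\bullet]},\ueins)$ in positive degrees --- is true for any surjective submersion admitting local sections, but it is a different fact requiring a different proof. The paper's torsor strategy avoids all of this: it never touches the pentagon for $\sigma$ or $\beta$ directly, extracting everything from the bijectivity of $\hc 0\mathrm{t}_1$ and the torsor structure on Hom-sets.
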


The proof is split into a couple of lemmata.
 We start with the following Serre spectral sequence calculation.

\begin{lemma}
\label{lem:serreall}
Let $\pi:P \to M$ be a principal $G$-bundle over $M$, for $G$ a compact, simple, simply-connected Lie group. Let $A$ be an abelian group. For $k=0,1,2$ the pullback map
\begin{equation*}
\pi^{*}: \h^k(M,A) \to \h^k(P,A)
\end{equation*}
is an isomorphism. Let $p\in P$ be a point and $i_p: G \to P: g\mapsto pg$. Then, the sequence
\begin{equation*}
\alxydim{}{0\ar[r] & \h^3(M,A) \ar[r]^-{\pi^{*}} & \h^3(P,A) \ar[r]^-{i_p^{*}} & \h^3(G,A) \ar[r]^{tr} & \h^4(M,A) } 
\end{equation*}
is exact, where $tr$ is the \quot{transgression} homomorphism of the Serre spectral sequence. 
\end{lemma}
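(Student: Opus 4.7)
The plan is to run the Serre (Leray--Serre) cohomology spectral sequence of the fibration $G \to P \to M$. Since $G$ is compact, simple, and simply-connected, $G$ is also $2$-connected, hence $\pi_k(G)=0$ for $k=0,1,2$ and by Hurewicz $\h^k(G,A)=0$ for $k=1,2$, while $\h^0(G,A)=A$ and $\h^3(G,A)\cong A$ (generated by $\gamma$). Because $M$ is simply-connected is not assumed, I would work with the sheaf-theoretic formulation of the spectral sequence, whose $E_2$-page is $E_2^{p,q}=\h^p(M,\h^q(G,A))$ with local coefficients; however, since $\pi$ is a principal bundle with connected structure group and the coefficient modules $\h^q(G,A)$ are acted on trivially by $\pi_1(M)$ (the action factors through $\pi_0(G)=0$), the local system is constant. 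Thus $E_2^{p,0}=\h^p(M,A)$, $E_2^{p,1}=E_2^{p,2}=0$, and $E_2^{p,3}=\h^p(M,A)$.

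For $k\leq 2$ the argument is then immediate: the total degree-$k$ row of $E_\infty$ has only one non-zero term, namely $E_\infty^{k,0}$, because the rows $q=1,2$ are zero. Moreover all differentials entering $E_r^{k,0}$ start in bidegrees $(k-r,r-1)$ with $r-1\in\{1,2\}$ for the relevant values of $r$, hence are zero. Consequently $E_\infty^{k,0}=E_2^{k,0}=\h^k(M,A)$, and the edge homomorphism $\pi^{*}:\h^k(M,A)\to\h^k(P,A)$ is an isomorphism.

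The interesting case is $k=3$. Here the total degree-$3$ of $E_\infty$ has potential contributions from $E_\infty^{3,0}$ and $E_\infty^{0,3}$ only, since $E_2^{2,1}=E_2^{1,2}=0$. Again, all incoming differentials to $E_r^{3,0}$ come from zero groups (as $q\in\{1,2\}$), so $E_\infty^{3,0}=\h^3(M,A)$. For the fibre row, the differentials $d_2$ and $d_3$ out of $E_r^{0,3}$ land in $E_r^{2,2}$ and $E_r^{3,1}$, which are zero; the first potentially non-trivial differential is $d_4:E_4^{0,3}=\h^3(G,A)\to E_4^{4,0}=\h^4(M,A)$, which is by definition the transgression $tr$. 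Hence $E_\infty^{0,3}=\ker(tr)$, and the associated filtration on $\h^3(P,A)$ yields a short exact sequence
\begin{equation*}
0\to \h^3(M,A) \xrightarrow{\pi^{*}} \h^3(P,A) \xrightarrow{e} \ker(tr) \to 0,
\end{equation*}
where $e$ is the edge homomorphism to $E_\infty^{0,3}\subset \h^3(G,A)$. To finish, I would identify $e$ with the fibre-restriction $i_p^{*}$: this is standard for the Serre spectral sequence of a fibration with path-connected base, where the edge homomorphism in the fibre direction is precisely the restriction to any fibre. Splicing the short exact sequence with the inclusion $\ker(tr)\hookrightarrow\h^3(G,A)$ and the transgression $tr$ yields the stated four-term exact sequence. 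The main (very mild) obstacle is verifying that the local coefficient system is trivial and that the edge homomorphism coincides with the geometric restriction $i_p^{*}$; both are textbook facts for principal bundles with connected structure group.
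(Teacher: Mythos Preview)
Your argument is correct and is exactly the Serre spectral sequence computation the paper has in mind; in fact the paper states this lemma without proof, introducing it simply as ``the following Serre spectral sequence calculation.'' Your write-up supplies precisely those details: the $2$-connectedness of $G$ kills the rows $q=1,2$ on $E_2$, the triviality of the local system for a principal bundle with connected structure group justifies constant coefficients, and the identification of the edge map with $i_p^{*}$ and of $d_4$ with $tr$ gives the exact sequence. One cosmetic remark: the claim $\h^3(G,A)\cong A$ is not needed for the exactness argument (and for general $A$ one should invoke universal coefficients together with $H_3(G,\Z)\cong\Z$ and $H_2(G,\Z)=0$); you only use the vanishing of $\h^1(G,A)$ and $\h^2(G,A)$, which you correctly deduce from $2$-connectedness.
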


In case of $G=\spin n$ and $A=\Z$, the transgression homomorphism $tr$ of the Serre spectral sequence sends the generator $\gamma\in \h^3(\spin n,\Z)$ to the class $\frac{1}{2}p_1(M) \in \h^4(M,\Z)$. Thus, we obtain the following result about string classes.

\begin{corollary}[{{\cite[Proposition 6.1.5]{redden1}}}]
\label{co:stringclasses}
Let $M$ be a spin manifold. 
\begin{enumerate}[(i)]

\item 
\label{co:stringclasses:exist}
String classes exist if and only if $M$ is a string manifold. 

\item
\label{co:stringclasses:torsor}
The set of string classes $\sc (M)$ is a torsor over $\h^3(M,\Z)$.

\end{enumerate}
\end{corollary}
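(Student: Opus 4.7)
My plan is to derive both parts directly from the exact sequence in Lemma \ref{lem:serreall}, applied with $G=\spin n$, $A=\Z$, and $P=FM$, together with the classical identification of the Serre transgression
\begin{equation*}
tr: \h^3(\spin n,\Z) \to \h^4(M,\Z), \qquad \gamma \mapsto \tfrac{1}{2}p_1(M)
\end{equation*}
stated in the paragraph preceding the corollary. The maps $i_p$ of the lemma and $\iota_p$ used to define string classes coincide.

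The preliminary observation I would establish is that the fibrewise condition $\iota_p^{*}\xi = \gamma$ for every $p\in FM$ is equivalent to the same condition at a single point. For two points $p,p'$ in the same fibre, the maps $\iota_p,\iota_{p'}\maps \spin n \to FM$ differ by right translation on $\spin n$, which is smoothly homotopic to the identity since $\spin n$ is connected; hence $\iota_p^{*}\xi = \iota_{p'}^{*}\xi$. Letting $p$ vary along a path then gives that $\iota_p^{*}\xi \in \h^3(\spin n,\Z)$ is locally constant over the connected base $M$, hence constant. Granted this reduction, part (i) is immediate from the exact sequence: a class $\xi\in\h^3(FM,\Z)$ with $\iota_p^{*}\xi = \gamma$ exists if and only if $\gamma$ lies in the image of $\iota_p^{*}$, which by exactness holds if and only if $tr(\gamma)=\tfrac{1}{2}p_1(M)=0$.

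For part (ii), I would define the action of $\h^3(M,\Z)$ on $\sc(M)$ by $(\alpha,\xi)\mapsto \xi + \pi^{*}\alpha$. This is well-defined because $\pi\circ\iota_p$ is the constant map at $\pi(p)$, so $\iota_p^{*}\pi^{*}\alpha=0$ and $\xi+\pi^{*}\alpha$ still restricts to $\gamma$ on every fibre. Transitivity of the action follows from exactness: if $\xi,\xi'\in \sc(M)$ then $\iota_p^{*}(\xi-\xi')=0$, so $\xi-\xi'=\pi^{*}\alpha$ for some $\alpha\in\h^3(M,\Z)$. Freeness is the injectivity of $\pi^{*}:\h^3(M,\Z)\to \h^3(FM,\Z)$, likewise supplied by the exact sequence. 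The main obstacle I anticipate is the preliminary fibrewise-independence argument; once this is in place, everything else is a direct unpacking of Lemma \ref{lem:serreall} with no further spectral sequence computations required.
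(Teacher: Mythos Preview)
Your proof is correct and follows exactly the approach the paper intends: the corollary is stated as an immediate consequence of Lemma~\ref{lem:serreall} together with the identification $tr(\gamma)=\tfrac{1}{2}p_1(M)$, and you have simply written out the details. One tiny slip: for $p'=pg_0$ in the same fibre one has $\iota_{p'}=\iota_p\circ L_{g_0}$, i.e.\ a \emph{left} translation rather than a right one, but since $\spin n$ is connected this is equally homotopic to the identity and your conclusion stands.
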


Now we are in position to contribute first partial results to the proof of Theorem \ref{th:truncation}.

\begin{lemma}
\label{lem:t3bijection}
The maps $\mathrm{t}_3$ and $\mathrm{t}_2 \circ \hc 0 \mathrm{t}_1$ are bijections.
\end{lemma}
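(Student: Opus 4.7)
My plan is to first establish that $\mathrm{t}_3$ is a bijection by direct cohomological arguments, and then to deduce the bijectivity of $\mathrm{t}_2 \circ \hc 0 \mathrm{t}_1$ via a torsor argument.

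For $\mathrm{t}_3$, injectivity will follow from the standard fact that bundle gerbes over $FM$ are classified up to stable isomorphism (which is the notion of 1-isomorphism in the bicategory $\ugrb{FM}$) by the Dixmier-Douady class in $\h^3(FM,\Z)$. For surjectivity, given a string class $\xi\in\sc(M)$, I would pick any bundle gerbe $\mathcal{S}$ over $FM$ with $\mathrm{DD}(\mathcal{S})=\xi$ and verify that it admits an isomorphism $\pr_2^{*}\mathcal{S}\otimes\delta^{*}\gbas\cong\pr_1^{*}\mathcal{S}$ over $FM^{[2]}$. Since $\ugrb{FM^{[2]}}$ is classified by $\h^3(FM^{[2]},\Z)$, this reduces to checking a cocycle identity, which I would verify using the diffeomorphism $FM^{[2]}\cong FM\times \spin n$ sending $(p_1,p_2)\mapsto (p_2,\delta(p_1,p_2))$. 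Under this identification, $\pr_2$ corresponds to the first projection, $\delta$ to the second projection, and $\pr_1$ to the action map $a(p,g):=pg$. Since $\spin n$ has $\h^1=\h^2=0$ under our dimensional hypothesis, the K\"unneth formula asserts that any class in $\h^3(FM\times\spin n,\Z)$ is determined by its restriction to $FM\times\{1\}$ and by its restriction along $\iota_p$ for some (hence any) $p\in FM$. Applying this decomposition to the class $a^{*}\xi-\pr_1^{*}\xi$, the first restriction vanishes since $a$ restricts to the identity on $FM\times\{1\}$, while the second equals $\iota_p^{*}\xi=\gamma$ precisely because $\xi$ is a string class. This identifies $a^{*}\xi-\pr_1^{*}\xi$ with $\pr_2^{*}\gamma$, which (up to the usual sign conventions) translates back to the desired cocycle identity over $FM^{[2]}$.

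For $\mathrm{t}_2\circ\hc 0 \mathrm{t}_1$, the idea is to exploit torsor structures. By Lemma \ref{lem:torsor} and a purely formal truncation to isomorphism classes, $\hc 0 \stst\relax M$ is a torsor over the group $\hc 0\ugrb M\cong \h^3(M,\Z)$, while by Corollary \ref{co:stringclasses}~(ii) the target $\sc(M)$ is a torsor over the same group. Each of the three maps $\hc 0\mathrm{t}_1$, $\mathrm{t}_2$, $\mathrm{t}_3$ is equivariant with respect to these actions by construction, so the composition $\mathrm{t}_3\circ\mathrm{t}_2\circ\hc 0 \mathrm{t}_1$ is an equivariant map between two torsors over $\h^3(M,\Z)$. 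If $M$ is not string, source and target are both empty by Corollary \ref{co:stringclasses}~(i) and \cite[Proposition 11.2]{stevenson2}, and the composition is vacuously a bijection. Otherwise, both are non-empty torsors over a single group, and any equivariant map between such is automatically a bijection. Combined with the bijectivity of $\mathrm{t}_3$ just established, this forces $\mathrm{t}_2\circ\hc 0\mathrm{t}_1$ to be a bijection as well.

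The main obstacle is the surjectivity of $\mathrm{t}_3$: the identification of $FM^{[2]}$ with $FM\times\spin n$ must be set up so that the projections $\pr_1$, $\pr_2$ and the map $\delta$ correspond correctly to the action map and the two projections on the product, so that the defining condition for a string class delivers exactly the cocycle identity needed for $\mathcal{S}$ to admit the required isomorphism.
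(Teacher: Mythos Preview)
Your proof is correct, but the route differs from the paper's in how surjectivity of $\mathrm{t}_3$ is obtained. The paper runs the torsor argument \emph{first}: since $\hc 0\stst\relax M$ and $\sc(M)$ are both $\h^3(M,\Z)$-torsors (in the string case) and the full composition $\mathrm{t}_3\circ\mathrm{t}_2\circ\hc 0\mathrm{t}_1$ is equivariant, the composition is a bijection; surjectivity of $\mathrm{t}_3$ is then read off as a consequence, and only injectivity of $\mathrm{t}_3$ is checked directly (exactly as you do, via the Dixmier--Douady classification). You instead prove surjectivity of $\mathrm{t}_3$ directly by a K\"unneth computation on $FM^{[2]}\cong FM\times\spin n$, and only afterwards invoke the torsor argument for $\mathrm{t}_2\circ\hc 0\mathrm{t}_1$.

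The paper's order is more economical: it avoids the explicit cohomology computation entirely, since surjectivity of $\mathrm{t}_3$ comes for free once the composition is known to be bijective. Your direct argument, on the other hand, is more self-contained and constructive---it exhibits explicitly why any string class lies in the image, and as a by-product shows that $\stst 0M$ is non-empty whenever $\sc(M)$ is, without going through the full chain of maps. One small presentational point: when you restrict $a^{*}\xi-\pr_1^{*}\xi$ to $FM\times\{1\}$, you note that $a$ restricts to the identity, but you should also observe that the first projection does too, so that the difference indeed vanishes.
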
 

\begin{proof}
If $M$ is not string, then $\hc 0 \stst\relax M$ and $\sc(M)$ are empty. Hence $\stst 0M$ is also empty, and both maps in the claim are maps between empty sets, hence bijections. If $M$ is string,  $\hc 0 \stst\relax M$ and $\sc(M)$ are non-empty. 
Then $\hc 0 \stst\relax M$ and $\sc(M)$ are both torsors over $\h^3(M,\Z)$ by Lemma \ref{lem:torsor} and Corollary \ref{co:stringclasses}, and the over all composition $\mathrm{t}_3 \circ \mathrm{t}_2 \circ \hc 0 \mathrm{t}_1$ is an equivariant map between torsors over the same group,  hence a bijection. In particular, $\mathrm{t}_3$ is surjective. The definition  of $\mathrm{t}_3$ shows immediately that it is also injective. Hence, $\mathrm{t}_3$ and $t_2 \circ \hc 0 \mathrm{t}_1$ are bijections.
\end{proof}

\begin{comment}
The next two lemmata extract from the Serre spectral sequence calculation of Lemma \ref{lem:serreall} statements about the cohomology of the fibre products of a principal $G$-bundle.
\end{comment}

\begin{lemma}
\label{lem:serre}
Let $\pi:P \to M$ be a principal $G$-bundle over $M$, for $G$ a compact, simple, simply-connected Lie group. We denote by $P^{[k]}$ the $k$-fold fibre product of $P$ with itself over $M$, and by  $\pi_k: P^{[k]} \to M$ the projection. Then, 
\begin{equation*}
\pi_k^{*}: \h^p(M,A) \to \h^p(P^{[k]},A)
\end{equation*}
is an isomorphism for all $k\in \N$, $p=0,1,2$, and $A=\R,\Z,\ueins$.
\end{lemma}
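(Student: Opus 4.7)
The plan is to proceed by induction on $k$, using Lemma \ref{lem:serreall} at each step for a suitable principal $G$-bundle. The base case $k=1$ is exactly the first part of Lemma \ref{lem:serreall}, applied to $\pi\maps P \to M$: the pullback $\pi^{*}\maps \h^p(M,A)\to \h^p(P,A)$ is an isomorphism for $p\eq0,1,2$ and any abelian coefficient group $A$.

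For the inductive step, suppose the claim holds for $k-1\geq 1$, i.e.\ $\pi_{k-1}^{*}\maps \h^p(M,A)\to \h^p(P^{[k-1]},A)$ is an isomorphism for $p\eq0,1,2$. First I would observe that the projection to the last factor,
\begin{equation*}
q\maps P^{[k]}\to P^{[k-1]}\maps (p_1,\dots,p_{k-1},p_k)\mapsto (p_1,\dots,p_{k-1})\text{,}
\end{equation*}
is a principal $G$-bundle over $P^{[k-1]}$, where $G$ acts by $g\cdot (p_1,\dots,p_{k-1},p_k) := (p_1,\dots,p_{k-1},p_kg)$; the fibre over $(p_1,\dots,p_{k-1})$ is canonically identified with the fibre of $\pi$ over $\pi(p_1)$. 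Since $G$ is compact, simple and simply-connected, Lemma \ref{lem:serreall} applies verbatim to this bundle and shows that $q^{*}\maps \h^p(P^{[k-1]},A)\to \h^p(P^{[k]},A)$ is an isomorphism for $p\eq0,1,2$. The projection factors as $\pi_k = \pi_{k-1}\circ q$, so $\pi_k^{*}=q^{*}\circ \pi_{k-1}^{*}$, and this is a composition of isomorphisms by the inductive hypothesis and the previous step.

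The argument is uniform in the coefficient group, so it covers $A=\R,\Z,\ueins$ simultaneously. The main subtlety is merely to notice that Lemma \ref{lem:serreall} is stated for an arbitrary principal $G$-bundle over any base, not only over the manifold $M$ of interest; once one applies it to $q\maps P^{[k]}\to P^{[k-1]}$ (rather than to $\pi_k\maps P^{[k]}\to M$, for which the fibre would be $G^k$ and the hypotheses of Lemma \ref{lem:serreall} would fail), the induction closes immediately.
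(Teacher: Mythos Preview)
Your proof is correct and, in fact, slightly more economical than the paper's. The paper argues by writing down an explicit diffeomorphism $P^{[k]}\cong P\times G^{k-1}$ via the difference map, then invokes the K\"unneth formula (for $A=\R,\Z$) together with the 2-connectedness of $G$ to see that the projection $P\times G^{k-1}\to P$ induces isomorphisms in degrees $\leq 2$, treats $A=\ueins$ separately using the exponential sequence and the five lemma, and finally applies Lemma~\ref{lem:serreall} to $\pi:P\to M$. Your inductive argument bypasses all of this by recognising that $q:P^{[k]}\to P^{[k-1]}$ is itself a principal $G$-bundle (indeed the pullback of $P\to M$ along $\pi_{k-1}$), so Lemma~\ref{lem:serreall} applies directly at each step and for every abelian coefficient group at once; no K\"unneth computation or coefficient-by-coefficient case distinction is needed. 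The two approaches are of course closely related---the diffeomorphism used in the paper is precisely an iterated trivialisation of the principal bundle tower you are inducting along---but your packaging is cleaner.
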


\begin{proof}
Since $P$ is a principal $G$-bundle, we have diffeomorphisms 
\begin{equation*}
\varphi_k: P^{[k]} \to P \times G^{k-1}: (p_1,...,p_k) \mapsto (p_1,\delta(p_1,p_2),\delta(p_1,p_3),...,\delta(p_1,p_k))
\end{equation*}
with $\pi \circ \pr_1 \circ  \varphi_k = \pi_k$. The projection 
$\pr_1\maps P \times G^{k-1} \to P$
induces an isomorphism in the cohomology with coefficients in $A=\R,\Z$ and in degrees $p=0,1,2$ via the Künneth formula (using that $\R$ and $\Z$ have no torsion) and the 2-connectedness of $G$. For $A=\ueins$ the same statement holds due to the exactness of the exponential sequence and the five lemma.  The bundle projection $\pi: Y \to M$ induces an isomorphism in cohomology in degrees $p=0,1,2$ according to Lemma \ref{lem:serreall}. 
\end{proof}

\begin{lemma}
\label{lem:coboundary}
Suppose that $\mathcal{F}$ is a presheaf of abelian groups over smooth manifolds, and $\pi\maps Y \to M$  is a surjective submersion. We denote by $\pi_k:Y^{[k]} \to M$ the projection, and by
$\Delta: \mathcal{F}(Y^{[k]}) \to \mathcal{F}(Y^{[k+1]})$
the \v Cech coboundary operator. 
\begin{comment}
That is,
\begin{equation*}
\Delta \beta = \sum_{i=1}^{k+1}(-1)^{i} \partial_i^{*}\beta
\end{equation*}
for $\beta \in \mathcal{F}(Y^{[k]})$,
where $\partial_i: Y^{[k+1]} \to Y^{[k]}$ is the map that omits the $i$th component. 
\end{comment}
Then, 
\begin{equation*}
\Delta \circ \pi_k^{*} = 
\begin{cases}
\pi_{k+1}^{*} & k\text{ even} \\
0 & k\text{ odd.} \\
\end{cases}
\end{equation*}
\end{lemma}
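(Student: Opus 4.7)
The plan is to reduce the statement to a trivial combinatorial identity by exploiting the fact that all face maps of the \v Cech nerve are compatible with the projection to the base.

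First I would recall that the \v Cech coboundary operator is defined as an alternating sum over the face maps of the nerve, namely
\begin{equation*}
\Delta = \sum_{i=1}^{k+1}(-1)^{i-1}\partial_i^{*}\colon \mathcal{F}(Y^{[k]}) \to \mathcal{F}(Y^{[k+1]})\text{,}
\end{equation*}
where $\partial_i\maps Y^{[k+1]} \to Y^{[k]}$ omits the $i$th factor. (Any other sign convention gives the same conclusion up to an overall sign, which is irrelevant.) The key observation is that each $\partial_i$ satisfies $\pi_k\circ \partial_i = \pi_{k+1}$, simply because omitting one of the components of a $(k{+}1)$-tuple with a common image in $M$ yields a $k$-tuple with the same image in $M$.

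Applying the presheaf $\mathcal{F}$ to this identity and using functoriality gives $\partial_i^{*}\circ \pi_k^{*} = \pi_{k+1}^{*}$ for every $i=1,\dots,k+1$. Therefore
\begin{equation*}
\Delta \circ \pi_k^{*} = \sum_{i=1}^{k+1}(-1)^{i-1}\partial_i^{*}\circ \pi_k^{*} = \left ( \sum_{i=1}^{k+1}(-1)^{i-1} \right )\pi_{k+1}^{*}\text{.}
\end{equation*}
The scalar coefficient is $1$ when $k+1$ is odd, i.e.\ $k$ is even, and $0$ when $k+1$ is even, i.e.\ $k$ is odd. This is precisely the claimed formula.

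There is no real obstacle — the statement is purely formal and reduces to the parity of an alternating sum of $\pm 1$. The only things to be careful about are the convention for the coboundary operator (which the reader can be assumed to know, or alternatively can be spelled out in a single display) and the verification that $\pi_k \circ \partial_i = \pi_{k+1}$, which is immediate from the definition of the fibre product.
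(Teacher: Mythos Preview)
Your proof is correct and follows exactly the same approach as the paper: observe that $\partial_i^{*}\pi_k^{*}=\pi_{k+1}^{*}$ for every $i$, and then count the signs in the alternating sum. The paper's proof is simply a terser version of what you wrote.
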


\begin{proof}
For $\alpha\in \mathcal{F}(M)$ and $\beta = \pi_k^{*}\alpha$ we have $\partial_i^{*}\beta = \partial_i^{*}\pi_k^{*}\alpha = \pi_{k+1}^{*}\alpha$. Now the claim is proved by counting the number of terms in the alternating sum. 
\end{proof}

Applying Lemma \ref{lem:coboundary} to the presheaf $\mathcal{F}=\h^p(-,\Z)$, we obtain the following.

\begin{corollary}
\label{co:delta}
Let $\pi:P \to M$ be a principal $G$-bundle over $M$, for $G$ a compact, simple, simply-connected Lie group. Then, for $p=0,1,2$, 
\begin{equation*}
\Delta: \h^p(P^{[k]},\Z) \to \h^p(P^{[k+1]},\Z)
\end{equation*}
is an isomorphism if $k$ is even, and the zero map if $k$ is odd. 
\end{corollary}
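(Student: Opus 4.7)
The plan is to combine the two preceding results directly: Lemma \ref{lem:serre} trivializes the cohomology of each fibre product, and Lemma \ref{lem:coboundary} computes what the \v Cech coboundary does to pullbacks from the base. The two fit together because, according to Lemma \ref{lem:serre}, every class in $\h^p(P^{[k]},\Z)$ ($p=0,1,2$) is pulled back uniquely from $\h^p(M,\Z)$.

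More explicitly, I would first instantiate Lemma \ref{lem:coboundary} with the presheaf $\mathcal{F} := \h^p(-,\Z)$, obtaining a commutative diagram
\begin{equation*}
\alxydim{@=\xypicst}{\h^p(M,\Z) \ar[r]^-{\pi_k^{*}} \ar[d]_-{\mathrm{id}\text{ or }0} & \h^p(P^{[k]},\Z) \ar[d]^-{\Delta} \\ \h^p(M,\Z) \ar[r]_-{\pi_{k+1}^{*}} & \h^p(P^{[k+1]},\Z)}
\end{equation*}
in which the left vertical arrow is the identity when $k$ is even and zero when $k$ is odd. Next I would invoke Lemma \ref{lem:serre}, which tells us that for $p=0,1,2$ the horizontal pullback maps $\pi_k^{*}$ and $\pi_{k+1}^{*}$ are isomorphisms.

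For $k$ even, the diagram then identifies $\Delta$ with the identity on $\h^p(M,\Z)$ via the two isomorphisms $\pi_k^{*}$ and $\pi_{k+1}^{*}$, so $\Delta$ is an isomorphism. For $k$ odd, surjectivity of $\pi_k^{*}$ combined with $\Delta \circ \pi_k^{*}=0$ forces $\Delta=0$. Neither step involves any real obstacle; the only conceptual point is to verify that the \v Cech coboundary $\Delta$ indeed takes the alternating form to which Lemma \ref{lem:coboundary} applies, which is immediate from the definitions $\Delta = \sum_{i}(-1)^{i}\partial_i^{*}$.
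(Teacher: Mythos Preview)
Your proposal is correct and follows exactly the paper's approach: the paper's proof is the single sentence ``Applying Lemma \ref{lem:coboundary} to the presheaf $\mathcal{F}=\h^p(-,\Z)$, we obtain the following,'' with Lemma \ref{lem:serre} implicitly supplying the isomorphisms $\pi_k^{*}$. You have simply spelled out the obvious details.
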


Now we  finish the proof of Theorem \ref{th:truncation} with the following two lemmata.

\begin{lemma}
\label{lem:t2}
The map $\mathrm{t}_2: \hc 0 \stst1M \to \stst 0M$
is a bijection. 
\end{lemma}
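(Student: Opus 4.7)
The strategy is a cohomological obstruction analysis for the existence of the transformation $\sigma$ in diagram \erf{eq:diagfillsigma}, combined with the Serre spectral sequence calculation recorded in Corollary \ref{co:delta}, applied to the principal $\spin n$-bundle $\pi\maps FM \to M$ and its iterated fibre products. The key input will be that the \v Cech coboundary
\begin{equation*}
\Delta\maps \h^2(FM^{[2]},\Z) \to \h^2(FM^{[3]},\Z)
\end{equation*}
is an isomorphism (since the index $k=2$ is even), while  $\Delta\maps \h^2(FM,\Z) \to \h^2(FM^{[2]},\Z)$ vanishes. Throughout, I will use that for any two bundle gerbes over a manifold $X$, the set of 2-isomorphism classes of 1-morphisms between them (if non-empty) is a torsor over $\h^2(X,\Z)$ via tensoring with principal $\ueins$-bundles.

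For surjectivity, I would start with a representative $\mathcal{S}$ of $[\mathcal{S}]\in \stst 0M$ and pick any isomorphism $\mathcal{A}_0\maps \pr_2^{*}\mathcal{S} \otimes \mathcal{P} \to \pr_1^{*}\mathcal{S}$ (which exists by definition of $\stst 0M$). The obstruction for the transformation $\sigma$ to exist is  the class $K_0 \in \h^2(FM^{[3]},\Z)$ of the  $\ueins$-bundle measuring the difference between the two composite 1-morphisms in \erf{eq:diagfillsigma}. A direct calculation shows that replacing $\mathcal{A}_0$ by $\mathcal{A}_0\otimes L$ for a $\ueins$-bundle $L$ over $FM^{[2]}$ shifts the upper composite by $\pr_{12}^{*}[L]+\pr_{23}^{*}[L]$ and the lower one by $\pr_{13}^{*}[L]$, so $K_0$ changes by $\pm\Delta [L]$. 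Since $\Delta$ is an isomorphism in the relevant degree, I can choose $L$ with $\Delta[L]=-K_0$; the pair $(\mathcal{S},[\mathcal{A}_0\otimes L])$ is then an object of $\stst 1M$ lying over $[\mathcal{S}]$.

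For injectivity, suppose $[\mathcal{S}_1]=[\mathcal{S}_2]$ in $\stst 0M$, realised by an isomorphism $\mathcal{B}\maps \mathcal{S}_1 \to \mathcal{S}_2$. Transport $\mathcal{A}_1$ along $\mathcal{B}$ to obtain $\mathcal{A}_1'\df \pr_1^{*}\mathcal{B} \circ \mathcal{A}_1\circ (\pr_2^{*}\mathcal{B}^{-1}\otimes \id)$, so that $[\mathcal{B}]$ gives an isomorphism $(\mathcal{S}_1,[\mathcal{A}_1]) \cong (\mathcal{S}_2,[\mathcal{A}_1'])$ in $\stst 1M$ with the identity transformation as $\beta$. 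It remains to prove $[\mathcal{A}_1']=[\mathcal{A}_2]$; let $[L]\in \h^2(FM^{[2]},\Z)$ be their difference. Since both $[\mathcal{A}_1']$ and $[\mathcal{A}_2]$ lie in $\stst 1M$, both obstructions vanish, so the shift analysis above forces $\Delta[L]=0$. The injectivity of $\Delta$ gives $[L]=0$, hence $[\mathcal{A}_1']=[\mathcal{A}_2]$, and $[\mathcal{B}]$ is already the required isomorphism in $\stst 1M$.

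The main obstacle I expect is purely bookkeeping: matching the torsor actions correctly and verifying that the obstruction class really transforms by $\Delta[L]$ under $\mathcal{A}\mapsto \mathcal{A}\otimes L$. The substantive input is entirely Corollary \ref{co:delta}, which packages the Serre spectral sequence computation into the single statement that $\Delta$ is an isomorphism in the even degree and vanishes in the odd degree; once this is in hand, both directions follow by an obstruction chase.
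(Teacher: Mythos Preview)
Your proof is correct and the injectivity half is essentially identical to the paper's: both identify the difference class $[L]\in\h^2(FM^{[2]},\Z)$ between the two candidate isomorphisms, observe that the existence of $\sigma_1$ and $\sigma_2$ forces $\Delta[L]=0$, and conclude $[L]=0$ from the injectivity of $\Delta$ in Corollary~\ref{co:delta}. The paper spells out the pasting diagram explicitly while you phrase it as an obstruction shift, but the content is the same.

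The surjectivity halves differ. The paper does not argue directly: it invokes Lemma~\ref{lem:t3bijection}, which shows that the composite $\mathrm{t}_3\circ\mathrm{t}_2\circ\hc 0\mathrm{t}_1$ is a bijection (an equivariant map between $\h^3(M,\Z)$-torsors $\hc 0\stst\relax M$ and $\sc(M)$), and deduces surjectivity of $\mathrm{t}_2$ from that. Your argument instead kills the obstruction $K_0\in\h^2(FM^{[3]},\Z)$ by twisting $\mathcal{A}_0$ with an $L$ chosen via the \emph{surjectivity} of $\Delta$ in Corollary~\ref{co:delta}. This is more self-contained---it needs neither the string-class machinery nor the bicategory-level torsor result of Lemma~\ref{lem:torsor}---and in fact the paper sketches exactly this alternative in a remark following its proof. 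The paper's route is shorter given what has already been established, while yours makes the role of Corollary~\ref{co:delta} fully symmetric between the two directions. One small point you leave implicit: for the injectivity argument you should note that $(\mathcal{S}_2,[\mathcal{A}_1'])$ is indeed an object of $\stst 1M$, i.e.\ that the transported $\mathcal{A}_1'$ admits its own $\sigma$; this follows by conjugating $\sigma_1$ with $\mathcal{B}$.
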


\begin{proof}
By Lemma \ref{lem:t3bijection} it is surjective; hence it remains to prove injectivity. 
Suppose $(\mathcal{S}_1,[\mathcal{A}_1])$ and $(\mathcal{S}_2,[\mathcal{A}_2])$ are objects in $\stst 1M$ such that there images under $\mathrm{t}_2$ are equal, i.e. there exists an isomorphism $\mathcal{B}: \mathcal{S}_1 \to \mathcal{S}_2$. We have to construct a transformation 
\begin{equation*}
\beta: \pr_1^{*}\mathcal{B} \circ \mathcal{A}_1 \Rightarrow \mathcal{A}_2 \circ (\pr_2^{*}\mathcal{B} \otimes \id)
\end{equation*}
over $Y^{[2]}$, see \erf{eq:beta}, so that $\mathbb{B}=(\mathcal{B},\beta)$ is an isomorphism in $\stst 1M$. A priori, the two isomorphisms $\pr_1^{*}\mathcal{B} \circ \mathcal{A}_1$ and $\mathcal{A}_2 \circ (\pr_2^{*}\mathcal{B} \otimes\id)$ are not 2-isomorphic. We recall that the category $\hom(\mathcal{G},\mathcal{H})$ of isomorphisms between two fixed bundle gerbes $\mathcal{G}$ and $\mathcal{H}$ over an arbitrary smooth manifold $X$ is a torsor over the monoidal category $\ubun X$ of principal  $\ueins$-bundles over $X$, under an action functor
\begin{equation}
\label{eq:actionbundles}
\ubun{X} \times \hom(\mathcal{G},\mathcal{H}) \to \hom(\mathcal{G},\mathcal{H}): (B,\mathcal{A}) \mapsto B \otimes \mathcal{A}\text{,}
\end{equation}
see \cite{carey2}. In our situation, this means that there exists a principal $\ueins$-bundle  $B$ over $Y^{[2]}$ such that
with $\mathcal{A}_2' :=B \otimes  \mathcal{A}_2$ we do have a transformation
\begin{equation*}
\beta: \pr_1^{*}\mathcal{B} \circ \mathcal{A}_1 \Rightarrow \mathcal{A}_2' \circ (\pr_2^{*}\mathcal{B} \otimes \id)\text{.}
\end{equation*}
The bundle $B$  has a first Chern class $c_1(B) \in \h^2(Y^{[2]},\Z)$. We claim that $c_1(B)=0$, meaning that $\mathcal{A}_2' \cong \mathcal{A}_2$ and $\beta$ is the claimed transformation; thus,  
$\mathrm{t}_2$ is injective. 

Indeed, since  $(\mathcal{S}_1,[\mathcal{A}_1])$ and $(\mathcal{S}_2,[\mathcal{A}_2])$ are objects in $\stst 1M =\trivtrunc{\mathbb{G}}$, there exist transformations $\sigma_1$, $\sigma_2$ making diagram \erf{eq:diagfillsigma}  commutative. One can paste together $\sigma_1$ and $\beta$ and produce the following transformation:
\begin{equation*}
\alxydim{@C=0.7cm@R=1.3cm}{\pr_3^{*}\mathcal{S}_2 \otimes \pr_{23}^{*}\mathcal{P} \otimes \pr_{12}^{*}\mathcal{P} \ar[rrrr]^-{\pr_{23}^{*}\mathcal{A}_2' \otimes \id} \ar[ddd]_{\id \otimes \mathcal{M}}  &&&& \pr_2^{*}\mathcal{S}_2  \otimes \pr_{12}^{*}\mathcal{P}   \ar[ddd]^{\pr_{12}^{*}\mathcal{A}_2'} \\ & \hspace{-2cm}\pr_3^{*}\mathcal{S}_1 \otimes  \pr_{23}^{*}\mathcal{P} \otimes \pr_{12}^{*}\mathcal{P}  \ar@{=>}@/_1pc/[ddl]_{\id} \ar@{=>}@/^1pc/[urrr]_{\id \otimes \pr_{23}^{*}\beta} \ar[ul]_{\pr_3^{*}\mathcal{B} \otimes \id \otimes \id} \ar[rr]^-{\pr_{23}^{*}\mathcal{A}_1 \otimes \id} \ar[d]_{\mathcal{M} \otimes \id} && \pr_2^{*}\mathcal{S}_1 \otimes \pr_{12}^{*}\mathcal{P}  \ar@{=>}@/^1pc/[ddr]^{\pr_{12}^{*}\beta} \ar@{=>}[dll]|{\sigma_1} \ar[ur]_-{\pr_2^{*}\mathcal{B} \otimes \id} \ar[d]^{\pr_{12}^{*}\mathcal{A}_1} \\ & \pr_3^{*}\mathcal{S}_1  \otimes  \pr_{13}^{*}\mathcal{P} \ar@{=>}@/_/[drrr]_{\pr_{13}^{*}\beta} \ar[dl]^{\pr_3^{*}\mathcal{B} \otimes \id} \ar[rr]_-{\pr_{13}^{*}\mathcal{A}_1} && \pr_1^{*}\mathcal{S}_1 \ar[rd]_{\pr_1^{*}\mathcal{B}} \\ \pr_3^{*}\mathcal{S}_2 \otimes \pr_{13}^{*}\mathcal{P}  \ar[rrrr]_{\pr_{13}^{*}\mathcal{A}_2'} &&&& \pr_{1}^{*}\mathcal{S}_2 }
\end{equation*}
Using the relation $\mathcal{A}_2' = B \otimes  \mathcal{A}_2$ and using that the action \erf{eq:actionbundles} commutes with composition, this transformation induces another transformation
\begin{equation*}
(\pr_{12}^{*}\mathcal{A}_2 \circ (\pr_{23}^{*}\mathcal{A}_2 \otimes \id)) \otimes (\pr_{12}^{*}B \otimes \pr_{23}^{*}B \otimes \pr_{13}^{*}B^{\vee}) \Rightarrow \pr_{13}^{*}\mathcal{A}_2 \circ (\id \otimes \mathcal{M} ) 
\end{equation*}
with $B^{\vee}$ the dual bundle. On the other hand, $\sigma_2$ is a transformation
\begin{equation*}
\sigma_2:\pr_{12}^{*}\mathcal{A}_2 \circ (\pr_{23}^{*}\mathcal{A}_2 \otimes \id) \Rightarrow \pr_{13}^{*}\mathcal{A}_2 \circ(\id \otimes \mathcal{M} ) 
\end{equation*}
It follows that $\pr_{12}^{*}B \otimes \pr_{23}^{*}B \otimes \pr_{13}^{*}B^{\vee}$ must be trivializable, i.e. $\Delta c_1(B)=0$.
By Corollary \ref{co:delta} this means $c_1(B)=0$. 
\end{proof}

\begin{lemma}
\label{lem:t1equiv}
The functor $\mathrm{t}_1$ is an equivalence of categories.
\end{lemma}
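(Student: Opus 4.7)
The plan is to establish essential surjectivity, fullness, and faithfulness of $\mathrm{t}_1$. Essential surjectivity is already in hand: Lemma \ref{lem:t2} gives that $\mathrm{t}_2$ is bijective, and Lemma \ref{lem:t3bijection} gives that $\mathrm{t}_2 \circ \hc 0 \mathrm{t}_1$ is bijective, so $\hc 0 \mathrm{t}_1$ is itself bijective, which is precisely essential surjectivity for $\mathrm{t}_1$. The claims about equivariance and the torsor structure on $\stst1 M$ then follow formally from the equivariance of $\mathrm{t}_1$ and from $\hc 1\stst\relax M$ being a $\hc 1 \ugrb M$-torsor (Lemma \ref{lem:torsor}).

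For fullness, given trivializations $\mathbb{T}_i = (\mathcal{S}_i,\mathcal{A}_i,\sigma_i)$ and a morphism $[\mathcal{B}]\maps \mathrm{t}_1(\mathbb{T}_1)\to\mathrm{t}_1(\mathbb{T}_2)$ in $\stst1 M$, I would fix a representative $\mathcal{B}$ and pick some transformation $\beta$ as in \erf{eq:beta}, whose existence is built into the definition of $\stst1 M$. The pair $(\mathcal{B},\beta)$ lifts $[\mathcal{B}]$ to a 1-morphism in $\stst\relax M$ provided $\beta$ satisfies the pentagon of Figure \ref{compmorph}. Its failure to do so is a transformation between two identical 1-morphisms of bundle gerbes over $FM^{[3]}$, hence a smooth function $f\maps FM^{[3]}\to\ueins$. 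For fixed $\mathcal{B}$, shifting $\beta$ by $g\in C^{\infty}(FM^{[2]},\ueins)$ changes $f$ by the Čech coboundary $\Delta g$, and combining the associator coherence of Figure \ref{compass} for both $\sigma_i$ with the definition of $\mathcal{M}$'s pentagon gives $\Delta f=1$ over $FM^{[4]}$. Fullness thus reduces to the exactness of $\Delta$ on smooth $\ueins$-valued functions at $FM^{[3]}$.

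For faithfulness I would take two lifts $\mathbb{B}_i=(\mathcal{B}_i,\beta_i)$ of the same $[\mathcal{B}]$. An arbitrary 2-isomorphism $\varphi_0\maps \mathcal{B}_1\Rightarrow\mathcal{B}_2$ exists by assumption; the failure of \erf{eq:2morphtriv} for $\varphi_0$ is a smooth function $h\maps FM^{[2]}\to\ueins$. Since the set of 2-isomorphisms $\mathcal{B}_1\Rightarrow\mathcal{B}_2$ is a torsor over $C^{\infty}(FM,\ueins)$, shifting $\varphi_0$ by $g$ shifts $h$ by $\Delta g$. Pulling the commutativity of \erf{eq:2morphtriv} to $FM^{[3]}$ and using the pentagons of Figure \ref{compmorph} satisfied by $\beta_1$ and $\beta_2$ forces $\Delta h=1$. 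So faithfulness reduces to the analogous exactness one step down.

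The main obstacle will be upgrading Corollary \ref{co:delta} from integer cohomology to smooth $\ueins$-valued functions, i.e. showing that every $\Delta$-closed element of $C^{\infty}(FM^{[3]},\ueins)$, respectively $C^{\infty}(FM^{[2]},\ueins)$, is a coboundary. I expect this to follow by combining Lemma \ref{lem:serre} for the coefficients $A=\Z, \R, \ueins$ with the exponential sequence, using the acyclicity of the fine sheaf $C^{\infty}(-,\R)$ on manifolds: this lets the Čech complex of smooth $\ueins$-valued functions along the submersion $FM\to M$ compute the same thing as the topological one in the low degrees covered by Lemma \ref{lem:serre}, and Lemma \ref{lem:coboundary} then makes $\Delta$ an isomorphism on $FM^{[2]}\to FM^{[3]}$ (even to odd) and the zero map $FM^{[3]}\to FM^{[4]}$ (odd to even), exactly the pattern needed to kill $f$ and $h$.
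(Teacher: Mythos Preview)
Your proposal is correct but takes a genuinely different route from the paper. The paper does not lift and correct obstructions at all; instead it observes that both Hom-sets carry an action of $\hc 0 \ubun M$ (coming from the $\hc 1\ugrb M$-module structure), that the source Hom-set is already a torsor by Lemma~\ref{lem:torsor}, and that the target Hom-set is a torsor because $\pi^{*}\maps\h^{2}(M,\Z)\to\h^{2}(FM,\Z)$ is an isomorphism (Lemma~\ref{lem:serre} for $p=2$). An equivariant map of torsors is a bijection, so fullness and faithfulness come for free. This only touches Lemma~\ref{lem:serre} in a single degree with $\Z$ coefficients.

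Your approach instead constructs the lifts explicitly and kills the resulting obstructions $f\in C^{\infty}(FM^{[3]},\ueins)$ and $h\in C^{\infty}(FM^{[2]},\ueins)$. For this you need the \v Cech complex of \emph{smooth} $\ueins$-valued functions along $FM\to M$ to be exact in degrees $1$ and $2$, which is a stronger statement than Corollary~\ref{co:delta} (that corollary is about locally constant functions only). Your sketch of how to get there is right in spirit but a bit compressed: the cleanest way is to note that the \v Cech complex of $C^{\infty}(-,\R)$ along any surjective submersion is acyclic in positive degrees (partition-of-unity contracting homotopy), that the \v Cech complex of locally constant $\Z$-valued functions and of $H^{1}(-,\Z)$ are acyclic in positive degrees by Lemma~\ref{lem:serre} and Lemma~\ref{lem:coboundary}, and then to run the long exact sequence coming from the four-term exact sequence $0\to\Z\to C^{\infty}(-,\R)\to C^{\infty}(-,\ueins)\to H^{1}(-,\Z)\to 0$ on each $FM^{[k]}$. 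The conclusion $\check H^{p}(FM^{\bullet},C^{\infty}(\ueins))=0$ for $p\geq 1$ then drops out. So your plan works, it is just more labor-intensive than the paper's torsor trick; on the other hand, it actually produces the lifted morphism rather than merely asserting its existence.
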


\begin{proof}
We know already that
 $\hc 0 \mathrm{t}_1$ is a bijection. This implies that  $\mathrm{t}_1$ is essentially surjective, and it implies that for two objects $\mathbb{T}_1=(\mathcal{S}_1,\mathcal{A}_1,\sigma_1)$ and $\mathbb{T}_2=(\mathcal{S}_2,\mathcal{A}_2,\sigma_2)$ the Hom-sets $\hom_{\hc 1 \triv{\mathbb{CS}_M}}(\mathbb{T}_1,\mathbb{T}_2)$ and $\hom_{\trivtrunc{\mathbb{CS}_M}}(\mathrm{t}_1(\mathbb{T}_1),\mathrm{t}_1(\mathbb{T}_2))$ are either both empty or both non-empty. It remains to show that $\mathrm{t}_1$ induces in the non-empty case a bijection between these sets. We have already seen that $\mathrm{t}_1$ is equivariant with respect to the $\hc 1 \ugrb M$-action, namely the one induced from \erf{eq:action}, and the action \erf{eq:actiontrunc}. Over the objects $(\mathcal{I},\mathcal{I})$ and $(\mathbb{T}_1,\mathbb{T}_2)$ these induce actions of the group 
\begin{equation*}
\hom_{\hc 1 \ugrb M}(\mathcal{I},\mathcal{I}) = \hc 0 \hom_{\ugrb M}(\mathcal{I},\mathcal{I})\stackerf{eq:actionbundles}{=}\hc 0 \ubun M
\end{equation*}
on the sets $\hom_{\hc 1 \triv{\mathbb{CS}_M}}(\mathbb{T}_1,\mathbb{T}_2)$ and $\hom_{\trivtrunc{\mathbb{CS}_M}}(\mathrm{t}_1(\mathbb{T}_1),\mathrm{t}_1(\mathbb{T}_2))$, respectively. By Lemma \ref{lem:torsor}, the first set is even a torsor under this action. We prove that the second is also a torsor, so that $\mathrm{t}_1$ is an equivariant map between torsors, hence a bijection.

We recall that the elements of $\hom_{\trivtrunc{\mathbb{CS}_M}}(\mathrm{t}_1(\mathbb{T}_1),\mathrm{t}_1(\mathbb{T}_2))$ are equivalence classes $[\mathcal{B}]$ of  isomorphisms $\mathcal{B}: \mathcal{S}_1 \to \mathcal{S}_2$ such that there exists a  transformation $\beta$ as in \erf{eq:beta}.
The action of $[K]\in\hc 0 \ubun M$ sends $[\mathcal{B}]$ to $[\pi^{*}K \otimes \mathcal{B}]$. This action is free  because $\pi^{*}: \h^2(M,\Z) \to \h^2(FM,\Z)$ is an isomorphism by Lemma \ref{lem:serre}.  If $\mathcal{B}$ and $\mathcal{B}'$ are both isomorphisms from $\mathcal{S}_1$ to $\mathcal{S}_2$, then by \erf{eq:actionbundles} there exists a principal $\ueins$-bundle $P$ on $FM$ such that $\mathcal{B}'\cong \mathcal{B} \otimes P$. But again, since $\pi^{*}$ is an isomorphism, $P \cong \pi^{*}K$ for some $K$ in $\ubun M$. Hence the action is transitive. 
\end{proof}

\setsecnumdepth{2}

\section{String connections and decategorification}
\label{sec:geomstringstructures}

This section is the  analogue of  Section \ref{sec:stringstructures} in the setting with connections. We first recall the definition of string connections and geometric string structures on the basis of \cite{waldorf8}.
Geometric string structures form a bicategory, of which we discuss various decategorified versions.
At the end of a sequence of decategorification we naturally find the notion of a differential string class.

\subsection{String connections as connections on trivializations}

\label{sec:geometricstringstructures}

The Levi-Cevita connection on $M$ lifts to a spin connection $A$ on $FM$; in turn it defines a connection on the Chern-Simons 2-gerbe $\mathbb{CS}_M$ \cite[Theorem 1.2.1]{waldorf8}. In the following we recall this construction.
We suppose first that $\mathbb{G}$ is a bundle 2-gerbe over a smooth manifold $M$ as in Definition \ref{def:bundle2gerbe}. 
\begin{comment}
That is, it has a surjective submersion $\pi:Y \to M$, over $Y^{[2]}$ a bundle gerbe $\mathcal{P}$ and over $Y^{[3]}$ a bundle 2-gerbe product $\mathcal{M}$ with associator $\mu$.
\end{comment}

\begin{definition}
\label{def:connection2gerbe}
A \emph{connection} on  $\mathbb{G}$  is a 3-form $C \in \Omega^3(Y)$ and  a connection on the bundle gerbe $\mathcal{P}$ such that
\begin{equation}
\label{eq:curving2gerbe}
\pr_2^{*}C - \pr_1^{*}C = \mathrm{curv}(\mathcal{P})\text{,}
\end{equation}
and the bundle 2-gerbe product $\mathcal{M}$ as well as the associator $\mu$ are connection-preserving.
\end{definition}

The 3-form $C$ is called the \emph{curving} of the connection.
In case of the Chern-Simons 2-gerbe, the announced connection is constructed using the connection $A$ on $FM$ and the connection on the basic bundle gerbe $\gbas$ described in Section \ref{sec:centralextension}:
\begin{itemize}

\item 
The curving is the Chern-Simons 3-form $CS(A) \in \Omega^3(FM)$ associated to $A$,
\begin{equation}
\label{eq:CSA}
CS(A)=\left \langle  A\wedge \mathrm{d}A \right \rangle + \frac{1}{3} \left \langle  A\wedge [A\wedge A] \right \rangle\text{,}
\end{equation} 
where $\left \langle -,-  \right \rangle$ is the same symmetric invariant bilinear form on the Lie algebra $\mathfrak{g}$ of $\spin n$ that was used to fix the curvature of the basic gerbe $\gbas$ in Section \ref{sec:centralextension}.

\item
The connection on the bundle gerbe $\mathcal{P}=\delta^{*}\gbas$ over $FM^{[2]}$ is given by the connection on $\delta^{*}\mathcal{G}_{bas}$ shifted by the 2-form
\begin{equation}
\label{eq:CSomega}
\omega :=  \left \langle   \delta^{*} \theta \wedge \pr_1^{*}A  \right \rangle \in
\Omega^2(FM^{[2]})\text{,}
\end{equation}
i.e. we have $\mathcal{P} = \delta^{*}\mathcal{G}_{bas} \otimes \mathcal{I}_{\omega}$ as bundle gerbes with connection. The well-known identity
\begin{equation}
\label{eq:csarule}
CS(\pr_2^{*}A) = CS(\pr_1^{*}A) + \delta^{*}H+\mathrm{d}\omega
\end{equation} 
for the Chern-Simons 3-form implies  the condition  \erf{eq:curving2gerbe} for the curving.

\item
We recall that the isomorphism
\begin{equation*}
\mathcal{M}: \pr_1^{*}\mathcal{G} \otimes \pr_2^{*}\mathcal{G} \to m^{*}\mathcal{G} \otimes \mathcal{I}_{\rho}
\end{equation*}
from the multiplicative structure on $\gbas$ is connection-preserving. Under pullback with $\delta_2: FM^{[3]} \to \spin n^2$, the 2-form $\rho$ satisfies
\begin{equation}
\label{eq:identityrhoomega}
\pr_{13}^{*}\omega=\delta_2^{*}\rho + \pr_{12}^{*} \omega  + \pr_{23}^{*}\omega\text{.}
\end{equation}
\begin{comment}
This identity is derived as follows. First we recall that
\begin{equation*}
\pr_{13}^{*}\delta^{*}\theta = \delta_2^{*}m^{*}\theta = \delta_2^{*}(\mathrm{Ad}_{\pr_2}^{-1}(\pr_1^{*}\theta)+\pr_2^{*}\theta)=\mathrm{Ad}^{-1}_{\pr_2 \circ \delta_2}(\delta_2^{*}\pr_1^{*}\theta)+\delta_2^{*}\pr_2^{*}\theta\text{.}
\end{equation*}
Then we get
\begin{eqnarray*}
&&\hspace{-1cm}\pr_{12}^{*} \omega - \pr_{13}^{*}\omega + \pr_{23}^{*}\omega 
\\&=& 
 \left \langle  \pr_{12}^{*} \delta^{*} \theta \wedge \pr_1^{*}A  \right \rangle -  \left \langle  \pr_{13}^{*} \delta^{*} \theta \wedge \pr_1^{*}A  \right \rangle+ \left \langle  \pr_{23}^{*} \delta^{*} \theta \wedge \pr_2^{*}A  \right \rangle
\\ &=&  
\left \langle  \pr_{12}^{*} \delta^{*} \theta -  \pr_{13}^{*} \delta^{*} \theta \wedge \pr_1^{*}A  \right \rangle+ \left \langle  \pr_{23}^{*} \delta^{*} \theta \wedge \mathrm{Ad}_{\delta \circ \pr_{12}}(\pr_1^{*}A)-\pr_{12}^{*}\delta^{*}\bar\theta  \right \rangle
\\ &=&  
\left \langle  -\mathrm{Ad}_{\pr_2\circ \delta_2}^{-1}(\delta_2^{*}\pr_1^{*}\theta) \wedge \pr_1^{*}A  \right \rangle+ \left \langle  \pr_{23}^{*} \delta^{*} \theta \wedge \mathrm{Ad}_{\pr_2\circ \delta_2}(\pr_1^{*}A)-\pr_{12}^{*}\delta^{*}\bar\theta  \right \rangle
 \\&=&  - \left \langle  \pr_{23}^{*}\delta^{*}\theta \wedge \pr_{12}^{*}\delta^{*}\bar\theta  \right \rangle
 \\&    =&  -\delta_2^{*} \left \langle  \pr_{1}^{*}\theta \wedge \pr_{2}^{*}\bar\theta  \right \rangle
\end{eqnarray*}
\end{comment}
This permits to define a connection-preserving bundle 2-gerbe product $\mathcal{M}'$ by
\begin{equation*}
\alxydim{@C=1.3cm@R=1.1cm}{\pr_{23}^{*}\mathcal{P} \otimes \pr_{12}^{*}\mathcal{P}= \delta_2^{*}(\pr_{1}^{*}\mathcal{G} \otimes \pr_2^{*}\mathcal{G})
\otimes \mathcal{I}_{\pr_{13}^{*}\omega-\delta_2^{*}\rho} \ar[r]^-{\delta_2^{*}\mathcal{M} \otimes
\id}  & \delta_2^{*}m^{*}\mathcal{G} \otimes \mathcal{I}_{\pr_{13}^{*}\omega}
= \pr_{13}^{*}\mathcal{P}\text{.}}
\end{equation*}

\item
The connection-preserving transformation $\alpha$ from the multiplicative structure on $\gbas$ gives a connection-preserving associator. 

\end{itemize}

If a bundle 2-gerbe $\mathbb{G}$ is equipped with a connection, and $\mathbb{T}=(\mathcal{S},\mathcal{A},\sigma)$ is a trivialization of $\mathbb{G}$, then a \emph{compatible connection} on $\mathbb{T}$ is a connection on the bundle gerbe $\mathcal{S}$ such that the isomorphism $\mathcal{A}$ and the transformation $\sigma$ are connection-preserving.

\begin{definition}[{{\cite[Definition 1.2.2]{waldorf8}}}]
Let $\mathbb{T}$ be a string structure on $M$. A \emph{string connection} on $\mathbb{T}$ is a compatible connection on $\mathbb{T}$. A \emph{geometric string structure} on $M$ is a pair of a string structure on $M$ and a string connection.
\end{definition}

Geometric string structures form a bicategory $\ststcon \relax M:=\trivcon{\mathbb{CS}_M}$, with  1-morphisms and 2-morphisms defined exactly as in the setting without connections, just that all occurring isomorphisms and transformations are  connection-preserving \cite[Remark 6.1.1]{waldorf8}. 
We recall the following results about string connections  for later reference. 

\begin{theorem}[{{\cite[Theorems 1.3.4 and 1.3.3]{waldorf8}}}]
\label{th:stringconnections}
\begin{enumerate}[(i)]

\item 
\label{th:stringconnections:exist}
Every string structure admits a string connection.

\item
\label{th:stringconnections:3form}
For every geometric string structure $\mathbb{T}=(\mathcal{S},\mathcal{A},\sigma)$, there exists a unique 3-form $K\in\Omega^3(M)$ such that $\pi^{*}K=CS(A) + \mathrm{curv}(\mathcal{S})$. 

\end{enumerate}
\end{theorem}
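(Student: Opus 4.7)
I would handle the two parts in reverse order, since (ii) is essentially a one-step descent argument that relies only on formulas already in the text, while (i) requires genuine obstruction theory.

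For part (ii), uniqueness is immediate: $\pi\maps FM \to M$ is a surjective submersion, so $\pi^{*}:\Omega^3(M) \to \Omega^3(FM)$ is injective. For existence, I would invoke the standard descent criterion: a 3-form $\eta$ on $FM$ is of the form $\pi^{*}K$ for some (unique) $K\in\Omega^3(M)$ if and only if $\pr_1^{*}\eta=\pr_2^{*}\eta$ in $\Omega^3(FM^{[2]})$. Thus I would show that $\eta := CS(A) + \mathrm{curv}(\mathcal{S})$ satisfies this equality. From identity \erf{eq:csarule} one has
\begin{equation*}
\pr_1^{*}CS(A)-\pr_2^{*}CS(A) = -(\delta^{*}H + \mathrm{d}\omega)\text{.}
\end{equation*}
On the other hand, since $\mathcal{A}\maps \pr_2^{*}\mathcal{S}\otimes\mathcal{P}\to\pr_1^{*}\mathcal{S}$ is connection-preserving (this is part of the definition of a string connection) and since $\mathcal{P}=\delta^{*}\gbas\otimes\mathcal{I}_{\omega}$ has curvature $\delta^{*}H+\mathrm{d}\omega$, one finds
\begin{equation*}
\pr_1^{*}\mathrm{curv}(\mathcal{S}) - \pr_2^{*}\mathrm{curv}(\mathcal{S}) = \mathrm{curv}(\mathcal{P}) = \delta^{*}H+\mathrm{d}\omega\text{.}
\end{equation*}
The two contributions cancel exactly, so $\eta$ descends to the desired $K$.

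For part (i), I would use an obstruction-theoretic argument. The forgetful 2-functor \erf{eq:forgetconnections} is essentially surjective, so the bundle gerbe $\mathcal{S}$ underlying the given string structure $\mathbb{T}=(\mathcal{S},\mathcal{A},\sigma)$ admits \emph{some} connection $\nabla_0$. With respect to $\nabla_0$, the isomorphism $\mathcal{A}$ is in general not connection-preserving, but since isomorphisms between two bundle gerbes with connection form a torsor over $\ubuncon{FM^{[2]}}$ whereas isomorphisms without compatibility form a torsor over $\ubun{FM^{[2]}}$, the defect is encoded by a 2-form $\eta_{\mathcal{A}}\in\Omega^2(FM^{[2]})$. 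The strategy is to shift the curving of $\nabla_0$ by a 2-form $\beta\in\Omega^2(FM)$ in such a way that $\eta_{\mathcal{A}}$ becomes $\Delta\beta$; after this shift, $\mathcal{A}$ can be upgraded to a connection-preserving isomorphism. One then repeats the same analysis for $\sigma$, adjusting the connection on the underlying $\ueins$-bundle of $\mathcal{S}$ by a 1-form, to make $\sigma$ connection-preserving.

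The main obstacle is to show that the obstruction 2-form $\eta_{\mathcal{A}}$ is exact in the simplicial de Rham sense, i.e.\ that it lies in the image of $\Delta\maps \Omega^2(FM) \to \Omega^2(FM^{[2]})$. The compatibility of $\mathcal{A}$ with $\sigma$ via diagram \erf{eq:diagfillsigma} forces $\eta_{\mathcal{A}}$ to satisfy $\Delta\eta_{\mathcal{A}}=0$ on $FM^{[3]}$ (modulo terms coming from $\rho$, which are themselves closed under $\Delta$ by \erf{eq:rhococycle}). A simplicial Poincar\'e-lemma-type argument, using the fact that $FM\to M$ is a surjective submersion with contractible fibres for the simplicial de Rham complex, then solves the equation $\eta_{\mathcal{A}}=\Delta\beta$. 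The final step is to verify that after these two successive adjustments the obstruction to $\sigma$ being connection-preserving is already a coboundary and can be absorbed, completing the construction of a string connection. I expect the bookkeeping of the 1-form versus 2-form adjustments, and checking that the compatibility with the connection-preserving associator of $\mathbb{CS}_M$ is preserved throughout, to be the hardest part.
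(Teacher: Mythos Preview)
The paper does not actually prove this theorem: it is quoted verbatim from \cite{waldorf8} (Theorems~1.3.3 and~1.3.4) and stated only ``for later reference'', so there is no in-paper proof to compare against. I therefore assess your proposal on its own merits.

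Your argument for (ii) is correct and is exactly the standard descent computation. The identity \erf{eq:csarule} and the defining property \erf{eq:curving2gerbe} of the curving give $\pr_2^{*}CS(A)-\pr_1^{*}CS(A)=\mathrm{curv}(\mathcal{P})$, while the connection-preservingness of $\mathcal{A}$ yields $\pr_1^{*}\mathrm{curv}(\mathcal{S})-\pr_2^{*}\mathrm{curv}(\mathcal{S})=\mathrm{curv}(\mathcal{P})$; these cancel.

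Your sketch for (i) is in the right spirit but has two genuine soft spots. First, the fibres of $FM\to M$ are $\spin n$, which is \emph{not} contractible; the relevant fact is that $\spin n$ is $2$-connected, so its de Rham cohomology vanishes in degrees $1$ and $2$. That is enough for the simplicial acyclicity you need, but you should say this rather than invoke contractibility. Second, packaging ``the defect'' of $\mathcal{A}$ into a single $2$-form oversimplifies what has to be done: a connection on a bundle gerbe has two layers (a connection on its $\ueins$-bundle and a curving), and making a given isomorphism $\mathcal{A}$ connection-preserving means equipping its own principal $\ueins$-bundle with a connection whose curvature matches the difference of curvings --- not merely matching curvings. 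The actual proof in \cite{waldorf8} carries out this two-step adjustment carefully (curving shift to fix curvatures, then a $1$-form shift on the bundle of $\mathcal{A}$, then the analogous step for $\sigma$). Your final worry about the associator compatibility is, incidentally, unnecessary: once $\sigma$ is connection-preserving, the pentagon error is a locally constant $\ueins$-valued function on the connected space $FM^{[4]}$ satisfying $\Delta\varepsilon=1$, hence $\varepsilon=1$ --- exactly the trick used in the proof of Lemma~\ref{lem:t1esssurjective} in the present paper.
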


\subsection{Decategorified  string connections}

\label{sec:decatstringcon}

Just like in the setting without connections, we may consider various truncations of the bicategory $\trivcon {\mathbb{G}}$ for a general bundle 2-gerbe $\mathbb{G}$ with connection. So we have  categories  $\hc 1 \trivcon{\mathbb{G}}$ and $\trivcontrunc{\mathbb{G}}$ and a functor
\begin{equation*}
\mathrm{t}_1^{\nabla}:\hc 1 \trivcon{\mathbb{G}} \to \trivcontrunc{\mathbb{G}}\text{.}
\end{equation*}
Further we have three sets $\hc 0 \trivcon{\mathbb{G}}$, $\hc 0 \trivcontrunc{\mathbb{G}}$ and $\trivconttrunc{\mathbb{G}}$, and maps
\begin{equation*}
\alxydim{}{\hc 0 \trivcon{\mathbb{G}}  \ar[r]^-{\hc 0 \mathrm{t}_1^{\nabla}} & \hc 0 (\trivcontrunc {\mathbb{G}}) \ar[r]^-{\mathrm{t}_2^{\nabla}} & \trivconttrunc{\mathbb{G}}\text{.}}
\end{equation*}

The passage from the setting with connections to the one without connections is attended by a 2-functor $F_2: \trivcon{\mathbb{G}} \to \triv{\mathbb{G}}$, a functor $F_1: \trivcontrunc{\mathbb{G}} \to \trivtrunc{\mathbb{G}}$, and a map $F_0\maps \trivconttrunc{\mathbb{G}} \to \trivttrunc{\mathbb{G}}$. It is fully consistent with the various functors and maps introduced above, in the sense that the diagrams
\begin{equation}
\label{eq:passage}
\hspace{-1em}\alxydim{@=\xypicst@C=2em}{\hc 1 \trivcon{\mathbb{G}} \ar[r] \ar[d]_{\hc 1 F_{2}} & \trivcontrunc{\mathbb{G}} \ar[d]^{F_1} \\ \hc 1 \triv{\mathbb{G}} \ar[r] & \trivtrunc{\mathbb{G}}}
\quad\quad
\alxydim{@R=\xypicst@C=2em}{\hc 0 \trivcon{\mathbb{G}} \ar[d]_{\hc  0 F_{2}} \ar[r] & \hc 0 \trivcontrunc{\mathbb{G}} \ar[d]^{\hc 0 F_1} \ar[r] & \trivconttrunc{\mathbb{G}} \ar[d]^{F_0} \\ \hc 0 \triv{\mathbb{G}} \ar[r] & \hc 0 \trivtrunc{\mathbb{G}} \ar[r] & \trivttrunc{\mathbb{G}} }
\end{equation}
of functors and maps, respectively, are commutative.

We have again various actions. The bicategory $\trivcon {\mathbb{G}}$ is a torsor for the monoidal bicategory $\ugrbcon M$ of bundle gerbes with connection over $M$ \cite[Lemma 2.2.5]{waldorf8}. As before, a bundle gerbe $\mathcal{K}$ with connection over $M$ acts on a trivialization $\mathbb{T}=(\mathcal{S},\mathcal{A},\sigma)$ with compatible connection by sending it to $(\pi^{*}\mathcal{K} \otimes \mathcal{S},\id \otimes \mathcal{A}, \id \otimes \sigma)$. Correspondingly, the category $\hc 1 \trivcon{\mathbb{G}}$ is a torsor category over the monoidal category $\hc 1 \ugrbcon M$, and the set $\hc 0 \trivcon {\mathbb{G}}$ is a torsor over the group $\hc 0 \ugrbcon M$.

In the same natural way, the monoidal category $\hc 1 \ugrbcon M$ acts on the category $\trivcontrunc{\mathbb{G}}$ such that the functor $\mathrm{t}_1^{\nabla}$ is equivariant, and the group $\hc 0 \ugrbcon M$ acts on the set $\trivconttrunc{\mathbb{G}}$ such that the map $\mathrm{t}_2^{\nabla}$ is equivariant.

Now we specialize to the case of the Chern-Simons 2-gerbe $\mathbb{G}=\mathbb{CS}_M$, and discuss the bicategory $\ststcon\relax M =\trivcon{\mathbb{CS}_M}$ of geometric string structures, the two categories $\hc 1 \ststcon\relax M$ and $\ststcon 1M := \trivcontrunc{\mathbb{CS}_M}$ of geometric string structures, and the three sets $\hc 0 \ststcon\relax M$, $\hc 0 \ststcon 1M$, and $\ststcon 0 M:= \trivconttrunc{\mathbb{CS}_M}$ of geometric string structures. We first note the following result about the passage from the setting with connections to the setting without connections.   

\begin{proposition}
\label{prop:passagesur}
\begin{enumerate}[(i)]

\item
\label{prop:passagesur:2}
The 2-functor $F_2: \ststcon\relax M \to \stst\relax M$ is essentially surjective.

\item
\label{prop:passagesur:1}
The functor $F_{1}\maps\ststcon 1M \to \stst 1M$ is essentially surjective.

\item
\label{prop:passagesur:0}
The map $F_0\maps \ststcon 0M \to \stst 0M$ is surjective.

\end{enumerate}
\end{proposition}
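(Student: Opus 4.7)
The plan is to handle part (i) first by invoking existing results and then derive parts (ii) and (iii) as formal consequences, using the decategorification equivalences of Theorem \ref{th:truncation} together with the commutativity of the two diagrams in \erf{eq:passage}.

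For part (i), I would simply appeal to Theorem \ref{th:stringconnections}(i), which asserts that every string structure admits a string connection. Unwinding the definition of $F_2$, which forgets the compatible connection of a trivialization, essential surjectivity of $F_2$ is precisely this existence statement.

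For part (ii), given $(\mathcal{S},[\mathcal{A}]) \in \stst 1M$, the strategy would be to lift first to the categorified setting without connections and then apply part (i). By the equivalence $\mathrm{t}_1\maps \hc 1 \stst\relax M \to \stst 1M$ from Theorem \ref{th:truncation}, there is a string structure $\mathbb{T} \in \hc 1 \stst\relax M$ with $\mathrm{t}_1(\mathbb{T}) \cong (\mathcal{S},[\mathcal{A}])$. Part (i) would then produce a lift $\tilde{\mathbb{T}} \in \hc 1 \ststcon\relax M$ with $(\hc 1 F_2)(\tilde{\mathbb{T}}) = \mathbb{T}$, and commutativity of the first square in \erf{eq:passage} would immediately yield $F_1(\mathrm{t}_1^{\nabla}(\tilde{\mathbb{T}})) = \mathrm{t}_1((\hc 1 F_2)(\tilde{\mathbb{T}})) = \mathrm{t}_1(\mathbb{T}) \cong (\mathcal{S},[\mathcal{A}])$, proving essential surjectivity of $F_1$.

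For part (iii) I would run the analogous argument at the set-theoretic level: given $[\mathcal{S}] \in \stst 0M$, the bijectivity of $\mathrm{t}_2 \circ \hc 0 \mathrm{t}_1\maps \hc 0 \stst\relax M \to \stst 0M$ from Theorem \ref{th:truncation} produces a string structure class $[\mathbb{T}]$ whose image is $[\mathcal{S}]$, and after equipping $\mathbb{T}$ with a string connection via part (i), chasing around the second diagram in \erf{eq:passage} (using the two squares to commute $\mathrm{t}_2 \circ \hc 0 F_1$ with $F_0 \circ \mathrm{t}_2^{\nabla}$ and $\hc 0 F_1 \circ \hc 0 \mathrm{t}_1^{\nabla}$ with $\hc 0 \mathrm{t}_1 \circ \hc 0 F_2$) exhibits the desired element of $\ststcon 0M$ with $F_0$-image $[\mathcal{S}]$. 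I do not anticipate a genuine obstacle in this plan: the entire weight of the proposition rests on Theorem \ref{th:stringconnections}(i), proved in \cite{waldorf8}, while parts (ii) and (iii) are purely formal consequences of the decategorification machinery of Theorem \ref{th:truncation} combined with the commutativity of the two diagrams in \erf{eq:passage}.
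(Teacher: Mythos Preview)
Your proposal is correct and follows essentially the same approach as the paper: part (i) is exactly Theorem \ref{th:stringconnections}\erf{th:stringconnections:exist}, and parts (ii) and (iii) are deduced from (i) via the commutativity of the diagrams \erf{eq:passage} together with the equivalences/bijections of Theorem \ref{th:truncation}. The paper's proof is more terse---it simply notes that (i) implies $\hc 1 F_2$ is essentially surjective and $\hc 0 F_2$ is surjective, then invokes \erf{eq:passage} and Theorem \ref{th:truncation}---but your explicit diagram chases unwind precisely this logic.
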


\begin{proof}
\erf{prop:passagesur:2} is Theorem \ref{th:stringconnections} \erf{th:stringconnections:exist}. It follows that the functor $\hc 1 F_2$ is essentially surjective, and that the map $\hc 0 F_2$ is surjective. Then, \erf{prop:passagesur:1} and \erf{prop:passagesur:0} follow from the commutativity of the diagrams \erf{eq:passage} and Theorem \ref{th:truncation}.
\end{proof}

Next we present the main theorem of this section, which is the analogue of Theorem \ref{th:truncation}  in the setting with connections.

\begin{theorem}
\label{th:truncationcon}
The functor 
\begin{equation*}
\mathrm{t}_1^{\nabla}: \hc 1 \ststcon\relax M \to \ststcon 1M
\end{equation*}
is an equivalence of categories; in particular, it is an equivariant functor between $\hc 1 \ugrbcon M$-torsors. The map 
\begin{equation*}
\mathrm{t}_2^{\nabla}: \hc 0 \ststcon 1M \to \ststcon 0M
\end{equation*}
is a bijection; in particular, it is an equivariant map between $\hat\h^3(M)$-torsors.
\end{theorem}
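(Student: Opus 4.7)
The theorem states that $\mathrm{t}_1^\nabla$ and $\mathrm{t}_2^\nabla$ are equivariant maps between torsors over $\hc 1 \ugrbcon M$ and $\hat\h^3(M)$, respectively. Since equivariant maps between non-empty torsors over the same group are automatically bijections (and equivalences of categories), the plan is to establish all the torsor structures and the equivariances, and then verify that both sides are non-empty precisely when $M$ is a string manifold. For non-emptiness, I would apply Proposition \ref{prop:passagesur} to the diagrams \erf{eq:passage}: the connection-forgetting functors and maps are essentially surjective or surjective on isomorphism classes, so the connection versions are non-empty iff their non-connection counterparts are, which by Theorem \ref{th:truncation} happens exactly when $M$ is string. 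In the non-string case all categories are empty and the statements are vacuous.

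Assuming $M$ is string, the torsor structures on $\hc 1 \ststcon\relax M$ and $\hc 0 \ststcon\relax M$ are a formal consequence of \cite[Lemma 2.2.5]{waldorf8}, which establishes the bicategorical torsor property of $\trivcon{\mathbb{CS}_M}$ over $\ugrbcon M$. Equivariance of the functor $\mathrm{t}_1^\nabla$ and the map $\mathrm{t}_2^\nabla$ is manifest from the construction of the actions in Section \ref{sec:decatstringcon}. The technical core is to show that $\ststcon 1 M$ and $\ststcon 0 M$ are torsors as well. Here I would repeat the arguments of Lemma \ref{lem:t2} and Lemma \ref{lem:t1equiv} in the setting with connections: given two objects with equal images, choose a connection-preserving isomorphism $\mathcal{B}\maps \mathcal{S}_1 \to \mathcal{S}_2$ of the underlying bundle gerbes with connection, shift the isomorphism $\mathcal{A}_2$ by a principal $\ueins$-bundle $B$ with connection on $FM^{[2]}$ so that a connection-preserving transformation $\beta$ as in \erf{eq:beta} exists, and then paste $\beta$ with the transformation $\sigma_1$ and compare against $\sigma_2$ to conclude that $\Delta[B]$ vanishes in $\hat\h^2(FM^{[3]})$. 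Surjectivity, freeness and transitivity for the $\hat\h^3(M)$-action on $\ststcon 0 M$ follow by the parallel argument one degree higher.

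The main obstacle is then a differential-cohomology analogue of Lemma \ref{lem:serre} and Corollary \ref{co:delta}: I would need the pullback $\pi_k^*\maps \hat\h^p(M) \to \hat\h^p(FM^{[k]})$ to be an isomorphism and the alternating-sum map $\Delta$ to be alternately an isomorphism (for even $k$) or zero (for odd $k$) on $\hat\h^p(FM^{[k]})$, in the relevant degrees $p\leq 3$. Using the trivialization $FM^{[k]}\cong FM\times G^{k-1}$ and the hexagon diagram relating $\hat\h^{*}$ to the integral cohomology and to closed forms with integral periods, these claims should reduce to the corresponding ordinary-cohomology statements (Lemma \ref{lem:serre} and Corollary \ref{co:delta}) together with the vanishing of $H^p(G^{k-1},\R)$ and of the related forms-modulo-integral-forms groups in low positive degrees; these vanishings are guaranteed by 2-connectedness of $G=\spin n$. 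Once these inputs are in place, $[B]$ vanishes in $\hat\h^2(FM^{[2]})$, completing the torsor verifications and hence the proof.
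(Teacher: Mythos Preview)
Your torsor strategy is sound in outline, but the differential-cohomology input you propose is false. The map $\pi_k^{*}\maps \hat\h^p(M) \to \hat\h^p(FM^{[k]})$ is \emph{not} an isomorphism for $p=2,3$: differential cohomology sits in an extension by $\Omega^{p-1}/\Omega^{p-1}_{cl,\Z}$, and this group on $FM^{[k]}$ is vastly larger than on $M$ (there are many non-closed forms on the fibres that do not descend). Your proposed reduction via K\"unneth and vanishing of $\h^p(G^{k-1},\R)$ does not control these form groups. The paper only establishes, and only needs, \emph{injectivity} of $\pi^{*}$ on $\hat\h^3$ (Lemma~\ref{lem:pulldiff}), proved by an elementary case analysis on the exact sequences.

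The paper circumvents the problem you hit by two observations you are missing. First, in the connection-preserving setting the torsor action on $\homcon(\mathcal{G},\mathcal{H})$ is by \emph{flat} bundles (see \erf{eq:actionbundlescon}), not by bundles with arbitrary connection; so the obstruction bundle $B$ in your Lemma~\ref{lem:t2}-style argument, and the obstruction $A$ to finding $\sigma$ in the surjectivity argument for $\mathrm{t}_2^{\nabla}$, live in $\h^1(-,\ueins)$. There Lemma~\ref{lem:serre} already applies, and no differential-cohomology analogue is needed. Second, for essential surjectivity of $\mathrm{t}_1^{\nabla}$ the paper does not mimic the non-connection argument (which would be circular, since the analogue of Corollary~\ref{co:stringclasses} for differential string classes is a consequence of the theorem, not an input). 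Instead it argues directly: given $(\mathcal{S},[\mathcal{A}])$ with a connection-preserving $\sigma$, the pentagon error $\varepsilon\maps FM^{[4]}\to\ueins$ is locally constant (all transformations are connection-preserving), hence constant by connectedness of $FM^{[4]}$, hence equal to $1$ by the pentagon axiom over $FM^{[5]}$. This yields a genuine geometric string structure $(\mathcal{S},\mathcal{A},\sigma)$ as preimage. With essential surjectivity in hand, the Hom-set torsor argument over $\hc 0\ubunconflat M\cong\h^1(M,\ueins)$ proceeds as in Lemma~\ref{lem:t1equiv}.
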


Here, $\hat \h^n(M)$  stands for the \emph{differential cohomology} of $M$. We recall from \cite{brylinski1} that the degree $n$ differential cohomology of a smooth manifold $X$ is a group $\hat\h^n(X)$ that fits into the   exact sequences
\begin{equation*}
\alxydim{}{0 \ar[r] & \Omega^{n-1}_{cl,\Z}(X) \ar[r] & \Omega^{n-1}(X) \ar[r]^-{a} & \hat\h^n(X) \ar[r]^-{c} & \h^n(X,\Z) \ar[r] & 0 }
\end{equation*}
and
\begin{equation*}
\alxydim{}{0 \ar[r] & \h^{n-1}(X,\ueins) \ar[r] & \hat\h^n(X) \ar[r]^-{R} & \Omega^{n}_{cl,\Z}(X) \ar[r] & 0\text{,}}
\end{equation*}
in which $\Omega_{cl,\Z}^{n}(X)$ denotes the closed $n$-forms on $X$ with integral periods. Bundle gerbes with connection are classified by degree three differential cohomology in terms of a differential Dixmier-Douady class $\widehat{\mathrm{DD}}: \hc 0 \ugrbcon X \to \hat\h^3(X)$; the map $c:\hat\h^3(X) \to \h^3(X,\Z)$   corresponds to projecting to the underlying (non-differential) Dixmier-Douady class, the map $R:\hat\h^3(X) \to \Omega^3_{cl,\Z}(X)$ corresponds to taking the curvature, and the map $a\maps  \Omega^2(X) \to \hat\h^3(X)$ corresponds to taking the trivial bundle gerbe $\mathcal{I}_{\rho}$ associated to a 2-form $\rho$. 

In the remainder of this section we prove Theorem \ref{th:truncationcon}, see Lemmata \ref{lem:t1nabla} and  \ref{co:t2nabla} below. 
First we generalize one aspect of the Serre spectral sequence calculation of Lemma \ref{lem:serreall}
from ordinary cohomology to differential cohomology.

\begin{lemma}
\label{lem:pulldiff}
The pullback $\pi^{*}:\hat\h^3(M) \to \hat\h^3(FM)$ is injective.         
\end{lemma}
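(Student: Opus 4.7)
The plan is to apply the short five lemma to the commutative diagram induced by $\pi$ from the second short exact sequence for differential cohomology displayed just above the statement of Theorem~\ref{th:truncationcon}, namely
\begin{equation*}
0 \to \h^{2}(X,\ueins) \to \hat\h^3(X) \to \Omega^3_{cl,\Z}(X) \to 0\text{,}
\end{equation*}
whose first nontrivial arrow includes flat classes and whose second arrow is the curvature map $R$. Both of these arrows are natural with respect to smooth maps, so applying the functor $\pi^{*}$ to the sequence for $X=M$ and comparing with the sequence for $X=FM$ produces a commutative diagram with exact rows in which the three vertical arrows are instances of~$\pi^{*}$.

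The left vertical arrow $\pi^{*}\maps \h^2(M,\ueins)\to \h^2(FM,\ueins)$ is an isomorphism by Lemma~\ref{lem:serreall}, applied with $G=\spin n$, $A=\ueins$, and $k=2$. The right vertical arrow $\pi^{*}\maps \Omega^3_{cl,\Z}(M)\to \Omega^3_{cl,\Z}(FM)$ is injective, because $\pi$ is a surjective submersion and pullback of differential forms along a submersion is injective; the condition of being closed and integral is preserved. A routine five-lemma diagram chase then delivers injectivity of the middle vertical arrow $\pi^{*}\maps \hat\h^3(M)\to\hat\h^3(FM)$, which is the claim.

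There is no serious obstacle: all topological input is bundled into the already-proven Lemma~\ref{lem:serreall}, and the remaining argument is purely formal homological algebra.
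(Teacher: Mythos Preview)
Your proof is correct and arguably cleaner than the paper's. You use the curvature/flat-class short exact sequence
\[
0 \to \h^{2}(X,\ueins) \to \hat\h^3(X) \xrightarrow{R} \Omega^3_{cl,\Z}(X) \to 0
\]
and a single diagram chase: the left vertical map is an isomorphism by Lemma~\ref{lem:serreall} with $A=\ueins$, the right vertical map is injective because $\pi$ is a surjective submersion, and injectivity of the middle follows.

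The paper instead works with the other exact sequence,
\[
0 \to \Omega^{2}_{cl,\Z}(X) \to \Omega^{2}(X) \xrightarrow{a} \hat\h^3(X) \xrightarrow{c} \h^3(X,\Z) \to 0,
\]
and argues by cases on whether $c(\hat\eta)$ vanishes. When it does not, the paper invokes injectivity of $\pi^{*}$ on $\h^3(-,\Z)$ from the Serre sequence in Lemma~\ref{lem:serreall}; when it does, it writes $\hat\eta=a(\mu)$ and splits further into $\mu$ non-closed versus $\mu$ closed but non-integral, the latter handled via the isomorphism $\pi^{*}\maps\h^2(M,\Z)\to\h^2(FM,\Z)$. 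Your approach packages this case analysis into one five-lemma step, at the cost of needing Lemma~\ref{lem:serreall} for $\ueins$-coefficients rather than $\Z$-coefficients; since that lemma is stated for arbitrary abelian $A$, there is no extra cost. Either route works; yours is more uniform.
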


\begin{proof}
Let $\hat\eta\in\hat\h^3(M)$. 
We show that $\hat\eta\neq0$ implies $\pi^{*}\hat\eta\neq0$. Indeed, if the underlying class $\eta := c(\hat\eta)\in\h^3(M,\Z)$ is non-zero, than $\pi^{*}\eta\neq 0$  because of Lemma \ref{lem:serreall}, and so is $\pi^{*}\hat\eta\neq 0$. If $\eta=0$, then $\hat\eta=a(\mu)$ for a 2-form $\mu\in\Omega^2(M)$. Since $\pi$ is a surjective submersion, $\pi^{*}$ is injective. Thus, if $\mu$ is not closed, then $\pi^{*}\mu$ is also not closed and $\pi^{*}\hat\eta=a(\pi^{*}\mu)$ must be non-trivial. It remains to discuss the case that $\mu$ is closed but its class is not integral. Then, $\pi^{*}\mu$ is also closed. By Lemma \ref{lem:serreall}, $\pi^{*}:\h^2(M,\Z) \to \h^2(FM,\Z)$ is an isomorphism, so since $\mu$ is not integral,  $\pi^{*}\mu$ is not integral. Hence $\pi^{*}\hat\eta$ is non-trivial. \end{proof}

\begin{corollary}
The actions of $\hat \h^3(M)\cong \hc 0 \ugrbcon M$ on $\hc 0 \ststcon 1M$ and $\ststcon 0M$ are free. 
\end{corollary}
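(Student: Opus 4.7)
The plan is to deduce freeness directly from the injectivity statement of Lemma \ref{lem:pulldiff}, combined with the fact that on both target sets, a trivialization of the action element amounts to a trivialization of its pullback to $FM$ as a bundle gerbe with connection.

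First I would unpack what it means for $[\mathcal{K}]\in\hat\h^3(M)\cong \hc 0 \ugrbcon M$ to act trivially on a class $[(\mathcal{S},[\mathcal{A}])]\in \hc 0 \ststcon 1M$. By construction of the $\hc 1 \ugrbcon M$-action on $\ststcon 1M$, the result of the action is $(\pi^*\mathcal{K}\otimes \mathcal{S},[\id\otimes \mathcal{A}])$. Triviality of the action at the level of isomorphism classes means that there exists a morphism in $\ststcon 1M$ from $(\pi^*\mathcal{K}\otimes \mathcal{S},[\id\otimes\mathcal{A}])$ to $(\mathcal{S},[\mathcal{A}])$, which by definition of $\ststcon 1M = \trivcontrunc{\mathbb{CS}_M}$ is in particular a connection-preserving isomorphism $\mathcal{B}\maps \pi^*\mathcal{K}\otimes \mathcal{S}\to \mathcal{S}$ of bundle gerbes with connection over $FM$. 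Tensoring with $\mathcal{S}^{\vee}$ yields an isomorphism $\pi^*\mathcal{K}\cong \mathcal{I}$ of bundle gerbes with connection, hence $\pi^*[\mathcal{K}]=0$ in $\hat\h^3(FM)$. By Lemma \ref{lem:pulldiff}, $[\mathcal{K}]=0$.

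For the set $\ststcon 0M=\trivconttrunc{\mathbb{CS}_M}$, the argument is exactly parallel and even simpler: the action of $[\mathcal{K}]$ on $[\mathcal{S}]$ is $[\pi^*\mathcal{K}\otimes \mathcal{S}]$, and triviality of the action means directly that there is an isomorphism $\pi^*\mathcal{K}\otimes \mathcal{S}\cong \mathcal{S}$ of bundle gerbes with connection over $FM$, so again $\pi^*[\mathcal{K}]=0$ and Lemma \ref{lem:pulldiff} concludes.

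There is no serious obstacle here; the only care needed is to verify that the relevant morphisms in the truncated categories really do give rise to honest connection-preserving isomorphisms of the underlying bundle gerbes over $FM$, which is immediate from the definitions in Section \ref{sec:trivializations} and its connection-version in Section \ref{sec:decatstringcon}. The key input is purely the injectivity of $\pi^{*}$ on differential cohomology.
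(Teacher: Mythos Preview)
Your proof is correct and follows essentially the same approach as the paper: both reduce freeness to the injectivity of $\pi^{*}:\hat\h^3(M)\to\hat\h^3(FM)$ from Lemma~\ref{lem:pulldiff}, using that the action on the underlying bundle gerbe with connection is by tensoring with $\pi^{*}\mathcal{K}$. The paper's one-line proof just phrases this as ``pullback (injective) followed by addition in $\hat\h^3(FM)$'', whereas you unpack the argument more explicitly at the level of objects and morphisms in the truncated categories.
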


\begin{proof}
These actions are defined by pullback (injective by Lemma \ref{lem:pulldiff}) and then addition in the group $\hat\h^3(FM)$.  \end{proof}

Now we are in position to prove the first part of Theorem \ref{th:truncationcon}.

\begin{lemma}
\label{lem:t1esssurjective}
\label{lem:t1nabla}
The functor $\mathrm{t}_1^{\nabla}:\hc 1 \ststcon\relax M \to \ststcon 1M$ is an equivalence.
\end{lemma}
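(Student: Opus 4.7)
The proof follows the template of Lemma \ref{lem:t1equiv}, lifted to the with-connections setting. I must show (a) that $\mathrm{t}_1^{\nabla}$ is essentially surjective and (b) that it is fully faithful. The overall strategy is to replace the integer-cohomology torsor arguments used in the without-connections case by their differential-cohomology counterparts, using Lemma \ref{lem:pulldiff} and the connections analogue of the torsor structure from Lemma \ref{lem:torsor}.

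For (a), I argue that $\hc 0 \mathrm{t}_1^{\nabla}$ is a bijection. The set $\hc 0 \ststcon \relax M$ is a torsor over $\hc 0 \ugrbcon M \cong \hat\h^3(M)$ by the connections version of Lemma \ref{lem:torsor}, the map $\hc 0 \mathrm{t}_1^{\nabla}$ is equivariant for the natural $\hat\h^3(M)$-action on $\hc 0 \ststcon 1M$, and the latter action is free by Lemma \ref{lem:pulldiff}. Using Proposition \ref{prop:passagesur} together with the diagrams \erf{eq:passage} and Theorem \ref{th:truncation}, both sides are non-empty exactly when $M$ is a string manifold; if $M$ is not string both sides are empty and the statement is vacuous. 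Otherwise, an equivariant map from a torsor to a set with a free action is automatically injective. Surjectivity is then deduced by combining the injectivity of $\mathrm{t}_2^{\nabla}$ (proved in the companion lemma) with the relation $F_0 \circ \mathrm{t}_2^{\nabla} \circ \hc 0 \mathrm{t}_1^{\nabla} = \mathrm{t}_2 \circ \hc 0 \mathrm{t}_1 \circ \hc 0 F_2$ read off from \erf{eq:passage}, in which the composite on the right is the bijection from Theorem \ref{th:truncation} precomposed with the surjection $\hc 0 F_2$ of Proposition \ref{prop:passagesur}(i). Essential surjectivity of $\mathrm{t}_1^{\nabla}$ is immediate from bijectivity of $\hc 0 \mathrm{t}_1^{\nabla}$.

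For (b), fix geometric string structures $\mathbb{T}_1$ and $\mathbb{T}_2$. Both Hom-sets carry a natural action of the group $\hc 0 \hom_{\ugrbcon M}(\mathcal{I}, \mathcal{I})$ of flat principal $\ueins$-bundles with connection over $M$, modulo connection-preserving isomorphism, induced by tensoring along the action functors; the functor $\mathrm{t}_1^{\nabla}$ is equivariant. The source Hom-set is a torsor over this group by the connections version of Lemma \ref{lem:torsor}. I prove the target Hom-set is also a torsor by adapting the pasting argument from Lemma \ref{lem:t2} to the setting with connections: any two connection-preserving isomorphisms $\mathcal{B}_1, \mathcal{B}_2: \mathcal{S}_1 \to \mathcal{S}_2$ representing morphisms in $\ststcon 1M$ differ by tensoring with a $\ueins$-bundle with connection $B$ over $FM$, and the existence of compatible transformations $\beta_1, \beta_2$ pasted against $\sigma_1, \sigma_2$ forces the simplicial coboundary $\Delta B$ to be trivial as a bundle with connection over $FM^{[3]}$. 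A differential-cohomology analogue of Corollary \ref{co:delta} then ensures that $B$ descends from a flat bundle with connection over $M$, giving transitivity; freeness follows from injectivity of $\pi^{*}$ in differential cohomology, which in degree two is a consequence of Lemma \ref{lem:serre} combined with the exact sequences defining $\hat\h^2$. Hence $\mathrm{t}_1^{\nabla}$ is an equivariant map between torsors over the same group, and therefore a bijection on Hom-sets.

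The main obstacle will be the differential-cohomology descent used in (b): one needs that a bundle with connection over $FM^{[k]}$ whose simplicial coboundary over $FM^{[k+1]}$ is trivial as a bundle with connection descends, up to connection-preserving isomorphism, from $M$. This refines Corollary \ref{co:delta} and is obtained by applying the five lemma to the two exact sequences defining differential cohomology, using Lemma \ref{lem:serreall} for the integer-cohomology input and a direct spectral-sequence argument for the descent of 1-forms on $FM^{[k]}$.
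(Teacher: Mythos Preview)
Your argument for essential surjectivity has a genuine gap. You try to deduce surjectivity of $\hc 0 \mathrm{t}_1^{\nabla}$ from the commutative squares \erf{eq:passage} together with injectivity of $\mathrm{t}_2^{\nabla}$, but the diagram chase does not close: knowing that $F_0 \circ \mathrm{t}_2^{\nabla} \circ \hc 0 \mathrm{t}_1^{\nabla}$ surjects onto $\stst 0 M$ only tells you that the image of $\mathrm{t}_2^{\nabla} \circ \hc 0 \mathrm{t}_1^{\nabla}$ meets every fibre of $F_0$; it does not imply that $\hc 0 \mathrm{t}_1^{\nabla}$ hits every element of $\hc 0 \ststcon 1 M$. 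To close the argument you would need to know already that $\ststcon 0 M$ (or $\hc 0 \ststcon 1 M$) is a torsor over $\hat\h^3(M)$, but that is precisely part of what Theorem \ref{th:truncationcon} asserts, so invoking it here is circular. The paper avoids this entirely by proving essential surjectivity \emph{directly}: given $(\mathcal{S},[\mathcal{A}])$ in $\ststcon 1 M$ with some connection-preserving $\sigma$ filling \erf{eq:diagfillsigma}, the failure of the pentagon in Figure \ref{compass} is a smooth map $\varepsilon:FM^{[4]}\to\ueins$. Because $\sigma$ and $\mu$ are connection-preserving, $\varepsilon$ is locally constant; since $FM^{[4]}$ is connected, $\varepsilon$ is constant; and the pentagon axiom for $\mu$ gives $\Delta\varepsilon=1$, which for a constant function is $\varepsilon^{5}=1$, hence $\varepsilon=1$. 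Thus $(\mathcal{S},\mathcal{A},\sigma)$ is already an honest object of $\ststcon\relax M$. This is the key new idea in the with-connections case and it is what your proposal is missing.

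For full faithfulness your torsor strategy is right but needlessly elaborate, and the details are off. Because $\mathcal{B}_1$ and $\mathcal{B}_2$ are both \emph{connection-preserving}, the bundle $B$ by which they differ is automatically \emph{flat}; the relevant classifying group is therefore $\h^1(FM,\ueins)$, not a differential cohomology group. Lemma \ref{lem:serre} (with $A=\ueins$) gives $\pi^{*}:\h^1(M,\ueins)\to\h^1(FM,\ueins)$ an isomorphism, so $B\cong\pi^{*}K$ for some flat $K$ on $M$, and transitivity is immediate. There is no need to paste against $\sigma_1,\sigma_2$, no simplicial coboundary over $FM^{[3]}$, and no differential-cohomology refinement of Corollary \ref{co:delta}; you have imported machinery from the proof of Lemma \ref{lem:t2} that is irrelevant here.
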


\begin{proof}
Part I of the proof is to show that $\mathrm{t}_1^{\nabla}$ is essentially surjective. 
We consider an object $(\mathcal{S},[\mathcal{A}])$ in $\ststcon 1M=\trivcontrunc{\mathbb{CS}_M}$, i.e. $\mathcal{S}$ is a bundle gerbe with connection over $FM$, and $\mathcal{A}: \pr_2^{*}\mathcal{S} \otimes \mathcal{P} \to \pr_1^{*}\mathcal{S}$ is a connection-preserving 1-isomorphism such that there exists a connection-preserving transformation $\sigma$ as in \erf{eq:diagfillsigma}.
A priori, $\sigma$ does not satisfy the compatibility condition with the associator $\mu$ of $\mathbb{CS}_M$, see Figure \ref{compass}. We show that it yet does, using the assumption that it is connection-preserving. The error in the commutativity of the diagram of Figure \ref{compass} is a smooth map $\varepsilon: FM^{[4]} \to \ueins$.   Since both $\mu$ and $\sigma$ are connection-preserving, $\varepsilon$ is locally constant. Since $M$ is connected, $FM$ and all fibre products $FM^{[k]}$ are connected, in particular  $FM^{[4]}$. Thus, $\varepsilon$ is constant. From the pentagon axiom for $\mu$ over $FM^{[5]}$ 
\begin{comment}
, see Figure \ref{fig:pentagon}, 
\end{comment}
it follows that $\Delta\varepsilon=1$. Since $\Delta\varepsilon$ has five  terms, which are all equal as $\varepsilon$ is constant, this implies $\varepsilon=1$. Thus, $\sigma$ automatically satisfies the compatibility condition, $(\mathcal{S},\mathcal{A},\sigma)$ is a geometric string structure, and a preimage of $(\mathcal{S},[\mathcal{A}])$ under $\mathrm{t}_1^{\nabla}$. Hence, $\mathrm{t}_1^{\nabla}$ is essentially surjective.

Part II of the proof is to show that $\mathrm{t}_1^{\nabla}$ is full and faithful. First of all, we note that the map $\hc 0 \mathrm{t}_1^{\nabla}$ is equivariant  under free actions and defined on a torsor, and thus injective. We have just proved that it also is surjective; hence, $\hc 0 \mathrm{t}_1^{\nabla}$ is a bijection. Now we proceed similar to the proof of Lemma \ref{lem:t1equiv}. That $\hc 0 \mathrm{t}_1^{\nabla}$ is a bijection implies that for two objects $\mathbb{T}_1\eq(\mathcal{S}_1,\mathcal{A}_1,\sigma_1)$ and $\mathbb{T}_2\eq(\mathcal{S}_2,\mathcal{A}_2,\sigma_2)$ of $\hc 1 \ststcon\relax M$ the Hom-sets $\hom_{\hc 1 \ststcon\relax M}(\mathbb{T}_1,\mathbb{T}_2)$ and $\hom_{\ststcon 1M}(\mathrm{t}_1^{\nabla}(\mathbb{T}_1),\mathrm{t}_1^{\nabla}(\mathbb{T}_2))$ are either both empty or both non-empty. It remains to show that $\mathrm{t}_1^{\nabla}$ induces in the non-empty case a bijection between these sets.

The action of the monoidal bicategory $\ubun X$ on the category of homomorphisms between two fixed bundle gerbes over $X$, see \ref{eq:actionbundles}, has a counterpart in the setting with connections, namely an action
\begin{equation}
\label{eq:actionbundlescon}
\ubunconflat X \times \homcon(\mathcal{G},\mathcal{H}) \to \homcon(\mathcal{G},\mathcal{H})
\end{equation}
of the monoidal category of principal $\ueins$-bundles with \emph{flat} connections on the category of connection preserving isomorphisms between $\mathcal{G}$ and $\mathcal{H}$, and connection-preserving transformations. This action exhibits again $\homcon(\mathcal{G},\mathcal{H})$ as a torsor over $\ubunconflat X$.

The functor $\mathrm{t}_1^{\nabla}$ is equivariant with respect to the $\hc 1 \ugrbcon M$-actions. Over the objects $(\mathcal{I}_0,\mathcal{I}_0)$ and $(\mathbb{T}_1,\mathbb{T}_2)$ these induce actions of the group 
\begin{equation*}
\hom_{\hc 1 \ugrbcon M}(\mathcal{I}_0,\mathcal{I}_0) = \hc 0 \hom_{\ugrbcon M}(\mathcal{I}_0,\mathcal{I}_0)\stackerf{eq:actionbundlescon}{=}\hc 0 \ubunconflat M
\end{equation*}
on the sets $\hom_{\hc 1 \ststcon\relax M}(\mathbb{T}_1,\mathbb{T}_2)$ and $\hom_{\ststcon 1M}(\mathrm{t}_1(\mathbb{T}_1),\mathrm{t}_1(\mathbb{T}_2))$, respectively.  The first set is  a torsor under this action. We prove that the second is also a torsor, so that $\mathrm{t}_1^{\nabla}$ is an equivariant map between torsors, hence a bijection.

We recall that the elements of $\hom_{\ststcon 1M}(\mathrm{t}_1(\mathbb{T}_1),\mathrm{t}_1(\mathbb{T}_2))$ are equivalence classes $[\mathcal{B}]$ of  connection-preserving isomorphisms $\mathcal{B}: \mathcal{S}_1 \to \mathcal{S}_2$ such that there exists a  connection-preserving transformation $\beta$ as in \erf{eq:beta}.
\begin{comment}
\begin{equation}
\alxydim{@=\xypicst}{\pr_2^{*}\mathcal{S} \otimes \mathcal{P} \ar[r]^-{\mathcal{A}} \ar[d]_{\pr_2^{*}\mathcal{B}\otimes \id} & \pi_1^{*}\mathcal{S} \ar@{=>}[dl]|*+{\beta}  \ar[d]^{\pr_1^{*}\mathcal{B}} \\ \pr_2^{*}\mathcal{S}'\otimes \mathcal{P} \ar[r]_-{\mathcal{A}'} & \pi_1^{*}\mathcal{S}'\text{.}}
\end{equation} 
\end{comment}
The action of $[K]\in\hc 0 \ubunconflat M \cong \h^1(M,\ueins)$ sends $[\mathcal{B}]$ to $[\pi^{*}K \otimes \mathcal{B}]$. This action is free  because $\pi^{*}: \h^1(M,\ueins) \to \h^1(FM,\ueins)$ is an isomorphism by Lemma \ref{lem:serreall}.  If $\mathcal{B}$ and $\mathcal{B}'$ are both connection-preserving isomorphisms from $\mathcal{S}_1$ to $\mathcal{S}_2$, then by \erf{eq:actionbundlescon} there exists a flat principal $\ueins$-bundle $P$ on $FM$ such that $\mathcal{B}'\cong \mathcal{B} \otimes P$. But again, since $\pi^{*}$ is an isomorphism, $P \cong \pi^{*}K$ for some $K$ in $\ubunconflat M$. Hence the action is transitive. 
\end{proof}

The second part of Theorem \ref{th:truncationcon} is proved by the following lemma.

\begin{lemma}
\label{lem:t2nablasurjective}
\label{co:t2nabla}
The map $\mathrm{t}_2^{\nabla}: \hc 0 \ststcon 1M \to \ststcon 0M$ is a bijection. 
\end{lemma}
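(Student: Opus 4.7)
My plan is to transport the strategy of Lemma \ref{lem:t2} into the setting with connections, replacing the torsor action \erf{eq:actionbundles} of all principal $\ueins$-bundles on categories of bundle gerbe isomorphisms by the torsor action \erf{eq:actionbundlescon} of \emph{flat} principal $\ueins$-bundles on categories of \emph{connection-preserving} isomorphisms. The decisive cohomological ingredient will be Corollary \ref{co:delta} specialized to $p \eq 1$ and $A \eq \ueins$: the coboundary $\Delta: \h^1(FM^{[2]},\ueins) \to \h^1(FM^{[3]},\ueins)$ is then an isomorphism, while $\Delta: \h^1(FM^{[3]},\ueins) \to \h^1(FM^{[4]},\ueins)$ is the zero map.

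If $M$ is not string, then both sets are empty: by Proposition \ref{prop:passagesur} \erf{prop:passagesur:0} and Theorem \ref{th:truncation}, $F_0: \ststcon 0 M \to \stst 0 M$ is surjective onto a set that is in bijection with $\sc(M) \eq \emptyset$, and similarly for $\hc 0 \ststcon 1 M$ via Lemma \ref{lem:t1nabla} (or alternatively via Theorem \ref{th:stringconnections} \erf{th:stringconnections:exist} and the torsor structure established in Lemma \ref{lem:t1nabla}). The map is then trivially a bijection. In the remainder of the proof I therefore assume $M$ to be string.

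For surjectivity, given $[\mathcal{S}] \in \ststcon 0 M$ I fix a representative connection-preserving isomorphism $\mathcal{A}: \pr_2^{*}\mathcal{S} \otimes \mathcal{P} \to \pr_1^{*}\mathcal{S}$ provided by the definition of $\ststcon 0 M$. The two connection-preserving isomorphisms appearing in diagram \erf{eq:diagfillsigma} differ, by \erf{eq:actionbundlescon}, by tensoring with a flat $\ueins$-bundle $B_3$ on $FM^{[3]}$. Replacing $\mathcal{A}$ by $\mathcal{A}' \df B \otimes \mathcal{A}$ for a flat $\ueins$-bundle $B$ on $FM^{[2]}$ shifts $[B_3]$ by $\Delta[B]$. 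Since $\Delta$ is an isomorphism in the relevant degree, I choose $B$ with $\Delta[B] \eq [B_3]$, whereupon the required filling $\sigma$ exists and $(\mathcal{S},[\mathcal{A}'])$ defines an object of $\ststcon 1 M$ whose image under $\mathrm{t}_2^{\nabla}$ is $[\mathcal{S}]$.

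For injectivity I adapt the argument of Lemma \ref{lem:t2} verbatim: starting from two objects $(\mathcal{S}_i, [\mathcal{A}_i])$ mapping to the same class and a connection-preserving isomorphism $\mathcal{B}: \mathcal{S}_1 \to \mathcal{S}_2$, \erf{eq:actionbundlescon} produces a flat $\ueins$-bundle $B$ on $FM^{[2]}$ such that $\mathcal{A}_2' \df B \otimes \mathcal{A}_2$ admits a connection-preserving transformation $\beta$ as in \erf{eq:beta}. The same pasting-square argument involving $\sigma_1$, $\sigma_2$ and $\beta$ as in Lemma \ref{lem:t2} shows that $\Delta [B] \eq 0$ in $\h^1(FM^{[3]},\ueins)$, whence injectivity of $\Delta$ in that degree forces $[B] \eq 0$. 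Therefore $\mathcal{A}_2' \cong \mathcal{A}_2$ connection-preservingly, and the transformation $\beta$ transfers to the original $\mathcal{A}_2$, furnishing the required 1-morphism $(\mathcal{B},\beta)$ in $\ststcon 1 M$. The subtle point to verify carefully will be that the entire pasting diagram stays inside connection-preserving transformations, which relies crucially on the fact that the bundle 2-gerbe product $\mathcal{M}$ and the associator $\mu$ of $\mathbb{CS}_M$ are themselves connection-preserving, so that the resulting cocycle equation lands in $\h^1(FM^{[3]},\ueins)$ rather than in a larger cohomology group.
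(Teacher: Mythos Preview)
Your proof is correct, but it diverges from the paper's in a notable way, and there is one small citation issue worth flagging.

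\textbf{Citation issue.} Corollary \ref{co:delta} is stated in the paper only for integer coefficients. Your appeal to it with $A=\ueins$ is valid, since the proof of that corollary uses only Lemma \ref{lem:serre} (which does treat $\ueins$) and the purely formal Lemma \ref{lem:coboundary}; but you should say this explicitly rather than cite the corollary as if it already covered $\ueins$.

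\textbf{Comparison with the paper.} For surjectivity, the two arguments are essentially the same: modify $\mathcal{A}$ by a flat line bundle on $FM^{[2]}$ to kill the flat obstruction on $FM^{[3]}$. The paper phrases this via Lemma \ref{lem:serre} (the obstruction is pulled back from $M$, so tensor $\mathcal{A}$ with $\pi_2^{*}A'$), whereas you phrase it via the surjectivity of $\Delta$; these are equivalent by Lemma \ref{lem:coboundary}.

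For injectivity the approaches genuinely differ. The paper does \emph{not} re-run the pasting argument of Lemma \ref{lem:t2}; instead it observes that $\mathrm{t}_2^{\nabla}$ is an equivariant map out of a torsor, with respect to an action already shown to be free on the target, and concludes injectivity in one line. This is slicker but logically depends on Lemma \ref{lem:t1nabla} (to know the domain is a torsor). Your direct argument, transporting the pasting square of Lemma \ref{lem:t2} into the connection-preserving setting and using injectivity of $\Delta$ on $\h^1(-,\ueins)$, is self-contained and works; the point you flag about everything staying connection-preserving is exactly right and is guaranteed because $\mathcal{M}$, $\mu$, $\sigma_1$, $\sigma_2$, and $\beta$ are all connection-preserving by construction. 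Your route buys independence from Lemma \ref{lem:t1nabla}; the paper's buys brevity.

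Finally, your handling of the non-string case is more elaborate than necessary: once $\ststcon 0M$ is empty, the existence of the map $\mathrm{t}_2^{\nabla}$ already forces its domain to be empty.
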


\begin{proof}
$\mathrm{t}_2^{\nabla}$ is  equivariant with respect to  free actions and is defined on a torsor. Hence it is injective. Now we prove that it is surjective. 
Consider an element in $\ststcon 0M$, represented by a bundle gerbe $\mathcal{S}$ with connection over $FM$ that admits  a connection-preserving isomorphism $\mathcal{A}:\pr_2^{*}\mathcal{S} \otimes \mathcal{P} \to \pr_1^{*}\mathcal{S}$. The existence of a connection-preserving transformation $\sigma$ as in \erf{eq:diagfillsigma}
\begin{comment}
\begin{equation*}
\alxydim{@=\xypicst}{\pr_{3}^{*}\mathcal{S} \otimes \pr_{23}^{*}\mathcal{P} \otimes \pr_{12}^{*}\mathcal{P} \ar[r]^-{\pr_{23}^{*}\mathcal{A} \otimes \id} \ar[d]_{\id \otimes \mathcal{M}} & \pr_{2}^{*}\mathcal{S} \otimes \pr_{12}^{*}\mathcal{P} \ar@{=>}[dl]|*+{\sigma} \ar[d]^{\pr_{12}^{*}\mathcal{A}} \\ \pr_{3}^{*}\mathcal{S} \otimes \pr_{13}^{*}\mathcal{P} \ar[r]_-{\pr_{13}^{*}\mathcal{A}} & \pr_1^{*}\mathcal{S}}
\end{equation*}
\end{comment}
is obstructed by a  flat principal $\ueins$-bundle $A$ over $Y^{[3]}$,  i.e. there exists a connection-preserving transformation
\begin{equation*}
\sigma:(\pr_{12}^{*}\mathcal{A} \circ (\pr_{23}^{*}\mathcal{A} \otimes\id)) \otimes A \Rightarrow \pr_{13}^{*}\mathcal{A} \circ (\id \otimes \mathcal{M})\text{.}
\end{equation*}
Since flat principal $\ueins$-bundles are classified by $\h^1(X,\ueins)$, we infer from Lemma \ref{lem:serre} that this bundle is the pullback of a flat bundle $A'$ over $M$ along $\pi_3\maps Y^{[3]} \to M$. Now we consider $\mathcal{A}' := \mathcal{A} \otimes \pi_2^{*}A'$, which is another connection-preserving isomorphism  $\mathcal{A}'\maps \pr_2^{*}\mathcal{S} \otimes \mathcal{P} \to \pr_1^{*}\mathcal{S}$. Then, $\sigma$ induces the required transformation for $\mathcal{A}'$.
This means that $(\mathcal{S},[\mathcal{A}'])$ is an object in $\ststcon 1M=\trivcontrunc {\mathbb{CS}_M}$ with $\mathrm{t}_2^{\nabla}(\mathcal{S},[\mathcal{A}])=[\mathcal{S}]$. \end{proof}

\subsection{Differential string classes}

\label{sec:differentialstringclasses}

With Theorem \ref{th:truncationcon} proved in the previous subsection we are well prepared to introduce an analog of string classes in the setting with connections -- we call it \emph{differential string classes}. Like  string classes, differential string classes have the advantage to be based solely on differential cohomology theory, and  no bundle gerbe theory is needed. 
\begin{comment}
Differential string classes have the disadvantage that they only form a \emph{set}, and do not allow a discussion of a \emph{category} of geometric string structures.
\end{comment}

We let $\hat\gamma \df \widehat{\mathrm{DD}}(\mathcal{G}_{bas}) \in\hat\h^3(\spin n)$ denote the differential cohomology class of the basic gerbe, with underlying class $\gamma \in \h^3(\spin n,\Z)$. As explained in Section \ref{sec:centralextension} this class is uniquely determined by just the 3-form $H$. We let $\hat\omega := a(\omega) \in\hat\h^3(FM^{[2]})$ denote the differential cohomology class associated to the 2-form $\omega$ of \erf{eq:CSomega}. 

\begin{definition}
\label{def:diffstringclass}
Let $M$ be a spin manifold with spin-oriented frame bundle $FM$. A \emph{differential string class} is a class $\hat\xi\in\hat\h^3(FM)$ such that the condition
\begin{equation}
\label{eq:conddiffsc}
\pr_2^{*}\hat\xi + \delta^{*}\hat\gamma + \hat\omega  = \pr_1^{*}\hat\xi
\end{equation}
over $FM^{[2]}$ is satisfied, where $\pr_1,\pr_2:FM^{[2]} \to FM$ are the two projections, and  $\delta\maps FM^{[2]} \to \spin n$ is the difference map (i.e. $p' \cdot \delta(p,p') = p$).
\end{definition}

We denote by $\sccon(M)$ the set of differential string classes. Condition \erf{eq:conddiffsc} implies
\begin{equation}
\label{eq:conddiffscwrong}
\hat\gamma = i_p^{*}\hat\xi \in \hat\h^3(\spin n)
\end{equation}
for all $p\in FM$ and $i_p: \spin n\to FM:g\mapsto pg$ the inclusion of the fibre of $p$. Indeed, for $j_p: \spin n\to FM^{[2]}:g \mapsto (pg,p)$ we have
\begin{equation*}
j_p^{*}\omega=j_p^{*}\left \langle \delta^{*}\theta \wedge \pr_1^{*}A  \right \rangle = \left \langle  \theta \wedge \iota_p^{*}A  \right \rangle = \left \langle \theta \wedge \theta  \right \rangle=0\text{.}
\end{equation*}
Further we have $j_p^{*}\delta^{*}\hat\gamma=\hat\gamma$ and $j_p^{*}\pr_2^{*}\hat\xi=0$ and $j_p^{*}\pr_1^{*}\hat\xi=\iota_p^{*}\hat\xi$ and so the pullback of \erf{eq:conddiffsc} is \erf{eq:conddiffscwrong}. 

Under the projection $c: \hat\h^3(X) \to \h^3(X,\Z)$ from differential to ordinary cohomology, condition \erf{eq:conddiffscwrong}  becomes the condition for string classes. In other words, $c$ induces a well-defined map
\begin{equation*}
c: \sccon(M) \to \sc(M)
\end{equation*}
from differential string classes to ordinary string classes.

\begin{remark}
Above considerations raise the question of whether  we could replace the defining condition \erf{eq:conddiffsc} by  condition \erf{eq:conddiffscwrong}. However, this is not the case. In order to see this, let $\hat\xi$ be a differential string class. Let $\mu\in\Omega^2(M)$ and $f\in C^{\infty}(FM,\R)$ satisfy the following assumptions:
\begin{itemize}

\item 
$\mu$ is closed and non-zero at at least one point $x\in M$.

\item
$\mathrm{d}f$ is non-zero at a point $p\in FM$ in the fibre over $x$, and $\mathrm{d}f=0$ at another point $p'\in FM$ in the same fibre. 

\end{itemize}
Such $\mu$, $f$ clearly exist. We consider the class $\hat\epsilon=a(\varepsilon)\in \hat\h^3(FM)$ associated to the 2-form  $\varepsilon \df f\cdot\nobr \pi^{*}\mu\in\Omega^2(FM)$. We have $\mathrm{d}\varepsilon = \mathrm{d}f \wedge \pi^{*}\mu + f\cdot \pi^{*}\mathrm{d}\mu=\mathrm{d}f\wedge \pi^{*}\mu$. This is non-zero at the point $p$, in particular, $\varepsilon$ is not closed and $\hat\epsilon\neq0\in\hat\h^3(FM)$. We show that $\hat\xi' := \hat\xi + \hat\varepsilon$ satisfies \erf{eq:conddiffscwrong} but is not a  differential string class. Firstly, we have $i_p^{*}\varepsilon = i_p^{*}f \cdot i_p^{*}\pi^{*}\mu = 0$, since $\pi \circ i_p$ is constant. Thus, $i_p^{*}\hat\xi'=i_p^{*}\hat\xi=\hat\gamma$. Secondly, we have $\mathrm{d}(\pr_1^{*}\varepsilon-\pr_2^{*}\varepsilon)=\mathrm{d}(\pr_1^{*}f-\pr_2^{*}f)\cdot \pr^{*}\mu$, which is  non-zero at the point $(p,p') \in FM^{[2]}$. In particular, $\pr_1^{*}\hat\varepsilon\neq \pr_2^{*}\hat\varepsilon\in \hat\h^3(FM^{[2]})$. Thus, condition \erf{eq:conddiffsc} is not satisfied.
\end{remark}

We have  an  action of $\hat\h^3(M)$ on the set $\sccon(M)$ of differential string classes, defined by $(\hat\eta,\hat\xi)\mapsto \pi^{*}\hat\eta + \hat\xi$. Under the projection to ordinary string classes, it covers the action of $\h^3(M,\Z)$ on $\sc (M)$. 
The differential Dixmier-Douady class gives a $\hat\h^3(M)$-equivariant map
\begin{equation*}
\ststcon 0M \to  \sccon(M): [\mathcal{S}] \mapsto \widehat{\mathrm{DD}}(\mathcal{S})\text{.}
\end{equation*}
This map is a bijection, because $\widehat{\mathrm{DD}}$ is a bijection and the conditions on both sides are the same. Thus,  we may identify the set of differential string classes with the set $\ststcon 0M$  introduced in the previous section. With this identification,   Theorem \ref{th:truncationcon}
applies to differential string classes, and we obtain the following result.

\begin{theorem}
\label{th:differentialstringclasses}
\begin{enumerate}[(i)]
\item 
The set $\sccon(M)$ of differential string classes is non-empty if and only if $M$ is a string manifold; in this case it is a torsor over $\hat\h^3(M)$.

\item
The map
\begin{equation*}
\hc 0 \ststcon\relax M \to \sccon (M): (\mathcal{S},\mathcal{A},\sigma) \mapsto \widehat{\mathrm{DD}}(S)
\end{equation*}
is a $\hat\h^3(M)$-equivariant bijection between isomorphism classes of geometric string structures and differential string classes.

\item
The projection from differential string classes to ordinary string classes, 
\begin{equation}
\label{eq:diffstringclassforget}
\sccon(M) \to \sc(M) \text{,}
\end{equation}
is surjective and its  fibres are torsors over $\Omega^2(M)/\Omega^2_{cl,\Z}(M)$.

\item
For every differential string class $\hat\xi$ there exists a unique 3-form $K\in\Omega^3(M)$ such that $\pi^{*}K\eq CS(A)+R(\hat\xi)$ as 3-forms over $FM$.

\end{enumerate}
\end{theorem}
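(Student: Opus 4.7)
The strategy is to identify $\sccon(M)$ with the set $\ststcon 0M$ of Section~\ref{sec:decatstringcon} via the differential Dixmier--Douady class, and then to read off all four claims from Theorem~\ref{th:truncationcon}, the Serre-type results in Lemmata~\ref{lem:serreall} and~\ref{lem:pulldiff}, and the identity \erf{eq:csarule} for the Chern--Simons form. First I would establish the bijection $\ststcon 0M \cong \sccon(M)$. Recall that the differential Dixmier--Douady class $\widehat{\mathrm{DD}}$ is a bijection between isomorphism classes of bundle gerbes with connection over $FM$ and $\hat\h^3(FM)$, natural in pullbacks, and sending tensor product to sum. Applied to the bundle gerbe $\mathcal{P} = \delta^{*}\gbas \otimes \mathcal{I}_{\omega}$ of the Chern--Simons 2-gerbe, we have $\widehat{\mathrm{DD}}(\mathcal{P}) = \delta^{*}\hat\gamma + \hat\omega$. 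Hence, for a bundle gerbe $\mathcal{S}$ with connection over $FM$ with $\hat\xi := \widehat{\mathrm{DD}}(\mathcal{S})$, the existence of a connection-preserving isomorphism $\pr_2^{*}\mathcal{S} \otimes \mathcal{P} \to \pr_1^{*}\mathcal{S}$ translates precisely into Equation \erf{eq:conddiffsc}. This shows that $[\mathcal{S}] \mapsto \widehat{\mathrm{DD}}(\mathcal{S})$ yields a $\hat\h^3(M)$-equivariant bijection $\ststcon 0M \to \sccon(M)$.

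Combining this with Theorem~\ref{th:truncationcon} I obtain a chain of $\hat\h^3(M)$-equivariant bijections
\begin{equation*}
\hc 0 \ststcon\relax M \xrightarrow{\hc 0 \mathrm{t}_1^{\nabla}} \hc 0 \ststcon 1M \xrightarrow{\mathrm{t}_2^{\nabla}} \ststcon 0M \xrightarrow{\widehat{\mathrm{DD}}} \sccon(M),
\end{equation*}
which gives claim (ii). Claim (i) is then immediate: since $\hc 0 \ststcon\relax M$ is a torsor over $\hat\h^3(M)$ which is non-empty precisely when $M$ is string, the same holds for $\sccon(M)$. For (iii), observe that the projection $\sccon(M) \to \sc(M)$ corresponds, under the bijections of (ii) and Theorem~\ref{th:truncation}, to the forgetful map $\hc 0 \ststcon\relax M \to \hc 0 \stst\relax M$, which is surjective by Proposition~\ref{prop:passagesur}. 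Furthermore the bijection of (ii) intertwines the $\hat\h^3(M)$-action with the $\h^3(M,\Z)$-action along the projection $c\maps \hat\h^3(M) \to \h^3(M,\Z)$. From the exact sequence defining differential cohomology, $\ker c$ is precisely $\Omega^2(M)/\Omega^2_{cl,\Z}(M)$, and this kernel acts freely and transitively on each non-empty fibre of $\sccon(M) \to \sc(M)$.

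Finally, for (iv) I would apply the curvature homomorphism $R\maps \hat\h^3(-) \to \Omega^3_{cl,\Z}(-)$ directly to the defining equation \erf{eq:conddiffsc}. Using $R(\hat\omega)=\mathrm{d}\omega$ and $R(\delta^{*}\hat\gamma)=\delta^{*}H$, this yields
\begin{equation*}
\pr_2^{*}R(\hat\xi) + \delta^{*}H + \mathrm{d}\omega = \pr_1^{*}R(\hat\xi).
\end{equation*}
Substituting \erf{eq:csarule} gives $\pr_1^{*}(R(\hat\xi)+CS(A)) = \pr_2^{*}(R(\hat\xi)+CS(A))$, so descent along the surjective submersion $\pi\maps FM \to M$ produces a unique $K\in\Omega^3(M)$ with $\pi^{*}K = R(\hat\xi) + CS(A)$. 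No step here is particularly hard: the only non-formal input is the identification of $\widehat{\mathrm{DD}}(\mathcal{P})$, and the main obstacle---the cohomological content encoded in the Serre spectral sequence---has already been absorbed in Theorems~\ref{th:truncation} and~\ref{th:truncationcon}.
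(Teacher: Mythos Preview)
Your proposal is correct and follows essentially the same route as the paper: identify $\sccon(M)$ with $\ststcon 0M$ via $\widehat{\mathrm{DD}}$, then invoke Theorem~\ref{th:truncationcon} for (i) and (ii), and use equivariance along $c\colon \hat\h^3(M)\to \h^3(M,\Z)$ for (iii). The only noteworthy difference is in (iv): the paper simply cites Theorem~\ref{th:stringconnections}\,\erf{th:stringconnections:3form} (using the bijection from (ii) and $R\circ\widehat{\mathrm{DD}}=\mathrm{curv}$), whereas you give a direct descent argument by applying $R$ to \erf{eq:conddiffsc} and invoking \erf{eq:csarule}; your argument is self-contained and in fact reproduces what underlies the cited theorem.
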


\begin{proof}
By Theorem \ref{th:stringconnections} \erf{th:stringconnections:exist} and Corollary \ref{co:stringclasses} \erf{co:stringclasses:exist}, $M$ is a string manifold if and only if it admits geometric string structures. By Theorem \ref{th:truncationcon} we have $\hat\h^3(M)$-equivariant bijections $\hc 0 \ststcon\relax M\cong \ststcon 0M\cong \sccon(M)$; this shows (i) and (ii). The projection $\sccon(M) \to \sc(M)$ is a map from a $\hat \h^3 (M)$-torsor to a $\h^3(M,\Z)$-torsor, and equivariant along the projection $c\maps \h^3(M) \to \h^3(M,\Z)$. This projection is surjective and has kernel $\Omega^2(M)/\Omega^2_{cl,\Z}(M)$; this shows (iii). Assertion (iv) is Theorem \ref{th:stringconnections} \erf{th:stringconnections:3form}. 
\end{proof}

\setsecnumdepth{1}

\section{Transgression of string geometry}

\label{sec:transgression}

In \cite[Section 5.2]{Waldorfa} and \cite{Nikolausa} we have described  transgression for bundle 2-gerbes $\mathbb{G}$ with connections and with loopable surjective submersions, i.e. surjective submersions $\pi:Y \to M$ for which $L\pi\maps LP\to LM$ is again a surjective submersion. 
\begin{comment}
It only uses the transgression functor 
\begin{equation*}
\tr: \hc 1 \ugrbcon M \to \ufusbuncon {LM}
\end{equation*}
and the fibre integration $\tau_{\Omega}$ of differential forms, see \erf{eq:difftrans}, and it  sends a bundle 2-gerbe $\mathbb{G}$ to a bundle gerbe $\tr_{\mathcal{G}}$ with connection and internal fusion product over $LM$.
\end{comment}
Suppose   $\mathbb{G}$ is such a bundle 2-gerbe over  $M$, with  loopable surjective submersion $\pi\maps Y \to M$, a curving 3-form $C\in\Omega^3(Y)$, over $Y^{[2]}$ a bundle gerbe $\mathcal{P}$ with connection, and over $Y^{[3]}$ a connection-preserving bundle 2-gerbe product $\mathcal{M}$ with associator $\mu$.
Then, the bundle gerbe $\tr_{\mathcal{G}}$ over $LM$ with connection and internal fusion product is given as follows: the surjective submersion  is $L\pi:LY \to LM$, the curving is $-\tau_{\Omega}(C)\in\Omega^2(LY)$, the principal $\ueins$-bundle with connection and fusion product over $LY^{[2]}$ is $\tr_{\mathcal{P}}$, and the connection-preserving, fusion-preserving  bundle gerbe product over $LY^{[3]}$ is $\tr_{\mathcal{M}}$. 

\begin{comment}
We promptly apply this transgression procedure to the Chern-Simons 2-gerbe, and obtain the following result.
\end{comment}

\begin{theorem}
\label{th:spinlift}
The transgression of the Chern-Simons 2-gerbe $\mathbb{CS}_M$ is canonically isomorphic to the spin lifting gerbe $\mathcal{S}_{LM}$ as bundle gerbes with connections and internal fusion products, where $\mathcal{S}_{LM}$ is equipped with the connection $(\chi_{\corr},B_{\corr})$ constructed in Section \ref{sec:liftingtheoryspinconnections}.
\end{theorem}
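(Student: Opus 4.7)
The plan is to unpack the transgression of $\mathbb{CS}_M$ component by component and match each piece with the corresponding structure on the spin lifting gerbe $\mathcal{S}_{LM}$. Both bundle gerbes share the surjective submersion $L\pi\maps LFM\to LM$, so the task reduces to constructing a canonical identification of the principal $\ueins$-bundles over $LFM^{[2]}$ and verifying that it respects curvings, connections, bundle gerbe products, and fusion products.

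First I would identify the bundles. The gerbe $\mathcal{P}\eq \delta^{*}\gbas\otimes\mathcal{I}_{\omega}$ transgresses, by monoidality and naturality of $\tr$, to
\begin{equation*}
\tr_{\mathcal{P}}\cong L\delta^{*}\tr_{\gbas}\otimes\tr_{\mathcal{I}_{\omega}}=\q\otimes\tr_{\mathcal{I}_{\omega}}\text{,}
\end{equation*}
where $\tr_{\mathcal{I}_{\omega}}$ is the trivial bundle with its canonical trivialization, carrying the connection 1-form $\tau_{\Omega}(\omega)$ and the trivial fusion product. As bundles with fusion products, $\tr_{\mathcal{P}}$ and $\q$ therefore agree canonically; as bundles with connection they differ by $\tau_{\Omega}(\omega)$. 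The crucial identity
\begin{equation*}
\tau_{\Omega}(\omega)=\tfrac{1}{2}\big(\xi-\tfrac{1}{2}\Delta\zeta\big)\cdot 2\quad\text{i.e.}\quad\tau_{\Omega}(\omega)=\xi-\tfrac{1}{2}\Delta\zeta
\end{equation*}
then identifies the transgressed connection with $\chi_{\corr}\eq L\delta^{*}\nu+(\xi-\frac{1}{2}\Delta\zeta)$ of \erf{eq:nukorr}. This is precisely the identity  invoked in the proof of Lemma \ref{lem:fusionform}; I would establish it by a direct differential-form computation, expanding $\int_{S^1}\ev^{*}\langle\delta^{*}\theta\wedge\pr_1^{*}A\rangle$ pointwise and comparing with the explicit formula for $\xi-\frac{1}{2}\Delta\zeta$ already tabulated in Section \ref{sec:liftingtheoryspinconnections}.

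Next I would match the curvings. The transgressed curving is $-\tau_{\Omega}(CS(A))\in\Omega^{2}(LFM)$ and must agree with $B_{\corr}\eq\frac{1}{2}\omega(\bar A\wedge\bar A)+r(\mathrm{curv}(\bar A))-\frac{1}{2}\mathrm{d}\zeta$. Splitting $CS(A)$ as in \erf{eq:CSA}, using Lemma \ref{lem:omega} to rewrite $\omega(\bar A\wedge\bar A)$ as a fibre integral of $2\langle\bar A\wedge\mathrm{d}\bar A\rangle$, and using the definitions of $r$ and $\zeta$ from Section \ref{sec:spinconnections}, this  reduces to a term-by-term matching of integrands. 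A useful consistency check is that the compatibility $\Delta B_{\corr}\eq \mathrm{curv}(\chi_{\corr})$ of Proposition \ref{prop:DeltaB} is then the transgressed form of \erf{eq:csarule}, both sides of which yield the same identity upon fibre integration.

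Finally, the bundle gerbe products and fusion products match for functorial reasons. The bundle 2-gerbe product of $\mathbb{CS}_M$ is $\delta_2^{*}\mathcal{M}$ composed with the 2-form identity \erf{eq:identityrhoomega}, so its transgression is $L\delta_2^{*}\tr_{\mathcal{M}}$, the additional $\mathcal{I}_{\rho}$-twist being cancelled by the same mechanism as in the first step, using that the error 1-form $\varepsilon_{\nu}=\tau_{\Omega}(\rho)$ is precisely the multiplicativity defect of $\nu$ recorded in \erf{eq:nuerror}. But by construction $\tr_{\mathcal{M}}$ is the multiplication of $\lspinhat n\eq \tr_{\gbas}$, which is exactly what defines $\mu$ on $\mathcal{S}_{LM}$. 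Likewise the internal fusion product on $\tr_{\mathcal{P}}$ is $L\delta^{*}\lambda_{\gbas}\eq\lambda_{\q}$ by naturality of transgression fusion products. The main obstacle is thus concentrated in the two concrete differential-form identities $\tau_{\Omega}(\omega)\eq\xi-\frac{1}{2}\Delta\zeta$ and $-\tau_{\Omega}(CS(A))\eq B_{\corr}$; once these are verified, all remaining compatibilities follow formally from monoidality, naturality, and the transgression of multiplicative structures developed in Section \ref{sec:fusionextensions}.
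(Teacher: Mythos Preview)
Your proposal is correct and follows essentially the same approach as the paper: both identify the surjective submersions, use monoidality and naturality of transgression to write $\tr_{\mathcal{P}}\cong L\delta^{*}\tr_{\gbas}\otimes\trivlin_{\tau_{\Omega}(\omega)}$, and reduce the connection-preserving claim to the two differential-form identities $\tau_{\Omega}(\omega)=\xi-\tfrac{1}{2}\Delta\zeta$ (the paper's Lemma~\ref{lem:tromega}) and $-\tau_{\Omega}(CS(A))=B_{\corr}$. The only difference is that the paper cites external references (\cite{Nikolausa}, \cite{Waldorfa}, \cite{Coquereaux1998}) for the bare isomorphism, the fusion compatibility, and the curving identity, whereas you propose direct verifications of each.
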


\begin{proof}
The bare isomorphism has been constructed in \cite[Proposition 6.2.1]{Nikolausa}, and in \cite[Proposition 5.2.3]{Waldorfa} it is  proved that it is fusion-preserving. We only have to prove that it is connection-preserving, and for this purpose we have to recall the construction.

  We start by noticing that both bundle gerbes, $\tr_{\mathbb{CS}_M}$ and $\mathcal{S}_{LM}$, have the same surjective submersion $L\pi:LFM \to LM$. The curving of $\tr_{\mathbb{CS}_M}$ is $-\tau_{\Omega}(CS(A))$, and the curving of $\mathcal{S}_{LM}$ is $B_{\corr}$ from Proposition \ref{prop:DeltaB}. These two 2-forms on $LFM$ coincide  \cite[Eq. 24]{Coquereaux1998}.
 The bundle gerbe $\tr_{\mathbb{CS}_M}$ has  over $LFM^{[2]}$ the principal $\ueins$-bundle  $\tr_{\mathcal{P}}$, where $\mathcal{P} \df \delta^{*}\gbas \otimes \mathcal{I}_{\omega}$, equipped with a connection $\nu_{\mathcal{P}}$ and an internal fusion product $\lambda_{\mathcal{P}}$ induced from transgression.    The bundle gerbe $\mathcal{S}_{LM}$ has over $LFM^{[2]}$ the principal $\ueins$-bundle $\q=L\delta^{*}\lspinhat n = L\delta^{*}\tr_{\gbas}$, equipped with the fusion product $L\delta^{*}\lambda_{\gbas}$ and the connection $\chi_{\corr}$ defined in \erf{eq:nukorr}. Naturality of transgression and the canonical connection-preserving, fusion-preserving bundle isomorphism $\tr_{\mathcal{I}_{\omega}}\cong \trivlin_{\tau_{\Omega}(\omega)}$ provide a connection-preserving, fusion-preserving isomorphism
\begin{equation*}
\tr_{\mathcal{P}}\cong L\delta ^{*}\tr_{\mathcal{G}_{bas}} \otimes \tr_{\mathcal{I}_{\omega}}\cong L\delta^{*}\tr_{\gbas} \otimes \trivlin_{\tau_{\Omega}(\omega)}\text{.} \end{equation*}
In the first place, this is an isomorphism \begin{equation}
\label{eq:bundleid}
\tr_{\mathcal{P}} \cong \q
\end{equation}
between the principal $\ueins$-bundles of the two bundle gerbes.
It commutes with the bundle gerbe products (\cite[Proposition 6.2.1]{Nikolausa}) and so we have completed the construction of an isomorphism $\tr_{\mathbb{CS}_M} \cong \mathcal{S}_{LM}$. Moreover, the isomorphism \erf{eq:bundleid} is fusion-preserving for the fusion product $\lambda_{\mathcal{P}}$ on the left and $L\delta^{*}\lambda_{\gbas}$ on the right, as $\trivlin_{\tau_{\Omega}(\omega)}$ is equipped with the trivial fusion product. Finally, it is connection-preserving for the connection $\nu_{\mathcal{P}}$ on the left and $L\delta^{*}\nu + \tau_{\Omega}(\omega)$ on the right: we show below in Lemma \ref{lem:tromega} the equality
\begin{equation*}
\tau_{\Omega}(\omega)=\xi-\frac{1}{2}\Delta\zeta
\end{equation*}
of 2-forms on $LFM^{[2]}$, where $\xi$ and $\zeta$ are defined in \erf{eq:zeta} and \erf{eq:xi}, respectively. This shows that $L\delta^{*}\nu + \tau_{\Omega}(\omega)=\chi_{\corr}$. Thus,  \erf{eq:bundleid} is connection-preserving.
\end{proof}

\begin{lemma}
\label{lem:tromega}
The transgression of the 2-form $\omega\in\Omega^2(P^{[2]})$ of \erf{eq:CSomega} is
\begin{equation*}
\tau_{\Omega}(\omega) =\xi-\frac{1}{2}\Delta\zeta\text{,}
\end{equation*}
with $\xi$ and $\zeta$ the differential forms defined in \erf{eq:zeta} and \erf{eq:xi}, respectively.
\end{lemma}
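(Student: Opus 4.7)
The proof is a direct computation: both sides are 1-forms on $LFM^{[2]}$, so it suffices to evaluate them on an arbitrary pair of tangent vectors $(X_1,X_2)\in T_{(\tau_1,\tau_2)}LFM^{[2]}$ and match the integrands. Throughout, I will use the notation $\delta := L\delta(\tau_1,\tau_2)\in L\spin n$ and $Y(z):=\mathrm{d}\delta(X_1(z),X_2(z))\in T_{\delta(z)}\spin n$ fixed by Lemma \ref{lem:deltazeta}. The essential identities are Ad-invariance of $\left\langle -,-\right\rangle$ and the cocycle identity for the Maurer–Cartan form, which give the transformation rule
\begin{equation*}
A_{\tau_1(z)}(X_1(z)) = \mathrm{Ad}_{\delta(z)}^{-1}(A_{\tau_2(z)}(X_2(z))) + \delta(z)^{-1}Y(z)
\end{equation*}
and its analogue with $X_i$ replaced by $\partial_z\tau_i$.

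First, I would expand the left-hand side. By definition of the fibre integration along $S^1$ and of $\omega = \langle \delta^*\theta \wedge \pr_1^*A\rangle$, one gets
\begin{equation*}
\tau_\Omega(\omega)|_{\tau_1,\tau_2}(X_1,X_2)=\int_0^1\left\{\left\langle \delta^{-1}\partial_z\delta, A_{\tau_1}(X_1)\right\rangle - \left\langle \delta^{-1}Y, A_{\tau_1}(\partial_z\tau_1)\right\rangle\right\}\,\mathrm{d}z.
\end{equation*}
Substituting the transformation rule for $A_{\tau_1}$ in both terms, and then pushing the Ad-factors onto the left arguments via invariance (so e.g.\ $\langle \delta^{-1}\partial_z\delta,\mathrm{Ad}_\delta^{-1}\bullet\rangle = \langle \partial_z\delta\,\delta^{-1},\bullet\rangle$), produces four terms, two of which are of the form $\pm\langle \partial_z\delta\,\delta^{-1}, Y\delta^{-1}\rangle$ and cancel. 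What remains is
\begin{equation*}
\tau_\Omega(\omega)|_{\tau_1,\tau_2}(X_1,X_2)=\int_0^1\left\{\left\langle \partial_z\delta\,\delta^{-1}, A_{\tau_2}(X_2)\right\rangle - \left\langle A_{\tau_2}(\partial_z\tau_2), Y\delta^{-1}\right\rangle\right\}\,\mathrm{d}z.
\end{equation*}

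Next I would expand the right-hand side. Using the explicit formula for $\beta$ (Lemma \ref{lem:beta}) and the formula $Z(\tau,X)=2\int_{S^1}\langle \tau^*\bar\theta,X\rangle$ from Lemma \ref{lem:Z}, the 1-form $\xi = L\delta^*\beta + Z(L\delta,\pr_2^*\bar A)$ evaluates to
\begin{equation*}
\xi_{\tau_1,\tau_2}(X_1,X_2)=\int_0^1\left\{\left\langle \delta^{-1}\partial_z\delta,\delta^{-1}Y\right\rangle+2\left\langle \partial_z\delta\,\delta^{-1},A_{\tau_2}(X_2)\right\rangle\right\}\,\mathrm{d}z.
\end{equation*}
Similarly, Lemma \ref{lem:deltazeta} combined with the explicit formulas for $r$ and $Z$ gives $\tfrac12\Delta\zeta$ as the integral of $\langle A_{\tau_2}(\partial_z\tau_2),Y\delta^{-1}\rangle + \langle \partial_z\delta\,\delta^{-1},A_{\tau_2}(X_2)\rangle + \langle \partial_z\delta\,\delta^{-1},Y\delta^{-1}\rangle$. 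Subtracting and using the Ad-invariance identity $\langle \delta^{-1}\partial_z\delta,\delta^{-1}Y\rangle = \langle \partial_z\delta\,\delta^{-1},Y\delta^{-1}\rangle$ to cancel the two remaining $\delta/Y$ terms yields exactly the expression obtained above for $\tau_\Omega(\omega)$.

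There is no serious obstacle here; the only delicate point is sign-bookkeeping in the Ad-shift of the connection $A$ along the right action and consistent use of invariance to collapse the integrands to a common form. Once the two computations are carried out, comparison is immediate.
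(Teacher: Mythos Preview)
Your proof is correct and follows essentially the same approach as the paper. The only organizational difference is that the paper first rewrites $\omega=\langle\delta^*\theta\wedge\pr_1^*A\rangle$ as $\langle\delta^*\bar\theta\wedge\pr_2^*A\rangle$ (using the same Ad-shift you apply) \emph{before} transgressing, so that $\tau_\Omega(\omega)$ lands directly in the $\pr_2$-form; you instead transgress first and then apply the connection transformation rule pointwise, producing the same two-term integrand, which is then matched against $\xi-\tfrac12\Delta\zeta$ via Lemma~\ref{lem:deltazeta} exactly as the paper does.
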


\begin{proof}
\begin{comment}
The sign is consistent: taking derivatives reproduces
\begin{multline*}
\tau_{\Omega}(\mathrm{d}\omega)\stackerf{eq:csarule}{=}\tau_{\Omega}(\Delta CS(A)-\delta^{*}H)=-\Delta B-L\delta^{*}\tau_{\Omega}(H)\\=-\mathrm{curv}(\nu_{corr})+L\delta^{*}\mathrm{curv}(\nu)\stackerf{eq:nukorr}{=}-\mathrm{d}(\xi-\frac{1}{2}\Delta\zeta)=-\mathrm{d}\tau_{\Omega}(\omega)\text{.}
\end{multline*}
Applying $\Delta$, we get
\begin{equation*}
\tau_{\Omega}(\Delta\omega) \stackerf{eq:identityrhoomega}{=} \tau_{\Omega}(-\delta_2^{*}\rho)=-\delta_2^{*}\varepsilon_{\nu}\stackerf{lem:epsiloncancel}{=}\Delta\xi=\Delta(\xi+\frac{1}{2}\Delta\zeta)\text{,}
\end{equation*}
which is also consistent.
\end{comment}
We use the reformulation 
\begin{equation*}
\omega =  \left \langle   \delta^{*} \theta \wedge \pr_1^{*}A  \right \rangle=\left \langle \delta^{*}\bar\theta\wedge \pr_2^{*}A  \right \rangle\text{.}
\end{equation*}
\begin{comment}
Indeed, using the transformation formula
\begin{equation*}
\mathrm{Ad}_{\delta}^{-1}(\pr_2^{*}A) =\mathrm{Ad}_{\delta}^{-1}(\mathrm{Ad}_{g}^{-1}(\pr_{1}^{*}A)+g^{*}\theta)= \pr_1^{*}A-\delta^{*}\theta
\end{equation*}
we obtain
\begin{equation*}
\omega =  \left \langle   \delta^{*} \theta \wedge \pr_1^{*}A  \right \rangle =\left \langle  \delta^{*}\theta\wedge \mathrm{Ad}_{\delta}^{-1}(\pr_2^{*}A)  \right \rangle=\left \langle \delta^{*}\bar\theta\wedge \pr_2^{*}A  \right \rangle\text{.}
\end{equation*}
\end{comment}
Then, we calculate for tangent vectors $X_1\in T_{\tau_1}LP$, $X_2\in T_{\tau_2}LP$, and their differences $\delta \df L\delta(\tau_1,\tau_2)\in LG$ and $Y:=\mathrm{d}L\delta(X_1,X_2)\in T_{\delta }LG$:
\begin{eqnarray*}
&&\hspace{-1cm}\tau_{\Omega}(\omega)|_{(\tau_1,\tau_2)}(X_1,X_2) \\&=& \int_0^1 \omega_{(\tau_1(z),\tau_2(z))}((\partial_z\tau_1(z),\partial_z\tau_2(z)),(X_1(z),X_2(z))) \mathrm{d}z
\\&=&  \int_0^1 \left \langle \partial_z\delta(z)\delta(z)^{-1},A_{\tau_2(z)}(X_2(z))  \right \rangle \mathrm{d}z -\int_0^1 \left \langle Y(z)\delta(z)^{-1}, A_{\tau_2(z)}(\partial_z\tau_2(z))   \right \rangle \mathrm{d}z
\\&=& (\xi-\frac{1}{2}\Delta\zeta)|_{\tau_1,\tau_2}(X_1,X_2)\text{,}
\end{eqnarray*}
where the last step is obtained using Lemma \ref{lem:deltazeta}. 
\begin{comment}
Namely, from that Lemma we get
\begin{multline*}
(\xi-\frac{1}{2}\Delta\zeta)_{\tau}(X_1,X_2) =\int_0^1 \left \langle \partial_z\delta(z)\delta(z)^{-1},A_{\tau_2(z)}(X_2(z))  \right \rangle \mathrm{d}z \\- \int_0^1 \left  \langle A_{\tau_2(z)}(\partial_z\tau_2(z)), Y(z)\delta(z)^{-1}    \right \rangle \mathrm{d}z \end{multline*}
in explicit expressions. 
\end{comment}
\end{proof}

\begin{comment}
As a Corollary, we reproduce Proposition \ref{prop:DeltaB}:
\begin{multline*}
\Delta B = -\Delta \tau_{\Omega}(CS(A))=-\tau_{\Omega}(\delta^{*}H + \mathrm{d}\omega)=-L\delta^{*} \tau_{\Omega}(H)+\mathrm{d}\tau_{\Omega}(\omega)\\=L\delta^{*}\mathrm{curv}(\nu)+\mathrm{d}(\xi-\frac{1}{2}\Delta\zeta)=\mathrm{curv}(\nu_{corr})\text{.} \end{multline*}
\end{comment}

\label{sec:equivconn}

We are now in position to provide the second half of the  main result of this article, an equivalence between string structures in $M$ and trivializations of the spin lifting gerbe. 
\begin{comment}
We start in the setting with connections.
\end{comment}

\begin{theorem}
\label{th:trequivcon}
Let $M$ be a connected spin manifold. Then, transgression and regression functors induce an equivalence of categories,
\begin{equation*}
\ststcon 1M  \cong \bigset{15em}{Fusion trivializations of $\mathcal{S}_{LM}$ with superficial fusive connection compatible with $(\chi_{\corr},B_{\corr})$}\text{.} \end{equation*}
This equivalence is equivariant with respect to the action of $\hc 1 \ugrbcon M$ on the left hand side and the action of $\ufusbunconsf{LM}$ on the right hand side, under the equivalence between these monoidal categories. Moreover, if $K\in\Omega^3(M)$ is the 3-form associated to a geometric string structure by Theorem \ref{th:stringconnections} \erf{th:stringconnections:3form}, and $\rho\in\Omega^2(LM)$ is the covariant derivative of the corresponding trivialization of $\mathcal{S}_{LM}$, then $\tau_{\Omega}(K)=-\rho$. 
\end{theorem}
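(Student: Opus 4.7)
The plan is to construct an explicit transgression functor from $\ststcon 1M$ to the category of fusion trivializations of $\mathcal{S}_{LM}$ with superficial fusive connection compatible with $(\chi_{\corr},B_{\corr})$, and to conclude that it is an equivalence by the formal argument that it is an equivariant functor between torsors over equivalent monoidal categories. The main tools are Theorem \ref{th:spinlift} identifying $\tr_{\mathbb{CS}_M}$ with $\mathcal{S}_{LM}$, the monoidal equivalence of Theorem \ref{th:equiv}, the torsor structure from Lemma \ref{lem:torsor} combined with Theorem \ref{th:truncationcon}, and the curvature rule \erf{eq:trcurv}.

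First I would define the functor on objects by sending $(\mathcal{S},[\mathcal{A}])$ to $(\tr_{\mathcal{S}},[\tr_{\mathcal{A}}])$, where $\tr$ is the transgression functor \erf{eq:trfull} applied to $\mathcal{S}$ as a bundle gerbe with connection over $FM$, and to any representative $\mathcal{A}$ of the class. Since $\tr$ factors through $\hc 1 \ugrbcon-$, the choice of representative is irrelevant. Composition with the canonical identification $\tr_{\mathcal{P}}\cong\q$ from Theorem \ref{th:spinlift} turns $\tr_{\mathcal{A}}$ into a bundle morphism $\pr_2^{*}\tr_{\mathcal{S}}\otimes\q\to\pr_1^{*}\tr_{\mathcal{S}}$. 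The strict cocycle condition over $LFM^{[3]}$ required for a trivialization follows immediately from the mere existence of the transformation $\sigma$ for $\mathcal{A}$: the diagram \erf{eq:diagfillsigma}, filled by $\sigma$, is commutative in $\hc 1 \ugrbcon-(FM^{[3]})$ because $2$-isomorphisms are collapsed to equalities, and transgression is a functor on the truncated category. The resulting bundle morphism $\tr_{\mathcal{A}}$ is connection-preserving, fusion-preserving, and compatible with the internal fusion product and connection of $\mathcal{S}_{LM}$ because the transgression functor takes values in $\ufusbunconsf{L-}$. On morphisms, $[\mathcal{B}]$ is sent to $[\tr_{\mathcal{B}}]$, well-defined and of the correct type for the same reasons.

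Next I would check equivariance. The $\hc 1 \ugrbcon M$-action sends $(\mathcal{S},[\mathcal{A}])$ to $(\pi^{*}\mathcal{K}\otimes\mathcal{S},[\id\otimes\mathcal{A}])$, which under transgression becomes $(L\pi^{*}\tr_{\mathcal{K}}\otimes\tr_{\mathcal{S}},[\id\otimes\tr_{\mathcal{A}}])$ thanks to monoidality and naturality of $\tr$; this coincides with $\tr_{\mathcal{K}}$ acting on $(\tr_{\mathcal{S}},[\tr_{\mathcal{A}}])$ under the action of $\ufusbunconsf{LM}$ on fusion trivializations. By Theorem \ref{th:equiv}, the actions thereby become equivariant along an equivalence of monoidal categories. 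The left-hand category is a torsor over $\hc 1 \ugrbcon M$ by Theorem \ref{th:truncationcon} combined with Lemma \ref{lem:torsor} (in the geometric setting), and the right-hand category is a torsor over $\ufusbunconsf{LM}$ by the usual principle that trivializations of a fixed bundle gerbe form a torsor over bundles, refined by the superficial/fusive conditions. An equivariant functor between torsors over equivalent monoidal categories is an equivalence as soon as both sides are simultaneously empty or non-empty, and this is settled because both sides are non-empty precisely when $M$ is a string manifold: on the left by Theorem \ref{th:stringconnections} combined with Theorem \ref{th:truncationcon}, and on the right through the identification $\tr_{\mathbb{CS}_M}\cong\mathcal{S}_{LM}$ which transports a trivialization from one to the other.

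For the curvature formula, by Theorem \ref{th:stringconnections} \erf{th:stringconnections:3form} the 3-form $K$ is characterized by $\pi^{*}K=CS(A)+\mathrm{curv}(\mathcal{S})$ on $FM$, while the covariant derivative $\rho$ of the trivialization is characterized by $L\pi^{*}\rho=\mathrm{curv}(\tr_{\mathcal{S}})+B_{\corr}$ on $LFM$. Applying $\tau_{\Omega}$ to the first identity and using naturality of $\tau_{\Omega}$ with respect to $L\pi$, the curvature rule \erf{eq:trcurv}, and the identity $B_{\corr}=-\tau_{\Omega}(CS(A))$ established in the proof of Theorem \ref{th:spinlift}, one obtains $L\pi^{*}\tau_{\Omega}(K)=-\mathrm{curv}(\tr_{\mathcal{S}})-B_{\corr}=-L\pi^{*}\rho$, whence $\tau_{\Omega}(K)=-\rho$ by injectivity of $L\pi^{*}$. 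The hardest part will be the bookkeeping needed to verify that the torsor structure on the right-hand side is genuinely a torsor over the full monoidal category $\ufusbunconsf{LM}$—in particular that freeness and transitivity survive the combined conditions of compatibility with $(\chi_{\corr},B_{\corr})$, superficiality, and fusivity; once this is in place, the equivalence follows formally.
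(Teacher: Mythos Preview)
Your proposal is correct in outline, but it takes a genuinely different route from the paper and incurs extra work that the paper's argument avoids.

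The paper does not use a torsor argument at all. Instead, it observes that the very definition of $\ststcon 1M$ is phrased entirely internal to the presheaf of categories $X\mapsto \hc 1 \ugrbcon X$: an object is an object of $\hc 1 \ugrbcon{FM}$ together with a morphism in $\hc 1 \ugrbcon{FM^{[2]}}$ satisfying an equation in $\hc 1 \ugrbcon{FM^{[3]}}$, and similarly for morphisms. Since Theorem~\ref{th:equiv} supplies a \emph{natural monoidal equivalence of presheaves} $\hc 1 \ugrbcon{X}\simeq \ufusbunconsf{LX}$, one can simply transport this internal definition through the equivalence, obtaining precisely the category of fusion trivializations of $\tr_{\mathbb{CS}_M}$ with superficial fusive connection. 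Theorem~\ref{th:spinlift} then identifies $\tr_{\mathbb{CS}_M}$ with $\mathcal{S}_{LM}$ equipped with $(\chi_{\corr},B_{\corr})$. The equivalence of categories is thus automatic---no freeness or transitivity needs to be checked---and the equivariance and torsor structure on the right-hand side are \emph{consequences} rather than ingredients.

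By contrast, your argument postpones the work to verifying that the right-hand side is a torsor over $\ufusbunconsf{LM}$, which you correctly flag as the hardest part. This is not circular, but note that in the paper the torsor property of $\spstconsffus$ (and hence of the right-hand side via Corollary~\ref{co:equivsupfusion}) is explicitly deduced \emph{from} Theorem~\ref{th:trequivcon}, not established beforehand. So you would have to supply an independent direct proof, essentially by showing that the descent of $T_2\otimes T_1^{\vee}$ along $L\pi$ carries a compatible fusion product and superficial connection. This is doable but is exactly the bookkeeping the paper's transport-of-structure argument circumvents. Your treatment of the curvature formula is essentially identical to the paper's.
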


\begin{proof}
The purpose of the category $\ststcon 1M$ introduced in Section \ref{sec:decatstringcon} was that its definition is purely in terms of the presheaf  $\hc 1 \ugrbcon-$: its objects are pairs $(\mathcal{S},[\mathcal{A}])$ consisting of an object $\mathcal{S}$ in $\hc 1 \ugrbcon {FM}$ and a morphism 
\begin{equation*}
[\mathcal{A}]: \pr_2^{*}\mathcal{S} \otimes \mathcal{P} \to \pr_1^{*}\mathcal{S}
\end{equation*}
of $\hc 1 \ugrbcon{FM^{[2]}}$ such that an equality of morphisms of $\hc 1 \ugrbcon{FM^{[3]}}$ holds, namely \erf{eq:diagfillsigma}. Likewise, the morphisms of $\ststcon 1M$ are morphisms $[\mathcal{B}]:\mathcal{S} \to \mathcal{S}'$ of $\hc 1 \ugrbcon {FM^{[2]}}$ such that an equality of morphisms of $\hc 1 \ugrbcon{FM^{[3]}}$ holds, namely \erf{eq:beta}. Now, we recall from Theorem \ref{th:equiv} that transgression and regression form an equivalence
\begin{equation*}
\hc 1 \ugrbcon X \cong \ufusbunconsf {LX}
\end{equation*}
that is natural in $X$ and monoidal. Hence, $\ststcon 1M$ is equivalent to the following category:
\begin{itemize}

\item 
An object is a pair $(T,\kappa)$ of an object $T$ in $\ufusbunconsf{LFM}$
and a morphism 
\begin{equation*}
\kappa: \pr_2^{*}T \otimes \tr_{\mathcal{P}} \to \pr_1^{*}T
\end{equation*}
in $\ufusbunconsf{LFM^{[2]}}$
such that the  equality 
\begin{equation*}
\alxydim{@C=2cm@R=\xypicst}{\pr_3^{*}T\otimes \pr_{23}^{*}\tr_{\mathcal{P}} \otimes \pr_{12}^{*}\tr_{\mathcal{P}}  \ar[r]^-{\pr_{23}^{*}\kappa \otimes \id} \ar[d]_{\id \otimes \tr_{\mathcal{M}'}} & \pr_2^{*}T \otimes \pr_{12}^{*}\tr_{\mathcal{P}} \ar[d]^{\pr_{12}^{*}\kappa} \\ \pr_3^{*}T \otimes \pr_{13}^{*}\tr_{\mathcal{P}} \ar[r]_-{\pr_{13}^{*}\kappa} & \pr_1^{*}T}
\end{equation*}
of morphisms of $\ufusbunconsf{LFM^{[3]}}$ holds.

\item
A morphism is a morphism $\varphi: T \to T'$  in $\ufusbunconsf{LFM^{[2]}}$, such that the equality
\begin{equation*}
\alxydim{@=\xypicst}{\pr_2^{*}T\otimes \tr_{\mathcal{P}}  \ar[r]^-{\kappa} \ar[d]_{\pr_2^{*}\varphi} & \pr_1^{*}T \ar[d]^{\pr_1^{*}\varphi} \\ \pr_2^{*}T' \otimes \tr_{\mathcal{P}} \ar[r]_-{\kappa'} & \pr_1^{*}T'}
\end{equation*}
of morphisms in $\ufusbunconsf{LFM^{[3]}}$ holds.

\end{itemize}
This is precisely the category of fusion trivializations of $\tr_{\mathbb{CS}_M}$ with superficial fusive connection compatible with $(\nu_P,\tau_{\Omega}(CS(A)))$, the connection on the transgression of the Chern-Simons 2-gerbe. The connection-preserving, fusion-preserving isomorphism of Theorem \ref{th:spinlift}
identifies this category with the claimed one.

The equivariance under the $\hc 1 \ugrbcon M$-actions follows immediately from the definitions. Suppose $K\in \Omega^3(M)$ is the 3-form associated to a string structure $(\mathcal{S},[\mathcal{A}])$, i.e. $\pi^{*}K=CS(A) + \mathrm{curv}(\mathcal{S})$, then $\tau_{\Omega}(K)$ satisfies 
\begin{equation*}
L\pi^{*}\tau_{\Omega}(K)=\tau_{\Omega}(CS(A))+\tau_{\Omega}(\mathrm{curv}(\mathcal{S}))=-B_{\corr}-\mathrm{curv}(\tr_{\mathcal{S}})\text{,}
\end{equation*}
see \cite[Eq. 24]{Coquereaux1998} and \erf{eq:trcurv}. Thus, $-\tau_{\Omega}(K)$ is the covariant derivative of $(\tr_{\mathcal{S}},\tr_{\mathcal{A}})$. \end{proof}

Now we come to the correspondence between string structures and trivializations of the spin lifting gerbe in the setting without connections. 

\begin{theorem}
\label{th:trequiv}
Let $M$ be a connected spin manifold. Then, regression  induces an equivalence
\begin{equation*}
h\trivthfus{\mathcal{S}_{LM}}\cong \stst 1M
\end{equation*}
between the homotopy category of thin fusion trivializations of the spin lifting gerbe $\mathcal{S}_{LM}$ and the category of string structures on $LM$.
This equivalence is equivariant with respect to the action of $h\ufusbunth{LM}$ on the left hand side and the action of  $\hc 1 \ugrb M$ on the right hand side, along the equivalence between the two monoidal categories.
\end{theorem}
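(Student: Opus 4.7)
The plan is to mirror the proof of Theorem \ref{th:trequivcon}, replacing the top row of the equivalence in Theorem \ref{th:equiv} with its bottom row. The key observation, as in the connection case, is that $\stst 1M = \trivtrunc{\mathbb{CS}_M}$ is by construction defined purely in terms of the presheaf $\hc 1 \ugrb{-}$: an object is a pair $(\mathcal{S},[\mathcal{A}])$ with $\mathcal{S}$ an object of $\hc 1 \ugrb{FM}$ and $[\mathcal{A}]$ a morphism of $\hc 1 \ugrb{FM^{[2]}}$ satisfying the associativity equation \erf{eq:diagfillsigma} in $\hc 1 \ugrb{FM^{[3]}}$, and a morphism is a morphism $[\mathcal{B}]$ of $\hc 1 \ugrb{FM}$ satisfying the square \erf{eq:beta} in $\hc 1 \ugrb{FM^{[2]}}$. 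I would apply the natural monoidal equivalence $\hc 1 \ugrb X \cong h\ufusbunth{LX}$ from the bottom row of Theorem \ref{th:equiv} successively at $X=FM,FM^{[2]},FM^{[3]}$ to transfer this definition to the loop-space side.

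The resulting equivalent category has as objects pairs $(T,[\kappa])$, where $T$ is an object of $h\ufusbunth{LFM}$ and $[\kappa]$ is a homotopy class of thin fusion-preserving bundle morphisms $\pr_2^{*}T \otimes \tr_{\mathcal{P}} \to \pr_1^{*}T$ whose associativity square commutes in $h\ufusbunth{LFM^{[3]}}$, and as morphisms the homotopy classes $[\varphi]\maps T \to T'$ that intertwine $[\kappa]$ with $[\kappa']$ in $h\ufusbunth{LFM^{[2]}}$. Using Theorem \ref{th:spinlift} (forgetting connections) I would replace $\tr_{\mathcal{P}}$ with the principal $\ueins$-bundle $\q$ of the spin lifting gerbe $\mathcal{S}_{LM}$, preserving fusion product and thin structure, and the transgressed product $\tr_{\mathcal{M}'}$ with the bundle gerbe product $\mu$ of $\mathcal{S}_{LM}$. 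After this identification, the above description agrees, on morphisms, exactly with Definition \ref{def:thinfusiontriv}(ii) passed to the homotopy category, while on objects it records a homotopy class $[\kappa]$ rather than a specific $\kappa$ as in Definition \ref{def:thinfusiontriv}(i).

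To bridge this last nominal difference I would argue that choosing any representative of $[\kappa]$ defines a functor into $h\trivthfus{\mathcal{S}_{LM}}$, that this functor is essentially surjective because every object of $h\trivthfus{\mathcal{S}_{LM}}$ obviously maps to $(T,[\kappa])$, and that it is fully faithful because two representatives $\kappa_1,\kappa_2$ of the same homotopy class give objects $(T,\kappa_1)$ and $(T,\kappa_2)$ that are canonically isomorphic in $h\trivthfus{\mathcal{S}_{LM}}$ via $\id_T$ (the required homotopy in Definition \ref{def:thinfusiontriv}(ii) is exactly the one exhibiting $\kappa_1 \simeq \kappa_2$). Equivariance along the equivalence $h\ufusbunth{LM} \cong \hc 1 \ugrb M$ is then inherited from the monoidality of Theorem \ref{th:equiv} and the compatibility of the action functors \erf{eq:action} and \erf{eq:actiontrivspin}.

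The main obstacle I anticipate is precisely the bookkeeping in the last step: the naive definition of $h\trivthfus{\mathcal{S}_{LM}}$ treats $\kappa$ as data rather than as an equivalence class, so one must verify carefully that representatives always exist and that ambiguity of choice is absorbed by object-isomorphisms in the homotopy quotient. Once this is done, the diagram \erf{eq:commdiagleft} and the compatibility with Theorem \ref{th:trequivcon} give a consistency check: the resulting equivalence fits into a commutative square with the equivalence of Theorem \ref{th:trequivcon} under the forgetful functors from connections to thin structures, as required for the final assembly in Section \ref{sec:proof}.
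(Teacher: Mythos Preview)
Your overall strategy matches the paper's up to the point where you compare $\mathcal{C}'$ with $h\trivthfus{\mathcal{S}_{LM}}$, but the step you flag as ``bookkeeping'' hides a genuine gap. After transporting $\stst 1M$ along $\hc 1 \ugrb X \cong h\ufusbunth{LX}$ you land in a category $\mathcal{C}'$ whose objects $(T,[\kappa])$ satisfy the associativity diagram only in the \emph{homotopy} category $h\ufusbunth{LFM^{[3]}}$. An object of $\trivthfus{\mathcal{S}_{LM}}$, however, is by definition a \emph{strict} trivialization of $\mathcal{S}_{LM}$: the associativity square over $LFM^{[3]}$ must commute on the nose (see the definition of trivialization in Section~\ref{sec:lifting}, on which Definition~\ref{def:thinfusiontriv}(i) is built). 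An arbitrary representative $\kappa$ of $[\kappa]$ therefore need not yield an object of $\trivthfus{\mathcal{S}_{LM}}$ at all, so the ``choose a representative'' functor $\mathcal{C}' \to h\trivthfus{\mathcal{S}_{LM}}$ is not well-defined as stated. This is exactly the content of essential surjectivity here, not a formality.

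The paper avoids this rectification problem entirely. It works with the forward functor $K'\maps \trivthfus{\mathcal{S}_{LM}} \to \mathcal{C}'$ and obtains essential surjectivity by going through connections: by Proposition~\ref{prop:passagesur}\,\erf{prop:passagesur:1} every object of $\stst 1M$ lifts to $\ststcon 1M$, and by Theorem~\ref{th:trequivcon} the latter corresponds to a fusion trivialization with superficial fusive connection --- in particular a \emph{strict} trivialization --- whose underlying thin fusion trivialization is the desired preimage. Fullness of $K'$ is then immediate, but faithfulness of the induced $hK'$ requires a further explicit argument (concatenating homotopies to show that a homotopy of thin fusion bundle morphisms can be promoted to a homotopy through morphisms of $\trivthfus{\mathcal{S}_{LM}}$), which your proposal also does not address.
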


\begin{proof}
We proceed as in the proof of Theorem \ref{th:trequivcon}, now using the equivalence 
\begin{equation*}
h\ufusbunth {LX} \cong \hc 1 \ugrb X
\end{equation*}
established by the regression functor $\un_x$
which is natural in $X$, monoidal, and depends on the choice of a point $x\in X$. Choosing a point $p\in FM$ (and then using the point $(p,...,p)\in FM^{[k]}$ in all higher fibre products) we find an equivalence $K: \mathcal{C} \to \stst 1M$, where $\mathcal{C}$ stands for the following category:
\begin{itemize}

\item 
An object is a pair $(T,\kappa)$ of an object $T$ in $h\ufusbunth{LFM}$
and a morphism 
\begin{equation*}
\kappa: \pr_2^{*}T \otimes \tr_{\mathcal{P}} \to \pr_1^{*}T
\end{equation*}
in $h\ufusbunth{LFM^{[2]}}$
such that the  equality 
\begin{equation}
\label{eq:proofth1}
\alxydim{@C=2cm@R=\xypicst}{\pr_3^{*}T\otimes \pr_{23}^{*}\tr_{\mathcal{P}} \otimes \pr_{12}^{*}\tr_{\mathcal{P}}  \ar[r]^-{\pr_{23}^{*}\kappa \otimes \id} \ar[d]_{\id \otimes \tr_{\mathcal{M}'}} & \pr_2^{*}T \otimes \pr_{12}^{*}\tr_{\mathcal{P}} \ar[d]^{\pr_{12}^{*}\kappa} \\ \pr_3^{*}T \otimes \pr_{13}^{*}\tr_{\mathcal{P}} \ar[r]_-{\pr_{13}^{*}\kappa} & \pr_1^{*}T}
\end{equation}
of morphisms of $h\ufusbunth{LFM^{[3]}}$ holds.

\item
A morphism is a morphism $\varphi: T \to T'$  in $h\ufusbunth{LFM^{[2]}}$, such that the equality
\begin{equation}
\label{eq:proofth2}
\alxydim{@=\xypicst}{\pr_2^{*}T\otimes \tr_{\mathcal{P}}  \ar[r]^-{\kappa} \ar[d]_{\pr_2^{*}\varphi} & \pr_1^{*}T \ar[d]^{\pr_1^{*}\varphi} \\ \pr_2^{*}T' \otimes \tr_{\mathcal{P}} \ar[r]_-{\kappa'} & \pr_1^{*}T'}
\end{equation}
of morphisms in $h\ufusbunth{LFM^{[2]}}$ holds.
\end{itemize}
The connection-preserving, fusion-preserving isomorphism between $\tr_{\mathbb{CS}_M}$ and $\mathcal{S}_{LM}$ induces an equivalence between $\mathcal{C}$ and a category $\mathcal{C}'$ obtained from $\mathcal{C}$ by replacing $\tr_{\mathcal{P}}$ by the principal $\ueins$-bundle $\q$ of $\mathcal{S}_{LM}$ and $\tr_{\mathcal{M}'}$ by the bundle gerbe product $\mu$ of $\mathcal{S}_{LM}$.

Now we go into the details of the category $\trivthfus{\mathcal{S}_{LM}}$ of thin fusion trivializations of the spin lifting gerbe, as introduced in Definition \ref{def:thinfusiontriv}. 
The category $\mathcal{C}'$
receives a functor 
\begin{equation*}
K': \trivthfus{\mathcal{S}_{LM}} \to \mathcal{C}'
\end{equation*}
defined in the following natural way: 
\begin{itemize}

\item 
It takes a thin fusion trivialization $(T,\kappa,\lambda,d)$ and sends it to the pair composed of the object $(T,\lambda,d)$ in $\ufusbunth{FM}$ and of the homotopy class of $\kappa$, which is a morphism in $h\ufusbunth{LFM^{[2]}}$ making diagram \erf{eq:proofth1} commutative. 
\item
A morphism $\varphi$ between thin fusion trivializations $(T,\kappa,\lambda,d)$ and $(T',\kappa',\lambda',d')$ is sent to its homotopy class, which is a morphism in $h\ufusbunth {LFM}$. The condition, i.e. the commutativity of diagram \erf{eq:proofth2}, is exactly the same as in Definition \ref{def:thinfusiontriv}. 
\end{itemize}
We obtain a commutative diagram of categories and functors,
\begin{equation}
\label{eq:commdiagright}
\alxydim{@=\xypicst}{\bigset{13em}{Fusion trivializations of $\mathcal{S}_{LM}$ with compatible superficial fusive connection} \ar[rr]^-{\text{Theorem \ref{th:trequivcon}}} \ar[d] &&  \ststcon 1M\ar[d]^{F_1} \\ \trivthfus{\mathcal{S}_{LM}} \ar[r]_-{K'} & \mathcal{C}' \cong \mathcal{C} \ar[r]_-{K} & \stst 1M\text{.}}
\end{equation}
Since the functor  $F_1$ is essentially surjective (Proposition \ref{prop:passagesur} \erf{prop:passagesur:1}), and the functor on the top is an equivalence (Theorem \ref{th:trequivcon}), it follows that $K'$ is essentially surjective. It is also full: suppose $\varphi$ is a morphism in $\mathcal{C}'$. It is represented by a fusion-preserving, thin bundle morphism $\varphi: T \to T'$  such that diagram \erf{eq:proofth2} is commutative in $h\ufusbunth{LFM^{[2]}}$. This means that the representative $\varphi$ is a morphism in $\trivthfus{\mathcal{S}_{LM}}$.

However, the functor $K'$ is not faithful. This problem is solved by passing  to the \emph{homotopy category} of thin fusion trivializations, $h\trivthfus{\mathcal{S}_{LM}}$. We note that the functor $K'$ is well-defined on the homotopy category, i.e. it induces a functor $hK'$ making the diagram
\begin{equation*}
\alxydim{@=\xypicst}{\trivthfus{\mathcal{S}_{LM}} \ar[r]^-{K'} \ar[d] & \mathcal{C}' \\ h\trivthfus{\mathcal{S}_{LM}} \ar[ur]_-{hK'}}
\end{equation*}
strictly commutative.
Indeed, a morphism on the left is a class $[\varphi]$ of morphisms between thin fusion trivializations, in which two morphisms $\varphi_0$ and $\varphi_1$ are identified, if there exists a homotopy through morphisms of $\trivthfus{\mathcal{S}_{LM}}$. Such a homotopy is, in particular, a  homotopy through fusion-preserving, thin bundle morphisms. Thus, $hK'([\varphi]) := [\varphi]$ is well-defined. 

As the projection $\trivthfus{\mathcal{S}_{LM}} \to h\trivthfus{\mathcal{S}_{LM}}$ is  surjective, we  deduce from the fact that $K'$  is essentially surjective and full, that $hK'$ is essentially surjective and full, too. It remains to show that $hK'$ is faithful. Suppose $\varphi_0$   and $\varphi_1$ are morphisms in $\trivthfus{\mathcal{S}_{LM}}$, such that they are equal in $\mathcal{C}'$. That is, there exists a homotopy $h$ between $\varphi_0=h_0$ and $\varphi_1=h_1$ through fusion-preserving, thin bundle morphisms $h_t: T \to T'$, identifying $\varphi_0$ and $\varphi_1$ in $h\ufusbunth{LFM}$. We  show that the same homotopy $h$ is a homotopy through morphisms in $\trivthfus{\mathcal{S}_{LM}}$, i.e.  the diagram
\begin{equation}
\label{eq:proofth3}
\alxydim{@=\xypicst}{\pr_2^{*}T  \otimes \q  \ar[r]^-{\kappa} \ar[d]_{\pr_2^{*}h_t \otimes \id} & \pr_1^{*}T \ar[d]^{\pr_1^{*}h_t} \\ \pr_2^{*}T' \otimes \q \ar[r]_-{\kappa'} & \pr_1^{*}T'}
\end{equation}
commutes in $h\ufusbunth {LFM^{[2]}}$ for all $t\in [0,1]$.
Indeed, as $\varphi_0$ and $\varphi_1$ are morphisms in $\trivthfus{\mathcal{S}_{LM}}$ the diagram commutes for $t=0$ (and $t=1$).  That is, there is a homotopy $H$ between $\pr_1^{*}\varphi_0 \circ \kappa = H_0$ and $\kappa' \circ (\pr_2^{*}\varphi_0 \otimes \id)=H_1$ through fusion-preserving, thin bundle morphisms $H_s: \pr_2^{*}T \otimes \q \to \pr_1^{*}T$. Now we have the following homotopies:
\begin{enumerate}
\item 
from $\pr_1^{*}h_t \circ \kappa$ to $\pr_1^{*}\varphi_0 \circ \kappa$, namely $s\mapsto \pr_1^{*}h_{t(1-s)} \circ \kappa$

\item
from $\pr_1^{*}\varphi_0 \circ \kappa$ to $\kappa' \circ (\pr_2^{*}\varphi_0 \otimes \id)$, namely $s\mapsto H_s$

\item
from $\kappa' \circ (\pr_2^{*}\varphi_0 \otimes \id)$ to $\kappa' \circ (\pr_2^{*}h_t \otimes \id)$, namely $s \mapsto \kappa' \circ (\pr_2^{*}h_{st} \otimes \id)$
\end{enumerate}
These can be concatenated to a smooth homotopy showing that diagram \erf{eq:proofth3} is commutative in $h\ufusbunth{LFM}$.
\end{proof}

\section{Proof of Theorem \ref{th:main}}

\label{sec:proof}

The various functors we have introduced  form the commutative diagram of Theorem \ref{th:main}:
\begin{equation*}
\alxydim{@R=\xypicst@C=2.4em}{\spstconsffus \ar[d] \ar[r]^-{\text{Cor. \ref{co:equivsupfusion}}} & *++{\bigset{10em}{Fusion trivializations of the spin lifting gerbe $\mathcal{S}_{LM}$ with superficial connection}}  \ar[r]^-{\text{Th. \ref{th:trequivcon}}} \ar[d] & \ststcon 1M \ar@{<-}[r]^-{\text{Th. \ref{th:truncationcon}}} \ar[d]_{F_1} & \hc 1 \ststcon\relax M \ar[d]^{\hc 1 F_2} \\ h\spstthfus \ar[r]_-{\text{Prop. \ref{prop:equivthinfusion}}} &  *++++++{\hspace{-1em}\bigset{10em}{Homotopy category of thin fusion trivializations of the spin lifting gerbe $\mathcal{S}_{LM}$}\hspace{-1em}} \ar[r]_-{\text{Th. \ref{th:trequiv}}} & \stst 1M \ar@{<-}[r]_-{\text{Th. \ref{th:truncation}}} & \hc 1 \stst\relax M\text{.}}
\end{equation*}
The separate diagrams have been discussed in Section \ref{sec:liftingtheoryspinconnections}, see \erf{eq:commdiagleft}, in Section \ref{sec:transgression}, see \erf{eq:commdiagright}, and in Section \ref{sec:decatstringcon}, see \erf{eq:passage}. The arrows are labelled with references to those statements where we have proved that they are equivalences of categories and have the claimed equivariance properties.

If $M$ is not a string manifold, $\stst\relax M$ is empty. Thus,  all categories in above diagram must be empty since they have functors to $\hc 1 \stst\relax M$. Now suppose  $M$ is a string manifold. Then, $\stst\relax M$ is non-empty. Since the functor $F_2$ is essentially surjective by Theorem \ref{th:stringconnections} \erf{th:stringconnections:exist}, $\ststcon\relax M$  is non-empty, too, and so are all other categories in the diagram. It also follows that all vertical functors in the diagram are essentially surjective.

\tocsection{Table of notation}

\newcommand{\notation}[3]{
  \noindent
  \begin{minipage}[t]{0.18\textwidth}#1\end{minipage}
  \begin{minipage}[t]{0.81\textwidth}#2\if!#3!\else(#3)\fi\end{minipage}\vspace{0.8em}}

\notation{$\ubun{X}$}{The category of Fréchet principal $\ueins$-bundles over $X$}{}
\notation{$\ubuncon{X}$}{The category of Fréchet principal $\ueins$-bundles with connection}{}
\notation{$\ugrb X$}{The bicategory of bundle gerbes over $X$}{}
\notation{$\ugrbcon X$}{The bicategory of bundle gerbes with connection over $X$}{}
\notation{$h\mathcal{C}$}{The homotopy category of a (topological) category, defined by identifying homotopic morphisms.}{}
\notation{$\hc k\mathcal{C}$}{The $k$-truncation of a (higher) category, defined using $(k+1)$-isomorphism classes of $k$-morphisms. }{}
\notation{$\spstthfus$}{The category of thin fusion spin structures on $LM$}{Section \ref{sec:fusionspin}}
\notation{$\spstconsffus$}{The category of superficial geometric fusion spin structures }{Section  \ref{sec:spinconnections}}
\notation{$\sc (M)$}{The set of string classes on $M$}{Section \ref{sec:stringclasses}}
\notation{$\sccon (M)$}{The set of differential string classes on $M$}{Section \ref{sec:differentialstringclasses}}
\notation{$\stst \relax M$}{The bicategory of  string structures on $M$}{Section \ref{sec:stringclasses}}
\notation{$\stst k M$}{A decategorification of the bicategory of  string structures }{Section \ref{sec:stringclasses}}
\notation{$\ststcon \relax M$}{The bicategory of geometric string structures on $M$}{Section \ref{sec:decatstringcon}}
\notation{$\ststcon k M$}{A decategorification of the bicategory of geometric string structures on $M$}{Section \ref{sec:decatstringcon}}
\notation{$\ufusbunth{LX}$}{The category of Fréchet principal $\ueins$-bundles over $LX$ with fusion product and thin structure}{}
\notation{$\ufusbunconsf{LX}$}{The category of Fréchet principal $\ueins$-bundles over $LX$ with fusion product and superficial connection}{}

\bibliographystyle{kobib}
\bibliography{kobib}

\end{document}